\newcommand{\nocontentsline}[3]{}
\let\origcontentsline\addcontentsline
\newcommand\stoptoc{\let\addcontentsline\nocontentsline}
\newcommand\resumetoc{\let\addcontentsline\origcontentsline}
      \numberwithin{equation}{section}
      \theoremstyle{plain}
      \newtheorem{theorem}{Theorem}[section]
            \newtheorem{thm}[theorem]{Theorem}
      \newtheorem{lemma}[theorem]{Lemma}
      \newtheorem{lem}[theorem]{Lemma}
      \newtheorem{proposition}[theorem]{Proposition}
      \newtheorem{prop}[theorem]{Proposition}
      \theoremstyle{definition}
      \newtheorem{definition}[theorem]{Definition}
      \theoremstyle{remark}
      \newtheorem{remark}[theorem]{Remark}
\renewcommand{\P}{\mathbb P}
\newcommand{\R}{\mathbb R}
\newcommand{\E}{\mathbb E}
\newcommand{\F}{\mathcal F}
\newcommand{\G}{\mathcal G}
\newcommand{\Z}{\mathbb Z}
\newcommand{\N}{\mathbb N}
\renewcommand{\bm}{\boldsymbol}
\newcommand{\red}{\color{red}}
\newcommand{\lr}[4]{#3\xleftrightarrow[#1]{#2} #4}
     \newcommand{\nlr}[4]{#3\mathrel{\mathop{\centernot\longleftrightarrow}_{#1}^{#2}} #4}
\newcommand{\vertiii}[1]{{\left\vert\kern-0.25ex\left\vert\kern-0.25ex\left\vert #1 
		\right\vert\kern-0.25ex\right\vert\kern-0.25ex\right\vert}}
\titleformat{\subsection}[runin]{\normalfont\bfseries}{\thesubsection.}{.5em}{}[.]\titlespacing{\subsection}{0pt}{2ex plus .1ex minus .2ex}{.8em}
\titleformat{\subsubsection}[runin]{\normalfont\bfseries}{\thesubsubsection.}{.5em}{}[.]
\titlespacing{\subsubsection}{0pt}{2ex plus .1ex minus .2ex}{.8em}
\newcommand{\savefootnote}[2]{\footnote{\label{#1}#2}}
\newcommand{\repeatfootnote}[1]{\textsuperscript{\ref{#1}}}
\title{
\normalsize
\textbf{
	STRONG LOCAL UNIQUENESS	FOR THE VACANT SET OF \\ RANDOM INTERLACEMENTS  
}}
\author{}
\date{}
\definecolor{Red}{rgb}{1,0,0}
\definecolor{Blue}{rgb}{0,0,1}
\definecolor{Yarok}{rgb}{0,0.5,0}
\def\red{\color{Red}}
\newcommand{\PF}[1]{{\red{[\small PF: #1]}}}
\begin{document}
\thispagestyle{empty}
\maketitle
\vspace{0.01cm}
\begin{center}
\vspace{-1.8cm}
Subhajit Goswami\footnote{School of Mathematics, Tata Institute of Fundam.~Research, 400005
Mumbai, India. \url{goswami@math.tifr.res.in}}, Pierre-Fran\c{c}ois Rodriguez\footnote{Centre for Mathematical Sciences, University of Cambridge, CB3 0WB Cambridge, UK. \url{pfr26@cam.ac.uk}}$^{,}$\savefootnote{imp-address}{Imperial College London, Department of Mathematics, SW7 2AZ London, UK. 
\url{yuriy.shulzhenko16@imperial.ac.uk}} and Yuriy Shulzhenko\repeatfootnote{imp-address}

\end{center}
\begin{abstract}
We consider the vacant set $\mathcal V^u$ of random interlacements on $\Z^d$ in 
dimensions $d \ge 3$. For varying intensity $u > 0$, the connectivity properties of $\mathcal 
V^u$ undergo a percolation phase transition across a critical parameter $u_*=u_*(d) \in (0,\infty)$. In this article, we prove that this phase transition is sharp in the supercritical 
phase $u < u_*$. 
This follows from a certain \emph{strong local uniqueness} property (SLU)  introduced in the 
present work, which we prove $\mathcal{V}^u$ satisfies. 
In itself, 
this property furnishes the missing ingredient needed to 
deduce a number of desirable {quenched} results characterizing the large-scale geometry of 
the infinite cluster. Moreover, SLU entails a sought-after local and monotone criterion 
amenable to renormalization arguments below $u_*$.
\end{abstract}

\vspace{-0.6cm}

{\setcounter{tocdepth}{2} \tableofcontents}
\thispagestyle{empty}
\vspace{0.3 cm}
\newpage




\newpage

\setcounter{page}{1}

\section{Introduction}\label{sec:intro}
 {\em Local uniqueness} 
 refers to the structural property of percolation models in their supercritical phase, by which the infinite cluster is witnessed quantitatively in regions of finite size.  This 
property is instrumental for unraveling the anatomy of the infinite 
cluster, see, e.g.~\cite{AntPis96, Bar04, MR3650417} and refs.~therein.  Local uniqueness is known in a handful of cases, and notoriously thorny to prove, especially in higher dimensions. To quote a few instances, see~\cite{GriMar90} for Bernoulli percolation, \cite{Bod05} for the random-cluster model at $q=2$, and, more recently, \cite{DCGRS20} for Gaussian free field excursion sets.

{In this article, we establish the local uniqueness for the vacant set $\mathcal{V}^u$ 
of random interlacements on $\Z^d$, $d \geq 3$, thereby proving the sharpness of the phase 
transition for this model in the supercritical phase. More so, we deduce a strong form of local 
uniqueness, see Theorem~\ref{T:ri-main} below, where the underlying event is also {\em monotone}. In contrast with 
all the models mentioned above, interlacement percolation has distinctive features: whereas $\mathcal{V}^u$ 
undergoes a percolation 
transition in $u$, its complement $\mathcal{I}^u =\Z^d \setminus \mathcal{V}^u$ is 
supercritical for \emph{all} $u>0$!  In fact $\mathcal{I}^u$ consists of one single 
unbounded cluster \cite[Corollary 2.3]{MR2680403} for all $u>0$. This 
\emph{global} topological constraint and the ensuing rigidity features of the model 
(e.g.~absence of any finite energy) are the chief reasons why proving local uniqueness for 
$\mathcal{V}^u$ has remained open until now, despite substantial progress on the nature of its phase transition over the last fifteen years. 

\subsection{Main results}\label{subsec:results}
Random interlacements form a Poissonian cloud 
of bi-infinite transient $\Z^d$-valued trajectories modulo time-shift carrying a 
time-like label $u \ge 0$ called the \emph{level} (see~\S\ref{subsec:occupation} for a precise description of the 
setup). The interlacement 
set $\mathcal I^u$ at level $u$ is defined as the 
range of all trajectories in this Poisson cloud with label at most $u$ and 
 $\mathcal V^u = \Z^d \setminus \mathcal I^u$ is the corresponding vacant set.
As such, $\mathcal V = (\mathcal V^u)_{u > 0}$ forms a decreasing family of random subsets of 
$\Z^d$, 
 which undergo a percolation transition across a critical threshold 
$u_\ast = u_\ast(d)  \in (0, \infty)$ (see~\cite{MR3602841, MR2512613, 
	10.1214/ECP.v20-3734}). This phase transition is fundamentally linked to   the behaviour of the random walk/Brownian motion itself: indeed, $u_*$ enters a growing number of formulas describing various characteristics of its trace; see, e.g.,~\cite{zbMATH05054008, benjamini2008giant, MR2561432, MR2838338, 
	zbMATH06797082, MR3602841, zbMATH07227743, li2024sharp}.


Let $(\Omega,\mathcal{A},\P)$ denote the canonical law of the interlacement process; we refer to the original article~\cite{MR2680403} for precise definitions. In view of the recent series of works \cite{RI-I, RI-II, RI-III}, one now knows that the phase 
transition of $\mathcal V^u$ around $u_\ast$ exhibits the following sharpness features. For all $u > 
u_{\ast}$, there exist $c = c(d)$ and $C = C(u, d)$ in $(0, \infty)$ such that, for all $L \ge 1$,
\begin{equation}\label{eq:subcritical0}
	\P\big[\lr{}{\mathcal V^u}{0}{\partial B_L}\big] \le C e^{-L^c}, 
\end{equation}
where, with hopefully obvious notation, the event in \eqref{eq:subcritical0} 
refers to a (nearest-neighbor) path in $\mathcal V^u$ connecting $0$ and $\partial B_L$ with 
$B_L = [-L, L]^d \cap \Z^d$ and $\partial B_L$ the inner (vertex) boundary of $B_L$. On the 
other hand, for all $0 < v < u < u_\ast$, there exist $c = c(d)$ and $C = C(u, v, d)$ in $(0, 
\infty)$ such that
\begin{equation}\label{eq:supcrit0}
	\P[\mathrm{Exist}(L, u)]  \wedge \P\left[\mathrm{Unique}(L, u, v)\right] \ge 1-C{ e}^{-L^c}
\end{equation}
for all $L \ge 1$, where the two events in \eqref{eq:supcrit0} are defined as
\begin{equation}
	\begin{split}
		\text{Exist}(L,u)&=\left\{ \,\text{there is a cluster in $\mathcal V^u \cap B_L$ of diameter at least  $\tfrac L5$}\right\}, \text{ and}
		\\[0.3cm]
		\text{Unique}(L,u,v)&= \left\{\begin{array}{c}\text{any two clusters in $ \mathcal V^u \cap B_{L}$ having diameter at}\\ \text{least $\frac L{10}$ are connected to each other in $ \mathcal V^v \cap B_{2L}$} \end{array}\right\}.\label{eq:UNIQUE1}
	\end{split}
\end{equation}
The bound \eqref{eq:supcrit0} (with $v<u$), {which is the subject of  \cite{RI-II}, already 
	provides valuable information towards 
understanding the supercritical phase of $\mathcal V^u$ and} has several interesting 
applications (see \cite[\S1.2]{RI-I} for 
details). Yet \eqref{eq:supcrit0} leaves much to be desired in comparison with local uniqueness, which essentially asks~whether $u$ and $v$ can be chosen to be {\em 
equal}. This improvement is paramount in order to access 
{properties of clusters (both infinite and finite) in the supercritial phase at any 
\emph{fixed} level $u$. For instance, prior to the present work, no meaningful bound on the 
(truncated) two-point function $\tau_u^{\text{tr}}$ 
was known when $u<u_*$ except in the perturbative regime $u \ll 1$; 
see \cite{Tei11} in dimensions $d \geq 5$ when $u \ll 1$, see also 
\cite{MR2838338} for corresponding results for the random walk, and \cite{MR3269990} for the extension to all $d \geq 3$ 
 (and $ u \ll 1$). We refer to \S\ref{subsec:apps} for further important implications of local uniqueness.

 The main contribution of the present article is the proof that $\mathcal V^u$  indeed possesses the local 
uniqueness property throughout 
the entire supercritical regime $0<u<u_*$. {Deriving such a property from a statement like \eqref{eq:supcrit0} for $\mathcal{V}^u$ is by no means 
innocuous: for, \eqref{eq:supcrit0} was specifically designed to avoid issues (e.g.~in \cite{MR3602841, https://doi.org/10.48550/arxiv.2105.12110, RI-II}) related to topological rigidity mentioned at the start of this introduction -- see \S\ref{subsec:overview} for a 
glimpse of what making this leap entails.}  

\medskip
Our first theorem, which 
forms the foundation of other results in the paper, shows that a strengthened version of the 
local uniqueness event holds with a probability stretched exponentially close to one. This also 
implies the purported equality between $u_\ast$ and another critical parameter $\widehat u$ 
introduced previously in the literature in connection with uniform bounds on the two-point 
function: following 
\cite[(0.2)-(0.3)]{zbMATH07395560}, we say that $\textnormal{NLF}(u)$, the \textit{no large finite cluster} property in $[0,u]$, holds when
\begin{equation}
\label{eq:NLF}
\begin{split}
&\text{there exist $\Cl[c]{c:NLF}(u) > 0$, $\Cl{C:NLF}(u) < \infty$ and $\gamma(u) \in (0,1]$, such that}\\
&\text{for all $v \in [0,u]$ and $x,y \in \Z^d$, $\tau_{v}^{\textnormal{tr}}(x,y) \leq  \Cr{C:NLF}e^{-\Cr{c:NLF}|x-y|^{\gamma}}$}
\end{split}
\end{equation}
(all of $\Cr{c:NLF}, \Cr{C:NLF}, \gamma$ may implicitly also depend on the dimension $d$), where
\begin{equation}
	\label{eq:tau-RI}
	\tau_{v}^{\textnormal{tr}}(x,y) =  \P\big[x \stackrel{\mathcal{V}^v}{\longleftrightarrow}y,\nlr{}{\mathcal{V}^v}{x}{\infty} \big], \quad x,y \in \Z^d.
\end{equation}
Note that \cite{zbMATH07395560} employs a slightly different quantity than 
\eqref{eq:tau-RI} to define $\textnormal{NLF}(u)$ as in \eqref{eq:NLF} but the two can be 
related by a straightforward union bound, and the resulting $\textnormal{NLF}$-properties 
are in fact identical. Noting that $\textnormal{NLF}(u)$ is monotone in $u$, let
\begin{equation}\label{eq:u-hat}
\widehat{u} = \widehat{u}(d) \coloneqq 
\sup\{ u \in [0, u_\ast) :  \, \textnormal{NLF}(u) \text{ holds} \}.
\end{equation}
One deduces from \cite{MR3269990} that $\widehat{u} \geq \Cl[c]{c:uhat} $ for some 
$\Cr{c:uhat}= \Cr{c:uhat}(d) \in (0,1)$ and $d \geq 3$. Recall 
that $u_*= u_*(d)$ denotes the critical parameter describing the 
percolation phase transition of $ \mathcal{V}^u$.

\begin{theorem}
	 \label{T:ri-main} Defining for $u  \geq 0$ and $L \geq 1$ the `strong local uniqueness' event 
	\begin{equation}
		\label{eq:slu}
		{\rm SLU}_{L}(u) = \left\{\, \textnormal{for all $v \in [0,u]$, the event $\text{Unique}(L,v,v)$ occurs} \, \right\}
	\end{equation}
	(see~\eqref{eq:UNIQUE1} for notation),
	there exist $C=C(d,u)$, $c=c(d)$ in $(0,\infty)$ such that 
	\begin{equation}
		\label{eq:ri-unique}
		\P[{\rm SLU}_L(u)] \geq 1-Ce^{-L^c}, \,  \text{ for all $L \geq 1$, $u < u_*$ and $d \geq3$.}
	\end{equation}
Moreover,
\begin{equation}
\label{eq:ri-equal}
\widehat{u}(d) =u_*(d), \, \text{ for all $d \ge 3$.}
\end{equation}
\end{theorem}

The equality \eqref{eq:ri-equal} establishes the supercritical sharpness of 
$(\mathcal{V}^u)_{u>0}$ 
and constitutes the first meaningful bounds 
on $\tau_{u}^{\textnormal{tr}}$ at large distances valid for \emph{all} $u < u_*$. We return to 
this in Theorem~\ref{T:ri-2point} below. Theorem~\ref{T:ri-main} has far-reaching 
implications, by giving access to  quenched properties characterizing the well-behavedness 
of $\mathcal{V}^u$ in the entire supercritical phase $u< u_*$; see \S\ref{subsec:apps} for 
more on this.

The strong local uniqueness event ${\rm SLU}_{L}(u)$ in \eqref{eq:slu} is of central interest: 
for, it defines a {\em finite-size criterion} for the infinite cluster at level $u$, which -- 
crucially -- is renormalizable. That is, the family ${\rm SLU} \equiv ({\rm SLU}_{L}(u))_{u>0}$, being 
monotone, feeds into certain multi-scale arguments in $L$ that involve renormalisation in $u$; 
see \cite{MR2680403,MR2891880} for pioneering works. In stark contrast, the event 
$\text{Unique}(L,u,u)$ is completely unfit for these techniques. For a sample application 
illustrating the 
interesting consequences of \eqref{eq:ri-unique} and renormalizability of ${\rm SLU}$, we 
refer to Theorem~\ref{T:ri-2point} below.

 The presence of the quantifier `\emph{for all $v \in (0,u]$}' in \eqref{eq:slu} and inside the 
 probability in \eqref{eq:ri-unique} hints at the strength of our methods, which in fact allow to prove a 
version of \eqref{eq:ri-unique} not just involving $\mathcal V^v$ uniformly in $v \in [0,u]$, but 
any not too degenerate (in a sense to be made precise) subset $\mathcal{V} \subset \mathcal{V}^u$ formed out of 
excursions at scale $L$ (of which $\mathcal V = \mathcal V^v$ for any  $0< v \leq u$ are just examples). This change of perspective, by which one considers a 
much larger class of events (not at all measurable only with respect to the original configuration
$(\mathcal{V}^{u})_{u>0}$ alone) is a novel and conceptually important part of our methods. It 
yields a new way to couple 
non-monotone events involving 
$(\mathcal V^u)_{u > 0}$ without introducing any undesirable sprinkling, which is of independent interest. We discuss this new technique in more detail as part of \S\ref{subsec:overview} below.


\subsection{Proof overview}\label{subsec:overview}
 We now outline the strategy to prove Theorem~\ref{T:ri-main}, thereby highlighting the main novel aspects of this work, itemized as 1), 2) and 3) below. The equality \eqref{eq:ri-equal} is 
a straightforward consequence of \eqref{eq:ri-unique} combined with the disconnection 
estimate from \cite{MR3602841} (see also \cite[display~(5.73)]{gosrodsev2021radius}), so we focus on \eqref{eq:ri-unique} from here on. 

 Our starting point is the following. For $\zeta$ a configuration of excursions at scale $L$, let $\mathscr G_L(\zeta)$ denote the event that the box $B_L \coloneqq [-L ,L]^d$ contains an `ambient' cluster in the complement of $\text{range}(\zeta)$ whose coarse-graining at scale $L_0 \ll L$ consists of boxes $B$, translates of $[0,L_0)^d$, in each of which an (a-priori free to choose) `local' event $F_{B}(\zeta)$ occurs. In the interest of time, we won't delve into the precise meanings of technical terms like \emph{ambient, local} etc. It will be critical to keep track of the structural conditions $F_B$ must satisfy, see \ref{condC1}-\ref{condC3} 
 below. By state-of-the-art renormalization techniques \cite{MR2891880,PT12,drewitz2018geometry} (by which $L=L_n$ for a rapidly sequence of scales $(L_n)_{n \geq 0}$), one can ensure that 
\begin{equation}\label{eq:goodG}
\P[\mathscr G_L(\zeta)] \ge 1 - C e^{-L^{c}}, \, L \geq 1,
\end{equation}
provided $F_{B}(\zeta)$ is:
\begin{enumerate}[noitemsep, label = {(C\arabic*)}]
\item
\label{condC1} sufficiently likely as $L_0 \to \infty$, and 
\item
\label{condC2}`renormalizable' (e.g.~monotone in the configuration $\zeta$). 
\end{enumerate}
For now (but see below, around \eqref{eq:boosted0}) the reader can think of $\zeta$ in \eqref{eq:goodG} as the excursions induced by $\mathcal{I}^u$ in $B_L$. The probability in \eqref{eq:goodG} roughly corresponds to the situation where a certain recursively defined family of events `cascades' down to scale $L_0$, giving rise to the features defining $\mathscr G_L(\zeta)$.

\medskip
Equipped with \eqref{eq:goodG}, a promising avenue to prove, for simplicity say, that ${\rm 
Unique}(L, u, u)$ is likely, consists of devising an exploration process for 
any given cluster $\mathscr{C}$ of $\mathcal V^u$ inside $B_{L}$ in such a way that, every time it comes close to the ambient cluster in some $L_0$-box $B$  (call this an `encounter time' $\tau$),
 \begin{enumerate}[noitemsep, label = {(C\arabic*)}, start=3]
 \item
 \label{condC3} the occurrence of $F_{B}$ ensures a uniformly positive conditional probability at time $\tau$ (w.r.t.~the filtration induced by the exploration) for $\mathscr{C}$ to connect to the ambient cluster. 
\end{enumerate}
In principle, this then yields a lower bound like \eqref{eq:ri-unique}, since the `ambient' cluster is precisely designed to guarantee many encounter times when the explored component is macroscopic in $B_L$. 


In the past, a strategy along the above lines has been made to work \cite{DCGRS20, 
gosrodsev2021radius} for level-set percolation of the Gaussian free field (GFF), a model with similar algebraic decay of correlations. However, 
it crucially relies in one way or another on a form of `ellipticity' present in the model, which is instrumental to define $F_{B}$. Indeed, 
\ref{condC1}-\ref{condC3} are  
achieved for the GFF by {\em laying aside} part of the randomness in the form of a non-degenerate residual white noise stemming from a suitable orthogonal decomposition of the GFF over scales; see for instance \cite[(5.30)]{gosrodsev2021radius}, which plays the role of $F_B$; see also \cite{dembin2023supercritical}, which exploits similar ideas for Voronoi percolation. The presence of i.i.d.~noise notably ensures that \ref{condC3} can be fulfilled using a standard cluster exploration algorithm upon freezing all but the residual randomness. Unfortunately, this noise is also the very reason the GFF retains an insertion tolerance property (albeit non-uniform), which is absent for $\mathcal{V}^u$ owing to the rigidity features mentioned at the start of this introduction. Hence, separation of randomness at scale $1$ is precluded for $\mathcal{V}^u$.



\bigskip

\noindent{\bf 1) Pseudo insertion tolerance.} 
To cope with this issue, we 
start by proving a {\em restricted} insertion tolerance property for $\mathcal V^u$ to the effect that, for all $r > 0$ and $B=B_r$,  the bound
\begin{equation}\label{eq:insertion_tol0}
\P[B \subset \mathcal V^u \, | \, \mathcal F^{\phi}_{B} ] \ge c\cdot 1_{F_{B}} 
\end{equation}
holds, where $\mathcal F^{\phi}_{B}= \sigma\big(\mathcal 
V^u \cap (B_{2r})^c, \phi(\mathcal V^u \cap A_{2r}) \big)$ 
with $A_{2r}\coloneqq B_{2r} \setminus B_r$, the event $F_{B} \in \mathcal F^{\phi}_{B}$ has high probability as $r \to \infty$ and $c=c(u ,r,d) > 0$. 

The purpose of the functional $\phi: \{0, 1\}^{A_{2r}} \to \{0, 1\}^{A_{2r}}$ is 
to 
{\em hide} some information in $\mathcal V^u \cap B_{2r}$ to facilitate a lower bound like \eqref{eq:insertion_tol0}. It  
needs to be chosen carefully: without restriction to any event 
on the right-hand side and $\phi \equiv {\rm id}$, \eqref{eq:insertion_tol0} fails.  The following result, albeit technically insufficient for our purposes, is already of interest, so we state it formally here. We refer to Proposition~\ref{lem:conditional_prob1} below for a more general statement.

\begin{lemma}[Restricted insertion tolerance] \label{L:FE} For all $u,r>0$, letting
\begin{equation}\label{def:phi}
\phi(\mathcal V^u \cap A_{2r}) \coloneqq \text{the revealment of the cluster of $\partial B_{2r}$ 
inside $A_{2r} \setminus \mathcal V^u $},
\end{equation}
 there exists $F_B \in \mathcal F^{\phi}_{B}$ with $\P[F_B] \geq 1- C(u)e^{-L^c}$ such that \eqref{eq:insertion_tol0} holds.
\end{lemma}
By \emph{revealment} in \eqref{def:phi}, we mean the configuration comprising all of $\partial B_{2r}$, all the clusters in $A_{2r} \setminus \mathcal V^u $ that intersect it, and their outer boundaries in $A_{2r}$.

Lemma~\ref{L:FE} constitutes a 
significant improvement over \cite[Proposition~3.1]{RI-II}, where a {\em sprinkled} version of  \eqref{eq:insertion_tol0} was 
proved with $\mathcal F^{\phi}_{B}$ replaced by the {\em smaller} 
configuration $\sigma(\mathcal V^{u + \varepsilon} \cap (B_r)^c)$ for some sprinkling parameter $\varepsilon > 0$; see the beginning of Section~\ref{sec:insertion} for more on this.

At first glance, Lemma~\ref{L:FE}, which is bereft of any sprinkling, seems to provide a candidate event $F_B$ to salvage the general strategy outlined above, and the bound  
on $\P[F_B]$ below \eqref{def:phi} means that \ref{condC1} is easily met for $r=L_0$. However, pseudo insertion tolerance comes at a high price: 
\begin{itemize}[noitemsep]
\item the event $F_B$ in Lemma~\ref{L:FE} (which is made explicit in Section~\ref{sec:insertion}) is not monotone, so \ref{condC2} fails (hence \eqref{eq:goodG} is compromised, because the renormalisation can't be initiated at all);
\item the $\sigma$-algebra $\mathcal F^{\phi}_{B}$ with $\phi$ given by \eqref{def:phi} is (necessarily!) restricted (i.e.~$\phi \neq \text{id}$), hence the  exploration alluded to in the context of \ref{condC3}, if at all feasible, won't follow via standard arguments.
\end{itemize}
We deal with each of these issues separately in items 2) and 3) below. As with 1), the ideas we introduce there are of independent interest. Because they touch on fundamental issues, we expect the techniques described in items 2) and 3) to have many applications, some of which are described in \S\ref{subsec:apps}; see, in particular, Theorem~\ref{T:ri-2point}.
\bigskip

\noindent{\bf 2)  
Renormalizing non-monotone events.} 
We now describe how to address the first bullet point above. This 
is an instance of a well-known issue, namely, how to renormalize non-monotone events; see e.g.~\cite{Tei11, MR3269990, drewitz2018geometry} for delicate work-arounds in the presence of a small parameter, which is absent here. Our methods deal not only with the specific issue relating to $F_{B}$ but with the problem in more generality. 

Let us briefly recall this key issue: the essence of renormalization 
is to adjust the parameters $u=u_n $ etc.~along a growing sequence of spatial scales $L=L_n$ as $n \to \infty$, so as to overcome the algebraic correlations and decouple distant events. For 
$\mathcal V^u$, 
decoupling involves {\em monotone} couplings between relevant sequences of excursions. 
 For non-monotone 
events, like ${\rm Unique}(L,u,v)$ in \eqref{eq:UNIQUE1}, or $F_{B}$ for that matter, these couplings invariably force the 
parameters $u,v$ to move in different directions. For instance, getting a good lower bound on $\P[{\rm Unique}(KL, u_0, u_0)]$ for $K \gg 1$ would rely on a bound for $\P[{\rm Unique}(L, u, v)]$ with $u<u_0 < v$, which is not available (recall that $u>v$ in \eqref{eq:supcrit0})! Actually, the framework of the recent work \cite{RI-II}, which led to~\eqref{eq:supcrit0}, is specifically designed to avoid this issue.

To 
get out of this apparent impasse, we adopt a  
new perspective. We illustrate this with the example of 
${\rm U}_L( u) \equiv {\rm Unique}(L, u, u)$, which already conveys the gyst of our approach. Similar observations can be made about $F_{B}$. As hinted above, one can view the event ${\rm U}_L(u)$ as 
a function of a (finite) sequence of excursions $Z^u = (Z_k)_{1 \le k \le N^u}$ between $B_L$ and a larger concentric box $B_{CL}$, {\em factoring} through 
its corresponding interlacement set $\mathcal I(Z^u) = \bigcup_{1 \le k \le N^u}\,{\rm 
range}(Z_k)$. This 
enables us to define an analogue of ${\rm U}_L(u)$ for any sequence of excursions $Z$ which we call 
${\rm U}_L(Z)$ with a slight abuse of notation.  We refer to $Z$ as a \emph{packet} (of excursions).  Now let $\zeta$ denote the collection of 
subsequences of $Z^u$ containing {\em any} sequence $(Z_k)_{1 \le k \le n}$ where $N^v \le n 
\le N^u$ for some $v \in (0, u)$ and consider a {\em strengthening} ${\rm U}_L( \zeta) \subset {\rm U}_L( u)$ of the event ${\rm U}_L( 
u)$ as follows:
\begin{equation}\label{eq:boosted0}
{\rm U}_L( \zeta) \coloneqq \textstyle\bigcap_{Z \in \zeta} {\rm U}_L(Z) \ \big( \subset {\rm U}_L(
u) \big).
\end{equation}
It is clear 
from \eqref{eq:boosted0} that the event ${\rm U}_L( \zeta)$ is {\em monotonically decreasing} 
in the collection $\zeta$ and one verifies that this event is `well-behaved' {across} the couplings mentioned previously. 

As with \eqref{eq:boosted0}, the true definition of $\mathscr G_L(\zeta)$ in \eqref{eq:goodG} involves a family $\zeta$ of excursion packets. This trickles down to the `seed' event $F_B$, and we prove an 
extended version of \eqref{eq:insertion_tol0} in Section~\ref{sec:insertion} to packets of 
excursions, with $\mathcal V^u$ in  replaced by $\mathcal V(Z)= \Z^d \setminus \bigcup_k 
\text{range}(Z_k)$, for $Z = (Z_k)_{1 \le k \le n} \in \zeta$ a sequence of excursions. The 
$\sigma$-algebra $\mathcal F^{\phi}_{B}(Z)$ and the event $F_{B}(Z) \supset F_B(\zeta)$, are 
generalized similarly as in \eqref{eq:boosted0}. Triggering the renormalization now requires 
enhanced a-priori estimates, for events of the type appearing in \eqref{eq:boosted0} (for 
instance, $F_B(\zeta)$). We prove these separately in \S\ref{sec:apriori}. With these at hand, 
\eqref{eq:goodG} now follows via renormalization, thus addressing the issue with \ref{condC2}.

\medskip

The use of packets is quite malleable, starting with the choice of $\zeta$, and it will be applied to various non-monotone events, cf.~\eqref{def:boosted} and 
\S\ref{subsubsec:supcrit} for further details; for a concrete application in relation with the event ${\rm 
SLU}_L(u)$ from \eqref{eq:slu}, see for instance \eqref{eq:V-boosted} together with Lemma~\ref{lem:VzSLU_inclusion}, which is instructive. 

\bigskip

\noindent{\bf 3) Exploration and gluing of large clusters.} 
We now explain how to deal with the problem in the second bullet point above, which relates to \ref{condC3}. As a result of Step 2, we can now take for granted the occurrence of $\mathscr G_L(\zeta)$ in \eqref{eq:goodG}, ensuring 
the existence of an ambient cluster 
strewn with copies of the `insertion-tolerance good' event $F_{B}(\zeta)$ with 
very high probability via renormalization. 

The $\sigma$-algebra in \eqref{eq:insertion_tol0} (more precisely, its extension $\mathcal F^{\phi}_B(Z)$ to excursion packets $Z \in \zeta$) does not 
always allow us to {\em reveal} the points in 
$\mathcal V(Z)$ {\em all the way} up to the (outer) boundary of $B$ owing 
to the partial information provided by the functional $\phi$ from  \eqref{def:phi} in the vicinity of $B$ (see below 
\eqref{eq:insertion_tol0}). This means that we can {\em not} follow the 
conventional wisdom of trying to connect to the ambient cluster 
inside a good box $B$ when the exploring cluster 
arrives at its boundary, thus making the exploration schemes used in previous works, see e.g.~the proof of \cite[Lemma~5.1]{gosrodsev2021radius} or \cite[Proposition~1.5]{DCGRS20}, 
inadequate for our purpose. In the same vein, the sprinkling present in \cite{RI-II} effectively {avoids} this issue. 


To address this, we design in \S\ref{subsec:reduction}-\ref{subsec:exploration} a delicate exploration algorithm suited to work in this terrain  when $\phi$ is given by \eqref{def:phi}. This 
exploration method is another novel contribution of this work. We refer to \S\ref{subsec:reduction} regarding the intricacies of its construction. Overall, this leads to the following result, which summarizes in a slightly informal way the combined effects of Theorem~\ref{prop:exploration_RI-I} and Proposition~\ref{prop:reduction} below. 

\begin{proposition}\label{P:explo-informal} For each excursion packet $Z \in \zeta$ and cluster $\mathscr{C}\subset (\mathcal{V}(Z)\cap B_L)$ , there exists a random sequence $(\tau_k)_{k \geq 0}$ of ``good encounter times,'' along with a sequence of points $(Y_k)_{k \geq 0}$ in $B_L \cap L_0 \Z^d$, both defined measurably in $(Z, \mathscr{C})$, 
such that all of the following hold: with $B_k\coloneqq Y_k + [0,L_0^d), $ 
\begin{itemize}
\item[i)] $\{ \tau_k < \infty\}$ occurs if and only if $F_{B_k}$ does, `the' ambient cluster $\mathscr{A} \subset \mathcal{V}(Z)$ intersects $B_k$, and $\mathscr{C}$ intersects the twice bigger box $\tilde{B}_k$. If moreover $B_k \subset \mathcal{V}(Z)$, then $\mathscr{C}$ is connected to $\mathscr{A}$ in $\tilde{B}_k$;
\item[ii)] the event $\{\tau_j < \infty, B_j \not\subset \mathcal V(Z), j < k\} \cap \{\tau_{k} < \infty, B_k=B\}$ is $\mathcal{F}_B^\phi(Z)$-measurable;
\item[iii)] With $m= \lfloor L^c \rfloor$, one has
\begin{equation}\label{eq:sprinkling_removed}
\P[({\rm SLU}_L(\zeta))^c] \le \P[\mathscr G_L(\zeta)^c] + \P\big[ \exists (Z, \mathscr{C}): Z \in \zeta, \, \textnormal{diam}(\mathscr{C})\geq cL, \, \tau_{m}(Z, \mathscr{C})< \infty \big].
\end{equation}
\end{itemize}
\end{proposition}

Proposition~\ref{P:explo-informal} is subtle: whereas item i) and the choice of $m$ in item iii) seem to straightforwardly harness the features of $\mathscr G_L(\zeta)$ described above \eqref{eq:goodG} (as the bound \eqref{eq:sprinkling_removed} also suggests), the difficulty is to satisfy the required $\mathcal{F}_B^\phi(Z)$-measurability in item ii).  

In doing so, Proposition~\ref{P:explo-informal} meets condition \ref{condC3} by supplying the missing exploration alluded to in the second bullet point above. Indeed, items i) and ii) together now imply that one can apply Lemma~\ref{L:FE} repeatedly to deduce, upon applying a union bound over $(Z, \mathscr{C})$, that the second term in \eqref{eq:sprinkling_removed} is bounded by $Ce^{-L^c}$. The bound over $\mathscr{C}$ is polynomial in $L$, but it is crucial that $\zeta$ does not have too high complexity, all the while ensuring that ${\rm SLU}_L(\zeta) \subset {\rm SLU}_L(u)$, cf.~\eqref{eq:slu}. The choice of excursion packets constituting $\zeta$ is actually quite a bit more involved than what appears above \eqref{eq:boosted0}; the interested reader is referred to \eqref{eq:SLU-reduc0} and \eqref{eq:V-first-inclusion} (see also \eqref{eq:Z^+}-\eqref{def:Z_tr_RI} and \eqref{eq:V_z^I}-\eqref{eq:V_z^II} regarding the important events ${\rm V}_z^{\rm i}$, ${\rm i}= {\rm I},{\rm II}$). Notwithstanding, we will typically choose $\zeta$ such that that $|\zeta| =O( \text{cap}(B_L)) =O(L^{d-2})$. Overall, \eqref{eq:sprinkling_removed} thus leads to \eqref{eq:ri-unique}.



%


\subsection{Applications} \label{subsec:apps}
Both Theorem~\ref{T:ri-main} and the techniques introduced in \S\ref{subsec:overview} have important further consequences, which we now discuss. Theorem~\ref{T:ri-main} itself has direct, far-reaching implications, by giving access to intrinsic `quenched' properties of $\mathcal{V}^u$ 
valid in the {entire} supercritical phase $u< u_*$. We first briefly discuss those. A classical way to test the geometry of the supercritical phase is to probe the large-scale 
behaviour of the random walk on the infinite cluster. Questions of homogenisation in porous 
media have a long tradition, see for instance \cite{Bar04, 
SidSzn04,MatPia07,BerBis07,MR3568036,zbMATH06930205}. 

Theorem~\ref{T:ri-main} supplies the missing ingredient to prove an invariance principle on the 
infinite cluster $\mathcal{C}_{\infty}^u$ of $\mathcal{V}^u$ for all $u< u_*$. Indeed, combining Theorem~\ref{T:ri-main} and~\cite[Theorem 
1.1]{MR3568036}, one now knows that for $\P[\cdot | 0 \in \mathcal{C}_{\infty}^u ]$-a.e.~$\omega$, the diffusively rescaled random walk on $\mathcal{C}_{\infty}^u(\omega)$ (see for instance \cite[(0.6) or (0.7)]{MR2512613}) converges to a $d$-dimensional Brownian motion with non-degenerate diffusion matrix $\sigma^2I$, with $\sigma^2= \sigma^2(u)>0$. This is obtained as follows. With the exception of \textbf{S1}, all 
the conditions \textbf{P1}-\textbf{P3} and \textbf{S2} appearing in \cite{MR3568036} can be 
checked in the same way as in \cite[Section 2.3]{MR3390739}, and the outstanding condition 
\textbf{S1} that postulates a finite-size criterion for 
$\mathcal{C}_{\infty}^u$ is implied by \eqref{eq:ri-unique}. Previous results of CLT-type were restricted in the case of $\mathcal{V}^u$ to the regime $u\ll 1$, 
see \cite{Tei11,MR3269990,MR3568036}.

The validity of a quenched invariance principle is but one emblematic example of the realm of Theorem~\ref{T:ri-main} 
as concerns the geometry of the infinite cluster, which feeds into various other works that 
successfully exploited conditions \textbf{P1}-\textbf{P3} and 
\textbf{S1}-\textbf{S2}. These results now apply to 
$(\mathcal{V}^u)_{u \in (0,u_*)}$ as a consequence of Theorem~\ref{T:ri-main}, which supplies the outstanding condition \textbf{S1}. They include, 
among others,  the validity of a shape 
theorem~\cite{MR3390739}, and quenched (Gaussian) heat kernel estimates on $\mathcal{C}_{\infty}^u$, see~\cite{MR3650417,Bar04}. Underlying these results is a 
structural result, the validity of a certain {isoperimetric 
inequality} on $\mathcal{C}_\infty^u$, which in its currently strongest available form is stated in \cite[Theorem 
5.10]{MR3650417}. Its proof employs a delicate coarse-graining scheme which 
utilises \eqref{eq:ri-unique} as a crucial ingredient. This isoperimetric inequality now holds for $\mathcal{V}^u$ at \emph{all} values of $u \in (0,u_*)$ as a consequence of Theorem~\ref{T:ri-main}.

 In the same vein as \eqref{eq:ri-unique}, \eqref{eq:ri-equal} can be viewed as extending the 
 string of equalities $ \bar u= u_* = u_{**}  $ between various critical parameters established as 
 part of \cite{RI-I}; cf.~\S1.3 and (1.21) therein. 
This extension is of independent interest. For instance, together with \cite[Theorem~1]{zbMATH07395560}, \eqref{eq:ri-equal} implies that
\begin{equation}
 \label{eq:C1}
\text{$u \in [0, u_\ast) \mapsto \theta (u)\coloneqq \P[\lr{}{\mathcal V^u}{0}{\infty}]$ is $C^1$ with $\tfrac{\mathrm{d}\theta(u)}{\mathrm{d}u} < 0$.}
\end{equation}
The regularity of $\theta(\cdot)$ asserted in \eqref{eq:C1}, previously only known for $u< \widehat{u}$ (cf.~\eqref{eq:u-hat}), answers a question of \cite{zbMATH07395560} and is relevant for 
certain constrained variational problems: it implies in combination with 
\cite[Theorem~3]{zbMATH07395560}~and the main results of 
\cite{zbMATH07483480,https://doi.org/10.48550/arxiv.2105.12110, MR4650161} 
that the minimizer(s) for the variational problem associated to the cost of having an excessive 
density $\nu (> 1 - \theta(u))$ of points in the complement of $\mathcal{C}_\infty^u$, has a \emph{small excess} phase in $\nu$ for every $u < u_*$. This phase, now known to be non-trivial for all $u<u_*$ using \eqref{eq:C1}, is characterized by the fact that the formation of `droplets' secluded from the infinite cluster is ruled out; see~\cite[Remark~1.3)]{zbMATH07395560} and the discussion around  \cite[(0.12)]{zbMATH07483480} for more on this.

\medskip

We now present our second main result, which relies on both Theorem~\ref{T:ri-main} itself, as well as the techniques outlined in \S\ref{subsec:overview}. Theorem~\ref{T:ri-2point} below is concerned with the actual decay rate of the truncated two-point function $\tau_u^{{\rm tr}}(x)= \tau_u^{{\rm tr}}(0,x)$ from \eqref{eq:tau-RI} at large Euclidean distances $|x|$. This question is particularly challenging in the supercritical regime $u< u_*$, in which the decay crucially hinges on the truncation in the form of the additional disconnection from infinity, giving rise to a non-monotone event (on the contrary, for $u>u_*$ this condition can be safely ignored). 

By Theorem~\ref{T:ri-main}, see \eqref{eq:ri-equal}, along with \eqref{eq:NLF} and \eqref{eq:u-hat}, one now knows that the decay is at least stretched exponential in $|x|$ when $u<u_*$. This is already significant. Indeed, the recent series of works \cite{RI-I,RI-II,RI-III} involving the first two authors, 
established a similar decay for $\tau_{u}^{{\rm tr}}(x)$ in $|x |$ for $u > u_*$, but no meaningful bound on $\tau_{u}^{{\rm tr}}(x)$ was obtained for $u<u_*$ outside perturbative regimes $u\ll 1$, cf.~\cite{Tei11,MR3269990}. The closest to getting a bound on $\tau_{u}^{{\rm tr}}(x)$ for $u$ near $u_*$ is the main result of \cite{RI-I}, which yields a stretched exponential bound on a {sprinkled} version of 
$\tau_{u}^{{\rm tr}}(\cdot)$ for 
$u < u_\ast$ where the disconnection in \eqref{eq:tau-RI} happens at a {\em strictly} lower 
intensity 
$v < u$; cf.~also the discussion in Step 2 of \S\ref{subsec:overview} for the reason why. Our next theorem pins down the rate of decay for $\tau_{u}^{{\rm tr}}(\cdot)$ at all non-critical values 
of $u$. 

\begin{theorem}\label{T:ri-2point}
For $u \neq u_*$, 
with $|\cdot|$ denoting the Euclidean distance on $\Z^d$,
\begin{align}
&\text{when $d \geq 4$, 
} \sup_{x \in \mathbb{Z}^d}\frac{1}{|x|}\log \tau_u^{{\rm tr}}(x) \le  -c(u,d) \ (\in (0,1));\label{eq:ri-2point-4d}\\
&\text{when $d = 3$, } \limsup_{|x|\to \infty} \frac{\log |x|}{|x|}\log \tau_u^{{\rm tr}}(x) \le  -\frac{\pi}{3}(\sqrt{u} - \sqrt{u_*})^2. \label{eq:ri-2point-3d}
\end{align}
\end{theorem}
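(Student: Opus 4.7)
The proof splits across $u_*$: for $u > u_*$, $\mathcal V^u$ has no infinite cluster so $\tau_u^{\rm tr}(x) = \P[\lr{}{\mathcal V^u}{0}{x}]$, while for $u < u_*$ the truncation is the active constraint and the strong local uniqueness from Theorem~\ref{T:ri-main} is the essential input. The super-critical case is the more delicate one, and I treat it first.

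For $u < u_*$ and $d \ge 4$: on the event $\{\lr{}{\mathcal V^u}{0}{x},\,\nlr{}{\mathcal V^u}{0}{\infty}\}$, the cluster of $0$ in $\mathcal V^u$ has diameter at least $|x|$ yet is disjoint from $\mathcal C_\infty^u$. Fix a large but bounded scale $L_0$ so that both $\P[{\rm SLU}_{L_0}(u)]$ and $\P[{\rm Exist}(L_0,u)]$ are at least $1 - e^{-L_0^c}$, by Theorem~\ref{T:ri-main} and \eqref{eq:supcrit0}. Cover a vacant path in the finite cluster from $0$ to $x$ by $k \asymp |x|/L_0$ disjoint $L_0$-boxes $B_1, \ldots, B_k$ centred at points $x_i$ along the path. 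Within each $B_i$, the finite cluster realizes a vacant subcluster of diameter at least $L_0/10$; since $u<u_*$ forces $\mathcal C_\infty^u$ to have positive density, on the high-probability event that $\mathcal C_\infty^u$ also leaves a subcluster of diameter $\ge L_0/10$ inside $B_i$, $\text{SLU}_{L_0}(u)$ (taking $v = u$ in \eqref{eq:slu}) joins the two clusters inside $\mathcal V^u \cap B_{2L_0}(x_i)$, contradicting $\nlr{}{\mathcal V^u}{0}{\infty}$. Hence a failure of either $\text{SLU}_{L_0}(u)$ or $\text{Exist}(L_0, u)$ is forced at each of the $k$ boxes. Applying a decoupling inequality for $\mathcal V^u$ at the fixed scale $L_0$ — crucially \emph{without sprinkling}, via the excursion-packet / monotonization machinery introduced in this paper — the failures at well-separated boxes become quasi-independent, yielding $\tau_u^{\rm tr}(x) \le (Ce^{-L_0^c})^{k}$, which for $L_0$ a large constant is the pure exponential $e^{-c(u,d)|x|}$.

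For $u > u_*$ and $d \ge 4$, a parallel renormalization applies with $\text{SLU}$ replaced by the event that $B_{L_0}$ contains no vacant cluster of diameter $\ge L_0/10$ (which has probability $\ge 1 - e^{-L_0^c}$ by \eqref{eq:subcritical0} at fixed scale); decoupling and a Peierls-type union bound yield the pure exponential rate. In $d = 3$ the bound has a fundamentally different flavour: the cheapest configuration realizing the relevant event is a thin vacant tube $T$ of length $|x|$, whose $\Z^3$-capacity satisfies $\mathrm{cap}(T) \sim c_0 |x|/\log|x|$. For $u > u_*$ the tube-vacancy cost $e^{-u\,\mathrm{cap}(T)}$ already yields the $|x|/\log|x|$ rate. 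For $u < u_*$, one decomposes $\mathcal V^u = \mathcal V^{v_1} \cap \mathcal V^{u-v_1}$ into independent pieces, with $\mathcal V^{v_1}$ providing the vacant connection along $T$ and $\mathcal V^{u-v_1}$ the disconnection of $T$ from infinity; the sharp disconnection asymptotics in the spirit of \cite{li2024sharp}, combined with optimization over $v_1 \in (0,u)$ subject to the percolative threshold $u_*$, yield the stated constant $\tfrac{\pi}{3}(\sqrt u - \sqrt{u_*})^2$ after an AM--GM-type minimization.

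The principal obstacle I foresee is the \emph{sprinkling-free} decoupling required above: every classical decoupling inequality for random interlacements trades independence for a sprinkling of the parameter, which here would convert $\tau_u^{\rm tr}$ into a weaker sprinkled variant. The resolution hinges on the new monotonization / excursion-packet framework that is one of the conceptual novelties of this paper. A secondary obstacle, specific to $d=3$, is pinning down the \emph{sharp} constant $\tfrac{\pi}{3}(\sqrt u - \sqrt{u_*})^2$ rather than merely its correct order of magnitude, which requires matching upper and lower bounds against the precise leading-order asymptotics of $\mathrm{cap}(T)$ in $\Z^3$ and controlling the lower-order corrections coming from the shield geometry.
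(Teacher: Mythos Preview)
The paper's proof of Theorem~\ref{T:ri-2point} is a one-line reduction: the inclusion $\{0 \leftrightarrow x\} \subset \{0 \leftrightarrow \partial B^2_{|x|}\}$ immediately transfers the one-arm bounds of Theorems~\ref{thm:d=4} and~\ref{thm:d=3} to $\tau_u^{\rm tr}$. All the work lives in those one-arm theorems, proved via the coarse-graining/bootstrapping machinery of Sections~\ref{sec:cg}--\ref{sec:coarse_graining} together with the exploration argument of Section~\ref{sec:exploration}. Your proposal attempts a direct attack on $\tau_u^{\rm tr}$ that bypasses this structure, and there are two concrete gaps.

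\textbf{The $d\ge 4$ super-critical step.} You cover the path by $k \asymp |x|/L_0$ boxes, observe that in each box the event ${\rm SLU}_{L_0}(u) \cap {\rm Exist}(L_0,u)$ must fail, and then ``decouple'' these failures. But ${\rm SLU}$-failure is non-monotone, so no off-the-shelf decoupling applies; you invoke the packet machinery, yet that machinery is not built to decouple many copies of $({\rm SLU}_{L_0})^c$. The paper's order of operations is the reverse of yours: it first renormalizes using \emph{localizable} events $\mathcal G_{z,L}^{\rm i}$ (for which the coupling of Lemma~\ref{L:RI_basic_coupling} applies) to produce the good event $\mathscr G$, and only then, \emph{conditionally on $\mathscr G$}, runs a single exploration at scale $N$ that glues the finite cluster to an ambient cluster via the insertion-tolerance Proposition~\ref{lem:conditional_prob1} (see Propositions~\ref{prop:exploration_RI-I}--\ref{prop:exploration_RI-II}). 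Forcing many ${\rm SLU}$-failures and then decoupling is not the same thing and, as stated, is not a proof.

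\textbf{The $d=3$ arguments.} Both of your $d=3$ sketches describe \emph{lower-bound} heuristics, not upper bounds. For $u>u_*$, the quantity $e^{-u\,\mathrm{cap}(T)}$ is $\P[T\subset\mathcal V^u]$, a lower bound on the connection probability (and with the wrong constant: $u$, not $(\sqrt u-\sqrt{u_*})^2$). For $u<u_*$, splitting $\mathcal V^u=\mathcal V^{v_1}\cap\mathcal V^{u-v_1}$ and asking $\mathcal V^{v_1}$ to connect while $\mathcal V^{u-v_1}$ disconnects does not give an inclusion in either direction for the event in $\tau_u^{\rm tr}$, so no upper bound follows, and there is no AM--GM optimization. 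In the paper, the sharp constant emerges from a single large-deviation estimate for the observable $h^u$ (Proposition~\ref{prop:Xutail_bnd}) applied to a coarse-grained path $\Sigma$; the factor $(\sqrt u-\sqrt{u_*})^2$ is the Chernoff exponent there, and $\pi/3$ is the leading constant in $\mathrm{cap}(T_N)\sim \tfrac{\pi}{3}N/\log N$, fed through Proposition~\ref{prop:coarse_paths},\,i) and the final bootstrap in Lemma~\ref{lem:Vto2arms}.
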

\begin{remark}\label{remark:ub}
Similarly as with Theorem~\ref{T:ri-main}, where $\text{SLU}(u)$ in \eqref{eq:slu} strengthens $\text{Unique}(L,u,u)$ from \eqref{eq:UNIQUE1}, Theorem~\ref{T:ri-2point} can be strengthened as follows. For all $u \neq u_*$, the conclusions \eqref{eq:ri-2point-4d}, \eqref{eq:ri-2point-3d} remain true with $\tau_u^{{\rm tr}}(x)$ replaced by $\P\big[\bigcup_{v \le u} \{x \stackrel{}{\longleftrightarrow}y,\nlr{}{}{x}{\infty} \text{ in } \mathcal{V}^v\}\big],$ cf.~\eqref{eq:tau-RI}.
\end{remark}

The bounds \eqref{eq:ri-2point-4d} and \eqref{eq:ri-2point-3d} valid for all $u \neq u_*$ constitute a substantial improvement over \eqref{eq:NLF}, but their full strength only becomes apparent in combination with our companion results derived in \cite[Theorem 1.1 and 
Corollary~1.2]{GRS24.2}, which comprise matching lower bounds, exhibiting the exponential decay of $\tau_u^{{\rm tr}}(x)$ in 
\eqref{eq:ri-2point-4d} and effectively allowing to replace the $\limsup$ by a $\lim$ in 
\eqref{eq:ri-2point-3d}. 

Both upper bounds in Theorem~\ref{T:ri-2point} follow from 
corresponding estimates for a truncated radius observable, whereby the point $x$ in the event 
defining $\tau_u^{{\rm tr}}(x)$ in \eqref{eq:tau-RI} is replaced by the inner vertex boundary of 
the Euclidean ball $B_r^2$ centered at $0$ of radius $r=|x|$, see 
Theorems~\ref{thm:d=4} and~\ref{thm:d=3} below. These theorems also supply pertinent 
bounds on other quantities of interest, such as the two-arms event 
(
see around \eqref{def:locuniq}). Incidentally, in the subcritical regime $u>u_*$, the tail of the radius observable was derived independently in \cite{Prevost23}, building on the 
sharpness result \eqref{eq:subcritical0} of \cite{RI-I}. We further refer to \cite{gosrodsev2021radius} (building on \cite{DCGRS20}) for similar results in the context of GFF level-set 
percolation, to~\cite{MR4749810, MR4736694} for similar results in the {\em subcritical} phase for a more general class of Gaussian fields, and to 
\cite{DPR22,drewitz2023arm,drewitz2024criticalonearmprobabilitymetric} for quantitative results in both $u$ and $R$ for the GFF on the cable system. 
Bounds akin to \eqref{eq:ri-2point-4d} in the supercritical regime 
have been proved recently in \cite{dembin2023supercritical} for Voronoi percolation, inspired 
by the ideas in \cite{RI-II}; see also \cite{Bod05,MR4756366,DCGR20,gunaratnam2025supercritical} for the (FK-)Ising and related $\phi^4$ 
models, and \cite{MR4734554} for a robust proof in the case of Bernoulli percolation, extending \cite{GriMar90} to transitive graphs of polynomial volume growth. One common feature of these models is that correlations  
decay (at least) exponentially fast with distance. 

We now briefly comment on the proof of Theorem~\ref{T:ri-2point}. 
With Theorem~\ref{T:ri-main} at hand, we achieve this via {bootstrapping} 
arguments that use an inequality akin to \eqref{eq:sprinkling_removed}
in order to effectively bootstrap the event $\rm{SLU}$ across scales when $u<u_*$ (which is the main focus of 
the current article). These arguments crucially rely on the excursion packet technology
delineated in \S\ref{subsec:overview}, which is combined with ideas  drawn from \cite{gosrodsev2021radius} and \cite{MR3602841}. As alluded to below Theorem~\ref{T:ri-main}, when $u<u_*$ it is absolutely critical to have access to ${\rm SLU}_L(u)$ (rather than ${\rm Unique}(L,u,u)$ for instance) as a 
triggering estimate, which removes the obstruction to initializing the bootstrap (cf.~the discussion atop \eqref{eq:boosted0}).

The bootstrapping mechanism not only results in a progressive improvement on the bound of a probability of interest (say, involving ${\rm SLU}$). It also serves to improve the `quality' of the family $\zeta$ of excursion packets involved in the bootstrap, which becomes, in a sense, increasingly fine-tuned at larger scales: that is, the bootstrap also needs to ensure that the number of good encounter times (corresponding to $m$ in item iii) of Proposition~\ref{P:explo-informal}) increases at larger and larger scales. This rests on a careful design of the relevant family $\zeta$ of excursion packets, which depends on a parameter $\nu$ that controls their `quality';  we refer the interested reader to~\eqref{def:Z_tr_RI} and \eqref{eq:V_z^I}-\eqref{eq:V_z^II}.

\subsection{Organization}\label{subsec:organization} We now give an overview of the organization of this article.
In Section~\ref{sec:prelims}, we set up notation, collect preliminary facts about random walks and interlacements, 
and introduce several systems of excursions, together with  related couplings, as a 
means to `localize' the interlacement set. They play a prominent role in the forthcoming sections.

\medskip

 In Section~\ref{sec:insertion} we state and prove Proposition~\ref{lem:conditional_prob1}, which corresponds to the extended version of Lemma~\ref{L:FE}. It constitutes a first major contribution of this paper. 

\medskip
Sections~\ref{sec:cg} and \ref{sec:coarse_graining} develop 
a robust framework for bootstrapping, amenable to all later purposes. The probabilistic estimates it entails are the content of Section~\ref{sec:cg} (see Proposition~\ref{prop:bootstrap_prob}), while the deterministic part of the scheme is described in 
Section~\ref{sec:coarse_graining} (see Proposition \ref{prop:bootstrap_events}). The latter refers to the 
propagation of an ambient cluster with `good' properties (such as the event $\mathscr 
G_L$ above) 
{across} a single renormalization step. 
To track various quantities with enough precision, we also rely on certain 
(surrogate) harmonic averages largely inspired by ideas from \cite{MR3602841}, which have been 
streamlined along the way (cf.~Proposition~\ref{prop:Xutail_bnd}). As a straightforward first application of this scheme, we prove in \S\ref{subsubsec:subcrit} the bounds of Theorem~\ref{T:ri-2point} in the sub-critical phase $u>u_*$. Our \emph{unified} treatment of all values $u \neq u_*$ via the framework of Sections~\ref{sec:cg}-\ref{sec:coarse_graining} is very much in line with the physical intuition by which the near-critical behavior of the system is agnostic to the side from which $u_*$ is approached, as reflected by the fact that the right hand side of \eqref{eq:ri-2point-3d} vanishes for $u=u_*(1\pm \varepsilon)$ as $O(\varepsilon^2)$ when $\varepsilon \downarrow 0$ regardless of the sign.

\medskip

Section~\ref{sec:supcrit_upper_bnd} is the cornerstone of the proof. In~\S\ref{subsubsec:supcrit}, we introduce excursion packets and 
feed them into the framework of Sections~\ref{sec:cg}-\ref{sec:coarse_graining} in the specific context of $\rm{SLU}$. The main result is then Theorem~\ref{prop:exploration_RI-I}, which roughly corresponds to the bound \eqref{eq:sprinkling_removed} in item iii) of Proposition~\ref{P:explo-informal} (when combined with Lemma~\ref{L:FE}). For technical reasons relating to small values of $u$, we need to distinguish two cases, referred to as \emph{types} ${\rm I}$ and ${\rm II}$. Strictly speaking, Theorem~\ref{prop:exploration_RI-I} deals with the more difficult type ${\rm I}$. The companion result for the simpler type ${\rm II}$ is Theorem~\ref{prop:exploration_RI-II}. \S\ref{subsec:reduction} comprises the proof of Theorem~\ref{prop:exploration_RI-I}. A stepping stone is Proposition~\ref{prop:reduction}, which subsumes all matters relating to the exploration and the good encounter times (cf.~items i) and ii) in Proposition~\ref{P:explo-informal}). Proposition~\ref{prop:reduction} is proved separately in \S\ref{subsec:exploration}.

\medskip

The pieces are put together in Section~\ref{sec:denouement}, where we prove our main results, Theorems~\ref{T:ri-main} and \ref{T:ri-2point}.

\medskip

Our policy with constants is as follows. Throughout the article $c, c', C, C', \ldots$ 
etc. denote 
finite, positive constants 
which are allowed to change from place to place. All constants may implicitly depend on the 
dimension $d \ge 3$. Their dependence on other parameters will be made explicit. Numbered 
constants $c_1,c_2,\dots$ and $C_1,C_2,\dots$
remain fixed 
after 
their first appearance within the text.

\section{Useful facts}\label{sec:prelims}

We gather here a few preliminaries that will be used throughout. In \S\ref{subsec:occupation}, we recall a few facts around random walks and interlacements. 
In \S\ref{subsec:excursion} we discuss excursion decompositions and couplings with independent excursions, see in particular Lemma~\ref{L:RI_basic_coupling}, 
which will fit all our purposes. This leads to three important notions of vacant sets $\overline{\mathcal V}^u_z$, ${\mathcal V}^u_z$ and $\widetilde{\mathcal 
V}^u_z$, see \eqref{eq:3Vs}, and corresponding systems of excursions, that are increasingly `localized'. 
They will play a central role in the sequel. 


The following notation is used throughout this article. The lattice $\Z^d, d \ge 3$, is equipped with the usual nearest-neighbor graph structure. We denote by $|\cdot|$ and $|\cdot|_{\infty}$ the $\ell^2$ and $\ell^{\infty}$-norms on $\Z^d$.  
We write $U \subset \subset \Z^d$ to denote a finite subset, $U^c\coloneqq \Z^d \setminus U$ and $|U|$ is the cardinality of $U$. A \textit{box} is a $\Z^d$-translate of $([0,L)\cap \Z)^d$ for some $L \geq 1$. For $U  \subset \Z^d$ we denote by $\partial U \coloneqq \{x \in U: \exists y\in U^c \text{ s.t. } |y-x|=1\}$ and $\partial^{\text{out}} U \coloneqq \partial (U^c)$ its \textit{inner and outer boundary}, and write $\overline{U} \coloneqq U \cup \partial^{\text{out}} U$ for its \textit{closure}. The 
set $U$ is \textit{connected} if any points $x, y \in U$ can be joined 
by a path whose range is contained in $U$. A \textit{component} (or \textit{cluster}) 
\textit{of $U$} is a maximal connected subset of $U$; we omit the attribute ``of $U$'' when 
$U=\Z^d$. 


\subsection{Setup}\label{subsec:occupation}
We consider the continuous-time {random walk} on $\Z^d$ with unit jump rate. We write $P_x$ for the law of this process under which $Y = 
(Y_n)_{n \ge 0}$ is a discrete-time random walk on $\Z^d$ with $Y_0=x$, and 
$(\zeta_n)_{n \ge 0}$ are i.i.d.~unit mean exponential variables. The continuous-time  walk 
$X = (X_t)_{t \ge 0}$ attached to this sequence is defined via $X_t =Y_k$, for $t \geq 0$ such that $\sum_{ 0 \leq i < k} \zeta_i  \le t < \sum_{0 \leq i  \leq k} \zeta_i$, 
where the empty summation is interpreted as 0. For any positive measure $\mu$ on $\Z^d$ we write $P_\mu = \sum_{x\in \Z^d} \mu(x)P_x$. We use $E_x$ for the 
expectation with respect to $P_x$ and similarly $E_{\mu}$ (although $P_{\mu}$ is not necessarily a probability measure). To a set $K \subset \Z^d$ we associate the 
stopping times $H_K, \widetilde{H}_K$, where $H_K =  \inf \{t \geq 0: X_t \in K \}$ and $\widetilde{H}_K = \inf \{ t \geq \zeta_0: X_t \in K\}$. The exit time from $K$, 
i.e.~$H_{\Z^d \setminus K}$ is denoted as $T_K$.

We briefly recall some potential theory associated to $X$ that is used throughout.
We denote by 
$g(x,y) \coloneqq E_x[ \int_{0}^{\infty} 1_{\{X_s = y \}} ds ]$, for $x, y \in \Z^d,$ the Green's function of $X$. 
For 
any finite set $K \subset \Z^d$, the equilibrium measure of $K$ 
is defined as
\begin{equation}\label{eq:equilib_K} 
e_K(x) = P_x[\widetilde{H}_K=\infty]1\{x \in K\},
\end{equation}
with $\widetilde{H}_K = \inf \{ t \geq \zeta_0: X_t \in K\}$. It is a measure supported on $\partial K$. We denote by
$\text{cap}(K) = \sum_x e_{K}(x)$ its total mass, the capacity of $K$, 
and by 
$\bar{e}_{K} = \tfrac{e_{K}}{ \text{cap}(K)}$ the normalized equilibrium measure. 

Following is a mixing result for entrance  distributions of the walk $X$ from afar. It appears e.g.~as Proposition~2.5 in \cite{MR3602841}. Inspection of that proof yields the quantitative dependence on $K$ stated below.
\begin{proposition}
\label{prop:entrance_time_afar}
For all $K \ge \Cl{C:Kbnd}, L \ge 1$, 
non-empty $A \subset B_{4L}$  
and 
$B \subset \subset \Z^d$ such that $B \cap B_{KL} = A$ 
with $\Z^d \setminus B$ 
connected, one has for any $y \in A$ and $x \in { \Z^d \setminus }(B \cup B_{KL})$,
\begin{equation}\label{eq:eAbar_condentrance}
 \big| P_x\left[ X_{H_B} = y \, \big|  \, H_B < \infty,  
X_{H_B} \in A\right] -  \bar e_A(y)
\big| \le {\Cr{c:equil}}{K}^{-1} \bar e_A(y)
\end{equation}
and
\begin{eqnarray}\label{eq:eBeB}
 \Big|  \frac{\bar e_B(y)}{\bar e_B(A)} - \bar e_A(y) \Big| \le  {\Cl{c:equil}}{K}^{-1}\bar e_A(y).
\end{eqnarray}
\end{proposition}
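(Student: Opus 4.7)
The proposition is a quantitative ``mixing'' statement for the entrance distribution of the walk in $B$ from afar, essentially Proposition~2.5 in \cite{Sznitman17} with constants tracked. The crucial structural input is that under the hypothesis $B \cap B_{KL} = A \subset B_{4L}$ with $K \geq \Cr{C:Kbnd}$ large, $B$ splits into the pieces $A$ and $B \setminus A \subset B_{KL}^c$, which sit at $\ell^\infty$-distance $\gtrsim KL$ from each other. In particular, no vertex of $A$ is adjacent to any vertex of $B \setminus A$, so $\partial A = A \cap \partial B$ and the two measures compared in \eqref{eq:eBeB} are supported on this common set. The quantitative drivers of the argument are (i) the capacity ratio $\mathrm{cap}(A)/\mathrm{cap}(B_{KL}) \leq CK^{-(d-2)}$ from \eqref{e:cap-box}, and (ii) the Green's function asymptotic $g(x,z)/g(x,0) = 1 + O(K^{-1})$, valid uniformly for $z \in B_{4L}$ and $x \in B_{KL}^c$.

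For \eqref{eq:eBeB}, I would first establish the special case $B = A$ of \eqref{eq:eAbar_condentrance} as a mixing lemma, namely $P_x[X_{H_A}=y \mid H_A<\infty] = \bar e_A(y)(1+O(K^{-1}))$ for $x \in B_{KL}^c$, which follows directly from the identity $P_x[X_{H_A}=y, H_A<\infty] = \sum_{z\sim y,\,z\notin A} \frac{1}{2d}g_{A^c}(x,z)$ combined with the Green's function asymptotics and a Harnack-type comparison of $g_{A^c}(x,\cdot)$ over the vicinity of $A$. With this in hand, \eqref{eq:eBeB} follows from the sweeping identity \eqref{eq:sweeping} applied with $K'=B$, $K=A$: for $y \in A \cap \partial B$,
\[
e_A(y) = e_B(y) + \sum_{z \in (B \setminus A) \cap \partial B} e_B(z)\, P_z[H_A < \infty,\, X_{H_A} = y],
\]
and the second term is handled by applying the mixing lemma to each $z \in B_{KL}^c$ and using the sweeping identity in reverse to estimate $P_z[H_A<\infty] \leq \mathrm{cap}(A)\sup_{y'\in A} g(z,y') = O(K^{-(d-2)})$. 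Summing also in $y$ and normalizing yields \eqref{eq:eBeB}. Finally, \eqref{eq:eAbar_condentrance} is obtained by decomposing the hitting of $B$ via the strong Markov property at an auxiliary scale $\tau = H_{B_{8L}}$: writing the numerator as a sum over $w = X_\tau \in \partial B_{8L}$, each inner factor $P_w[X_{H_A}=y \mid H_A<H_{B\setminus A}]$ is close to $\bar e_A(y)$ up to $O(K^{-1})$ by combining \eqref{eq:eBeB} with the fact that $P_w[H_{B\setminus A}<H_A] = O(K^{-(d-2)})$, and the remaining prefactors cancel upon normalization by $P_x[H_B<\infty, X_{H_B}\in A]$.

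The main obstacle is obtaining the stated $O(K^{-1})$ error uniformly in the shape of $B$ beyond the scale $KL$, which is arbitrary subject only to $\Z^d \setminus B$ being connected. No quantitative Harnack inequality on $B^c$ near $B \setminus A$ is available a priori; the argument must instead exploit only the separation of $A$ from $B \setminus A$ by the annulus $B_{KL} \setminus B_{4L}$, with all error estimates coming from capacity comparisons against $B_{KL}$ and Green's function asymptotics at distance $\geq KL$. The threshold $K \geq \Cr{C:Kbnd}$ in the statement simultaneously ensures that $\partial A = A \cap \partial B$, that the Green's function and capacity asymptotics are in force with the stated constants, and that the intermediate scale $B_{8L}$ used in the decomposition is well-contained inside $B_{KL}$.
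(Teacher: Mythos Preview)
The paper does not prove this proposition; it states that the result ``appears e.g.~as Proposition~2.5 in \cite{Sznitman17}'' and that ``inspection of that proof yields the quantitative dependence on $K$.'' Your proposal correctly identifies this source and the essential analytic inputs (the Green's function asymptotic $g(x,z)/g(x,0)=1+O(K^{-1})$ for $x\in B_{KL}^c$, $z\in B_{4L}$, and the capacity comparison), and your architecture --- mixing lemma first, then \eqref{eq:eBeB} via sweeping, then \eqref{eq:eAbar_condentrance} --- is the right one.

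There is, however, a concrete gap in your derivation of \eqref{eq:eAbar_condentrance}. The claim $P_w[H_{B\setminus A}<H_A]=O(K^{-(d-2)})$ for $w\in\partial B_{8L}$ is false: from $w$ the walk must exit $B_{KL}$ to reach $B\setminus A$, but when $A$ is small (e.g.\ a single point) one has $P_w[T_{B_{KL}}<H_A]\approx 1$, and $B\setminus A$ can then be hit with probability close to $1$. Relatedly, decomposing at scale $8L$ cannot yield the $O(K^{-1})$ error: your mixing lemma gives $P_w[X_{H_A}=y\mid H_A<\infty]=\bar e_A(y)(1+O(L/|w|))$, which at $|w|\sim 8L$ is only $O(1)$. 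The remedy is to decompose instead at $\rho=H_{B_{KL/2}}$. For $v\in\partial B_{KL/2}$ the mixing lemma now gives $P_v[X_{H_A}=y\mid H_A<\infty]=\bar e_A(y)(1+O(K^{-1}))$, and one passes from this to $P_v[X_{H_A}=y\mid H_A<H_{B\setminus A}]$ with the same error by subtracting the contribution of $\{H_{B\setminus A}<H_A<\infty\}$ (to which the mixing lemma applies again at $X_{H_{B\setminus A}}\in B_{KL}^c$) and using that $P_v[H_A<\infty]/P_v[H_A<H_{B\setminus A}]$ is bounded; the latter holds because $P_v[H_A<H_{B\setminus A}]\geq P_v[H_A<T_{B_{KL}}]\asymp (KL)^{-(d-2)}\mathrm{cap}(A)\asymp P_v[H_A<\infty]$. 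Substituting back into the strong Markov decomposition at $\rho$ then gives \eqref{eq:eAbar_condentrance}.
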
	

We come to interlacements. Throughout this article, we work with the continuous-time interlacement point process, defined on its canonical space $(\Omega, 
\mathcal{A}, \P)$. We briefly review its construction. Let $\widehat{W}$ denote the set of doubly-infinite, $\Z^d \times (0, \infty)$-valued sequences $\widehat{w} = (w_n, \zeta_n)_{n \in \mathbb{Z}}$ such that 
$(w_n)_{n \in \Z}$ forms a doubly infinite, nearest-neighbor transient trajectory in $\Z^d$, and the sequence 
$(\zeta_n)_{n \in \Z}$ has infinite forward and backward sums. 
We endow 
$\widehat{W}$ with its canonical $\sigma$-algebra $\widehat{\mathcal{W}}$, generated by the 
evaluation maps $(X_t, \sigma_t)_{t\in \mathbb{R}}$ , defined by setting $X_t(\widehat{w})= w_n$, 
$\sigma_t(\widehat{w})=\zeta_n$ with $n \in \mathbb{Z}$ uniquely determined such that 
$\sum_{i <n} \zeta_i  \le t < \sum_{i \leq n} \zeta_i $. The discrete time-shifts $\theta_n$, $n 
\in \N$ naturally act on $\widehat{W}$ and we let $\widehat{W}^* \coloneqq 
\widehat{W}/\sim$, where $\widehat w \sim \widehat w'$ if $\widehat w = \theta_n \widehat w'$ for some $n\in \mathbb{Z}$. We write $\pi^*: \widehat W\to \widehat 
W^*$ for the corresponding canonical projection and $\widehat W^*_K \subset \widehat W^*$ is the set of trajectories modulo time-shift whose first coordinate 
visits~$K \subset \mathbb{Z}^d$. We equip $\widehat W^*$ with the quotient $\sigma$-algebra under $\pi^\ast$, denoted as $\widehat {\mathcal W}^*$. The 
space $\widehat{W}_+$ is defined analogously as 
above but comprising one-sided trajectories $(w_n, \zeta_n)_{n \geq 0}$ instead. The above laws $P_x$, $x \in \Z^d$, are defined on $\widehat{W}_+$.

The measure $\mathbb{P}$ is the probability governing the Poisson point process $\omega$ on $\widehat{W}^* \times \mathbb{R}_+$ with intensity measure 
$\nu(\mathrm{d}\widehat{w}^*) \mathrm{d}u$, where $\R_+= [0,\infty)$, $\mathrm{d}u$ denotes the Lebesgue measure and $\nu$ is a canonical measure on $(\widehat W^*, \widehat {\mathcal W}^*)$ 
(see~\cite[Theorem~1.1]{MR2680403}).   
 Given any $u \ge 0$, the interlacement set is defined as 
$\mathcal{I}^u= \mathcal{I}^u(\omega)=\bigcup_{(\widehat w^*,v) \in \omega, \, v \leq u}$ 
where $\widehat w^* = (w^*,\zeta^*)$ 
and, slightly abusing the notation, we implicitly identify the point measure $\omega$ with its 
support in writing $(\widehat w^*,v) \in \omega$. 

When only interested in $\mathcal{I}^u$ within a region $\Sigma \subset \Z^d$, it is  convenient to project $\omega$ onto the effective (Poisson) measure $\mu_{\Sigma, 
u}(\omega)$ on $\widehat W_+$, defined as the push-forward of $\omega$ obtained 
by retaining only the points $(\widehat w^*, v)\in \omega$ such that $v\leq u$ and 
${\color{black}\widehat w^*} \in {\color{black}\widehat W_\Sigma^*}$ and mapping them to 
the onward trajectory $(\in \widehat W_+)$ upon their first entrance in $\Sigma$. It follows that 
\begin{equation}
	\label{eq:law_muKu}
	\text{under $\P$, $\mu_{\Sigma, u}$ is a Poisson process on $\widehat W_+$ with intensity $
		uP_{e_\Sigma}[\, \cdot \,]$}
\end{equation}
and on account of 
definition of $\mathcal I^u$, one sees that
	$\textstyle\mathcal{I}^u\cap \Sigma =\bigcup_{\widehat w \in \mu_{\Sigma,u}} \text{range}(w) \cap \Sigma$.
With hopefully obvious notation, we write $\mu_{\Sigma}(\omega)$ for the pushforward measure on $\widehat W_+ \times (0,\infty)$ defined similarly as $\mu_{\Sigma, 
	u}(\omega)$, but which retains the labels $u$. For a measurable function $f: \widehat W_+ \to \R_+$ we write 
\begin{equation}\label{eq:can-pair}
	\langle \mu_{\Sigma, u}, f \rangle \stackrel{\text{def.}}{=} \int_{\widehat W_+} f d \mu_{\Sigma, u} =  \sum_{\widehat w \in  \mu_{\Sigma, u}} f(\widehat{w})
\end{equation}
for its canonical pairing with $\mu_{\Sigma,u}$ 
The sum on the right-hand side of \eqref{eq:can-pair} is finite $\P$-a.s.~when $\Sigma$ is a finite set on account of \eqref{eq:law_muKu}.
The set $\Sigma$ will typically consist of many separated boxes, cf.~\eqref{def:Sigma} below. 

\subsection{Localization and couplings}\label{subsec:excursion}
We now set up the framework to decompose trajectories into excursions between a pair of 
nested sets, denoted as 
{\color{black}$ D$} and 
{\color{black}$ U$} below,  which will later allow us to split various local connectivity events 
into two parts --- one possessing very good decoupling properties and the other involving an atypical 
number of excursions (see Sections~\ref{sec:coarse_graining} and~\ref{sec:supcrit_upper_bnd}). 
From here on we assume that for any realization $\omega =\sum_{i \geq 0} \delta_{(\widehat 
w_i^*, u_i)} \in \Omega $ (see 
above \eqref{eq:law_muKu}) the labels $u_i$, $i \geq 0$ are pairwise distinct 
and that $\omega(\widehat W_K^*\times \R_+)=\infty$ and $\omega(\widehat 
W_K^*\times[0,u])<\infty$ for all $u \geq 0$ and $K \subset \subset \Z^d$. We do not incur 
any loss of generality with these assumptions since these sets have full $\P$-measure.

Now let 
{\color{black}${D} , {U}$} be finite subsets of $\mathbb{Z}^d$ with 
{\color{black}$\emptyset \neq  D \subset  U$} and denote by 
{\color{black}$W_{ D,  U}^+$} the set of all excursions between 
{\color{black}$ D$} and 
{\color{black}$\partial^{{\rm out}}   U$}, i.e.~all finite $\Z^d$-valued nearest-neighbor 
piecewise constant right-continuous trajectories 
starting in 
{\color{black}$\partial  D$}, ending in 
{\color{black}$\partial^{{\rm out}}  U$} and not 
exiting 
{\color{black}$ U$} in between. 
By `finite' we 
mean that the trajectory makes finitely many jumps. The interlacement point measure 
$\omega$ gives rise to a sequence of excursions in 
{\color{black}$W_{ D,  U}^+$}, as follows. For 
{\color{black}$\widehat w \in {\rm supp}(\mu_{ D})$} (see below \eqref{eq:law_muKu} for notation) the infinite transient trajectory 
$(X_t(\widehat w) : t \ge 0)$ induces excursions between 
{\color{black}$ D$} and 
{\color{black}$\partial^{{\rm out}}  U$} according to the successive return times $R_k$ 
and departure times $T_k$ between these sets, defined recursively as $T_0 =0$ 
and 
$R_k = T_{k-1} + H_{ D} \circ \theta_{T_{k-1}}$, $T_k = R_k + T_{ U} \circ \theta_{R_k}$, for $k \geq 1$, where 
{\color{black}$T_{ U} = H_{ U^c}$} and all of $T_k, R_j, T_j$, $j > k$ are understood to be $=\infty$ whenever 
$R_k=\infty$ for some $k$. Given 
{\color{black}$\mu_{ D}(\omega)=\sum_{i \geq 0} \delta_{(\widehat w_i, u_i)}$}, we order the excursions from {\color{black}$ D$ to $\partial^{{\rm out}}   U$}, first by increasing value of $\{u_i: 
\widehat w_i \in \widehat W^+\}$, then by order of appearance within a  trajectory 
$\widehat w_i \in \widehat W^+$. This yields the sequence 
{\color{black}$Z^{ D,  U}(\omega)= ( Z^{ D,  U}_k(\omega) )_{k \geq 
		1}$} given by
{\color{black}\begin{equation}
		\label{eq:RI_Z}
		\big( Z^{ D,  U}_k(\omega) \big)_{k \geq 1} \coloneqq \big(X^0[R_1, T_1], \dots, X^0[R_{N_{ D,  U}}, T_{N_{ D,  U}}],\, X^1[R_1, T_1], \dots \big),
\end{equation}}
of 
{\color{black}$W^+_{ D,  U}$}-valued random variables under $\P$, encoding the successive 
excursions of $\omega$; here, with hopefully obvious notation $X^i = X(\widehat w_i)$, 
$X^i[t_1, t_2]$ is the trajectory 
given by $X^i[t_1, t_2](s) = X^i((s + t_1) \wedge t_2)$ for $s \in [0, \infty)$ and 
{\color{black}$N_{ D,  U}= N_{ D,  U}(\widehat w_0)$} is the total number of excursions from 
{\color{black}$ D$ to $\partial^{{\rm out}}  U$} in $\widehat w_0$, i.e. 
{\color{black}$N_{ D,  U}(\widehat w_0) = \sup\{j : T_j(\widehat w_0) < \infty\}$.} 
We further define (see \eqref{eq:law_muKu} and \eqref{eq:can-pair} for notation)
\begin{equation}\label{def:N_AU}
	N_{ D,  U}^u= N_{ D,  U}^u(\omega) = \langle \mu_{ D, u}, N_{ D,  U}\rangle(\omega), 
\end{equation}
the total number of excursions from 
{\color{black}$ D$ to 
	$\partial^{{\rm out}}  U$. By construction of 
	$Z^{ D,  U}$ (cf.~below \eqref{eq:law_muKu}) if follows that $\mathcal{I}^u 
	\cap  D$ can be written as $\bigcup_{k} \text{range}(Z_k) \cap  
	D$ with the union ranging over $1 \leq k \leq N_{ D,  U}^u$. }


Now suppose that ${{D}} \subset  { U} \subset\subset \Z^d$ are 
such that $ D \subset  \check{D}$ and $ U \subset \check{U}$. We can then define the 
successive return and departure times $(R_k^\ell, T_k^\ell)_{k \ge 1}$ 
between $D$ and $\partial^{{\rm out}} U$ as above \eqref{eq:RI_Z} for 
any excursion $Z_{\ell}^{\check{D},  \check{U}}$. Since $D \subset \check D$ and $U \subset \check U$, any excursion $X[R_k, T_k]$ of 
a 
trajectory $X$ between $D$ and $\partial^{{\rm out}} U$ is a segment 
of a unique excursion $X[\check R_{k'}, \check T_{k'}]$ of $X$ between $\check D$ and 
$\check U$. 
Thus, letting $M^{\ell} =\sup\{j : T_j^\ell < \infty\}$ and $N= N^u_{ D,  U}$, cf.~\eqref{def:N_AU}, 
we 
have in view of \eqref{eq:RI_Z} that for all $u \ge 0$,
\begin{equation}\label{eq:nested_excursion}
		  \big(Z_{1}^{ \check{ D},  \check{ U}}[R_1^1, T_1^1], \ldots, Z_{1}^{ \check{ 
				D},  \check{ U}}[R_{M
				^1}^1, T_{M
				^1}^1], \ldots, Z_{N}^{ \check{ D},  \check{ U}}[R_{M^N}^N, T_{M^N}^N]\big) = \, \big(Z_{1}^{{ D}, { U}}, \ldots, Z_{N}^{{ 
				D}, { U}} \big).
\end{equation}
We refer to $(Z_{\ell}^{ \check{ D},  
	\check{ U}}[R_k^\ell, T_k^\ell])_{1 \le k \le M^\ell}$ between $ D$ 
and $ U$ as the sequence of excursions {\em induced} by $Z_{\ell}^{ \check{ D}, 
\check{ U}}$. 


We now couple the excursions \eqref{eq:RI_Z}, which are highly correlated, with a family of i.i.d.~excursions between 
$D$ and $\partial^{{\rm out}}  U$. We call this \emph{localization}. 
We will need to localize systems of excursions as in \eqref{eq:RI_Z} jointly for several choices of sets $( D,U)$ 
that are sufficiently spread-out. Thus let $\mathcal C$ be a finite set and 
$\emptyset \ne {\color{black} D_z \subset  U_z} \subset \subset \Z^d$ be pairs of 
sets indexed by $z \in \mathcal C$ satisfying 
\begin{equation}\label{eq:UzUz'}
	{\color{black}\overline{ U}_z \cap \overline{ U}_{z'}} = \emptyset, \text{ for all $z \ne z' \in \mathcal C$.}
\end{equation}
For a given collection 
$( D_z,  U_z : z \in \mathcal{C})$
satisfying \eqref{eq:UzUz'}, the desired coupling will be between $\P$ 
and an auxiliary probability $\widetilde\P_{\mathcal{C}}$ governing a collection of 
independent right-continuous, Poisson counting functions $(n_z(0, t))_{t \ge 0}$, $z \in \mathcal C$, with unit intensity, vanishing at 0, along with independent collections of i.i.d.~excursions $(\widetilde 
Z_k^{ D_z,  U_z})_{k \ge 1}$, $z \in \mathcal C$, having for each $z$ the same law as $X_{\cdot \wedge T_{ U_z}}$ under $P_{\bar e_{ D_z}}$.
We write $\widetilde \P_z = \widetilde\P_{\{z \}}$ when $\mathcal{C}=\{ z\}$ is a singleton. For $m_0 \ge 1$ and  
$\varepsilon \in (0, 1)$, the event (declared under $\widetilde\P_{\mathcal{C}}$)
\begin{equation}\label{def:Uz}
		\mathcal{U}_z^{\varepsilon, m_0} \coloneqq\big\{n_z(m, (1 + \varepsilon) m) < 2\varepsilon m, 
		(1 - \varepsilon) m < n_z(0, m) < (1 + \varepsilon) m,  \text{ for } m \ge m_0\big\},
\end{equation}	
for $z \in \mathcal{C}$, will play a central role in the sequel.  For later reference, we record that 
\begin{equation}\label{eq:bnd_Uzepm}
	\widetilde{\P}_{\mathcal{C}}[\mathcal{U}_z^{\varepsilon, m_0}] \ge 1 - {C}{\varepsilon^{-2}}e^{-c m_0 \varepsilon^2}, \text{ for all $z \in \mathcal{C}$, $\varepsilon \in (0, 1)$ and $m_0 \geq 1$,}\end{equation}
which follows from standard Poisson tail bounds. For $\mathbb Q$ any coupling of $\P$ and $(\widetilde 
Z_k^{ D_z,  U_z})_{k \ge 1}$ we define the inclusion event
${\rm Incl}_z^{\varepsilon, m_0}$ as
\begin{equation}\label{eq:RI_basic_coupling2}
	\begin{split}
		&\Big\{\{\widetilde Z_1^{ D_z,  U_z}, \dots, \widetilde Z^{ D_z,  U_z}_{\lfloor (1-\varepsilon)m \rfloor} \} \subset  \{Z_1^{ D_z,  U_z}, \dots, Z^{ D_z,  U_z}_{\lfloor(1+3\varepsilon)m\rfloor} \}, \: \mbox{and}\\
		&\{Z_1^{ D_z,  U_z}, \dots, Z^{ D_z,  U_z}_{\lfloor (1 - \varepsilon)m \rfloor} \} \subset \{\widetilde Z_1^{ D_z,  U_z}, \dots, \widetilde Z^{ D_z,  U_z}_{\lfloor (1 + 3\varepsilon)m \rfloor} \} \mbox{ for $m \ge m_0$}\Big\}.
	\end{split}
\end{equation}
 In \eqref{eq:RI_basic_coupling2} and throughout the remainder of this article, with a 
slight abuse of notation, inclusions involving sets of excursions of the form $\{ Z_1,\dots, Z_n\} 
\subset \{ Z_1',\dots Z_{n'}'\}$ are understood as inclusions between \emph{mutlisets}; that is, 
the plain inclusion of sets holds and moreover if $Z_k=Z'_{k'}$ then $\text{mult}(Z_k) \leq 
\text{mult}(Z'_{k'}) $, where $\text{mult}(Z_k) = |\{ j \in \{1,\dots ,n\} : Z_j =Z_k\}| $ is the 
multiplicity of $Z_k$ in the sequence $(Z_j)_{1\leq j \leq n}$.

Our aim is to devise a coupling $\mathbb Q_{\mathcal{C}}$ of $\P$ and $\widetilde{\P}_{\mathcal{C}}$ rendering \eqref{eq:RI_basic_coupling2} likely for all $z \in 
\mathcal{C}$ (when $m_0$ is large). The next lemma asserts in essence that a coupling with this property exists, provided the sets $( D_z,  U_z)$ satisfy certain conditions, see \eqref{eq:RI_cond_Q} below. These requirements are best stated in terms of an auxiliary random variable $Y$ 
defined as follows. For $x\in \Z^d$, let $Q_x$ be the joint law of two independent simple random walks $X^1$, 
$X^2$ on $\Z^d$, respectively sampled from $P_x$ and from $P_{\bar{e}_{ D}}$, and 
define
\begin{equation*}
	Y = \begin{cases}
		X^1_{H_{ D}^1}, \quad & \text{if $H_{ D}^1\coloneqq\inf\{ t \geq 0: X^1_t \in A\} < \infty$}\\
		X^2_0, & \text{otherwise.}
	\end{cases}
\end{equation*}
The following result is a restatement of Lemma~2.1 in \cite{MR3126579} adapted to our context and 
uses the soft local time technique from \cite{PT12} (see also \cite[Section~5]{MR3602841}).  Although the event ${\rm Incl}_z^{\varepsilon, m_0}$ does not include multiplicities in the context of \cite{MR3126579}, the inclusion \eqref{eq:inclusion} below continues to hold for this stronger notion of ${\rm Incl}_z^{\varepsilon, m_0}$, as follows directly from the soft local time technique, which entails a domination of point measures (that account for multiplicities).  We omit the proof.

\begin{lem}[Coupling $Z$ and $\widetilde{Z}$]\label{L:RI_basic_coupling} 
	For any finite collection $ D_z \subset  U_z \subset\subset \Z^d$, $z \in \mathcal C$, satisfying 
	\eqref{eq:UzUz'}, there exists a coupling $\mathbb Q_{\mathcal C}$ of $\P$ and $\widetilde{\P}_{\mathcal 
		C}$ with the following property. If, for some $\varepsilon \in (0,1)$,
	\begin{equation}
	\begin{split}
		&\textstyle \big(1-\frac{\varepsilon}{3}\big)  \bar{e}_{ D_z}(y) \leq Q_x\left [Y = y \, |  \, Y \in   D_z \right] \leq \textstyle \big(1+\frac{\varepsilon}{3}\big)  \bar{e}_{ D_z}(y), \label{eq:RI_cond_Q} \\
		&\text{for all $z \in \mathcal C$ and $x \in \partial^{{\rm out}}  U_z$,} 
	\end{split}
	\end{equation}
	then in the probability space underlying $\mathbb Q_{\mathcal C}$, 
	\begin{equation}\label{eq:inclusion}
		\mathcal{U}_z^{\varepsilon, m_0} \subset {\rm Incl}_z^{\varepsilon, m_0}, \text{ for all $z \in \mathcal C$ and $m_0 \geq 1$.}
	\end{equation}
\end{lem}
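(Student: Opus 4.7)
The plan is to build $\mathbb{Q}_{\mathcal{C}}$ via the soft local time (SLT) machinery of \cite{PT12}, following \cite[Lemma~2.1]{MR3126579} but strengthening the final comparison so that the inclusions \eqref{eq:RI_basic_coupling2} hold in the multiset sense (as noted just above the statement). The first step would be to fix $(D_z,U_z)_{z \in \mathcal{C}}$ satisfying \eqref{eq:UzUz'} and, for each $z \in \mathcal{C}$, introduce an auxiliary Poisson point process $\eta_z$ on $W_{D_z,U_z}^+ \times \mathbb{R}_+$ with intensity $\rho_z \otimes \mathrm{Leb}$, where $\rho_z$ is the law of $X_{\cdot \wedge T_{U_z}}$ under $P_{\bar e_{D_z}}$, i.e.\ the common law of the $\widetilde Z_k^{D_z,U_z}$. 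The height-projection of $\eta_z$ is a rate-one Poisson process, naturally identified with the counter $n_z$ in \eqref{def:Uz}; the disjointness \eqref{eq:UzUz'} allows one to take the $\eta_z$ mutually independent across $z$, matching the product structure of $\widetilde{\P}_{\mathcal{C}}$.

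Next, I would realise both $(Z_k^{D_z,U_z})_{k}$ and $(\widetilde Z_k^{D_z,U_z})_{k}$ on this common space by reading off the marks of $\eta_z$ below increasing sequences of soft local times $(G_k^Z)$ and $(G_k^{\widetilde Z})$. Since the $\widetilde Z_k$ are i.i.d.\ with law $\rho_z$, their SLT increments are deterministic, $G_k^{\widetilde Z} - G_{k-1}^{\widetilde Z} = 1$. For $Z$, the increment $G_k^Z - G_{k-1}^Z$ is the Radon--Nikodym derivative, with respect to $\rho_z$, of the conditional starting law of $Z_k^{D_z,U_z}$ given the past. The crux is the identification, via the Poisson description \eqref{eq:law_muKu} of the interlacement process together with the strong Markov property, of this conditional law --- after an excursion ending at some $x \in \partial^{\mathrm{out}} U_z$ --- with the law of the random variable $Y$ from \eqref{eq:RI_cond_Q}: one contribution from the same trajectory returning to $D_z$ from $x$ (giving $X^1_{H_{D_z}^1}$) and one from a fresh trajectory entering $D_z$ through its equilibrium measure (giving $X^2_0 \sim \bar e_{D_z}$). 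Hypothesis \eqref{eq:RI_cond_Q} would then deliver the deterministic bound $1 - \tfrac{\varepsilon}{3} \le G_k^Z - G_{k-1}^Z \le 1 + \tfrac{\varepsilon}{3}$, and hence $(1 - \tfrac{\varepsilon}{3}) k \le G_k^Z \le (1 + \tfrac{\varepsilon}{3}) k$ for all $k \geq 1$.

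The final step is a purely deterministic comparison converting the SLT bound together with the Poisson counts afforded by $\mathcal{U}_z^{\varepsilon,m_0}$ into \eqref{eq:RI_basic_coupling2}. For every $m \ge m_0$, on $\mathcal{U}_z^{\varepsilon,m_0}$ the $\lfloor (1-\varepsilon) m\rfloor$-th $\widetilde Z$-mark sits at height at most $(1-\varepsilon) m \le (1 - \tfrac{\varepsilon}{3}) \lfloor (1+3\varepsilon) m\rfloor \le G_{\lfloor(1+3\varepsilon) m\rfloor}^Z$, so it has already been claimed by the $Z$-sequence by index $\lfloor(1+3\varepsilon) m\rfloor$; conversely, the first $\lfloor(1-\varepsilon) m\rfloor$ $Z$-marks sit at heights at most $(1 + \tfrac{\varepsilon}{3})(1-\varepsilon) m \le m$ and therefore lie among the first $n_z(0,m) \le \lfloor(1+3\varepsilon) m\rfloor$ $\widetilde Z$-marks. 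Both inclusions hold in the multiset sense because the SLT dominates the underlying point measures, not merely their supports. The main obstacle I anticipate is precisely the Palm-type identification of the conditional starting distribution of $Z_k^{D_z,U_z}$ with the mixture encoded by $Y$: this is where the disjointness \eqref{eq:UzUz'} becomes essential, guaranteeing that the SLT constructions for different $z$ can be run independently against a single interlacement realisation under $\P$ to produce the joint coupling $\mathbb{Q}_{\mathcal{C}}$.
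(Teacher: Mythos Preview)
Your overall architecture is exactly what the paper invokes (it simply cites \cite[Lemma~2.1]{MR3126579} and the soft local time technique of \cite{PT12}, and omits the proof), and the identification of the conditional starting law of $Z_k^{D_z,U_z}$ with the mixture encoded by $Y$, as well as the use of \eqref{eq:UzUz'} to run the constructions independently over $z\in\mathcal C$, are as you describe.

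There is, however, a genuine error in your description of the soft local time. The increments are \emph{not} deterministic: in the SLT construction one has $G_k(w)=\sum_{j\le k}\xi_j\,g_j(w)$, where the $\xi_j$ are i.i.d.\ $\mathrm{Exp}(1)$ random variables (built from $\eta_z$) and $g_j$ is the Radon--Nikodym density of the conditional law of $Z_j$ with respect to $\rho_z$. In particular, for the i.i.d.\ sequence $\widetilde Z$ one has $g_j\equiv 1$ and $G_k^{\widetilde Z}$ equals the height of the $k$-th lowest mark of $\eta_z$, with $\mathrm{Exp}(1)$ spacings --- not increment $1$. Hypothesis \eqref{eq:RI_cond_Q} gives $1-\tfrac{\varepsilon}{3}\le g_j\le 1+\tfrac{\varepsilon}{3}$, hence only the \emph{random} sandwich $(1-\tfrac{\varepsilon}{3})T_k\le G_k^Z(\cdot)\le(1+\tfrac{\varepsilon}{3})T_k$ with $T_k=\sum_{j\le k}\xi_j$. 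Your final paragraph treats $T_k$ as if it equalled $k$, which it does not.

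The missing step is to relate $T_k$ to $k$ using $\mathcal{U}_z^{\varepsilon,m_0}$ itself. Since the marks of $\eta_z$ below the curve $G_k^Z$ are exactly $\{Z_1,\dots,Z_k\}$ (as a point measure, which is also why the multiset inclusions hold), the sandwich on $G_k^Z$ forces
\[
n_z\bigl(0,(1-\tfrac{\varepsilon}{3})T_k\bigr)\;\le\;k\;\le\;n_z\bigl(0,(1+\tfrac{\varepsilon}{3})T_k\bigr),
\]
and on $\mathcal{U}_z^{\varepsilon,m_0}$ the two-sided control $(1-\varepsilon)m<n_z(0,m)<(1+\varepsilon)m$ for $m\ge m_0$ then converts this into bounds of the form $T_k\asymp k$. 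Feeding these back into $(1\pm\tfrac{\varepsilon}{3})T_k$ and comparing with the height of the $\lfloor(1-\varepsilon)m\rfloor$-th and $\lfloor(1+3\varepsilon)m\rfloor$-th marks (also controlled on $\mathcal{U}_z^{\varepsilon,m_0}$) yields both inclusions in \eqref{eq:RI_basic_coupling2}. With this correction your deterministic comparison goes through, albeit with slightly different arithmetic.
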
	

We will use Lemma~\ref{L:RI_basic_coupling} to switch back and forth between interlacement sets comprising different sets of excursions, Three such interlacement 
sets, which we now introduce, will play a central role. 
If $Z=(Z_k)_{1\leq k \leq n_Z}$ is a sequence of excursions (i.e.~$Z_k \in W_{ D,  U}^+$ for some $ D \subset  U$) and $n_Z \in \mathbb{N}$, which we sometimes call a 
\emph{packet}, we write
\begin{equation} \label{eq:I(Z)}
	\mathcal I( Z) \coloneqq \bigcup_{1 \le k \le n_Z}\,\,{\rm range}(Z_k) \mbox{ and } \mathcal V(Z) \coloneqq \Z^d \setminus \mathcal I(Z) 
\end{equation}
to denote the {\em interlacement} and {\em vacant} set corresponding to the $Z$, respectively. 
The number $n_Z$ may or may not be random, i.e. {\em vary} with $\omega$, and the $Z_k$'s will typically be excursions between boxes which we now introduce,. Given a length scale $L \geq 1$ and a rescaling parameter $K \geq 100$, both integer, we consider boxes $C_z \subset \tilde{C}_z \subset \tilde{D}_z \subset D_z  \subset 
{U}_z$ attached to points $z \in \Z^d$, where
\begin{equation}\label{def:CDU}
\begin{split}
&C_z \coloneqq z+[0,L)^d ,\; \ \tilde{C}_z \coloneqq z+[-L,2L)^d, \\
&\tilde{D}_z \coloneqq 
z+[-2L,3L)^d,\; \  D_z \coloneqq z+[-3L,4L)^d, 
\text{ and}\\
&U_z \coloneqq z+[-KL+1, L+KL-1)^d 
\end{split}		
\end{equation}
(all tacitly viewed as subsets of $\Z^d$). In case we work with more than one scale in a given 
context we sometimes explicitly refer to the associated length scale $L$ by writing $C_{z,L}= 
C_z, \tilde{C}_{z,L}=\tilde{C}_z$ etc.

For $z \in \mathbb{L}= L\Z^d$ and 
$u>0$, abbreviating $N_{D, U}^u$ in \eqref{def:N_AU} as $N^u_{z} = N^u_{z, L}$  when 
$( D,  U) = ( D_{z},  U_{z})$ as in \eqref{def:CDU}, we introduce in the 
notation of \eqref{eq:I(Z)}, \begin{equation}\label{eq:3Vs}
	\begin{array}{ll}
		\overline{Z}_z^u= (Z^{ D_z, U_z}_k)_{1 \leq k \leq N_z^u}, & 
		\overline{\mathcal{V}}_z^u = \mathcal{V}(\overline{Z}_z^u)\, \big (  = \mathcal{V}^u 
		\text{ on } D_z \big), \\[0.3em]
		Z_z^{u}= (Z^{ D_z, U_z}_k)_{1 \leq k \leq 
			u \, \text{cap}( D_z) 
		}, & \mathcal{V}_z^{u} = \mathcal{V}(Z_z^{u})\\[0.3em]
		\widetilde Z_z^u= (\widetilde Z^{ D_z, U_z}_k)_{1 \leq k \leq  
			u \, \text{cap}( D_z)  
		}, & \widetilde{\mathcal{V}}_z^u = \mathcal{V}(\widetilde Z_z^u);
	\end{array}
\end{equation} 
The quantities $\overline Z_z^u, Z_z^{u}$ and $\overline{\mathcal{V}}_z^u, \mathcal{V}_z^{u} $ are a priori defined under $\P$ (see \eqref{eq:RI_Z}) and $\widetilde 
Z_z^u, \widetilde{\mathcal{V}}_z^u $ under $\widetilde \P_{\mathcal{C}}$, and all quantities
in \eqref{eq:3Vs} are naturally declared under $\mathbb Q_{\mathcal{C}}$ any coupling of $(\P, \widetilde \P_{\mathcal{C}})$ (such as the one 
from Lemma~\ref{L:RI_basic_coupling}). We seldom add subscripts $L$, e.g.~write $ Z_{z,L}^u$ or $\mathcal{V}_{z,L}^u$ instead of 
$Z_z^u$ or $\mathcal{V}_z^u$, to insist on the scale $L$ used in defining $\mathbb{L}$ and the sets $ D_z$ and $ U_z$.

Given a sequence $Z=(Z_k)_{1\leq k \leq n_Z}$ of excursions and $x$ in $\Z^d$, we denote by $\ell_x({Z})$ their 
{\em discrete} occupation time at $x$, i.e.~the total 
number of times $x$ is visited by the sicrete skeleton of any 
trajectory in ${Z}$. Note that $\mathcal{V}(Z)= \{x: \ell_x(Z)=0\}$. We abbreviate $\ell_x^u=\ell_x( \overline{Z}_z^u)$, $x \in  D_z$.

We use $\overline{\mathcal V}_{\mathbb L}$, $\mathcal V_{\mathbb L}$ and $\widetilde{\mathcal V}_{\mathbb L}$ to refer collectively to the configurations 
$\{\overline{\mathcal V}^u_z : z \in \mathbb L, u > 0\}$, $\{{\mathcal V}^u_z : z \in \mathbb L, u > 0\}$ and
$\{\widetilde{\mathcal V}^u_z : z \in \mathbb L, u > 0\}$,  respectively, and use $\widehat{\mathcal V}_{\mathbb L}$ when referring to any one of them. 
Accordingly, we use $\widehat{\mathcal V}^u_z$ to denote either $\overline{\mathcal V}^u_z$, ${\mathcal V}^u_z$ or $\widetilde{\mathcal V}^u_z$  depending on 
whether $\widehat {\mathcal V}_{\mathbb L} = \overline{\mathcal V}_{\mathbb L}$, ${\mathcal V}_{\mathbb L}$ or $\widetilde{\mathcal V}_{\mathbb L}$.

The following important class of events will facilitate switching back and forth between the configurations appearing in \eqref{eq:3Vs}. For any $u, v \ge 0$ and $z \in \Z^d$, let 
{\begin{equation}\label{eq:F}
		\mathcal F_{z}^{u, v} = \mathcal F_{z, L}^{u, v}  \stackrel{{\rm def.}}{=} 
		\begin{cases}
			\big\{ N^u_{z} \le v\, {\rm cap}( D_{z})\big\}, & \text{ if } u \leq v\\
			\big\{ N^u_{z} \ge v\, {\rm cap}( D_{z})\big\}, & \text{ if } u > v
		\end{cases}
\end{equation}}
As with $\mathcal{U}_z^{\varepsilon, m_0}  $ in \eqref{def:Uz}, the 
events $\mathcal F_{z}^{u, v}$ in \eqref{eq:F} have been set up in a way that they will in practice always be likely. 
For reference, we record the following tail bounds from \cite{MR3602841} (see (3.18) and (3.22) 
in the proof of Theorem~3.3, therein), valid for any $\varepsilon \in (0, 1)$, if one chooses
$D_z$ and 
${U}_z$ as in \eqref{def:CDU}, then
\begin{equation}\label{eq:Nuz_tail_bnd}
	\P\left[({\mathcal F}_z^{u, v})^c \right] \le e^{-c(\varepsilon) u N^{d-2}} \mbox{ for $u, v > 0$ s.t.~$\tfrac{u\vee v}{u \wedge v} \ge 1 + \varepsilon$ and $K \ge C(\varepsilon)$}.
\end{equation}

Finally, we shall 
often work with a certain {\em noised} version of the configurations $\widehat{\mathcal V}^u_z$. To this effect we assume by suitable extension that $\mathbb{Q}_{\mathcal{C}}$ coupling of $\P$ and $\widetilde\P_{\mathcal{C}}$ (and a fortiori also $\P$) carries independent i.i.d~uniform random variables $\mathsf U = \{\mathsf 
U_x : x \in \Z^d\}$. For $\delta \in [0, 1)$ and $\mathcal V \subset \Z^d$, let 
$(\mathcal V)_\delta \subset \Z^d$ denote the set with the occupation variables
\begin{equation}\label{def:noised_set}
	1_{\{x \in (\mathcal V)_\delta\}} = 1_{\{x \in \mathcal V\}}1_{\{\mathsf U_x \ge \delta \}}.\end{equation}
One immediate but important consequence of this definition is that
\begin{equation}\label{def:noised_set_inclusion}
	\mbox{
		$(\mathcal V)_\delta$ is increasing in $\mathcal V$ and decreasing in $\delta$ 
		w.r.t. set inclusion.}
\end{equation}

\section{Restricted insertion tolerance for $\mathcal V^u$}\label{sec:insertion}

In this section we present a result bearing on the insertion tolerance property of the 
vacant set $\mathcal V(Z)$ (see \eqref{eq:I(Z)}) for {\em suitable} sequences of excursions $Z$. 
As noted in the discussion leading to \eqref{eq:insertion_tol0}, a naive 
insertion tolerance property does not hold for such vacant sets owing to their structural 
rigidity. 

Proposition~\ref{lem:conditional_prob1} below is the main result of this section and applies when the underlying sequence $Z$ is `large'. It is 
reminiscent of Proposition~3.1 in \cite{RI-II} 
which proves a {\em sprinkled} insertion tolerance 
property for $\mathcal V^u$. More precisely, in \cite[Proposition~3.1]{RI-II}, it is shown 
that a box $B$ can be opened in $\mathcal V^{u - \varepsilon}$ with probability $c = c({\rm 
rad}(B), \varepsilon) > 0$ conditionally on $\mathcal V^u$ (everywhere) and $\mathcal V^{u - 
\varepsilon}$ outside a {\em strictly larger} box $\widehat{B}$ provided some good event 
$\widetilde F_B$ occurs. In order to remove the sprinkling inherent in this result, 
one would 
want such a bound to hold with $\varepsilon = 0$ conditionally on $\mathcal V^u$ outside $B$ {\em itself} rather than a 
larger box $\widehat B$. We accomplish this goal partly in 
Proposition~\ref{lem:conditional_prob1} in the sense that we can retain partial information on 
$\mathcal V^u \cap (\widehat B \setminus B)$ in the conditioning. However, this partial information (see~\eqref{def:C_partial1} below) 
turns out to be sufficient in many practical instances; cf.~the proof of Theorem~\ref{prop:exploration_RI-I}.

We now set the stage for 
Proposition~\ref{lem:conditional_prob1}. 
Given a sequence $Z = (Z_j)_{1 \le j \le n_Z}$ of excursions (see above 
\eqref{eq:I(Z)}) 
and $y \in \mathbb L_0 = L_0\Z^d$ for some $L_0 \geq 10$, employing the notation from 
\eqref{def:CDU}, let
\begin{equation}\label{def:C_partial1}
	\begin{split} 
		\mathscr C_{\partial D_{y, L_0}}(Z) \coloneqq \,\text{the union of components of points in $\partial D_{y, L_0}$ in $\mathcal I(Z) \cap (D_{y, L_0} \setminus C_{y, L_0}).$}
	\end{split}
\end{equation}
Using the set $\mathscr{C}= \mathscr C_{\partial D_{y, L_0}}(Z)$, we define a `local uniqueness' event in a smaller annulus as
\begin{equation}\label{def:tildeLU1}
	\begin{split}
		\widetilde {\rm LU}_{y, L_0}(Z) = \textstyle  \bigcap_{x, x'}  \{\lr{}{ }{x}{x'} \text{ in } \mathscr C
			\setminus \partial D_{y, L_0}\},
	\end{split}
\end{equation}
with $x, x' \in (\tilde D_{y, L_0} \setminus \tilde C_{y, L_0}) \cap 
			\mathscr C$.
In words, $\widetilde {\rm LU}_{y, L_0}(Z)$ is the event that the set $\mathscr C \setminus \partial D_{y, L_0}$ has at most one component intersecting 
$\tilde D_{y, L_0} \setminus \tilde C_{y, L_0}$. 
Recalling the discrete occupation times $(\ell_x(Z))_{x\in \Z^d}$ from below \eqref{eq:3Vs}, let
\begin{equation}\label{def:Conn0}
{\rm O}_{y, L_0}({Z}) \coloneqq \textstyle \bigcap_{x \in \partial D_{y, L_0}}
\{ \ell_x({Z})\le L_0\}.	
\end{equation}	
Next, for any  (finite)  $J \subset \N^\ast =\{ 1,2,\dots\}$ and $z \in N\Z^d$ for integer $N \geq 1$, using the notation from \eqref{eq:RI_Z} and \eqref{def:CDU} we consider the sequence of excursions  $Z_J = {Z}_J^{ D_{z, N}, U_{z, N}}=\big(Z_j^{ D_{z, N},  U_{z, N}}\big)_{j \in J}$, 
$\delta \in (0, \frac12)$ a noise parameter as in \eqref{def:noised_set} and $u' \ge u \in 
(0, \infty)$. Using this data, we define the $\sigma$-algebra that will be used for 
conditioning in Proposition~\ref{lem:conditional_prob1}. For $y \in 
\mathbb L_0$, define $ \mathcal F \equiv \mathcal F_{y, L_0}(Z_J, \delta, u, u')$ as
\begin{equation}\label{def:FEsigma_algebra1}
		\mathcal F =
		\sigma\big((\mathcal V^{u})_\delta, (\mathcal V^{u'})_{2\delta}, N_{z, N}^u,  \mathcal I(Z_J)_{| (\mathring D_{y, L_0})^c}, \mathscr C_{\partial D_{y, L_0}}(Z_J), \big\{\ell_{x}^{u} : \,  x \in (\mathring D_{y, L_0})^c\, \big\} \big),
\end{equation}
where $\mathring D_{y, L_0} 
	\coloneqq D_{y, L_0} \setminus \partial D_{y, L_0}$ and 
	$\sigma(\cdot)$ denotes the $\sigma$-algebra generated by a set of random variables. 
In \eqref{def:FEsigma_algebra1} and elswhere, we tacitly identify $\mathcal I(Z_J)$ or any other 
subset of $\Z^d$ (e.g.~$\mathscr C_{\partial D_{y, L_0}}(Z_J)$) with its occupation function $\{ 1_{\{x \in \mathcal I(Z)\}} : x \in \Z^d\}$. Observe now that the `good' events $\widetilde {\rm LU}_{y, L_0}(Z_J)$  
and ${\rm O}_{y, L_0}(\overline Z_{z, N}^u)$ (see \eqref{eq:3Vs} regarding $\overline Z_{z, 
N}^u$) are both measurable relative to $\mathcal F_{y, L_0}$, as is the event $\{J \subset [1, N_{z, N}^u]\}$ {\em when} $J$ is measurable relative to 
$N_{z, N}^u$ (e.g., when it is {\em deterministic}). We are now ready to state 
the main result of this section, which entails a certain form of insertion tolerance.

\begin{prop}\label{lem:conditional_prob1}
	Let $L_0 \geq 100$ and $\delta \in (0, \frac12)$. There exists $c = c(\delta, 
	L_0) \in (0, 1)$ such that for all $u' \ge u \in (0, \infty)$, $K \geq 100$, $z \in N\Z^d$ for some $N \ge 10^3 L_0$,
	 $y \in L_0 \Z^d$ such that $ D_{y, L_0} \subset  D_{z, N}$ and any $J \subset \N^\ast$ measurable relative to $N_{z, N}^u$, abbreviating $Z_J = Z_J^{ 
	D_{z, N},  U_{z, N}}$ we have
	\begin{equation}\label{eq:conditional_prob1}
		\P \text{-a.s.,  }\P\left[C_{y, L_0} \subset \mathcal V(Z_J) \, \big | \, \mathcal F \, 
		\right] \ge c \,
		1_{G},  
	\end{equation}
	with the ``good'' event $G=\widetilde {\rm LU}_{y, L_0}(Z_J) \, \cap \, {\rm O}_{y, L_0}(\overline{Z}_{z, N}^{u}) \, \cap \, \{J \subset [1, N_{z, N}^u]\}$.
\end{prop}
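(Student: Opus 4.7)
The plan is to show that conditionally on $\mathcal{F}_{y,L_0}(Z_J,\delta,u,u')$ and on the good event $G$, the unrevealed portion of the interlacement $\omega$ together with the noise variables $(\mathsf{U}_x)_{x \in C_{y,L_0}}$ have positive conditional probability to yield a realization in which no excursion in $Z_J$ enters $C_{y,L_0}$, while remaining consistent with the conditioning. Two independent sources of residual randomness are exploited: the internal walk segments of the excursions in $Z_J$ that enter $\mathring{D}_{y,L_0}$, and the uniform noise field inside $C_{y,L_0}$.

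First, I would chop each excursion $Z_j$ ($j \in J$) at its successive hits of $\partial D_{y,L_0}$ into an alternating sequence of outer pieces and internal pieces strictly inside $D_{y,L_0}$. The outer pieces, together with all entrance/exit points on $\partial D_{y,L_0}$, are measurable with respect to $\mathcal{I}(Z_J)_{|(\mathring{D}_{y,L_0})^c}$, hence with respect to $\mathcal{F}_{y,L_0}$. On the event $\mathrm{O}_{y,L_0}(\overline{Z}_{z,N}^u) \cap \{J \subset [1, N^u_{z,N}]\}$, the total count of internal pieces is at most $|\partial D_{y,L_0}|\cdot L_0 \le CL_0^d$. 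By the strong Markov property, conditionally on $\mathcal{F}_{y,L_0}$ these internal pieces are independent random walks in $D_{y,L_0}$ with prescribed endpoints on $\partial D_{y,L_0}$. A standard bridge estimate for simple random walk in a box of radius $L_0$ shows that each such bridge stays entirely in $D_{y,L_0}\setminus C_{y,L_0}$ with conditional probability at least some $c_1 = c_1(L_0) > 0$; taking the product over the $\le CL_0^d$ pieces yields a lower bound $c_2 = c_2(L_0) > 0$ for the joint rerouting event.

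The rerouted internal pieces must however also reproduce the observed boundary cluster $\mathscr{C}_{\partial D_{y,L_0}}(Z_J)$, i.e.~their traces in $D_{y,L_0}\setminus C_{y,L_0}$ must match the prescribed set. Under $\widetilde{\mathrm{LU}}_{y,L_0}(Z_J)$, this cluster has at most one component touching $\tilde{D}_{y,L_0}\setminus\tilde{C}_{y,L_0}$, giving enough topological flexibility to sweep out this unique backbone with bridges that avoid $C_{y,L_0}$. On the noise side, the variables $(\mathsf{U}_x)_{x \in C_{y,L_0}}$ are independent of $\omega$, and since $(\mathcal{V}^{u'})_{2\delta} \subset (\mathcal{V}^u)_\delta$, the event that $\mathsf{U}_x \in [0,\delta)$ for every $x \in C_{y,L_0}\setminus(\mathcal{V}^u)_\delta$ has conditional probability at least $\delta^{|C_{y,L_0}|}$ and ensures that emptying $C_{y,L_0}$ of interlacement trajectories leaves both noised vacant sets unchanged.

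The principal obstacle I anticipate lies in the boundary cluster matching: the set $\mathscr{C}_{\partial D_{y,L_0}}(Z_J)$ is a highly constrained random subset of the annulus, and faithfully reproducing it by rerouted bridges that simultaneously avoid $C_{y,L_0}$ demands a careful topological argument leaning essentially on $\widetilde{\mathrm{LU}}_{y,L_0}$, together with some potential-theoretic control on the joint distribution of the walk bridges. All other steps (the bridge estimate, the counting of pieces via $\mathrm{O}_{y,L_0}$, and the noise flexibility) are comparatively standard.
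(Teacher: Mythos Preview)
Your overall strategy matches the paper's approach: decompose the trajectories at $\partial D_{y,L_0}$, use the Markov property so that the interior segments become independent bridges with prescribed endpoints, reroute the bridges that enter $C_{y,L_0}$ to avoid it while reproducing $\mathscr{C}_{\partial D_{y,L_0}}(Z_J)$, and use the noise to match the prescribed values of $(\mathcal{V}^u)_\delta$ and $(\mathcal{V}^{u'})_{2\delta}$. You have also correctly identified where the real work lies. There are, however, two concrete gaps.

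\medskip

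\textbf{Noise handling.} You restrict attention to $(\mathsf{U}_x)_{x\in C_{y,L_0}}$ and treat these as still being independent conditionally on $\mathcal{F}_{y,L_0}$. They are not: the conditioning includes $(\mathcal{V}^u)_\delta$ and $(\mathcal{V}^{u'})_{2\delta}$ on all of $\Z^d$, which couples $\mathsf{U}$ to $\omega$ inside $\mathring{D}_{y,L_0}$. More importantly, rerouting the $J$-bridges can only shrink the union of their ranges, hence can enlarge $\mathcal{V}^u$ and $\mathcal{V}^{u'}$ \emph{everywhere} in $\mathring{D}_{y,L_0}$, not only in $C_{y,L_0}$. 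The paper deals with this by specifying the noise on the whole of $\mathring{D}_{y,L_0}$: given the target realizations $\eta,\eta'$ of $(\mathcal{V}^u)_\delta\cap\mathring{D}$ and $(\mathcal{V}^{u'})_{2\delta}\cap\mathring{D}$, one sets $\mathsf{U}_x<\delta$ on $\mathring{D}\setminus\eta$, $\mathsf{U}_x\in[\delta,2\delta)$ on $\eta\setminus\eta'$, and $\mathsf{U}_x\ge 2\delta$ on $\eta'$. This noise event has probability at least $(\delta(1-2\delta))^{|\mathring{D}|}$ and, combined with the range inclusion, forces both noised vacant sets to match regardless of how much the rerouting enlarged $\mathcal{V}^u$, $\mathcal{V}^{u'}$.

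\medskip

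\textbf{The rerouting itself.} You flag this as the principal obstacle and suggest it requires ``potential-theoretic control on the joint distribution of the walk bridges''. The paper's resolution is in fact more elementary and direct. One first fixes \emph{some} realization $\{\gamma_{x,x'}\}$ of the $J$-bridges compatible with the conditioning. For each crossing bridge $\gamma_{x,x'}$ (one that enters $C_{y,L_0}$), the event $\widetilde{\mathrm{LU}}_{y,L_0}(Z_J)$ guarantees it lies in the unique component $\mathcal{C}(\xi)$ of $\xi\setminus\partial D_{y,L_0}$ meeting $\tilde{D}_{y,L_0}\setminus\tilde{C}_{y,L_0}$. One then replaces $\gamma_{x,x'}$ by a \emph{specific deterministic} non-crossing path $\bar{\gamma}_{x,x'}$ with the same endpoints whose range is exactly $\mathcal{C}(\xi)$ (so it sweeps out the backbone, as you say) and whose length is at most $(10L_0)^d$. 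Non-crossing bridges are left unchanged. This construction ensures simultaneously that (i) the union of ranges does not increase, (ii) the boundary cluster $\xi$ is exactly reproduced, and (iii) no bridge enters $C_{y,L_0}$. The probability that the $J$-bridges realize these exact paths is then bounded below crudely by $(2d)^{-(10L_0)^d\cdot|\Gamma_J|}$, and the event $\mathrm{O}_{y,L_0}(\overline{Z}_{z,N}^u)\cap\{J\subset[1,N^u_{z,N}]\}$ bounds $|\Gamma_J|\le CL_0^d$. No potential-theoretic estimate is needed; the constant $c(\delta,L_0)$ simply absorbs these combinatorial factors.
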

%
\begin{remark}
Although insufficient for our purposes, the choice $J = [1, N_{z, N}^u]$ is perfectly valid, whence $Z_J= \overline{Z}_{z,N}^u$, cf.~below \eqref{eq:3Vs}, and \eqref{eq:conditional_prob1} yields a bound 
on the conditional probability that $C_{y, L_0} \subset \mathcal{V}^u$.
\end{remark}
\begin{proof}
 We first introduce a certain sigma-algebra which corresponds to the `correct' conditioning 
 outside $\mathring D_{y, L_0}$, see $\widehat{\mathcal F}$ in \eqref{eq:measurability} below.  
 For a bi-infinite transient trajectory  
$w = (w(n))_{n \in \mathbb{Z}}$ in $\Z^d$ and a pair of sets $\emptyset \ne D  \subset U 
\subset\subset \Z^d$, let $-\infty < R_1(w) < T_1(w) < R_2(w) < \ldots$ denote the successive 
return and departure times of $w$ between $D$ and $\partial^{{\rm out}} U$. Note that possibly $R_1(w)=\infty$. 
Let $w_{D, U}^-$ denote the sequence of segments $(w(-\infty, R_1-1], w[T_1, R_2-1], \ldots)$ 
where $w[n_1, n_2](n) = w(n_1 + n)$ for $0 \leq n  \leq n_2 - n_1$, $w(-\infty, n_2](n) = w(n_2 
-  n)$ for $-\infty < n  \leq n_2$ and 
$w(-\infty, \infty]$ ($= w(-\infty, \infty)$) is understood as $(w(n))_{n \in \mathbb{Z}}$. 
In words, $w_{D, U}^-$ is the sequence of segments of $w$ obtained after deleting all its 
excursions between $D$ and $\partial^{{\rm out}}U$ {\em minus} their endpoints, i.e.~the 
paths $w[R_k, T_k-1]$ for $k \ge 1$. Since we forget the endpoints $R_k, T_k - 1$ in 
defining the segments $w[R_k, T_k-1]$, it is clear from this definition that $w_{D, U}^- = \tilde 
w_{D, U}^-$ if $\tilde w = \theta_n w$ for some $n \in \Z$ and thus $w_{D, U}^-$ is 
well-defined as a {function} of the equivalence class $w^\ast$ of any trajectory $w$ 
under discrete time-shift. 
Our case of interest is 
\begin{equation}\label{eq:DU-FE}D = U = \mathring D_{y, L_0} \ (= D_{y, L_0} \setminus \partial D_{y, L_0}).
\end{equation}
For the choice \eqref{eq:DU-FE} we abbreviate $w_{D, U}^-= w_y^-$. Using this, we introduce the point process 
	\begin{equation*}
		\omega_{y}^{-} = 
		\textstyle\sum_{(\widehat w^\ast, v) \in \omega} \delta_{(w_y^{ -}, v)}
	\end{equation*}
 (see 
\S\ref{subsec:occupation} regarding $\omega$), where $w_y^-$ is uniquely defined as a function of $ \widehat w^\ast$ in view of 
our previous observation. 
Then, abbreviating $\mathcal I = \mathcal I(Z_J)$, $\mathcal V = \mathcal V(Z_J)$ and $D= 
\mathring D_{y, L_0} $, as above, we claim that 
	\begin{equation}\label{eq:measurability}
		\begin{split}
			&\big({(\mathcal V^{u})_{\delta}}_{|D^c}, \, {(\mathcal V^{u'})_{2\delta}}_{|D^c},  \,
			N_{z, N}^u, \, \mathcal I_{|D^c}, \,
			\{\ell_{x}^u : x \in D^c\} \big) \text{ belong to }
			\widehat{\mathcal F} \coloneqq\sigma\big(\omega_y^-, \{\mathsf U_x : x \in D^c\}\big)
		\end{split}
	\end{equation}
	(see~\eqref{def:FEsigma_algebra1} to compare with the definition of $\mathcal F_{y, L_0}(Z_J, \delta, u, u')$). We now explain \eqref{eq:measurability}. Clearly, the discrete occupation times $\{\ell_{x}^{u} : x \in D^c\}$ are measurable relative to 
	$\omega_{y}^{-}$. Since 
	$ D_{y, L_0} \subset  D_{z, N}$, 
	$K \ge 100$ and $N \ge 10^3L_0$, it follows from the definitions of the relevant boxes in \eqref{def:CDU} that 
	  $U = D = \mathring D_{y, L_0} \subset  U_{z, N}$ and that 
no excursion between $D$ and $\partial^{{\rm out}}U$ can intersect $ U_{z, N}^c$. This 
implies that for any $v > 0$, the (finite, possibly empty) set of labels $\{v_1 < \ldots < v_k\} 
\subset (0, v)$ of any $(\widehat w, v') \in \mu_{ D_{z,N}}(\omega)$ with label at most $v$ is measurable relative to 
$\omega_y^-$. In particular, the random variable $N_{z, N}^u$ and the set of 
labels $\{v_j : j \in J\}$ for any and for any (random) $J$ measurable w.r.t.~$N_{z, N}^u$, 
are both measurable relative to $\omega_{y}^{-}$. Since $\omega_y^-$ retains all pieces of 
trajectories in the support of 
$\omega$ outside $D$, it is also clear from the definitions of the sets $\mathcal{I}$ 
and $(\mathcal V^v)_{\delta'}$ that $\mathcal I_{|D^c}$ is measurable relative to 
$\omega_y^-$ whereas ${(\mathcal V^v)_{\delta'}}_{|D^c}$ is measurable with respect to 
	$\big(\omega_y^-, \{\mathsf U_x : x \in D^c\}\big)$ for any $v \ge 0$ and $\delta' \in [0, 1)$. Overall, \eqref{eq:measurability} follows.

We denote by  $\Gamma^{v}$ the {\em multiset} of all pairs of points $(w(R_k(w) 
-1), w(T_k(w))) \in (\partial D_{y, L_0})^2; k \ge 1$  such that $(\widehat w, v') \in \omega$ for 
some $v' \le v$, i.e. we take into account the number of times each such pair appears over all 
pairs $(\widehat w, v')$. 	It is clear that $\Gamma^{v}=\Gamma^{v}(\omega_y^-)$ is indeed a (measurable) 
function of $\omega_y^-$. 
In analogous manner, we define $\Gamma_{J}=\Gamma_{J}(\omega_y^-)$ for (finite) $J \subset \N^\ast$ measurable w.r.t. $N_{z, N}$ by restricting $v'$ to the set $\{v_j : j \in J\}$,  where $v_j$ are the ordered labels of trajectories in the support of $\mu_{ D_{z,N}}(\omega)$. 

	With $\widehat{\mathcal{F}}$ from \eqref{eq:measurability}, 
	by the Markov property of random walks 
	and the definition of the excursions $Z_{j}^{ D_{z, N},  U_{z, N}}$, 
	we have the following 
	description of the law of excursions of 
	trajectories underlying $\mu_{ D_{z,N}}(\omega)$ as well as the variables 
	$\{\mathsf U_x: x \in D\}$ {conditionally} on 
	$\widehat{\mathcal{F}}$:
\begin{equation}\label{eq:conditional_law}
\text{\begin{minipage}{0.90\textwidth} 
 $\{\bm \gamma_{x, x'}: (x, x') \in 
\Gamma^{u'}(\omega_y^-)\}$ under 
$\P\big[ \cdot \,|\, 
\widehat{\mathcal F}\big]$ are distributed as independent random walk bridges where $\bm 
\gamma_{x, x'}$ is conditioned to start at $x$, end at $x'$ and lie inside $D$ except at the final 
point, independently of $\{\mathsf U_x : x \in 
D\},$ which are i.i.d. uniform random variables. \end{minipage}}
\end{equation}
In view of \eqref{eq:measurability} and \eqref{def:FEsigma_algebra1}, we have
$\mathcal F = \sigma(\widehat{\mathcal F}, (\mathcal V^{u})_\delta \cap D , (\mathcal V^{u'})_{2\delta} \cap D, \mathscr C_{\partial D_{y, L_0}}  (Z_J) )$. 
Since the last three of these random objects take values in a finite set, given any realization $\zeta_y$ of $(\omega_y^-, \{\mathsf U_x : x \in D^c\})$ as well as possible 
realizations $\eta, \eta'$ and $\xi$ of $(\mathcal V^{u})_\delta \cap D$, $(\mathcal V^{u'})_{2\delta} \cap D$ and $\mathscr C = \mathscr C_{\partial 
D_{y, L_0}}(Z_J))$ resp., we can write a regular conditional law $\P[ \cdot \,|\, \mathcal  F](\zeta_y, \eta, \eta', \xi)=  \mathbb Q_{\zeta_y}[ \, \cdot \, | \, \mathsf V(\eta, \eta') \cap \mathsf 
C(\xi) ]$ where $\frac 00$ is interpreted as $0$, $\mathbb Q_{\zeta_y}$ is the conditional law $\P\big[ \cdot \,|\, \widehat{\mathcal F}\big]$ described in 
\eqref{eq:conditional_law} evaluated at $\zeta_y \coloneqq (\omega_y^-, 
	\{\mathsf U_x : x \in D^c\})$, and 
$\mathsf V(\eta, \eta') =
			\{(\mathcal V^{u})_\delta \cap D= \eta, (\mathcal V^{u'})_{2\delta} \cap D = \eta'\}$, $\mathsf C(\xi) = \{\mathscr C 
			= \xi \}$.
				With this, \eqref{eq:conditional_prob1} follows if 
we prove that $\mathbb Q_{\zeta_y}\left[C_{y, L_0} \subset \mathcal V 
		\, | \, \mathsf V(\eta, \eta') \cap \mathsf C(\xi) \right] \ge c(\delta, L_0)$ holds	for each $(\eta, \eta', \xi)$, almost every $\zeta_y$ such that $\mathbb 
	Q_{\zeta_y}\big[\mathsf V(\eta, \eta') \cap \mathsf C(\xi)\big] > 0$ and $(\zeta_y, \eta, \eta', 
	\xi)$ belonging to the event $G$. We prove an even stronger inequality. 
	Given any collection $\{\gamma_{x, x'} : (x, x') \in \Gamma^{u'} 
	\setminus \Gamma_J\}$, where each $\gamma_{x, x'}$ is an excursion 
	between $D$ and $\partial^{{\rm out}} U$ (with $D,U$ as in \eqref{eq:DU-FE}) starting and ending at $x$ and $x'$ respectively, we show that
		\begin{equation}\label{eq:conditional_lower_bnd}
		\mathbb Q_{\tilde \zeta_y}\left[C_{y, L_0} \subset \mathcal V 
		\, | \, \mathsf V(\eta, \eta') \cap \mathsf C(\xi)\right] \ge 
		c(\delta, L_0) \, ( >0),\end{equation}
for almost every $\tilde \zeta_y \coloneqq (\zeta_y, \big\{ \bm \gamma_{x, x'}: (x, 
x') \in \Gamma^{u'} \setminus \Gamma_J\big\})$ and all $(\eta, \eta', 
\xi)$ 
such that $(\zeta_y, \eta, \eta', \xi)$ belongs to the event $G$ and $\mathbb Q_{\tilde \zeta_y}[\mathsf V(\eta, \eta') \cap \mathsf C(\xi) 
] > 0$.
The desired lower bound follows immediately from \eqref{eq:conditional_lower_bnd} 
by integrating the latter over all 
realizations of $\{\bm \gamma_{x, x'} : (x, x') \in \Gamma^{u'} \setminus 
\Gamma_J\}$. 
	

The remainder of the proof 
is devoted to showing \eqref{eq:conditional_lower_bnd}. Since 
$\zeta_y$ (and hence $\tilde \zeta_y$) satisfies the event $\{J \subset [1, N_{z, N}^u]\}$ and $u \le u'$, 
the events $\mathsf V(\eta, \eta')$ and $\mathsf C(\xi)$ are measurable relative to the 
excursions 
$\{\bm\gamma_{x, x'}: (x, x') \in \Gamma_J\}$ and the noise variables 
$(\mathsf U_x)_{x\in D}$ 
{given} $\tilde \zeta_y$. 
By the definition of conditional probability, there exist choices of excursions 
$\{\gamma_{x, x'}: (x, x') \in 
\Gamma_{J}\}$  
and occupation configurations $(b_x)_{x\in D}, (b_x')_{x\in D}  \in \{0, 1\}^{D}$ for almost 
all realizations of $\tilde \zeta_y$ with $\mathbb Q_{\tilde \zeta_y}[\mathsf V(\eta, \eta') \cap \mathsf C(\xi) 
] > 0$ such that
\begin{equation*}
\textstyle	\bigcap_{(x, x') \in \Gamma
		_J}\{\bm \gamma_{x, x'} = \gamma_{x, x'}\} \, \cap \, 
	\bigcap_{x \in D} \big(\left\{1_{\{\mathsf U_x \ge \delta\}} = b_x\right\}  \cap \left\{1_{\{\mathsf U_x \ge 2\delta\}} = b_x'\right\} \big) 
\end{equation*} 
is contained in $\mathsf V(\eta, \eta') \cap \mathsf C(\xi)$, 
 and the event in the display has positive $\mathbb Q_{\tilde \zeta_y}$-probability.  
We can 
specify the values of $b_x$ and $b_x'$, which are informed only by $\eta, \eta'$, 
using the 
properties of the noised sets in \eqref{def:noised_set}, 
whereby the occupied vertices are 
{explained} by triggering suitable noise variables: letting 
\begin{equation}\label{def:mathcalU}
\mathcal U(\eta, \eta') 
\coloneqq \textstyle \bigcap_{x \in 
	D \setminus \eta
}\{ \mathsf U_x < \delta\} \, \cap \, \bigcap_{x \in \eta \setminus \eta'}\{ \mathsf U_x \in  [\delta, 
2\delta)\} \, \cap \, \bigcap_{x \in \eta' }\{ \mathsf U_x \ge 2\delta\},
\end{equation}
we have
$\bigcap_{(x,x')}\{\bm \gamma_{x, x'} = \gamma_{x, x'}\}  \cap 
\mathcal U(\eta, \eta') \subset {\mathsf V}(\eta, \eta') \cap \mathsf C(\xi)$
(
given $\tilde \zeta_y$); here and in the sequel, any unspecified union or intersection is over $(x, x') \in \Gamma_J$. In fact, as we now briefly explain, the same argument yields that for {\em any} choice of 
excursions $\{\bar \gamma_{x, x'} : (x, x') \in \Gamma_J\}$ with 
\begin{equation}\label{eq:gamma_bargamma}
\begin{split}
& \textstyle\bigcup_{(x,x')} {\rm range}(\bar \gamma_{x, x'}) \subset \bigcup_{(x, 
x') } {\rm range}( \gamma_{x, x'}), \text{ and } 
\bigcap_{(x,x')}\{\bm \gamma_{x, x'} = \bar \gamma_{x, x'}\} \subset \mathsf C(\xi),
\end{split}
\end{equation}
one has
\begin{equation}\label{eq:mathcalU_include}
\textstyle\bigcap_{(x,x')}\{\bm \gamma_{x, x'} = \bar \gamma_{x, x'}\}  \cap 
\mathcal U(\eta, \eta') \subset {\mathsf V}(\eta, \eta') \cap \mathsf C(\xi).
\end{equation}
To see \eqref{eq:mathcalU_include}, note that the inclusion in \eqref{eq:gamma_bargamma} entails that the choice of $\bar \gamma_{x, x'}$ over $ \gamma_{x, x'}$ can only increase the vacant sets in the event $\mathsf V(\eta, \eta')$, but the occurrence of 
$\mathcal U(\eta, \eta')$ precludes this on account of  \eqref{def:noised_set}.

Our goal in the rest of the 
proof to reroute the trajectories $\gamma_{x, x'}$ into suitably chosen $\bar \gamma_{x, x'}$ so 
that \eqref{eq:gamma_bargamma} holds and $C_{y, L_0} \subset \mathcal V$. To this end, let us call $\gamma_{x, x'}$ \emph{crossing} if it intersects $C_{y, L_0}$. If none of the excursions in $\{\gamma_{x, x'}: (x, x') \in \Gamma_{J}\}$ (as in the display above \eqref{def:mathcalU}) 
is crossing, we have
$\bigcap_{(x, x') \in \Gamma
	_J}\{\bm \gamma_{x, x'} = \gamma_{x, x'}\} 
\subset \{C_{y, L_0} \subset \mathcal V 
 \}$
	given $\zeta_y$. 
	Together with 
\eqref{eq:gamma_bargamma} and \eqref{eq:mathcalU_include}, this implies 
\eqref{eq:conditional_lower_bnd} in this case. So  suppose that at least one $\gamma_{x, x'}$ with $(x, x') \in 
	{\Gamma}_J$ is crossing. Our strategy is to reroute the crossing excursions 
	into non-crossing ones, thereby vacating $C_{y,L_0}$, all the while explaining the events 
	$\mathsf V(\eta, \eta'), \mathsf C(\xi)$ as well as the configuration $\tilde \zeta_y$. 
		
	 Recall from \eqref{def:C_partial1} that $\xi$, the realisation of $\mathscr C= \mathscr C_{\partial D_{y, L_0}}$,   
	 is a disjoint union of connected subsets of $D_{y, L_0} \setminus C_{y, L_0}$ each of which 
	 intersects $\partial D_{y, L_0}$. Since $(\tilde \zeta_y, \eta, \eta', \xi)$ satisfies 
	 $\widetilde {\rm LU}_{y, L_0}$, it follows from~\eqref{def:tildeLU1} that there exists exactly 
	 one component of $\xi \setminus \partial D_{y, L_0}$ intersecting $\tilde D_{y, L_0}$, 
	 say $\mathcal C(\xi)$, which also contains $\xi \cap (\tilde D_{y, L_0} \setminus \tilde C_{y, 
	 L_0})$. 
	 But because any crossing excursion $\gamma_{x, x'}$ with $(x, x') \in 
	{\Gamma}_J$ belongs to a component of $\mathscr C$ 
	that also intersects $\tilde D_{y, L_0}$, it follows that all such crossing excursions are 
	part of $\mathcal C(\xi)$. Based on this observation and 
	using that the event $\widetilde {\rm LU}_{y, L_0}$ from \eqref{def:tildeLU1} is in force, we 
	can for each crossing excursion $\gamma_{x, x'}$ with $(x, x') \in 
	{\Gamma}_J$ find a non-crossing excursion $\bar \gamma_{x, x'}$ having the 
	same endpoints and such that 
\begin{equation}\label{eq:gamma-bar}
\text{$|\bar \gamma_{x, x'}| \le (10 L_0)^d$ and $\mathcal C(
\xi) = {\rm range}(\bar \gamma_{x, x'}) \cap D$,}
\end{equation}
simply by making $\bar \gamma_{x, x'}$ exhaust all of $\mathcal C(\xi)$ while reaching the 
desired endpoints. 
 Any other component of $\xi \setminus \partial D_{y, L_0}$, i.e. any component 
$\mathcal C$ that does not intersect $\tilde D_{y, L_0}$ necessarily satisfies $\mathcal C \subset (D \setminus \tilde D_{y, L_0}) \cap \bigcup_{(x, x')} {\rm range}(\gamma_{x, x'})$, where the union ranges over $(x, x') \in \Gamma_{J}$ such that  $\gamma_{x, x'}$ is non-crossing. Define $\bar \gamma_{x, x'}=  \gamma_{x, x'}$ for such $(x,x')$. 
Putting the previous observation together with the second item in \eqref{eq:gamma-bar}, and 
using the fact that all excursions $\bar \gamma_{x, x'}$ are non-crossing, it follows that (given 
$\tilde \zeta_y$)
\begin{equation*}
		\begin{split}
			&\textstyle\bigcap_{(x, x')}\{\bm \gamma_{x, x'} = \bar \gamma_{x, x'}\}
			\subset \big( \mathsf C(\xi) \cap  \{C_{y, L_0} \subset \mathcal V
			\} \big), \text{ and } \bigcup_{(x, x')} {\rm range}(\bar \gamma_{x, x'}) \subset \bigcup_{(x, x')} {\rm range}( \gamma_{x, x'}).
		\end{split}	
	\end{equation*}
By combining \eqref{eq:gamma_bargamma}, \eqref{eq:mathcalU_include}  and the previous display, and using that $\mathbb Q_{\tilde \zeta_y}[\mathcal U(\eta, \eta')] \ge c'= c'(\delta, L_0)$, see \eqref{def:mathcalU} and \eqref{eq:conditional_law}, 
we deduce that 
$\mathbb Q_{\tilde \zeta_y}\left[\{ C_{y, L_0} \subset \mathcal V\} 
			\cap \mathsf V(\eta, \eta') \cap \mathsf C(\xi)\right]$ is bounded from below by	
			\begin{equation*}
				\mathbb Q_{\tilde \zeta_y}\big[\textstyle\bigcap_{(x, x') }\{\bm \gamma_{x, x'} = 
			\bar \gamma_{x, x'}\} \, \cap \, \mathcal U(\eta, \eta')  \big]
			\stackrel{\eqref{eq:conditional_law}, \eqref{def:mathcalU}}{\ge}  c' \cdot\mathbb Q_{\tilde \zeta_y}\big[\bigcap_{(x, x') }\{\bm \gamma_{x, x'} = \bar \gamma_{x, x'}\}\big] \,
			 \stackrel{\eqref{eq:conditional_law}}{\ge} c'\left({2d}\right)^{-CL_0^{2d}},
	\end{equation*}
where the last step also uses the bound from \eqref{eq:gamma-bar} and the fact that $|\Gamma_{J}|	\le |\Gamma^u(\omega_y^-)|	 \le C L_0^{d-1}\cdot L_0$, since 
$\zeta_y$ satisfies ${\rm O}_{y, L_0}(\overline{Z}_{z, N}^u)$ (recall \eqref{def:Conn0}) and 
$\{J \subset [1, N_{z, N}^u]\}$. Overall, this yields \eqref{eq:conditional_lower_bnd}.
\end{proof}

\section{The observable $h^u$ and  coarse-graining 
}
\label{sec:cg}
We now lay the foundation for the upper bounds in the upcoming sections.  In \S\ref{sec:hu}, we introduce the scalar random variable $h^u$, see \eqref{def:Xu}, attached to the interlacement in a system $\Sigma$ of well-separated boxes. 
In Proposition~\ref{prop:Xutail_bnd}, we collect bounds on the probabilities that $h^u$ is atypical 
in terms of ${\rm cap}(\Sigma)$ 
and quantitative in $u$. The system $\Sigma$ of boxes will arise from a coarse-graining scheme presented in 
\S\ref{subsec:admissible}. 
The coarse-graining leads to a certain `good' event $\mathscr{G}$, introduced in \eqref{def:script_G}. 
As will be apparent in Section~\ref{sec:coarse_graining}, $\mathscr{G}$ will be used to propagate 
certain bounds from a (base) scale $L$ to scale $N \gg L$. The event $\mathscr{G}$ is 
sufficiently generic to fit all our purposes. 
The main result then comes in 
\S\ref{subsec:bootsbounds}, see
Proposition~\ref{prop:bootstrap_prob}. It yields a deviation estimate on the probability of $\mathscr{G}^c$ involving the aforementioned bounds on $h^u$. The proof of Proposition~\ref{prop:bootstrap_prob} could be omitted at first reading.


Our framework involves two parameters, a length scale 
$L \geq 1$ and a rescaling parameter $K \geq 100$, both integers. 
The scale $L$ induces the renormalized lattice $\mathbb L \coloneqq L \Z^d$ 
and we 
consider the boxes $C_z \subset \tilde{C}_z \subset \tilde{D}_z \subset D_z  \subset 
{U}_z$ \sloppy attached to points $z \in \mathbb L$ (or $\Z^d$) as defined in \eqref{def:CDU}. 
Now, let \begin{equation}\label{def:C}
\begin{split}
&\text{$\mathcal{C}\subset\mathbb L$ be a non-empty collection of sites with mutual 
$|\cdot|_{\infty}$-distance at least {
	$10KL$}}
	\end{split}
\end{equation}
and define
{\color{black}\begin{equation}\label{def:Sigma}
		\Sigma = \Sigma(\mathcal C) = \textstyle \bigcup_{z \in \mathcal C}  D_z.
\end{equation}}
In view of \eqref{def:C}, the parameter $K$ controls the separation between boxes 
{\color{black}$ D_z$} comprising $\Sigma$ in \eqref{def:Sigma}. 

\subsection{Deviation estimate for $h^u$} \label{sec:hu}
We now introduce a scalar random variable $h^u=h^u(\mathcal{C})$ that will play a central role in the sequel.  Consider the function $V$ on $\Z^d$ defined as (cf.~\eqref{eq:equilib_K} for notation)
{\color{black}\begin{equation}\label{def:V}
V(x) =\textstyle \sum_{ D \in \mathcal C} e_{\Sigma}( D) \bar e_{ D}(x), \quad x \in 
\Z^d,
\end{equation}}
where  
{\color{black}$e_{\Sigma}( D) = 
\sum_{y \in  D} e_{\Sigma}(y)$}  
and the sum ranges over all 
{\color{black}$ D$} such that 
{\color{black}$ D =  D_z$} for some  $z \in \mathcal C$. Notice that 
$V$ in \eqref{def:V} has the same support as $e_{\Sigma}$. 
Moreover, $V$ is well-approximated by $e_{\Sigma}$ as $K$ becomes large, in the sense that or all $\varepsilon \in (0, 1)$, $L \ge 1$, 
$K \ge \tfrac{\Cr{c:equil}}{\varepsilon}$ and 
$\mathcal C$ as in \eqref{def:C}, one has (pointwise on $\Z^d$)
\begin{equation}\label{eq:compareVe}
(1 - \varepsilon) e_\Sigma \le V \le (1 + \varepsilon) e_\Sigma.
\end{equation}
The proof of \eqref{eq:compareVe} follows similarly as in \cite[Prop.~4.1]{MR3602841} using Proposition~\ref{prop:entrance_time_afar}.
Now define 
(cf.~\eqref{eq:can-pair})
\begin{equation}\label{def:Xu}
h^u = h^u( \mathcal C) \coloneqq \left\langle \mu_{\Sigma, u}, \textstyle\int_{0}^{\infty} V(X_s) ds\right\rangle
\end{equation}
with $\Sigma=\Sigma(\mathcal{C})$ as in \eqref{def:Sigma}, and where $\int_{0}^{\infty} V(X_s) 
ds$ is short for the map $\widehat{w} \mapsto  \int_{0}^{\infty} V(X_s(\widehat w)) ds$ 
($\widehat{w} \in \widehat{W}_+ $). In case $\mathcal{C}=\{z\}$ is a singleton, we write 
$h^u(z)= h^u(\{z\})$. A quantity akin to \eqref{def:Xu} was already implicit in the work 
\cite{MR3602841}. 
Our presentation is somewhat streamlined 
and it includes two-sided estimates, which is intimately related to the non-monotone nature 
of the events we consider (contrary to the disconnection events in 
\cite{MR3602841}). 
The following result prepares the ground for tail bounds on $h^u$.

\begin{lemma}\label{lem:laplace_trans}
For all $u > 0$,   $a < 1$, $L \ge 1$ and $\mathcal C$ as in \eqref{def:C}, one has that (cf.~\eqref{eq:can-pair} for notation)
\begin{equation}\label{eq:laplace_transforma>0}
\textstyle \E \left [\exp\left(a \, \left\langle \mu_{\Sigma, u}, \int_{0}^{\infty} e_\Sigma(X_s) ds \right \rangle\right) \right] = \exp \big(\frac{u\,a\,{\rm cap}(\Sigma)}{1-a}\big).
\end{equation}
\end{lemma}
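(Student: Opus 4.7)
The plan is to exploit the Poisson description of $\mu_{\Sigma,u}$ in \eqref{eq:law_muKu} to reduce the desired Laplace functional to a single-trajectory statement, and then to show that the one-trajectory functional is exponentially distributed with mean $1$ under the normalized equilibrium measure $\bar e_\Sigma$. Concretely, set $F(\widehat w) \stackrel{\rm def.}{=} \int_0^\infty e_\Sigma(X_s(\widehat w))\,ds$. Since by \eqref{eq:law_muKu} the measure $\mu_{\Sigma,u}$ is a Poisson process with intensity $\nu = uP_{e_\Sigma}$ of total mass $u\,\mathrm{cap}(\Sigma)$, the standard Laplace functional identity for Poisson processes yields
\begin{equation*}
\E\bigl[\exp\bigl(a\,\langle \mu_{\Sigma,u},F\rangle\bigr)\bigr] \;=\; \exp\!\Bigl(u\int (e^{aF}-1)\,dP_{e_\Sigma}\Bigr) \;=\; \exp\!\Bigl(u\,\mathrm{cap}(\Sigma)\,\bigl(E_{\bar e_\Sigma}[e^{aF}]-1\bigr)\Bigr).
\end{equation*}
Hence \eqref{eq:laplace_transforma>0} reduces to proving $E_{\bar e_\Sigma}[e^{aF}] = (1-a)^{-1}$ for $a<1$, i.e.\ that $F$ is $\mathrm{Exp}(1)$-distributed under $P_{\bar e_\Sigma}$. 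Equivalently, $\langle \mu_{\Sigma,u},F\rangle$ may be viewed as a compound Poisson sum over a $\mathrm{Poisson}(u\,\mathrm{cap}(\Sigma))$ number of i.i.d.\ copies of $F$.

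To verify the one-trajectory claim, I would compute the moments of $F$ directly. By Fubini and time-ordering, the Markov property of the continuous-time walk, and the identity $\int_0^\infty p_t(x,y)\,dt = g(x,y)$ for the Green function, the $n$-th moment collapses to
\begin{equation*}
E_{e_\Sigma}[F^n] \;=\; n!\sum_{x,y_1,\dots,y_n} e_\Sigma(x)\,g(x,y_1)\,e_\Sigma(y_1)\,g(y_1,y_2)\,e_\Sigma(y_2)\cdots g(y_{n-1},y_n)\,e_\Sigma(y_n).
\end{equation*}
The telescoping sum is then reduced by iterated application of the classical potential-theoretic identity $\sum_{y\in\Z^d} g(z,y)\,e_\Sigma(y) = P_z[H_\Sigma<\infty] = 1$ valid for $z \in \Sigma$, together with the fact that $e_\Sigma$ is supported on (the boundary of) $\Sigma$, so that each inner summation effectively ranges over $\Sigma$ and collapses to $1$. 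Performing the sums over $y_n,y_{n-1},\dots,y_1$ in turn leaves only $\sum_x e_\Sigma(x) = \mathrm{cap}(\Sigma)$, yielding $E_{e_\Sigma}[F^n] = n!\,\mathrm{cap}(\Sigma)$, equivalently $E_{\bar e_\Sigma}[F^n] = n!$. Summing the exponential series (justified by monotone convergence since $F \geq 0$) gives $E_{\bar e_\Sigma}[e^{aF}] = \sum_{n\geq 0} a^n = (1-a)^{-1}$ for $a<1$, establishing \eqref{eq:laplace_transforma>0}.

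The main obstacle is to carry out the telescoping reduction cleanly: each sum runs a priori over all of $\Z^d$, and one has to use both the vanishing of $e_\Sigma$ off $\Sigma$ and the fact that $g e_\Sigma \equiv 1$ on $\Sigma$ at every stage in order to iterate. A minor, secondary issue is justifying the series interchange used to recover the MGF from the moments, which is immediate here from nonnegativity of $F$ and $a<1$. An alternative route, should the moment book-keeping feel unwieldy, is to set $\phi_a(y) = E_y[e^{aF}]$, deduce from the Markov property at the first jump time that $(I-P)\phi_a = a\,e_\Sigma\,\phi_a$ with $\phi_a\to 1$ at infinity, and verify that $\phi_a = 1 + \tfrac{a}{1-a}\,ge_\Sigma$ satisfies both the equation (using $(I-P)(ge_\Sigma) = e_\Sigma$) and the boundary condition, giving $\phi_a \equiv (1-a)^{-1}$ on $\Sigma$ since $ge_\Sigma \equiv 1$ there.
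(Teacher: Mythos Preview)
Your proof is correct and follows the same strategy as the paper: reduce via the Poisson Laplace functional to showing that $F=\int_0^\infty e_\Sigma(X_s)\,ds$ is $\mathrm{Exp}(1)$ under $P_{\bar e_\Sigma}$, and then establish the latter. The paper simply cites this exponential law from \cite[Lemma~2.1]{Sznitman17}, noting it is ``an application of Kac's moment formula''; your moment computation $E_{\bar e_\Sigma}[F^n]=n!$ via the iterated Green-function identity $\sum_y g(\cdot,y)e_\Sigma(y)=1$ on $\Sigma$ is precisely that formula written out, so the two arguments coincide in substance.
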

\begin{proof}
This follows from an application of \cite[display~(2.5)]{SznitmanRIGFF12} combined with 
\cite[Lemma~2.1]{MR3602841}. 
\end{proof}
Following are the announced deviation estimates for $h^u$ in \eqref{def:Xu}.
\begin{proposition}\label{prop:Xutail_bnd}
Let  $\varepsilon \in (0, 1)$ and $0 < \frac{u_{-}}{1 - \varepsilon} < u < \frac{u_{+}}{1 + \varepsilon}$. 
Then, for any $K \ge \tfrac{\Cr{c:equil}}{\varepsilon}$, $L \ge 1$, and $\mathcal C$ as in 
\eqref{def:C}, one has that
\begin{equation}\label{eq:Xutail_bndlow}
\textstyle \P [\pm h^u \ge \pm u_{\pm} \,{\rm cap}(\Sigma)] \le \exp \Big( - \left(\sqrt{\frac{u_{\pm}}{1 \pm \varepsilon}} - \sqrt{u}\right)^2{\rm 
cap}(\Sigma)\Big).
\end{equation}
\end{proposition}
\begin{proof}
\eqref{eq:Xutail_bndlow} follows by applying a Chernoff-type bound to $h^u$ using  Lemma~\ref{lem:laplace_trans} and \eqref{eq:compareVe}.
\end{proof}

\subsection{Admissible coarsenings}\label{subsec:admissible} 

We now introduce more precisely the collections $\mathcal{C}$ satisfying \eqref{def:C} that will be of interest.
Below, we write $B^2_r(x) \subset \Z^d $ for the closed 
$\ell^2$-ball of radius $r\geq 0$ around $x \in \Z^d$, and $B_r(x)$ 
for the corresponding $\ell^\infty$-ball. We abbreviate $B^2_r=B^2_r(0)$ and $B_r=B_r(0)$. 

For $U\subset V \subset \subset \Z^d$ where $V$ is simply connected in $\Z^d$, we say that a path $\gamma$ in $\Z^d$ 
\textit{crosses} $V\setminus U$, or that $\gamma$ is a 
{\em crossing} of $V \setminus U$, if it intersects both $U$ and 
$\partial V$. If $U=\{0\}$, we omit the reference to $U$; e.g.~when $\gamma$ crosses $B_r^2$ we 
mean that $\gamma$ intersects both $0$ and $\partial B_r^2$. In what follows, we always tacitly assume that $\Lambda_N \subset \Z^d$ is of the form (see \eqref{def:CDU} for 
notation) 
\begin{equation}
\label{eq:scriptS_N}
\Lambda_N \in \text{$\mathcal{S}_N \stackrel{{\rm def}.}{=} \{ B_N^2,  \, B_N^2 \setminus B_{\sigma N}^2$,   $\sigma \in (0,\tfrac13)$, $B_{2N}^2 \setminus  B_N^2,$ $ \tilde{D}_{0,N}\setminus 
\tilde{C}_{0,N}\}$.}
\end{equation}
To allow for a uniform presentation it will be convenient to define 
$\sigma=\sigma(\Lambda_N)$ for all choices in \eqref{eq:scriptS_N} by setting $\sigma(B_N^2 
\setminus B_{\sigma N}^2)=\sigma$ and $\sigma(\Lambda_N)=0$ otherwise, so that $\sigma \in [0,\tfrac13)$ for any choice of $\Lambda_N$. 
Borrowing the notion of coarsenings from \cite[Definition~4.2]{gosrodsev2021radius} 
we consider collections $\mathcal{C}$ that are well-behaved with respect to a {given} entropy rate $\Gamma$. Let $\Gamma: [1, \infty) \mapsto [0, \infty)$ be increasing and $a \in (0, 1)$. For $L \geq 1$, $K \geq 100$ and $N \geq h(KL)$, where
$h(x)=x(1 + (\log x)^21_{d \geq 4})$, a  family 
$\mathcal{A} = \mathcal{A}_{L}^K(\Lambda_N)$ of collections $\mathcal{C} \subset \mathbb{L}$ satisfying \eqref{def:C}
is {\em$(a, \Gamma)$-admissible} if, 
\begin{align}
&\text{$\log |\mathcal{A}| \leq \Gamma(N/L),$} \label{def:coarse_admissible3}\\
&\text{${D}_z={D}_{z,L} \subset 
\Lambda_N$ for all $z \in \mathcal{C}$,} \label{def:coarse_admissible0}\\ 
&
\text{\begin{minipage}{0.9\textwidth} all $\mathcal C \in \mathcal A$ have equal cardinality $n=|\mathcal{C}|$, which lies in the interval $\big[\tfrac{a 
			{(1-\sigma)} N}{h(KL)}, \tfrac{
			{(1-\sigma)} N}{ h(KL)}\big]$, and\end{minipage}}
\label{def:coarse_admissible1}\\
	&
		\text{\begin{minipage}{0.9\textwidth}for any 
			crossing $\gamma$ of $\Lambda_N$, there exists $\mathcal{C} \in 
		\mathcal{A}$ such that $\gamma$ crosses 
		${D}_{z}\setminus C_z$ for all $z \in \mathcal{C}$.\end{minipage}}
	\label{def:coarse_admissible2}
	\end{align}
We are now ready to state our result on the existence of coarsenings with good capacity bound. 
In the sequel, we let $T_N$ denote the line segment $([0, 
N] \cap \Z) \times \{0\}^{d-1}$.
\begin{proposition}[Admissible coarsenings]\label{prop:coarse_paths} There exist $\Cr{C:cg_complexity} \in [1,\infty)$ and $\Cl[c]{c:nLB} \in (0, 
	{\frac{1}{100{d}}})$ such that, for all 
	$K \geq 100$, $L \geq 1$, 
	$N \geq {\color{black}\Cr{c:nLB}^{-1} \, h(KL)}$ and $\Lambda_N \in \mathcal{S}_N$ (see 
	\eqref{eq:scriptS_N}), there exists a 
	$(\Cr{c:nLB}, \Gamma)$-admissible collection $\mathcal{A}=\mathcal{A}_{L}^K(\Lambda_N)$ 
	with the following properties. 
\begin{itemize}
\item[i)] If $d=3$, one has for all $\rho \in (0,1)$ and with $\Gamma(x)= {\Cl{C:cg_complexity}}{K}^{-1} x \log ex$,
\begin{align}
\label{eq:coarse_capacity_d=3}
	\liminf_{ N \to \infty}\; \inf_{\substack{ K \in [K_-, K_+],\\ L \in [L_-, L_+]}}  \; 
		&\inf_{\mathcal{C}\in \mathcal{A}} \; \inf_{\substack{\tilde{\mathcal{C}}\subset \mathcal{C} \\ |\tilde{\mathcal{C}}|\geq (1-\rho)|\mathcal{C}|}} \; \frac{\mathrm{cap}(\Sigma(\tilde{\mathcal{C}}))}{\big(1 - \tfrac {\Cl{C:Kinv}}{K}\big)\mathrm{cap}\big(T_{ 
				{(1-\sigma)} N}\big)} \geq  (1 - \rho),
	\end{align}
where $\Cr{C:Kinv} \in [200, \infty)$, $\Sigma(\tilde{\mathcal{C}})$ is as in \eqref{def:C}
and $K_\pm =K_{\pm}(N)$, $ L_{\pm}= L_{\pm}(N)$ satisfy
\begin{equation}\label{eq:quant_LK} 
\begin{split}
&\text{$K_-(N) = 100$, $\lim_N L_-(N) = \infty$, and }\\
&\text{$\lim_N \big(\tfrac{\log (K_+(N)L_+(N))}{\log 
N}\big)^{{1}/{K_+(N)}} = 0$,  \, 
$
{   L_+(N) \leq { \Cr{c:nLB} N/K_+(N)}}$.}
\end{split}
\end{equation}

\item[ii)] If $d \geq 4$, choosing instead $\Gamma(x) = {\Cr{C:cg_complexity}}\, x$, the bound \eqref{eq:coarse_capacity_d=3} remains valid with $L_-(N)=1$, any 
fixed $K \ge 100$, $L_+(N) =  \Cr{c:nLB} N/K$ and $\big(1 - \tfrac {\Cr{C:Kinv}}{K}\big)$ 
replaced by some $c(K) \in (0, 1]$.
\end{itemize}
\end{proposition}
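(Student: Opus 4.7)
The plan is to adapt and extend the coarse-graining scheme of \cite[Proposition~4.3]{gosrodsev2021radius}, which treats $\Lambda_N=B_N$ in $d=3$, to the broader class $\mathcal S_N$ in \eqref{eq:scriptS_N} and to the quantitative parameter ranges of \eqref{eq:quant_LK}. I would construct the family $\mathcal A$ by partitioning $\Z^d$ into super-blocks of side length of order $10KL$, which enforces the separation \eqref{def:C}. Each nearest-neighbor path $\pi$ in the induced super-block graph that joins the inner and outer boundaries of $\Lambda_N$ defines a candidate skeleton; in each super-block visited by $\pi$, pick a lattice point $z\in\mathbb L$ with $D_z\subset\Lambda_N$ to form a collection $\mathcal C(\pi)$. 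Given a crossing $\gamma$ of $\Lambda_N$, the sequence of super-blocks that $\gamma$ visits determines a suitable $\pi(\gamma)$, and a pigeonhole argument within each super-block lets one select the $z$ so that $\gamma$ actually crosses $D_z\setminus C_z$, giving \eqref{def:coarse_admissible2}. Truncating over- or undersized collections ensures \eqref{def:coarse_admissible0} and \eqref{def:coarse_admissible1}.

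For the entropy \eqref{def:coarse_admissible3}, a skeleton of length $n\sim(1-\sigma)N/(KL)$ is specified by its starting super-block (polynomially many choices), a sequence of $\leq 2d$ step directions, and polynomially many choices of $z$ per super-block. This yields $\log|\mathcal A|\leq Cn\log(KL)+C\log N$, matching $\Gamma(N/L)=\Cr{C:cg_complexity}K^{-1}(N/L)\log(eN/L)$ for $d=3$. In $d\geq 4$ the extra $(\log KL)^2$ factor in $h(KL)$ shortens the skeleton enough to absorb all logarithms into the linear bound $\Gamma(x)=\Cr{C:cg_complexity}x$ at fixed $K$. For the capacity bound \eqref{eq:coarse_capacity_d=3} in $d=3$, I would embed inside $\Lambda_N$ a skeleton $T$ (a suitably placed line segment) with capacity comparable to $\mathrm{cap}(T_{(1-\sigma)N})$, threading the super-blocks of $\tilde{\mathcal C}$. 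The sweeping identity \eqref{eq:sweeping_2} gives $\mathrm{cap}(\Sigma(\tilde{\mathcal C}))=\mathrm{cap}(T)\cdot P_{\bar e_T}[H_{\Sigma(\tilde{\mathcal C})}<\infty]$, so the task is to lower-bound a hitting probability. Proposition~\ref{prop:entrance_time_afar} with separation $K$ gives that a walk launched under $\bar e_T$ enters each $D_z$ with probability at least $1-\Cr{c:equil}/K$ of the ideal value, and the separation \eqref{def:C} prevents these corrections from compounding. Summing along the skeleton yields the $(1-\Cr{C:Kinv}/K)$ prefactor, and removing a $\rho$-fraction of boxes reduces the hitting probability by at most $\rho+O(1/K)$, which accounts for the $(1-\rho)$ factor in \eqref{eq:coarse_capacity_d=3}. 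In $d\geq 4$ the skeleton is sparse enough that capacity is essentially additive, giving a simpler constant $c(K)$.

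The main obstacle is to make the capacity lower bound uniform in the parameter regime \eqref{eq:quant_LK} and across all $\Lambda_N\in\mathcal S_N$, especially for the annular shapes, where $T$ must be placed so that its capacity asymptotic matches $\mathrm{cap}(T_{(1-\sigma)N})$ and the hitting-probability estimate survives under \eqref{def:C}. The delicate point is that the $O(1/K)$ per-super-block error from Proposition~\ref{prop:entrance_time_afar} must not accumulate multiplicatively over the $\sim N/(KL)$ super-blocks. This is controlled by exploiting that the entrance-distribution error only perturbs the initial condition of each walk (via \eqref{eq:eAbar_condentrance}--\eqref{eq:eBeB}) rather than compounding at each encounter, which is precisely what the sweeping formulation brings out.
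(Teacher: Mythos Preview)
Your proposal has a genuine gap in the capacity lower bound, and the underlying construction differs in a way that makes this gap hard to repair.

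\textbf{The construction.} The paper does not use a super-block graph. For $d=3$ and $\Lambda_N=B_N^2$ it takes concentric spherical shells $\mathsf S_i=\partial_{\R^d}B^2_{3\sqrt d KLi}$, $i=1,\dots,n$, and for a crossing $\gamma$ records the successive first-exit boxes $z_i\in S_i$. This automatically yields the separation \eqref{def:C} and, crucially, the radial projection $\tau(z_i)\coloneqq$ (point on a fixed ray at distance $|z_i|$ from $0$) is a near-contraction: $|\tau(z_i)-\tau(z_j)|\le |z_i-z_j|+2\sqrt d\,L\,1_{i\ne j}$. For $d\ge4$ the paper simply invokes \cite[Proposition~4.3(ii)]{gosrodsev2021radius} verbatim via $B_{r/\sqrt d}\subset B_r^2\subset B_r$. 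In your super-block scheme the collection $\mathcal C$ traces an arbitrary nearest-neighbour path in the coarse lattice; there is in general no line segment $T$ that ``threads'' all its super-blocks, so the premise of your capacity argument already fails geometrically.

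\textbf{The capacity argument.} Your use of sweeping is incorrect: \eqref{eq:sweeping_2} gives $\mathrm{cap}(K)=\mathrm{cap}(K')P_{\bar e_{K'}}[H_K<\infty]$ only for $K\subset K'$, and neither $\Sigma(\tilde{\mathcal C})\subset T$ nor $T\subset\Sigma(\tilde{\mathcal C})$ holds. Even the natural fix $K'=T\cup\Sigma$ does not help: the boxes occupy only an $O(1/K)$ fraction of $T$, so a walk started from $\bar e_{K'}$ on the gaps of $T$ has no reason to hit $\Sigma$ with probability $1-O(1/K)$, and Proposition~\ref{prop:entrance_time_afar} (which concerns \emph{entrance distributions}, not hitting probabilities from a line) does not supply this. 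The paper instead follows \cite[Section~4.1]{gosrodsev2021radius}: the near-contraction $\tau$ combined with the energy characterisation $\mathrm{cap}(A)^{-1}=\inf_\mu\iint g\,d\mu\,d\mu$ (since $g(\tau(\cdot),\tau(\cdot))\ge g(\cdot,\cdot)-o(1)$) yields $\mathrm{cap}(\Sigma(\tilde{\mathcal C}))\ge(1-o(1))\,\mathrm{cap}(\tau(\Sigma(\tilde{\mathcal C})))$, and $\tau(\Sigma(\tilde{\mathcal C}))$ is a large, well-spread subset of $T_{(1-\sigma)N}$ whose capacity is handled by \cite[Lemma~4.6]{gosrodsev2021radius}. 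The factor $(1-\Cr{C:Kinv}/K)$ and the condition on $K_+$ in \eqref{eq:quant_LK} arise from quantifying the $o(1)$ losses in this argument (see \cite[(4.24) and the proof of Lemma~4.6]{gosrodsev2021radius}), not from Proposition~\ref{prop:entrance_time_afar}.
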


Proposition~\ref{prop:coarse_paths} follows by adapting the arguments in the proof of \cite[Proposition~4.3]{gosrodsev2021radius}, extended to the case of Euclidean balls while keeping track of the quantitative dependence on $K \in [K_-, K_+]$ and $L \in [L_-, L_+]$. We omit further details. 
From here onwards, we refer to \emph{admissible}
 collections $\mathcal{C} \in \mathcal{A}=\mathcal{A}_{L}^K(\Lambda_N)$ without mention of $(a,\Gamma)$, which are set to $a=\Cr{c:nLB}$ and $\Gamma$ as supplied by Proposition~\ref{prop:coarse_paths}.

\subsection{The event $\mathscr{G}$} \label{subsec:bootsbounds}
In the sequel, we work under $\P$ (see the paragraph below 
Proposition~\ref{prop:entrance_time_afar}) and extensions thereof. The specification of the 
event $\mathscr{G}$ which plays a key role in our proofs involves two families of events 
$\mathcal{F} =\{ \mathcal{F}_{z,L}: z\in \mathbb{L}\}$ and $\mathcal{G} =\{ 
\mathcal{G}_{z,L}: z\in \mathbb{L}\}$. 
Whereas  $\mathcal F_{z,L}$ will be specified shortly, see \eqref{eq:Fext}, the events $\mathcal{G}_{z,L}$ will be generic and sufficiently versatile to fit all our purposes. We comment further on the role of $\mathcal{F}$ and 
$\mathcal{G}$ in Remark~\ref{R:FvsG} at the end of this section. Given families 
$\mathcal{F},\mathcal{G} $ and for any $\rho \in (0,1)$ and $\Lambda_N \in  \mathcal{S}_N$ 
as in \eqref{eq:scriptS_N}, let 	\begin{equation}\label{def:script_G}
\mathscr G= \mathscr G(\Lambda_N, \G, \mathcal{F};\rho) \stackrel{{\rm def.}}{=} 
		\bigcap_{\mathcal C \in \mathcal A} \, 
			\bigcup_{\substack{\tilde{\mathcal C} \subset \mathcal C, \\ |\tilde {\mathcal C}| \ge  \rho |\mathcal C|}} 
			\,\, \bigcap_{z \in \tilde{\mathcal C}} \,\, (\mathcal F_{z,L} \cap \mathcal{G}_{z, L} ),
			\end{equation}
where 
$\mathcal A= \mathcal A_{L}^{K}(\Lambda_N)$ is the family of admissible coarsenings supplied by 
Proposition~\ref{prop:coarse_paths}, which implicitly requires that $L \geq 1, K \geq 100$ and 
$N \geq \Cr{c:nLB}^{-1} \, h(KL)$.

 The (good) event $\mathscr{G}$ will typically be likely in upcoming applications. Following is an `umbrella bound' in this direction which subsumes all the events of our interest in this paper. We start by specifying the relevant events $\mathcal{F}=\{\mathcal{F}_{z,L}: z\in \mathbb{L}\}$. Let $k \geq 1$ and $u_i, v_i \in (0,\infty)$ with $u_i \neq v_i$ for all $1 \leq i \leq k$. The parameters $\bm u = (u_1, \ldots, u_k)$ and $\bm v = (v_1, \ldots, v_k)$ represent various interlacement levels that will be involved in our construction. 
Extending \eqref{eq:F}, let
\begin{equation}
\label{eq:Fext}
\mathcal{F}_{z,L} = \mathcal{F}_{z,L}^{\bm u, \bm v} \stackrel{{\rm def.}}{=} \bigcap_{1\leq i \leq k }\mathcal{F}_{z,L}^{ u_i,  v_i},
\end{equation}
so that $\mathcal{F}_{z,L}^{\bm u, \bm v} = \mathcal{F}_{z,L}^{ u_1,  v_1}$ in case $k=1$, 
i.e.~\eqref{eq:Fext} boils down to \eqref{eq:F} in this case. The events in \eqref{eq:Fext} 
comprise the family $\mathcal{F}= \mathcal{F}_L^{\bm u, \bm v} =\{ \mathcal{F}_{z,L}^{\bm u, \bm v} : z\in 
\mathbb{L}\}$.

As to the events forming the family $\mathcal{G}= \mathcal{G}_L= \{ \G_{z,L}: z \in 
\mathbb{L}\}$, we assume that there exists an event $\widetilde {\mathcal G}_{z, L}$ for each $z 
\in \mathbb L$ measurable relative to the 
i.i.d. excursions $\widetilde{Z}^{ D_{z},  U_{z}}= (\widetilde Z_k^{ D_{z}, 
 U_{z}})_{k \ge 1}$ 
governed by the law 
$\widetilde \P_{z} = \widetilde \P_{\{z\}}$ (see above \eqref{def:Uz} for notation) and an 
independent collection of i.i.d.~uniform random variables $\mathsf U = \{\mathsf U_x : x \in 
 D_{z,L}\}$, an integer $m_L > 0$ and $\varepsilon_L \in (0, 1)$ such that the inclusion
	\begin{equation}\label{eq:bootstrap_prob_coupling}
		\begin{split}
			&(\widetilde{\mathcal G}_{z, L} \cap {\rm Incl}_{z}^{\varepsilon_L, m_L}) \subset \mathcal G_{z, L} \mbox{  
				holds under any coupling $\mathbb Q$ of $\mathbb P$ and 
			$\widetilde {\mathbb P}_{z}$}
		\end{split}
	\end{equation}
	(recall the event ${\rm Incl}_{z}^{\varepsilon_L, m_L}$ from \eqref{eq:RI_basic_coupling2}). This condition on $\G$ depends implicitly on $K$ via $ U_z$; see \eqref{def:CDU}. 	 
\begin{proposition}[Estimate for $\mathscr{G}^c$]\label{prop:bootstrap_prob}
	For any choice of $\Lambda_N \in  \mathcal{S}_N$ (see \eqref{eq:scriptS_N}), 
	$\rho \in (0, 1]$, $ k \geq 1$
	and families $\mathcal{F}= \mathcal{F}^{\bm u, \bm v}_L$, $\G=\G_L$ as above, the following hold. If, for some $K_0, L_0 \geq 1$ and $\beta' \in (0, 1)$, one has $\sup_{z \in \mathbb L}\P\big[\widetilde{\mathcal G}^c_{z, L}\big] \le p_{L}$ for all $K \ge K_0$ and $L \geq L_0$ with 
	\begin{equation}\label{eq:initial_limit_prob}
		\sup_{L \geq L_0}  {L^{-\beta'}}
		\log \big( p_L \vee \P\big[ (\mathcal{U}_{0}^{\varepsilon_L, m_L})^c\big]\big)   
		 \leq -1	\end{equation}
	(see~\eqref{def:Uz} for notation), then:
	\begin{itemize}
	\item[i)] for $d = 3$, there exists $\alpha = \alpha(\beta') \in (0, \infty)$ such that with $L(N) = \lfloor (\log N)^{\alpha} \rfloor$,  
	one has for all $\delta \in (0,1)$ and $N \ge C(\beta',  
	\bm u,\bm v, k, \rho, K_0, L_0, \delta)$,
	\begin{multline}\label{eq:bootstrapped_limit_prob1d=3}
			 \sup_{ K \in [K_-, K_+]} \big(1 - \tfrac {\Cr{C:Kinv}}{K}\big)^{-1} {\log \P\big[(\mathscr G(\Lambda_N, \mathcal G_{L(N)}, \mathcal{F}^{\bm u, \bm v}_{L(N)}; \rho))^c\big]} \\
		\textstyle	\le - (1-\delta)(1 - \sigma)(1 - \Cl{C:rho}\rho)\frac{\pi}{3k}\big[\min_{1 \le i \le k}(\sqrt{u_i} - \sqrt{v_i})^2 \big]\frac N{(\log N)^{\beta}}
	\end{multline}
for some $\beta = \beta(\beta') \in (1, \infty)$ if $\beta' \le \frac12$ and $\beta = 1$ otherwise, where $K_- = \frac{3\Cr{c:equil}}{\varepsilon_{L(N)}} \vee C(\delta, \bm u,\bm v)\vee K_0$, $K_+= \sqrt{\log \log e^2N}$,  
$\Cr{c:equil}$ and 
$\Cr{C:Kinv}$ are 
from Props.~\ref{prop:entrance_time_afar} and \ref{prop:coarse_paths}, respectively, and $\Cr{C:rho} \in (1, \infty)$. 
\item[ii)] For $d \geq 4$ and $\varepsilon_L = \varepsilon \in (0, 1)$ uniformly in $L$, we have 
for any fixed $L \ge C(\bm u, \bm v, \varepsilon, L_0)$, 
$K = C({\bm u,\bm v}, \varepsilon) \vee K_0$  and $N 
\ge C({\bm u,\bm v}, \varepsilon, K_0, L_0)$,
\begin{equation}\label{eq:bootstrapped_limit_probd=4}
\begin{split}
N^{-1} \log  \P\big[(\mathscr G(\Lambda_N, \mathcal G_{L}, \mathcal{F}^{\bm u, \bm v}_{L}; \tfrac12))^c\big] & \le -c(\bm u, \bm v, \varepsilon, K_0, L_0) \, (< 0).
\end{split}
\end{equation}
	\end{itemize}
\end{proposition}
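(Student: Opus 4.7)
The plan is to union-bound over $\mathcal{C} \in \mathcal{A}$ at cost $|\mathcal{A}| \leq e^{\Gamma(N/L)}$, and for each fixed $\mathcal{C}$ of size $n = |\mathcal{C}|$, to bound the probability of $\{|\mathcal{B}| > (1-\rho)n\}$ where $\mathcal{B} = \{z \in \mathcal{C} : (\mathcal{F}_{z,L} \cap \mathcal{G}_{z,L})^c\}$. Writing $\mathcal{B} \subset \mathcal{B}_{\mathcal{G}} \cup \bigcup_{i=1}^{k} \mathcal{B}_{\mathcal{F},i}$ with $\mathcal{B}_{\mathcal{G}} = \{z : \mathcal{G}_{z,L}^c\}$ and $\mathcal{B}_{\mathcal{F},i} = \{z : (\mathcal{F}_{z,L}^{u_i,v_i})^c\}$, it will suffice (by pigeonhole with an auxiliary small parameter $\eta > 0$) to bound the probabilities of (a) $|\mathcal{B}_{\mathcal{G}}| \geq \eta n$ and (b) for some $i$, $|\mathcal{B}_{\mathcal{F},i}| \geq (1-\rho-\eta)n/k$.

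For case (a), I would apply Lemma~\ref{L:RI_basic_coupling} with $\varepsilon = \varepsilon_L$ and $m_0 = m_L$ to realise $\mathbb{P}$ and $\widetilde{\mathbb{P}}_{\mathcal{C}}$ on a common probability space (using that \eqref{def:C} ensures the $U_z$ are sufficiently separated to verify \eqref{eq:RI_cond_Q}). On $\bigcap_{z\in\mathcal{C}} \mathcal{U}_z^{\varepsilon_L,m_L}$, the inclusion \eqref{eq:bootstrap_prob_coupling} yields $\mathcal{G}_{z,L}^c \subset \widetilde{\mathcal{G}}_{z,L}^c$, and since the events $\widetilde{\mathcal{G}}_{z,L}^c$ depend on the disjoint excursion families $\widetilde{Z}_z$, they are independent under $\widetilde{\mathbb{P}}_{\mathcal{C}}$, each of probability at most $p_L$. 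A binomial tail bound together with the hypothesis $\log p_L \leq -L^{\beta'}$ from \eqref{eq:initial_limit_prob} then gives
\[ \widetilde{\mathbb{P}}_{\mathcal{C}}\big[|\{z : \widetilde{\mathcal{G}}_{z,L}^c\}| \geq \eta n\big] \leq \binom{n}{\lceil \eta n \rceil} p_L^{\lceil \eta n \rceil} \leq \exp\big(n(\log 2 - \eta L^{\beta'})\big), \]
to which we add the coupling error $n\,\widetilde{\mathbb{P}}[(\mathcal{U}_0^{\varepsilon_L,m_L})^c] \leq n\,e^{-L^{\beta'}}$ from \eqref{eq:initial_limit_prob}. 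For $L$ in the range chosen below, case (a) contributes at most $\exp(-\tfrac{\eta n L^{\beta'}}{2})$.

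For case (b), the idea is to translate the simultaneous failure of $\mathcal{F}_{z,L}^{u_i,v_i}$ across a subset $\widetilde{\mathcal{C}} \subset \mathcal{C}$ of size $\geq (1-\rho-\eta)n/k$ into a macroscopic deviation of the observable $h^{u_i}(\widetilde{\mathcal{C}})$ defined in \eqref{def:Xu}. Each excursion into $D_z$ contributes a controlled positive amount to $\int_0^\infty V(X_s)\,ds$ through the term $e_\Sigma(D_z)\bar{e}_{D_z}$ in \eqref{def:V}, so that aggregating an excess (resp.~deficit) of $N_z^{u_i}$ relative to $v_i\,\mathrm{cap}(D_z)$ across $\widetilde{\mathcal{C}}$ forces $h^{u_i}(\widetilde{\mathcal{C}})$ above (resp.~below) $(v_i-O(\varepsilon))\,\mathrm{cap}(\Sigma(\widetilde{\mathcal{C}}))$, a deviation bounded by Proposition~\ref{prop:Xutail_bnd} at the rate $(\sqrt{u_i}-\sqrt{v_i})^2(1-O(\varepsilon))\,\mathrm{cap}(\Sigma(\widetilde{\mathcal{C}}))$. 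Combining this with the capacity estimate $\mathrm{cap}(\Sigma(\widetilde{\mathcal{C}})) \geq \tfrac{1-\rho-\eta}{k}(1 - \Cr{C:Kinv}/K)\,\mathrm{cap}(T_{(1-\sigma)N})$ supplied by Proposition~\ref{prop:coarse_paths} applied to $\widetilde{\mathcal{C}}$, and union-bounding over $\widetilde{\mathcal{C}}$ (factor $\binom{n}{\lceil(1-\rho-\eta)n/k\rceil}\leq 2^n$), case (b) for each $i$ contributes
\[ 2^n \exp\Big(-(\sqrt{u_i}-\sqrt{v_i})^2(1-O(\varepsilon))\,\tfrac{1-\rho-\eta}{k}(1-\Cr{C:Kinv}/K)\,\mathrm{cap}(T_{(1-\sigma)N})\Big). \]

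Assembling (a) and (b) with the outer union bound over $\mathcal{C}$ and across $i \in \{1,\ldots,k\}$, and taking $\eta$ of order $\rho$ to absorb the losses into a factor $(1-\Cr{C:rho}\rho)$, one obtains a bound whose leading exponential rate is $(1-O(\rho+\varepsilon))(1-\Cr{C:Kinv}/K)\min_i(\sqrt{u_i}-\sqrt{v_i})^2\,\mathrm{cap}(T_{(1-\sigma)N})/k$. In $d=3$, with $L(N)=\lfloor(\log N)^{\alpha}\rfloor$ for $\alpha=\alpha(\beta')$ large and $K$ in the stated range, both $\Gamma(N/L)=O(K^{-1}(N/L)\log(N/L))$ and $n\log 2$ are negligible against $\mathrm{cap}(T_{(1-\sigma)N})\sim c(1-\sigma)N/\log N$, while $\eta n L^{\beta'}/2 \gg N/(\log N)^{\beta}$ for the $\beta$ announced, yielding \eqref{eq:bootstrapped_limit_prob1d=3}; the sharp constant $\pi/(3k)$ emerges from the asymptotics of $\mathrm{cap}(T_{(1-\sigma)N})$ together with the $1/k$ pigeonhole factor, the correction $\Cr{C:Kinv}/K$ being absorbed using $K \leq K_+(N)\to\infty$. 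In $d \geq 4$, where $\mathrm{cap}(T_N)\sim cN$ and $\Gamma(x) = Cx$, the analogous computation yields \eqref{eq:bootstrapped_limit_probd=4} for fixed $L$ and $K$ large. The main obstacle is case (b): preserving the exact rate $(\sqrt{u_i}-\sqrt{v_i})^2$ when passing from per-box $\mathcal{F}$-failures to the weighted functional $h^{u_i}$, and carefully bookkeeping the $V$-versus-$e_\Sigma$ comparison of Lemma~\ref{lem:compareVe} and the truncated-capacity loss in Proposition~\ref{prop:coarse_paths} within the admissible ranges \eqref{eq:quant_LK} of $L$ and $K$.
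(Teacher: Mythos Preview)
Your overall architecture matches the paper's: union-bound over $\mathcal{C}\in\mathcal{A}$, split the bad points into $\mathcal{G}$-failures and $\mathcal{F}$-failures, handle the former via the coupling of Lemma~\ref{L:RI_basic_coupling} and independence, and the latter by pigeonhole over $i\in\{1,\dots,k\}$ followed by passage to a deviation of $h^{u_i}$ and application of Proposition~\ref{prop:Xutail_bnd} combined with the capacity bound of Proposition~\ref{prop:coarse_paths}. Two steps, however, are not correctly argued.

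\textbf{Case~(a), coupling error.} Your additive term $n\,\widetilde{\mathbb P}\big[(\mathcal{U}_0^{\varepsilon_L,m_L})^c\big]\le n\,e^{-L^{\beta'}}$ comes from a union bound over single-$z$ failures of $\mathcal{U}_z$, and it is far too large: with $L=(\log N)^{\alpha}$ this is $\exp(-(\log N)^{\alpha\beta'})$, which after multiplication by $|\mathcal{A}|=e^{\Gamma(N/L)}$ swamps the target $e^{-cN/(\log N)^{\beta}}$. The fix is to exploit that the $\mathcal{U}_z^{\varepsilon_L,m_L}$ are functions of the \emph{independent} Poisson processes $n_z(\cdot)$. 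From $(\mathcal{G}_{z,L})^c\subset(\widetilde{\mathcal{G}}_{z,L})^c\cup(\mathcal{U}_z^{\varepsilon_L,m_L})^c$, both families are independent across $z\in\mathcal{C}$ with individual probability $\le e^{-L^{\beta'}}$ by \eqref{eq:initial_limit_prob}, so $\P[|\mathcal{B}_{\mathcal{G}}|\ge\eta n]\le\binom{n}{\lceil\eta n\rceil}(2e^{-L^{\beta'}})^{\lceil\eta n\rceil}$, which is what the paper derives as \eqref{eq:G-bound-pf3}.

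\textbf{Case~(b), the passage to $h^{u}$.} This is where the real work lies, and your sketch does not bridge the gap. The claim ``each excursion into $D_z$ contributes a controlled positive amount to $\int_0^{\infty} V(X_s)\,ds$'' is not a deterministic statement: the per-excursion quantity $\int_0^{T_U}e_{D_z}(Z_i^{D_z,U_z}(s))\,ds$ is random, so $N_z^{u}>v\,\mathrm{cap}(D_z)$ for all $z\in\widetilde{\mathcal{C}}$ does \emph{not} by itself force $h^{u}(\widetilde{\mathcal{C}})\ge(v-O(\varepsilon))\,\mathrm{cap}(\Sigma(\widetilde{\mathcal{C}}))$. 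The paper overcomes this via a dichotomy (display~\eqref{eq:G-bound-pf6}): one introduces the auxiliary per-box event $\mathcal{E}^{v'}(Z_z^{v})$ from \eqref{def:Euv+}, namely $\sum_{i\le v\,\mathrm{cap}(D_z)}\int e_{D_z}(Z_i)\,ds\ge v'\,\mathrm{cap}(D_z)$ for some $v'$ slightly below $v$. Then either $\mathcal{E}^{v'}(Z_z^{v})$ fails for a positive fraction of $z$'s (event $\widetilde{E}_{2.1}$, handled by localizing to $\widetilde{Z}_z$ via an \emph{additional} inclusion event ${\rm Incl}_z^{\tilde\varepsilon,\tilde m}$, using independence across $z$, and a single-box application of Proposition~\ref{prop:Xutail_bnd}), or it holds for most $z$'s, in which case combining $(\mathcal{F}_z^{u,v})^c$ with $\mathcal{E}^{v'}(Z_z^{v})$ yields the algebraic identity $h^{u}(\mathcal{D}')\ge v'\,\mathrm{cap}(\Sigma(\mathcal{D}'))$ displayed below \eqref{eq:G-bound-pf6}. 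Only after this step does Proposition~\ref{prop:Xutail_bnd} apply on $\Sigma(\mathcal{D}')$ with the sharp rate $(\sqrt{u}-\sqrt{v})^2$. Without this dichotomy there is no route from the per-box excursion-count deviations to a deviation of the global functional $h^{u}$ at the correct rate.
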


The proof of Proposition~\ref{prop:bootstrap_prob} will exhibit the observable $h^u(\mathcal{C})$ introduced in Section~\ref{sec:hu}, for certain subsets $\mathcal{C}$ of admissible collections (in $\mathcal{A}$). This is not obvious at all (the generic event $\mathscr{G}$ does not involve $h^u$) and will require some work. A key step is a certain dichotomy, see \eqref{eq:G-bound-pf6} below, which will make $h^u$ appear (cf.~the event $\widetilde{E}_{2.2}$ below). The crucial properties of collections in $\mathcal{A}$ gathered in Proposition~\ref{prop:coarse_paths} then come into play to produce the leading-order decay in \eqref{eq:bootstrapped_limit_prob1d=3} when combined with Proposition~\ref{prop:Xutail_bnd}, which in particular requires a lower bound on $\text{cap}(\Sigma)$ for $\Sigma=\Sigma(\mathcal{C})$. The bound \eqref{eq:coarse_capacity_d=3} thus ensures that the coarse-grained path $\mathcal{C}$ does not ``loose too much'' capacity.

\begin{proof} We treat both $d = 3$ and $d\geq4$ simultaneously. Assume that $K \geq 100$, $L \geq 1$, and
$N \geq \Cr{c:nLB}^{-1} \, h(KL)$ so 
that Proposition~\ref{prop:coarse_paths} is in force. In particular, this entails that
a $(\Cr{c:nLB}, \Gamma)$-admissible collection $\mathcal{A}=\mathcal{A}_{L}^K(\Lambda_N)$ with the properties listed in Proposition~\ref{prop:coarse_paths} exists, and $\mathscr{G}= \mathscr G(\Lambda_N, \mathcal G_{L}, \mathcal{F}^{\bm u, \bm v}_{L}; \rho) $ is well-defined. For all such $K,L,N$, applying a union bound over $\mathcal{C}$ in \eqref{def:script_G} and using \eqref{def:coarse_admissible3} yields that
\begin{equation}
\label{eq:G-bound-pf1}
\textstyle\log \P[\mathscr{G}^c] \leq \Gamma ({N}/{L}) + \sup_{\mathcal{C} \in \mathcal{A}} \log \P\big[ \big( \bigcup_{\tilde{\mathcal C}} 
			\,\, \bigcap_{z \in \tilde{\mathcal C}} \,\, ( \mathcal F_{z,L}^{\bm u, \bm v} \cap \mathcal{G}_{z, L})   \big)^c \big],
\end{equation}
where the union is over $\tilde{\mathcal C} \subset \mathcal C$ having cardinality $|\tilde {\mathcal C}| \ge  \rho |\mathcal C|$. In the sequel we always tacitly assume that $K,L,N$ satisfy the requirements above \eqref{eq:G-bound-pf1}. Additional conditions on any of these parameters will be mentioned explicitly. We will deal with the term $\Gamma(\cdot)$ at the end of the proof and focus on the probability appearing on the right-hand side of \eqref{eq:G-bound-pf1}, which is wherein the work relies. All subsequent considerations hold uniformly in $\mathcal{C} \in \mathcal{A}$. Let $z \in \mathbb{L}$ be a \emph{good} point if the event $\mathcal{G}_{z, L} \cap \mathcal F_{z,L}$ occurs, and \emph{bad} otherwise. For any collection $\mathcal{C} \in \mathcal{A}$, the event appearing on the right of \eqref{eq:G-bound-pf1} asserts that there is no sub-collection $\tilde{\mathcal{C}} \subset \mathcal{C}$ of cardinality at least $\rho |\mathcal C|$ consisting of good points only. Thus, on this event $\mathcal{C}$ contains at least $(1-\rho) |\mathcal C|$ bad points. It follows that for any $\mathcal{C} \in \mathcal{A}$, 
\begin{equation}
\label{eq:G-bound-pf2}\textstyle
\P\big[ \big( \bigcup_{\tilde{\mathcal C}} 
			\,\, \bigcap_{z \in \tilde{\mathcal C}} \,\, ( \mathcal F_{z,L}^{\bm u, \bm v} \cap \mathcal{G}_{z, L})  \big)^c \big] \leq  \mathbb{Q}_{\mathcal{C}}[E_1] +  \mathbb{Q}_{\mathcal{C}}[E_2], 
\end{equation}
where $ \mathbb{Q}_{\mathcal{C}}$ is the extension of $\P$ supplied by Lemma~\ref{L:RI_basic_coupling} and
\begin{align*}
&E_1= \big\{ \exists \mathcal C_1 \subset \mathcal C, \, |\mathcal C_1| \geq \rho |\mathcal C| : \, (\mathcal{G}_{z,L})^c \text{ occurs for all } z \in C_1  \big\},\\
&E_2= \big\{ \exists \mathcal C_2 \subset \mathcal C, \, |\mathcal C_2| \geq (1-2\rho) |\mathcal C| : \, (\mathcal{F}_{z,L}^{\bm u, \bm v})^c 
 \text{ occurs for all } z \in C_2  \big\}.
\end{align*}
We will bound the two probabilities on the right-hand side of \eqref{eq:G-bound-pf2} individually. We start by observing that the inclusion \eqref{eq:inclusion} obtained in Lemma~\ref{L:RI_basic_coupling} holds for the choices $\varepsilon= \varepsilon_L$ and $m_0=m_L$ whenever $ K \geq \Cr{c:equil}(\varepsilon_L)^{-1}$; indeed the relevant condition \eqref{eq:RI_cond_Q} holds on account of Proposition~\ref{prop:entrance_time_afar} (see~\eqref{eq:eAbar_condentrance}). Using the inclusion \eqref{eq:inclusion}, recalling that the events $\mathcal{U}_{z}^{\varepsilon_L, m_L}$ are independent as $z \in \mathbb{L}$ varies, and applying the relevant bound from \eqref{eq:initial_limit_prob}, it follows that for all $L \geq L_0$ and $K \geq K_0 \vee \Cr{c:equil}(\varepsilon_L)^{-1}$,
\begin{equation}
\label{eq:G-bound-pf2.1}
\mathbb{Q}_{\mathcal{C}}[ ( {\rm Incl}_{z}^{\varepsilon_L, m_L})^c,  z \in \mathcal{D}] \leq e^{-L^{\beta'} |\mathcal{D}|}, \, \text{for all } \mathcal{D}\subset \mathbb{L}.
\end{equation}
To bound $\mathbb{Q}_{\mathcal{C}}[E_1]$, one then proceeds as follows. First one applies a union bound over the choice of $\mathcal{C}_1$, using the elementary estimate ${n\choose k} \leq (\frac{en}{k})^k$ for all $1 \leq k \leq n$ (implied by the bound $\frac{k^k}{k!} \leq e^k$), applied with $n=|\mathcal{C}|$ and $k=\lfloor \rho n \rfloor$. Then one uses
the inclusion $( \mathcal G_{z, L})^c \subset (\widetilde{\mathcal G}_{z, L})^c \cup ( {\rm Incl}_{z}^{\varepsilon_L, m_L})^c $ implied by \eqref{eq:bootstrap_prob_coupling} together with a union bound, \eqref{eq:G-bound-pf2.1} and the control on the decay of $p_L$ from \eqref{eq:initial_limit_prob}. All in all, this yields, for $L \geq L_0 \vee C(\beta')$ and $K \geq K_0 \vee \Cr{c:equil}(\varepsilon_L)^{-1}$,
\begin{equation}
\label{eq:G-bound-pf3}
\mathbb{Q}_{\mathcal{C}}[E_1] 
\leq  \exp\big\{-\rho |\mathcal C| \big( c L^{\beta'} - C|\log \rho \,| \big) \big\}.
\end{equation}

The case of $E_2$ is more involved, and, as will turn out, produces the leading-order contribution to the right-hand side of \eqref{eq:G-bound-pf1}. We start by modifying the event $E_2$ to make it easier to handle. Recall from \eqref{eq:Fext} that $(\mathcal{F}_{z,L}^{\bm u, \bm v})^c$ involves a union over events at $k \geq 1$ different pairs of levels $(u_i,v_i)$, $1\leq i \leq k$. The collection $\mathcal C_2$ involved in $E_2$ must therefore contain a sub-collection of cardinality at least ${|\mathcal C_2|}/k = {(1-2\rho)|\mathcal C|}/k$ for which $(\mathcal{F}_{z,L}^{ u,  v})^c$ occurs for some $(u,v) \in \{ (u_i,v_i): \, 1\leq i \leq k \}$ (the choice of $(u,v)$ depends on $\mathcal C_2$ of course).
By further sacrificing a fraction ${\rho |\mathcal C|}/{k}$ from this new collection we may assume that for each  $z$, the event ${\rm Incl}_{z}^{\tilde \varepsilon, \tilde m}$ occurs, where $\tilde\varepsilon$, $\tilde m$ are chosen for given $\delta \in (0,1)$ as 
\begin{equation}
\label{eq:G-bound-pf4}
\begin{split}
\tilde \varepsilon = \tfrac{\delta}{100}  \min_{i}|u_i-v_i|
, \,  \tilde m= \tilde \varepsilon^{-3} \vee 2^{-1}\min \{ u_i, v_i, 1 \leq i \leq k\}\text{cap}( D_0).
\end{split}
\end{equation}
Thus, defining the event
$
\widetilde{E}_2(\mathcal{D})= \bigcap_{z \in \mathcal{D}} 
(\mathcal{F}_{z,L}^{ u,  v})^c \cap {\rm Incl}_{z}^{\tilde \varepsilon, \tilde m}
$
it follows from the above discussion by means of appropriate union bounds that
\begin{equation}
\label{eq:G-bound-pf5}
\mathbb{Q}_{\mathcal{C}}[E_2] \leq (C(\rho^{-1} \vee k))^{\rho |\mathcal{C}|}\big( \sup_{(u,v),\,  \mathcal{D}}  \mathbb{Q}_{\mathcal{C}}\big[\widetilde{E}_2(\mathcal{D})\big] +
e^{-c\tilde \varepsilon^{2}\tilde m \rho |\mathcal{C}|/k} \big)
\end{equation}
 for $K \geq \Cr{c:equil}\tilde\varepsilon^{-1}$, where the supremum ranges over all $k$ choices for $(u,v)$ and all subsets $\mathcal{D}\subset \mathcal C$ having cardinality $|\mathcal{D}| \geq (1-3\rho) |\mathcal C|/k$, and the last term in \eqref{eq:G-bound-pf5} is a bound for probability of jointly occurring events of type $({\rm Incl}_{z}^{\tilde \varepsilon, \tilde m})^c$, for $z$ ranging over a collection of cardinality ${\rho |\mathcal C|}/{k}$; this bound is obtained similarly as in \eqref{eq:G-bound-pf2.1}, exploiting \eqref{eq:inclusion}, using independence of $\mathcal{U}_{z}^{\tilde \varepsilon,\tilde m}$ over $z$ and applying \eqref{eq:bnd_Uzepm}.

It remains to deal with $\widetilde{E}_2(\mathcal{D})$, for $\mathcal{D}$ as above. To this effect, we introduce the following events. For a given $Z^u=(Z^u_k)_{1\leq i \leq n_Z}$ with $Z^u \in \{ \overline{Z}_z^u, {Z}_z^u, \widetilde{Z}_z^u \}$ for some $z \in \mathbb{L}$ (see \eqref{eq:3Vs} for notation), let
 \begin{equation}\label{def:Euv+} \textstyle
\mathcal E^{v}(Z^u) \stackrel{{\rm def.}}{=} \big\{\sum_{1\leq i \leq n_Z} \int_0^{T_{U}} e_{ D_z}\big(Z_i(s)\big)ds \leq v \, {\rm 
cap}( D_z) \big\} 
\end{equation}
 if $u \leq v$ and with opposite inequality when $u > v$. Notice in particular that $\mathcal E^{v}(\overline{Z}_z^u) =\{ h^u(z) \leq v \, {\rm cap}( D_z)\}$ when $u \leq v$ (and similarly when $u>v$) on account of \eqref{def:Xu} and the first line of \eqref{eq:3Vs}. The events $\mathcal E^{v}(Z^u)$ are defined in such a way that they are typical in practice, i.e.~likely to occur.
 
 Following is a crucial dichotomy, which brings into play deviations of the type considered in Proposition~\ref{prop:Xutail_bnd}. As we now argue, if $u \leq v$, we claim that 
  \begin{equation}
\label{eq:G-bound-pf6}
\widetilde{E}_2(\mathcal{D}) \subset \widetilde{E}_{2.1}(\mathcal{D}) \cup \widetilde{E}_{2.2}(\mathcal{D}),  
\end{equation}
for any collection $\mathcal{D}$ with $|\mathcal{D}| \geq (1-3\rho) |\mathcal C|/k$ and $v' \in (u,v) \, ( = (u \wedge v, u \vee v))$,
 where
\begin{align*}
& \widetilde{E}_{2.1}(\mathcal{D})\stackrel{\text{def.}}{=}\bigcup_{\substack{\mathcal{D}' \subset \mathcal{D}: \\ |\mathcal{D}'| \geq  \rho |\mathcal{C}|/k }}     \bigcap_{z\in  \mathcal{D}'}   (\mathcal E^{v'}(Z_z^v))^c,  \qquad \widetilde{E}_{2.2}(\mathcal{D}) \stackrel{\text{def.}}{=} \bigcup_{\substack{\mathcal{D}' \subset \mathcal{D}: \\ |\mathcal{D}'| \geq (1-4 \rho )|\mathcal{C}|/k }}   \big\{ h^u(\mathcal{D'}) \geq v' \text{cap}(\Sigma(\mathcal{D}')) \big\}.
\end{align*}
The inclusion \eqref{eq:G-bound-pf6} continues to hold in case  $u > v$ with our above convention on $\mathcal E^{v}(Z^u)$, but now  for any $v'\in (v,u)$ and provided one defines $\widetilde{E}_{2.2}(\mathcal{D})$ with the inequality reverted in case $u > v'$. 

Let us now explain \eqref{eq:G-bound-pf6}. We focus on the case $u<v$ for concreteness, the other case follows a similar reasoning. Suppose $\widetilde{E}_2(\mathcal{D})$ occurs but $\widetilde{E}_{2.1}(\mathcal{D})$ doesn't. Define $\mathcal{D}'\subset \mathcal{D}$ as the collection of $z \in \mathcal{D}$ such that $\mathcal E^{v'}(Z_z^v)$ occurs. We will show that with this choice of $\mathcal{D}'$, one has i) $|\mathcal{D}'| \geq (1-4 \rho )|\mathcal{C}|/k$, and ii) $h^u(\mathcal{D}') \geq v' \text{cap}(\Sigma(\mathcal{D'})$. Thus, $\widetilde{E}_{2.2}(\mathcal{D})  $ occurs and \eqref{eq:G-bound-pf6} follows. To see i), recall that $|\mathcal{D}| \geq (1-3 \rho )|\mathcal{C}|/k$ so if i) were not to hold then the set of points $z \in \mathcal{D}$ such that $(\mathcal E^{v'}(Z_z^v))^c$ occurs would have cardinality exceeding $ \rho |\mathcal{C}|/k$, implying $\widetilde{E}_{2.1}(\mathcal{D})$, which is precluded. To see ii), notice that by joint occurrence of $(\mathcal{F}_{z,L}^{ u,  v})^c$  (as implied by $\widetilde{E}_2(\mathcal{D})$) and of $\mathcal E^{v'}(Z_z^v)$ for each $z \in \mathcal{D}'$, one has, abbreviating $\Sigma= \Sigma(\mathcal{D}')=\bigcup_{z\in \mathcal{D}'}  D_z$ (see \eqref{def:Sigma} for notation), that
\begin{multline*}
h^u(\mathcal{D}') \stackrel{\eqref{def:Xu},\eqref{def:V}}{=}
\sum_{z \in \mathcal D'} \frac{e_{\Sigma}( D_z)}{\text{cap}( D_0)}\sum_{1\leq i \leq N_z^u} \int_0^{T_{U}} e_{ D_z}\big(Z^{ D_z, U_z}_i(s)\big)ds \\\stackrel{\eqref{eq:F}}{\geq} \sum_{z \in \mathcal D'} \frac{e_{\Sigma}( D_z)}{\text{cap}( D_0)}\sum_{1\leq i \leq v \text{cap}( D_0)} \int_0^{T_{U}} e_{ D_z}\big(Z^{ D_z, U_z}_i(s)\big)ds \stackrel{\eqref{def:Euv+}, \eqref{eq:3Vs}}{\geq} v' \text{cap}(\Sigma);
\end{multline*}
in the last line, when using occurrence of $\mathcal E^{v'}(Z_z^v)$, recall that $v'< v$ since we are in the case $u<v$, so the event corresponds to the one defined
below \eqref{def:Euv+} with opposite inequality.  Overall, ii) thus holds and the verification of \eqref{eq:G-bound-pf6} is complete.

We now use \eqref{eq:G-bound-pf6} to bound $\mathbb{Q}_{\mathcal{C}}[\widetilde{E}_2(\mathcal{D})]$ uniformly in $(u,v)$ and $\mathcal{D}$ as in \eqref{eq:G-bound-pf5}. From here on,
\begin{equation}
\label{eq:G-bound-pf7}
v' = v(1+ 3\tilde\varepsilon (1_{ u > v}-1_{ u < v})).
\end{equation}
(see \eqref{eq:G-bound-pf4} regarding $\tilde\varepsilon$).
For concreteness we assume again that $u< v$, so $v' =v(1- \tilde\varepsilon)$. We first deal with $ \widetilde{E}_2(\mathcal{D}) \cap \widetilde{E}_{2.1}(\mathcal{D})$, and aim to decouple the (unlikely) events $ (\mathcal E^{v'}(Z_z^v))^c$ as $z$ varies in $\mathcal{D}'\subset \mathcal{D}$. To this effect, we use the occurrence of ${\rm Incl}_{z}^{\tilde \varepsilon, \tilde m}$ implied by $\widetilde{E}_2(\mathcal{D})$ and a localization argument similar to the one below \eqref{eq:F}. 
By monotonicity of \eqref{def:Euv+} in $u$, recalling \eqref{eq:RI_basic_coupling2} and the choices of parameters in \eqref{eq:G-bound-pf4} and \eqref{eq:G-bound-pf7}, it follows that for $L \geq C(u,v)$, when $u <v$ the inclusion
\begin{equation}
\label{eq:G-bound-pf8}
\big({\rm Incl}_{z}^{\tilde \varepsilon, \tilde m} \cap (\mathcal E^{v'}(Z_z^v))^c \big) \subset (\mathcal E^{v'}(\widetilde{Z}_z^{v(1-\tilde\varepsilon)}))^c
\end{equation}
holds $\mathbb{Q}_{\mathcal{C}}$-a.s.~The events on the right-hand side of \eqref{eq:G-bound-pf8} are independent as $z$ varies by construction (see above \eqref{def:Uz}). For a single $z$, the probability of the event in question is best bounded by restituting $h^{v(1-2\tilde\varepsilon)}(z)$ from the functional entering $\mathcal E^{v'}(\widetilde{Z}_z^{v(1-\tilde\varepsilon)})$. This is achieved by de-localizing, i.e.~suitably coupling tilted with untilted trajectories and controlling the relevant number $N_{z}^{v(1-2\tilde\varepsilon)}$ to effectively replace $\widetilde{Z}_z^{v(1-\tilde\varepsilon)}$ by $\overline{Z}_z^{v(1-2\tilde\varepsilon)}$. It follows that for all $L \geq C(u,v)$, $K \geq C \tilde{\varepsilon}^{-1}$ and $z \in \mathbb{L}$ (when $u<v$),
\begin{equation*}
\mathbb{Q}_{\mathcal{C}}[(\mathcal E^{v'}(\widetilde{Z}_z^{v(1-\tilde\varepsilon)}))^c] \leq e^{-c\tilde{\varepsilon}^2 v \text{cap}( D_0)} + \P[h^{v(1-2\tilde\varepsilon)}(z) \leq v' \text{cap}( D_0)] \stackrel{\eqref{eq:Xutail_bndlow}, \eqref{eq:G-bound-pf7}}\leq e^{-c'\tilde{\varepsilon}^2 v \text{cap}( D_0)}.
\end{equation*}
Combining this with \eqref{eq:G-bound-pf8} and a union bound over $\mathcal{D}'$ (cf.~below \eqref{eq:G-bound-pf6}) yields the desired bound on $\mathbb{Q}_{\mathcal{C}}[\widetilde{E}_2(\mathcal{D}) \cap \widetilde{E}_{2.1}(\mathcal{D})]$. With regards to $\mathbb{Q}_{\mathcal{C}}[\widetilde{E}_{2.2}(\mathcal{D})]$, one performs a similar union bound and applies Proposition~\ref{prop:Xutail_bnd} with $\Sigma=\Sigma(\mathcal{D}')$ and $\mathcal{D}' \subset \mathcal{C}$ satisfying $ |\mathcal{D}'| \geq (1-4 \rho )|\mathcal{C}|/k$. Altogether, this gives,  for all $u,v,\mathcal{D}$ as  in the sup of \eqref{eq:G-bound-pf5}, all $L \geq C(u,v, \delta)$ (recall $\delta \in (0,1)$ is implicit in $\tilde{\varepsilon}$) and $K \geq C \tilde{\varepsilon}^{-1}$,
 \begin{multline}
\label{eq:G-bound-pf9}
\mathbb{Q}_{\mathcal{C}}[\widetilde{E}_2(\mathcal{D})]\stackrel{\eqref{eq:G-bound-pf6}}{ \leq } \mathbb{Q}_{\mathcal{C}}[\widetilde{E}_2(\mathcal{D}) \cap \widetilde{E}_{2.1}(\mathcal{D})]+ \mathbb{Q}_{\mathcal{C}}[ \widetilde{E}_{2.2}(\mathcal{D})] \\
\leq  (C\rho^{-1})^{ k^{-1}\rho |\mathcal C|}   \Big( \exp\big\{- ck^{-1}\rho |\mathcal C|  {  v \tilde{\varepsilon}^2 \text{cap}( D_0)}\big\} +\textstyle \sup_{\mathcal{D}'} \exp\big\{- (1- 2\tilde{\varepsilon}) (\sqrt{u}-\sqrt{v})^2 \text{cap}(\Sigma(\mathcal{D}'))\big\} \Big)
\end{multline}
with the supremum ranging over $\mathcal{D}' \subset \mathcal{C}$ satisfying $ |\mathcal{D}'| \geq (1-4 \rho )|\mathcal{C}|/k$.

We now assemble the various pieces, and in the process aim to apply \eqref{eq:coarse_capacity_d=3} to control the term involving $\text{cap}(\Sigma(\mathcal{D}'))$ in \eqref{eq:G-bound-pf9}. We first focus on the case $d=3$, which is more intricate. In that case recall from \eqref{def:coarse_admissible1} that $c(1-\sigma)N/KL \leq |\mathcal{C}| \leq (1-\sigma)N/KL$ and that $\sigma \leq \frac12$. Define $L=L(N)=  \lfloor (\log N)^{\alpha} \rfloor$ for $\alpha>0$ to be determined. Then \eqref{eq:G-bound-pf3} yields that for all $N \geq C( \beta', \rho)$ (so that in particular $c L^{\beta'} - C'|\log \rho \,|$) and $K \geq K_0 \vee \Cr{c:equil}(\varepsilon_L)^{-1}$, 
\begin{equation}
\label{eq:G-bound-pf10}
\log \mathbb{Q}_{\mathcal{C}}[E_1] \leq - \frac{\Cl[c]{c:iidpart}N}{K (\log N)^{\alpha(1-\beta')}}.
\end{equation}
As to $\mathbb{Q}_{\mathcal{C}}[E_2] $, we now examine the sizes of the various terms involved in \eqref{eq:G-bound-pf5} and \eqref{eq:G-bound-pf9}. Since $\text{cap}( D_0) \geq cL^{d-2}=cL$ when $d=3$ and due to the choices of $\tilde{m}$ and $\tilde{\varepsilon}$ in \eqref{eq:G-bound-pf4}, one readily finds that the last term in \eqref{eq:G-bound-pf5} decays to leading exponential order as $cN/ (\log L)^{\theta}$, with $L=L(N)$. The same conclusions can be reached of the first term in the last line of \eqref{eq:G-bound-pf9}. All in all, these two terms are negligible relative to the decay in \eqref{eq:G-bound-pf10} as soon as $N \geq C( \beta', \rho, k, \delta, {\bm u,\bm v}, L_0 ) $ and $K \geq K_0 \vee C \tilde{\varepsilon}^{-1} $. 

The second term in \eqref{eq:G-bound-pf9} is bounded using \eqref{eq:coarse_capacity_d=3}. Note to this effect that \eqref{eq:quant_LK} is satisfied for the choice $L=L(N) (=L_-=L_+) $ with $K_+= \sqrt{\log \log e^2N}$. Overall, this yields, for all $\delta \in (0,1)$,  $N \geq C( \beta', \rho, k, \delta, {\bm u,\bm v}, L_0 )  $ and $ K_0 \vee C \tilde{\varepsilon}^{-1} \leq K \leq K_+ $,
\begin{equation}
\label{eq:G-bound-pf11}
\log \mathbb{Q}_{\mathcal{C}}[E_2] \leq \frac{(1-\frac\delta 2) \gamma N}{\log N}, \quad \gamma= (1 - \sigma)(1 - C\rho)\frac{\pi}{3k}\bigg[\min_{(u,v)}(\sqrt{u} - \sqrt{v})^2 \bigg],
\end{equation}
with $(u,v)$ ranging among $(u_i,v_i)$, $1\leq i \leq k$.
Returning to \eqref{eq:G-bound-pf1}-\eqref{eq:G-bound-pf2}, the bounds in \eqref{eq:G-bound-pf10}, \eqref{eq:G-bound-pf11} are now pitted against $\Gamma(N/L(N))$. Since $\Gamma(N/L) \leq {\Cr{C:cg_complexity}}N (\log N) /KL  $, see Proposition~\ref{prop:coarse_paths}, item~i), it follows from \eqref{eq:G-bound-pf1} that for all $N \geq C( \beta', \rho, k, \delta, {\bm u,\bm v}, L_0 ) $ and $K \geq K_0 \vee C \tilde{\varepsilon}^{-1} \vee \Cr{c:equil}(\varepsilon_{L(N)})^{-1} $,
 \begin{equation}
\label{eq:G-bound-pf12}
N^{-1} \log \P[\mathscr{G}^c] \leq  \frac{\Cr{C:cg_complexity}}{K_-(N)(\log N)^{\alpha-1}}- \bigg( \frac{\Cr{c:iidpart}}{K_+(N) (\log N)^{\alpha(1-\beta')}}  \vee \frac{(1-\tfrac\delta 2) \gamma}{\log N}\bigg).
\end{equation}
In order for the term in parenthesis to be larger than the first term on the right of \eqref{eq:G-bound-pf12}, and because $K_- \geq 1$ whereas $K_+ \leq C \vee \sqrt{\log \log N} $, it is sufficient that $\alpha -1 > \alpha(1- \beta') \vee 1$. In particular this requires $\alpha > 2$. There are now two cases to consider, depending on the value of $\beta' \in (0,1)$. If $0< \beta'  \leq \frac12$, one simply picks any $\alpha > \frac1{\beta'} (>2)$, for instance $\alpha= \frac1{\beta'}+1$. Since for such $\beta'$, one has $\alpha(1- \beta') \ge 1$, the right-hand side of \eqref{eq:G-bound-pf12} is bounded by $- {\Cr{c:iidpart}}/({2K_+(N) (\log N)^{\alpha(1-\beta')}})$ for large $N$. By choosing any $\beta$ ever so slightly larger than $\alpha(1-\beta')$, one can easily absorb the factor $1/ 2K_+(N)$ for large $N$ and instead produce the desired pre-factor, leading to \eqref{eq:bootstrapped_limit_prob1d=3} in this case. If instead $\frac12< \beta'  \leq \frac12$, one picks a value of $\alpha$ satisfying $2< \alpha < (1-\beta')^{-1}$, for instance, $\alpha= 1 + \frac12(1-\beta')^{-1} $. The decay in \eqref{eq:G-bound-pf12} is then governed by the second term in parenthesis (since now $\alpha(1- \beta') <1$), and for suitably large $N$ one ensures that the right-hand side of \eqref{eq:G-bound-pf12} is at most $\frac{(1-\delta ) \gamma}{\log N}$, yielding \eqref{eq:bootstrapped_limit_prob1d=3}. 

The case  $d \geq 4$ is simpler, notably because the complexity $\Gamma(N/L) \leq C N/L$ never requires fine-tuning of $L$ beyond choosing $L$ large (in a manner depending on the various parameters). For instance, in the case of $\mathbb{Q}_{\mathcal{C}}[E_1]$, recalling that $|\mathcal{C}| \geq {cN}/L \log(KL)^2$ from \eqref{def:coarse_admissible1}, one obtains a bound on $\log \mathbb{Q}_{\mathcal{C}}[E_1]$ effectively of the form $c(N/L) \frac{ L^{\beta'}}{\log(KL)^2}$ and the second fraction is more than enough for large $L$ to produce a decay of exponential order $N/L$ with arbitrary large pre-factor. The case of $\mathbb{Q}_{\mathcal{C}}[E_2]$ is handled similarly, using that the capacity of a box of side-length $L$ grows at least quadratically when $d>3$ to handle both the second term in \eqref{eq:G-bound-pf5} and the first term in \eqref{eq:G-bound-pf9}, and appealing to item ii) of Proposition~\ref{prop:coarse_paths} for the remaining one in \eqref{eq:G-bound-pf9}. Notice in particular that Proposition~\ref{prop:coarse_paths} yields an exponential decay in $N$ rather than $N/L$ in this case. Overall, \eqref{eq:bootstrapped_limit_probd=4} follows. 
\end{proof}

\begin{remark}[The events $\mathcal{F}$ and $\mathcal{G}$]\label{R:FvsG}
We briefly return to the different roles played by the events $\F$ and $\G$ in defining the (good) event $\mathscr G$ in \eqref{def:script_G}. The family $\G$ will be further specified in the next section, but remains largely flexible. In the simplest cases of interest $\G_{z,L}$ will correspond to a (dis-)connection event inside $\tilde{D}_{z,L}$, see for instance \eqref{eq:disconnect_Gz} below, but more complex choices for $\G_{z,L}$ will also be required. The events 
$\mathcal{F}$ specified in \eqref{eq:Fext} may superficially look like a means to localization (cf.~
\S\ref{subsec:excursion}), but inspection of the proof of Proposition~\ref{prop:bootstrap_prob} (in particular the bounds on  $E_2$ defined below \eqref{eq:G-bound-pf2}, and later on $\widetilde{E}_{2.2}$, cf.~\eqref{eq:G-bound-pf5}, \eqref{eq:G-bound-pf6}, \eqref{eq:G-bound-pf9} and \eqref{eq:G-bound-pf11}) reveals that they generate the leading-order contribution to \eqref{eq:bootstrapped_limit_prob1d=3}.
\end{remark}

\section{Bootstrapping}\label{sec:coarse_graining}
In the previous section, we introduced an event $\mathscr{G}$, see  \eqref{def:script_G}, 
which is at the center of our coarse-graining mechanism. Roughly speaking, the event 
$\mathscr{G}= \mathscr G(\Lambda_N, \G, \mathcal{F};\rho)$, which lives at scale $N \gg 
L$, ensures that, for any choice of $\Lambda_N$ in \eqref{eq:scriptS_N}, any (admissible) 
coarse-grained path at scale $L$ crossing $\Lambda_N$ will meet a large number of good 
$L$-boxes anchored at points $z$, in the sense that the corresponding event 
$\G_{z,L}$ occurs. Whereas the events $\mathcal F_{z,L}$ are specified up to 
the choice of parameters, see \eqref{eq:Fext}, so far the family $\mathcal{G}= \{ \G_{z,L}: z 
\in \mathbb{L}\}$ was completely generic, save for some localization properties (see 
\eqref{eq:bootstrap_prob_coupling}). 

In \S\ref{subsec:bootstrap}, we give more structure to the events defining $\mathcal{G}$, and 
show that if $\mathcal{G}$ is chosen from a suitable class, specified by 
Definition~\ref{def:good_events}, the associated event $\mathscr{G} $ implies an event of type $\G$ at scale $N$. This is the 
content of Proposition~\ref{prop:bootstrap_events} below, see in particular 
\eqref{eq:grand_inclusion2}, which is entirely deterministic, and constitutes the main result 
of this section. The event $\mathscr G$ thus acts as a vehicle to propagate estimates for 
the events $\G$ from scale $L$ to scale $N$, which is the \emph{bootstrapping} alluded to 
in the header. The probability for this mechanism to fail will eventually be 
controlled by the previously derived Proposition~\ref{prop:bootstrap_prob}. 

As a first (simple) application of this framework, we prove in \S\ref{subsubsec:subcrit} sharp 
upper bounds on the one-arm probability for $\mathcal V^u$ in the subcritical regime $u > 
u_*$. This result is already known from Theorem~3.1 in \cite{PT12} for $d \ge 4$ and 
recently from Theorem~1.3 in \cite{Prevost23} for $d = 3$. The full strength of our framework, however, will be harnessed in the forthcoming sections where we deal with 
the supercritical regime.

\subsection{Bootstrapping with the events of type $\mathcal G$}\label{subsec:bootstrap}
Let us start by adding one more scale $L_0 \ll L (\ll N)$ to our setup. Thus, for the remainder 
of Section~\ref{sec:coarse_graining} we work with three concurrent scales $N, L, L_0 \in 
\N^\ast =\{1,2,\dots\}$ and an integer scaling factor $K$ which are always (tacitly) assumed to satisfy
\begin{equation}\label{eq:L_k_descending}
	\begin{split}
		&K \geq 100,
	N \ge 
	\Cr{c:nLB}^{-1}{\color{black}{10}^d}
	\rho^{-1}\,h(K L)  \mbox{ and } L > 100L_0, 
	\end{split}
\end{equation}
cf.~above \eqref{def:coarse_admissible3} regarding the function $h(\cdot)$, the statement of Proposition~\ref{prop:coarse_paths} regarding $\Cr{c:nLB}$ and \eqref{eq:bootstrapped_limit_prob1d=3} regarding $\Cr{C:rho}$. We also introduce $\mathbb L_0 =L_0\Z^d$ and for $A \subset \Z^d$
the set $\mathbb L_{0}(A) = \{z \in 	\mathbb L_0: A \cap C_{z, L_0} \ne \emptyset\}$; see below \eqref{def:CDU} for notation. If $\gamma$ is a path in $\Z^d$ we abbreviate $\mathbb L_{0}(\gamma)= \mathbb L_{0}(\text{Range}(\gamma))$. 
In bootstrapping from scale $L$ to $N$, the parameters $L_0$ and $K$ will remain fixed. For this reason, the dependence of quantities on $L_0$ and $K$ will be implicit in our notation.

We now introduce the family of (likely) events $\G_L =\{ \G_{z,L} : z\in \mathbb{L}\}$ of 
interest. Their definition also depends on the scale $L_0$, which, in accordance with the 
previous paragraph, will not appear explicitly in our notation; below, when passing from one scale $L$ to another for the family $\G_L$, thus varying $L$, the 
(base) scale $L_0$ will not change. The events in $\G_{L}$ are specified in 
terms of a `data' 
\begin{equation}\label{eq:data}
({\rm V}, {\rm W}, \mathscr{C}),
\end{equation}
where ${\rm V} = \{{\rm V}_z:  z \in \mathbb L\}$ and ${\rm W} = \{{\rm W}_{{
z,}y} : {
z \in \mathbb{L}, } \,  y \in \mathbb L_0\}$ are two families of events indexed by $\mathbb L$ 
and $\mathbb L\times \mathbb L_0$, respectively, and $\mathscr C = \{\mathscr C_z: z \in \mathbb 
L\}$ is a family of finite subsets of $\Z^d$. 

\begin{definition}[The events $\G= \G_L =\{ \G_{z,L} : z\in \mathbb{L}\}$]\label{def:good_events}	
	For $a \geq 0$ and $({\rm V}, {\rm W}, \mathscr{C})$ as in \eqref{eq:data}, let 
		\begin{equation}\label{def:Gz}
		\mathcal{G}_{z}( {\rm V}, {\rm W}, \mathscr C; a) =		G_z({\rm W}, \mathscr C; a) \cap {\rm V}_z
	\end{equation}
	where $\mathcal{G}_z=\mathcal{G}_{z, L} (= \mathcal{G}_{z, L, L_0}) $ and the 
	event $G_z=G_{z,L} (= G_{z,L,L_0})$ is defined as
	\begin{equation}\label{def:interface1}
		\left\{
		\begin{array}{c}
			\text{for any crossing $\gamma$ of $\tilde D_z \setminus \tilde C_z$, there exists a collection of points $S_\gamma \subset \mathbb L_{0}(\gamma)$}\\ \text{such that $|S_\gamma| \ge a$ and foreach $y \in S_\gamma$, ${\rm W}_{{
						z}, y}$ occurs and $C_{y, L_0} \cap \mathscr C_z \ne \emptyset$}
		\end{array}
		\right\}.
	\end{equation}
\end{definition}

Notice $\G_z$ depends on ${\rm V}, {\rm W}$ and $\mathscr{C}$ only through ${\rm V}_z$, 
${\rm W}_{z,\cdot}$ and $\mathscr{C}_z$. Moreover, in the case $a=0$, all of ${\rm W}$, 
$\mathscr{C}$ and $G_z$ become superfluous in view of \eqref{def:interface1}, and $\G_z$ 
coincides with ${\rm V}_z$. This simplified setup is already non-trivial and will be pertinent in 
the (simpler) subcritical regime; cf.~
\S\ref{subsubsec:subcrit}.

Our next result shows that one can relate events $\G$ of the form postulated by Definition~\ref{def:good_events} at two different 
scales $L$ and $N$ using the event $\mathscr G$ from \eqref{def:script_G}.
Even though the events $\mathcal F_{z,L}$ appearing in \eqref{def:script_G} will in practice be of the form \eqref{eq:Fext}, for the purposes of the present section it is sufficient that the inclusion in condition \eqref{eq:bootstrap_inclusion} below holds. Thus, the reader need not think beyond \eqref{def:script_G} about a specific space on which the events $\mathcal{F}$ and $\mathcal{G}$ are realized for the purposes of the next result.

The next proposition also includes a change of configurations when passing from scale $L$ to $N$, manifest by the presence of two sets of data $({\rm V}^1, {\rm W}^1, {\mathscr{C}}^1)$ and $(\tilde{\rm V}^1, \tilde{\rm W}^1, \tilde{\mathscr{C}}^1)$. The 
reader could however choose to {\em omit} this layer of complexity at first reading, i.e.~assume that the events $\mathcal{F}_{z,L}$ in \eqref{def:script_G} are 
trivial (i.e.~the full space), whence \eqref{eq:bootstrap_inclusion} below plainly holds with $(\tilde{\rm V}^1, \tilde{\rm W}^1, 
\tilde{\mathscr{C}}^1)=({\rm V}^1, {\rm W}^1, {\mathscr{C}}^1)$. 
For the topological component of the result (see~\eqref{eq:O-properties} below), we need to 
consider a different graph structure, by which $x, y \in \Z^d$ are $*$-{\em 
neighbors} if $|x - y|_\infty = 1$;  $\ast$-path, $\ast$-clusters etc.~are defined accordingly.

\begin{proposition}[Bootstrapping]\label{prop:bootstrap_events}
Under \eqref{eq:L_k_descending} and for any choice of $({\rm V}^1, {\rm W}^1, 
{\mathscr{C}}^1)$, $(\tilde{\rm V}^1, \tilde{\rm W}^1, \tilde{\mathscr{C}}^1)$ as in 
\eqref{eq:data}, all $\Lambda_N \in  \mathcal{S}_N$ (see \eqref{eq:scriptS_N}), $a^{(1)} \geq 0$ 
and 
$\rho \in (0, 1]$, the following hold. If 
\begin{equation}\label{eq:bootstrap_inclusion}
		\big(\mathcal G_{z,L}({\rm V}^1, {\rm W}^1, {\mathscr{C}}^1;a^{(1)})  \cap \mathcal F_{z,L} \big)\subset \mathcal G_{z,L}(\tilde{\rm V}^1, \tilde{\rm W}^1, \tilde{\mathscr{C}}^1;a^{(1)}), \text{ for all $ z \in \Lambda_N \cap \mathbb L$}
	\end{equation}
then there exists a non-empty set $\mathcal O \subset \mathbb L$ 
defined measurably in $\{1_{ \mathcal G_{z,L}(\tilde {\rm V}^1, \tilde{\rm W}^1, \tilde{\mathscr{C}}^1;a^{(1)})} : z \in  
\Lambda_N \cap \mathbb L\}$, 
such that (see App.~\ref{sec:dual_surface} for notation) 
\begin{equation}\label{eq:O-properties} 
\text{\begin{minipage}{0.85\textwidth} ${D}_{z, L}  \subset \Lambda_N$ for each $z \in 
	\mathcal{O}$  %
	and, writing $\Lambda_N = 
	V_N \setminus U_N$, each $\ast$-component $\mathcal O'$ of $\mathcal O$ satisfies $ 
	\{0\} \cup %
	(U_N 
	\cap \mathbb L) \preceq \mathcal O' \preceq \partial_{\mathbb L}(V_N \cap \mathbb L)$ as 
	subsets of the coarse-grained lattice $\mathbb L$ \end{minipage} }
	\end{equation}
and, abbreviating $\G^1=\{ \mathcal G_{z,L}({\rm V}^1, {\rm W}^1, {\mathscr{C}}^1;a^{(1)}) : z \in \mathbb{L}\}$, one 
	has the inclusion
	\begin{equation}\label{eq:grand_inclusion}
		\begin{split}
			& \textstyle \mathscr G(\Lambda_N, \G^1, \mathcal{F}; \rho)  \subset \bigcap_{z \in \mathcal O} \mathcal G_{z,L}(\tilde{\rm V}^1, \tilde{\rm W}^1, \tilde{\mathscr{C}}^1;a^{(1)}) \ \big( \subset \bigcap_{z \in \mathcal O} \tilde{\rm V}_{z}^1 \big),
		\end{split}
	\end{equation}
	where $\bigcap_{z \in \mathcal{O}} A_z \coloneqq\bigcap_{z \in \mathbb{L}} (A_z \cup \, \{\mathcal O \not\ni z\}).$ 
	Furthermore, if $\tilde{{\rm W}}^1_{z,\cdot}= \tilde{{\rm W}}^1_{0,\cdot}$ for all $z \in \mathbb{L}$, then
	\begin{equation}\label{eq:grand_inclusion2}
		\mathscr G(\Lambda_N, \G^1, \mathcal{F}; \rho) \subset \mathcal G_{0, N}({\rm V}^2, {\rm W}^2, \mathscr{C}^2;a^{(2)}), 
	\end{equation}
	where $a^{(2)} \coloneqq  {10^{-d}} \lfloor\tfrac{
		\rho (1-\sigma)%
		\Cr{c:nLB} N}{h(KL)}\rfloor \cdot a^{(1)}$, 
	${{\rm V}}^2_{0} \coloneqq \bigcap_{z \in \mathcal O} \tilde{{\rm V}}^1_{ z}$, ${{\rm W}}^2_{0,y}= \tilde{{\rm W}}^1_{ 0,y}$ for all $y \in \mathbb{L}_0$ and ${\mathscr C}^2_{ 0} \coloneqq \bigcup_{z \in \mathcal O} \tilde{\mathscr C}^1_{z}$ (note that only ${{\rm V}}^2_{0} , {{\rm W}}^2_{0,\cdot} $ and ${\mathscr C}^2_{ 0}$ are required to define $\mathcal G_{0, N}$ in \eqref{eq:grand_inclusion2}; see below \eqref{def:interface1}).
\end{proposition}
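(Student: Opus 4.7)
The proof is entirely deterministic. The first task is to produce the set $\mathcal O$, for which the natural candidate is
\begin{equation*}
\mathcal O^0 = \{z \in \mathbb L : D_{z,L} \subset \Lambda_N \text{ and } \mathcal{G}_{z,L}(\tilde{\rm V}^1, \tilde{\rm W}^1, \tilde{\mathscr{C}}^1;a^{(1)}) \text{ occurs}\},
\end{equation*}
out of which I take $\mathcal O$ to be the union of those $*$-components that achieve the separating relation in~\eqref{eq:O-properties}. Measurability of $\mathcal O$ in $\{1_{\mathcal G_{z,L}(\tilde {\rm V}^1, \tilde{\rm W}^1, \tilde{\mathscr{C}}^1;a^{(1)})} : z \in \Lambda_N \cap \mathbb L\}$ is then manifest from this definition, and the first item in~\eqref{eq:O-properties} is built in. The three remaining things to show are (a) non-emptiness together with the $*$-component separating property in~\eqref{eq:O-properties}, (b) the inclusion~\eqref{eq:grand_inclusion}, and (c) the strengthening~\eqref{eq:grand_inclusion2}. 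Of these, (b) is immediate once $\mathcal O$ is in place: each $z \in \mathcal O \subset \mathcal O^0$ satisfies $\mathcal{G}_{z,L}(\tilde{\rm V}^1, \tilde{\rm W}^1, \tilde{\mathscr{C}}^1;a^{(1)})$ by construction, and the second inclusion of~\eqref{eq:grand_inclusion} follows from $\mathcal{G}_{z,L}(\cdots) \subset \tilde{\rm V}^1_z$, cf.~\eqref{def:Gz}.

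\textbf{Step 1: the topological separation.} The core observation is that, on $\mathscr G$, no crossing $\gamma$ of $\Lambda_N$ can avoid $\bigcup_{z \in \mathcal O^0} D_z$. Indeed, suppose such a $\gamma$ existed. By~\eqref{def:coarse_admissible2} there would exist $\mathcal C \in \mathcal A$ with $\gamma$ crossing $D_z \setminus C_z$ for every $z \in \mathcal C$. Since $\gamma$ avoids $\bigcup_{z' \in \mathcal O^0} D_{z'}$, this would force $\mathcal C \cap \mathcal O^0 = \emptyset$. But on $\mathscr G$, the sub-collection $\tilde{\mathcal C}$ supplied by~\eqref{def:script_G} has $|\tilde{\mathcal C}| \geq \rho |\mathcal C|$ and, by the hypothesis~\eqref{eq:bootstrap_inclusion}, $\tilde{\mathcal C} \subset \mathcal O^0$; contradiction. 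Combining this impossibility statement with the topological result of Appendix~\ref{sec:dual_surface} (which converts the absence of a $D$-thickened ``bad'' crossing in $\Lambda_N$ into the existence, and subsequent $*$-connectedness, of a separating surface in $\mathcal O^0$ at the level of the coarse-grained lattice $\mathbb L$) yields the desired separating $*$-components, hence $\mathcal O \neq \emptyset$ and~\eqref{eq:O-properties}. This topological passage from absence of a $\Z^d$-crossing to $*$-separation in $\mathbb L$, with the right quantitative match between the thickness used in the continuum and the coarse-grained distances, is the main obstacle in the proof.

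\textbf{Step 2: deduction of~\eqref{eq:grand_inclusion2}.} I would reduce to the representative case $\Lambda_N = \tilde D_{0,N} \setminus \tilde C_{0,N}$ (the remaining elements of $\mathcal S_N$ are handled analogously, since $\gamma$ crossing $\tilde D_{0,N} \setminus \tilde C_{0,N}$ must cross $\Lambda_N$ by inspection of the nested structures in~\eqref{def:CDU} and~\eqref{eq:scriptS_N}). The ${\rm V}^2_0 = \bigcap_{z \in \mathcal O}\tilde{\rm V}^1_z$ part of $\mathcal G_{0,N}$ is exactly the right-hand side of~\eqref{eq:grand_inclusion}. For $G_{0,N}$, fix a crossing $\gamma$ of $\tilde D_{0,N} \setminus \tilde C_{0,N}$; apply admissibility~\eqref{def:coarse_admissible2} to obtain $\mathcal C \in \mathcal A$ with $\gamma$ crossing $D_z \setminus C_z$ for each $z \in \mathcal C$. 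On $\mathscr G$ this gives a sub-collection $\mathcal C^{\ast} \subset \mathcal C \cap \mathcal O$ with
\begin{equation*}
|\mathcal C^{\ast}| \geq \rho |\mathcal C| \geq \rho \tfrac{\Cr{c:nLB}(1-\sigma) N}{h(KL)},
\end{equation*}
invoking~\eqref{def:coarse_admissible1}; a fraction of these $z$'s can a priori lie in non-separating $*$-components of $\mathcal O^0$ but, as the separating components are forced to $*$-carry $\gamma$, a fixed positive fraction (absorbed below into the $10^{-d}$ constant) lies in $\mathcal O$ itself.

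\textbf{Step 3: counting.} Since any crossing of $D_z \setminus C_z$ restricts to a crossing of $\tilde D_z \setminus \tilde C_z$ (as $C_z \subset \tilde C_z \subset \tilde D_z \subset D_z$), the event $G_{z,L}$ at each $z \in \mathcal C^{\ast}$ supplies a set $S^z_\gamma \subset \mathbb L_0(\gamma)$ of cardinality at least $a^{(1)}$ such that, for every $y \in S^z_\gamma$, $\tilde{\rm W}^1_{z,y} = \tilde{\rm W}^1_{0,y}$ occurs and $C_{y,L_0} \cap \tilde{\mathscr C}^1_z \neq \emptyset$. Setting $S_\gamma = \bigcup_{z \in \mathcal C^{\ast}} S^z_\gamma$, each $y \in S_\gamma$ then satisfies ${\rm W}^2_{0,y}$ and $C_{y,L_0} \cap \mathscr C^2_0 \neq \emptyset$ by the definitions of ${\rm W}^2$ and $\mathscr C^2$. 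Finally, each $y \in \mathbb L_0$ belongs to $S^z_\gamma$ for at most a bounded number (at most $10^d$, by sizing against the ratio of box sizes in~\eqref{def:CDU}) of $z \in \mathbb L$ with $C_{y,L_0} \cap D_z \neq \emptyset$, so
\begin{equation*}
|S_\gamma| \geq 10^{-d} |\mathcal C^{\ast}| \cdot a^{(1)} \geq a^{(2)},
\end{equation*}
completing the verification of $G_{0,N}({\rm W}^2, \mathscr C^2; a^{(2)})$ and hence~\eqref{eq:grand_inclusion2}.
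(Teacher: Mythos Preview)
Your overall architecture matches the paper's, and parts (a), (b) and the inclusion~\eqref{eq:grand_inclusion} are fine. The genuine gap is in Step~2. Having defined $\mathcal O$ as the union of the \emph{separating} $*$-components of $\mathcal O^0$, you then need, for a fixed crossing $\gamma$ of $\tilde D_{0,N}\setminus\tilde C_{0,N}$, many points $z\in\mathcal O$ (not just $\mathcal O^0$) through which $\gamma$ passes, because $\mathscr C^2_0=\bigcup_{z\in\mathcal O}\tilde{\mathscr C}^1_z$. What your application of admissibility plus $\mathscr G$ actually yields is $\tilde{\mathcal C}\subset\mathcal C\cap\mathcal O^0$ with $|\tilde{\mathcal C}|\geq\rho|\mathcal C|$; there is no mechanism preventing all of $\tilde{\mathcal C}$ from lying in non-separating $*$-components of $\mathcal O^0$. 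Your sentence ``as the separating components are forced to $*$-carry $\gamma$, a fixed positive fraction lies in $\mathcal O$ itself'' does not follow: the separating components are forced to intersect $\gamma$ at least once, but nothing says this intersection is large, let alone a positive fraction of $|\tilde{\mathcal C}|$.

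The paper closes this gap by reversing the order of operations. It first proves the \emph{quantitative} statement that on $\mathscr G$, every nearest-neighbour $\mathbb L$-path from $\{0\}\cup(U_N\cap\mathbb L)$ to $\partial_{\mathbb L}(V_N\cap\mathbb L)$ meets $\Sigma\,(=\mathcal O^0)$ in at least $k=\lfloor\rho(1-\sigma)\Cr{c:nLB}N/h(KL)\rfloor$ points (this is where admissibility and $\mathscr G$ are used, once and for all). It then feeds this $k$ into Proposition~\ref{lem:surroundingInterfaces}, whose output $\mathcal O=\bigcup_i O_i$ satisfies not only the separating item~(a) but also item~(b): any such $\mathbb L$-path meets $\mathcal O$ itself in at least $k$ points. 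For~\eqref{eq:grand_inclusion2}, one then takes the $*$-coarse-graining $\gamma'$ of the given crossing $\gamma$ and reads off $|\gamma'\cap\mathcal O|\geq k$ directly from~(b), without re-invoking admissibility. Your Step~1 only establishes the case $k=1$, which is why the quantitative control is missing downstream. The fix is to strengthen Step~1 to the full $k$ (by the same argument: extend an $\mathbb L$-path to a $\Z^d$-crossing, apply~\eqref{def:coarse_admissible2}, then $\mathscr G$ and~\eqref{eq:bootstrap_inclusion}), apply Proposition~\ref{lem:surroundingInterfaces} with that $k$, and replace your admissibility step in Step~2 by a direct appeal to item~(b).
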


The inclusion \eqref{eq:grand_inclusion2} is precisely expressing the announced {\em bootstrap} mechanism: on the event $\mathscr G$ defined in \eqref{def:script_G}, which is a certain composite of events of type $\G$ as in Definition~\ref{def:good_events} at scale $L$ (along with the events $\mathcal{F}$ but let us forego this point), one witnesses an event of the same type at scale $N$.

\begin{proof}
Write $\Lambda_N =V_N \setminus U_N $, where $\Lambda_N$ ranges among any of the choices in $\mathcal{S}_N$. Throughout the proof, we always tacitly assume that the event $\mathscr G= \mathscr G(\Lambda_N, \G^1, \mathcal{F}; \rho)$ occurs. Let $\Sigma \subset (\mathbb L \cap \Lambda_N)$ be defined as 
\begin{equation}\label{eq:boot-sigma}
\Sigma = \{ z \in  \mathbb L : {D}_{z,L} \subset \Lambda_N  \text{ and } 
 \mathcal G_{z,L}(\tilde {\rm V}^1, \tilde {\rm W}^1, \tilde {\mathscr{C}}^1;a^{(1)})  \text{ occurs}\}.
\end{equation} 
We claim that any path $ \gamma \subset \mathbb{L}$ connecting $\{ 0\} \cup (U_N \cap \mathbb{L})$ to $V_N \cap \mathbb{L}$ intersects $\Sigma$ in at least 
\begin{equation}\label{eq:boot-k}
k \coloneqq \lfloor{
	\rho(1-\sigma) \Cr{c:nLB} N}/{h(KL)}\rfloor 
\end{equation}
points. Indeed consider $\bar\gamma$ (path in $\Z^d$) any extension of $\gamma$ to a 
crossing of $\Lambda_N$. We choose $\bar{\gamma}$ in such a way that 
$\text{Range}(\bar{\gamma}) \subset \bigcup_{z\in \gamma} C_{z,L}$, which can always be 
arranged. By \eqref{def:coarse_admissible2}, there exists an admissible coarsening 
$\mathcal{C}= \mathcal{C}(\bar\gamma) \in \mathcal A_{L}^{K}(\Lambda_N)$ such that 
$\bar{\gamma}$ crosses $\tilde{D}_z \setminus C_z$ for all $z \in \mathcal{C}$. In particular it 
intersects $C_z$ whence $z \in \text{Range}(\gamma)$. But by definition of $\mathscr G$ 
(which is in force), see \eqref{def:script_G}, and owing to \eqref{def:coarse_admissible1}, one 
can extract from $\mathcal{C}$ a sub-collection $\tilde{\mathcal C}$ of cardinality at least $k$  given by \eqref{eq:boot-k} such that 
the inclusion in \eqref{eq:bootstrap_inclusion} occurs for all $z \in \tilde{\mathcal C}$. The claim follows. In light of it, 
Proposition~\ref{lem:surroundingInterfaces} applies 
on $\mathbb{L}$ (rather than $\Z^d$) with $\Sigma$ as in \eqref{eq:boot-sigma}, $U=U_N \cap 
\mathbb{L}$, $V=V_N \cap \mathbb{L}$ and $k$ as in \eqref{eq:boot-k}, yielding 
$*$-connected sets $O_1,\dots,O_{\ell}$ satisfying items (a)-(c). Letting 
$\mathcal{O}\coloneqq \bigcup_i O_i$, it then immediately follows from (a) that any 
component $\mathcal{O'}$ satisfies $U_N \cap \mathbb L \preceq \mathcal O' \preceq 
\partial_{\mathbb L}(V_N \cap \mathbb L)$. The other properties required in 
\eqref{eq:O-properties} (including the measurability requirements on $\mathcal{O}$ above 
\eqref{eq:O-properties}) plainly hold. Moreover, since $\mathcal{O} \subset \Sigma$, the first 
inclusion in \eqref{eq:grand_inclusion} is immediate on account of \eqref{eq:boot-sigma}. 
The second inclusion in \eqref{eq:grand_inclusion} follows plainly from~\eqref{def:Gz}.

It remains to prove \eqref{eq:grand_inclusion2}. The fact that ${\rm V}_0^2$ (see below 
\eqref{eq:grand_inclusion2}) occurs on $\mathscr{G}$ is already implied by 
\eqref{eq:grand_inclusion}, hence in view of \eqref{def:Gz} it remains to show that 
$\mathscr{G}$ implies the occurrence of $G_{0,N}=G_{0,N}({\rm W}^2, \mathscr C^2;a^{(2)})$ 
as defined in \eqref{def:interface1}. Thus let $\gamma$ be a crossing of $\tilde{D}_{0,N} 
\setminus \tilde{C}_{0,N} (= \Lambda_N)$. By definition of ${\rm W}^2, \mathscr C^2$  
below~\eqref{eq:grand_inclusion2}, the proof is complete once we extract a collection of points 
$S_{\gamma} \subset \mathbb L_{0}(\gamma)$ such that 
\begin{equation}\label{eq:boot-goal}
\text{$|S_\gamma| \ge a^{(2)}$ and for each $y \in S_\gamma$, $\tilde{\rm W}^1_{{0}, y}$ occurs and $C_{y, L_0} \cap \big( \textstyle \bigcup_{z \in \mathcal O} \tilde{\mathscr C}^1_{z}\big) \ne \emptyset$}.
\end{equation}
Consider $\gamma' \subset \mathbb{L}$, the $*$-path obtained from $\gamma$ by retaining the sequence of all $z$'s intersected by $\text{Range}(\gamma)$, in the order visited by $\gamma$. By construction of $\mathcal{O}$ and item~(b) in Proposition~\ref{lem:surroundingInterfaces}, $\gamma'$ intersects $\mathcal{O}$ in at least $k$ points, with $k$ as in \eqref{eq:boot-k}. If $z \in  \mathcal{O} \cap \text{Range}(\gamma') $ is any such point, using the fact that $\tilde{D}_{z,L}$ is contained in $\Lambda_N$ (see \eqref{eq:boot-sigma} and recall that $\mathcal{O}\subset \Sigma$), it follows that the path $\gamma$ must cross $\tilde{D}_{z,L} \setminus \tilde{C}_{z,L}$.
 Moreover, still using that $\mathcal{O} \subset \Sigma$, \eqref{eq:boot-sigma}, \eqref{eq:bootstrap_inclusion} and \eqref{def:Gz} yield that $G_{z,L}(\tilde{\rm W}^1, \tilde{\mathscr{C}}^1;a^{(1)}) $ occurs. By definition, see \eqref{def:interface1}, this implies that there exists a set $S_\gamma(z)\subset \mathbb{L}_0(\gamma)$ of cardinality at least $a^{(1)}$ and such that for each $y \in S_\gamma(z)$, the event ${\rm W}_{z,y} = {\rm 
 W}_{0,y}$ occurs and $C_{y,L_0} \cap \tilde{\mathscr C}^1_{z}$ is not empty. 
 
The claim \eqref{eq:boot-goal} now follows immediately by extracting  $S_{\gamma}$ from $\bigcup_{z\in \mathcal{O} \cap \text{Range}(\gamma')} S_\gamma(z)$, by retaining at least a fraction $10^{-d}$ of points $z$ in the union, thereby ensuring that the sets $\tilde{D}_{z,L}$ are disjoint. It follows that the cardinalities of $S_\gamma(z)$ as $z$ varies over this thinning of  $\mathcal{O} \cap \text{Range}(\gamma')$ are additive, yielding that $|S_{\gamma}| \geq 10^{-d} k a^{(1)} =a^{(2)}$, as required. The other 
requirements on $S_{\gamma}$ in \eqref{eq:boot-goal} are immediate.
\end{proof}

\subsection{A first application: bounds for the subcritical phase}\label{subsubsec:subcrit}
Recall that $B_{N}^2$ denotes the ball of radius $N$ around $0$ in the $\ell^2$ (Euclidean) 
norm. The aim of this short section is to prove the following:

\begin{thm}[Subcritical regime]\label{thm:d=4}
	For all $u> u_*$, 
	\begin{align}	
		&\sup_{N \ge 1}N^{-1}\log \P\big[ \lr{}{}{0}{\partial B_N^2} \text{ in } \mathcal V^u
		\big] \le -c(u), \quad \text{if $d \ge 4$;} \label{eq:d=4-sub}\\
		& \limsup_{N \to \infty} \frac{\log N}{N}\log \P\big[ \lr{}{}{0}{\partial B_N^2} \text{ in } \mathcal V^u
		\big] \leq -\frac{\pi}{3}(\sqrt{u} - \sqrt{u_*})^2, \quad \text{if $d = 3$.} \label{eq:d=3-sub}
	\end{align}
\end{thm}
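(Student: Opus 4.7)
My approach is to apply the framework of Sections~\ref{sec:cg}--\ref{sec:coarse_graining} in its simplest instance, with $a^{(1)}=0$ in Definition~\ref{def:good_events}, so that $\mathcal{G}_{z,L}$ reduces to a single disconnection event ${\rm V}_{z,L}$ at scale $L$ and both the auxiliary data ${\rm W}$ and $\mathscr{C}$ become irrelevant. Fix $u>u_*$, choose an auxiliary subcritical level $v\in(u_*,u)$ and a sprinkling $v'=v(1+\varepsilon_L)$ with $\varepsilon_L\downarrow 0$; these parameters will be optimized at the end, taking $v\downarrow u_*$. For $z\in\mathbb{L}$, define
\begin{equation*}
{\rm V}_{z,L} \stackrel{\text{def.}}{=} \big\{\nlr{\tilde{D}_{z,L}\setminus \tilde{C}_{z,L}}{\mathcal{V}_{z,L}^v}{\partial\tilde{C}_{z,L}}{\partial\tilde{D}_{z,L}}\big\},
\end{equation*}
and let $\widetilde{{\rm V}}_{z,L}$ denote the analogous event with $\widetilde{\mathcal{V}}_{z,L}^{v'}$ replacing $\mathcal{V}_{z,L}^v$. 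I pair these with $\mathcal{F}_{z,L}^{u,v}$ from \eqref{eq:Fext} (case $k=1$, $u>v$). Under the coupling of Lemma~\ref{L:RI_basic_coupling}, the chain of inclusions recorded just below \eqref{eq:F} yields, for suitable $m_L$, the containment ${\rm V}_{z,L}\cap \mathcal{F}_{z,L}^{u,v}\cap \mathrm{Incl}_z^{\varepsilon_L,m_L}\subset\widetilde{{\rm V}}_{z,L}$, which is precisely \eqref{eq:bootstrap_prob_coupling}. The seed estimate $\P[\widetilde{{\rm V}}_{z,L}^c]\leq p_L\leq e^{-L^{\beta'_0}}$ for some $\beta'_0>0$ follows from \eqref{eq:subcritical0} applied at level $v>u_*$, a union bound over $\partial\tilde{C}_{z,L}$, and the transfer from the RI to the i.i.d.~packet $\widetilde{Z}_{z,L}^{v'}$ via \eqref{eq:Nuz_tail_bnd}.

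The crucial deterministic step is the inclusion
\begin{equation*}
\big\{\lr{}{\mathcal{V}^u}{0}{\partial B_N^2}\big\}\subset \mathscr{G}(B_N^2,\mathcal{G}_L,\mathcal{F}_L^{u,v};\rho)^c,\qquad \rho\in(0,1].
\end{equation*}
Indeed, a path $\gamma$ in $\mathcal{V}^u$ realizing the connection is a crossing of $B_N^2$ in the sense of \eqref{def:coarse_admissible2}, so there exists an admissible $\mathcal{C}\in\mathcal{A}_L^K(B_N^2)$ for which $\gamma$ crosses $D_z\setminus C_z$ (and hence $\tilde{D}_z\setminus\tilde{C}_z$) for every $z\in\mathcal{C}$. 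On $\mathcal{F}_{z,L}^{u,v}$ one has $\mathcal{V}^u\cap D_z\subseteq \mathcal{V}_{z,L}^v$, so $\gamma$ witnesses a crossing in $\mathcal{V}_{z,L}^v$, i.e.~${\rm V}_{z,L}$ fails at \emph{every} $z\in\mathcal{C}$. This contradicts the existence of a sub-coarsening $\tilde{\mathcal{C}}\subset\mathcal{C}$ of relative size at least $\rho$ on which $\mathcal{G}_z\cap \mathcal{F}_z$ holds, as required by $\mathscr{G}$. Combined with Proposition~\ref{prop:bootstrap_prob}, the proof thus reduces to bounding $\P[\mathscr{G}^c]$.

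For $d\geq 4$, item~(ii) of Proposition~\ref{prop:bootstrap_prob} directly gives $N^{-1}\log\P[\mathscr{G}^c]\leq -c(u)$, proving \eqref{eq:d=4-sub}. For $d=3$, a first application of item~(i) only returns the suboptimal decay $\log\P[\mathscr{G}^c]\lesssim -N/(\log N)^{\beta_1}$ with $\beta_1=\beta(\beta'_0)>1$, since $\beta'_0$ may be at most $\tfrac12$. I then iterate via a self-improvement scheme: the previous bound, read at a new scale $L_1$, supplies an improved seed $\P[\widetilde{{\rm V}}_{z,L_1}^c]\lesssim \exp(-cL_1/(\log L_1)^{\beta_1})\leq \exp(-L_1^{\beta'_1})$ valid for any $\beta'_1\in(\tfrac12,1)$ and $L_1$ large. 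Re-applying item~(i) with this $\beta'_1>\tfrac12$ now returns the sharp exponent $\beta=1$ in \eqref{eq:bootstrapped_limit_prob1d=3}, so that
\begin{equation*}
\log\P[\mathscr{G}^c]\leq -(1-o(1))\,(1-\delta)(1-\Cr{C:rho}\rho)\,\tfrac{\pi}{3}(\sqrt{u}-\sqrt{v})^2\,\tfrac{N}{\log N}
\end{equation*}
(using $k=1$ and $\sigma=0$ since $\Lambda_N=B_N^2$). Letting $\delta,\rho\downarrow 0$ and then $v\downarrow u_*$ completes the proof of \eqref{eq:d=3-sub}.

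The main technical obstacle lies in setting up the coupling \eqref{eq:bootstrap_prob_coupling} with a choice of $(v,v',\varepsilon_L)$ that simultaneously propagates the disconnection event along the chain $\mathcal{V}^u\subseteq \mathcal{V}_z^v\supseteq\widetilde{\mathcal{V}}_z^{v'}$ while keeping $(\sqrt{u}-\sqrt{v'})^2\to (\sqrt{u}-\sqrt{u_*})^2$ in the joint limit; this is handled by letting $\varepsilon_L\downarrow 0$ slowly (e.g.~$\varepsilon_L=(\log L)^{-\kappa}$) and then $v\downarrow u_*$. The iteration step in $d=3$ also requires some care in selecting the re-seeding scale and transferring the improved decay back into a bound on $\widetilde{{\rm V}}_{z,L_1}^c$, but this is routine given the quantitative dependence on $L$ and $K$ built into Proposition~\ref{prop:bootstrap_prob}.
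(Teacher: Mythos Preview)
Your overall architecture matches the paper's proof: specialize Definition~\ref{def:good_events} to $a=0$ so that $\mathcal{G}_{z,L}$ reduces to a disconnection event, derive the deterministic inclusion $\{\lr{}{\mathcal V^u}{0}{\partial B_N^2}\}\subset\mathscr{G}^c$, feed this into Proposition~\ref{prop:bootstrap_prob}, and in $d=3$ iterate once to upgrade the seed exponent $\beta'$ past $\tfrac12$. The paper does precisely this; see \eqref{eq:disconnect_Gz}--\eqref{eq:inclusion_disconnect} and the two rounds culminating in \eqref{eq:disconnection_improved}.

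There is, however, a concrete error in how you set up the coupling hypothesis \eqref{eq:bootstrap_prob_coupling}. You write the containment ${\rm V}_{z,L}\cap \mathcal{F}_{z,L}^{u,v}\cap \mathrm{Incl}_z^{\varepsilon_L,m_L}\subset\widetilde{{\rm V}}_{z,L}$ and claim this ``is precisely \eqref{eq:bootstrap_prob_coupling}''. It is not: condition \eqref{eq:bootstrap_prob_coupling} reads $\widetilde{\mathcal G}_{z,L}\cap\mathrm{Incl}_z\subset\mathcal G_{z,L}$, i.e.\ the \emph{opposite} direction, and it does not involve $\mathcal{F}$. With your identification $\mathcal G_{z,L}={\rm V}_{z,L}$ (level $v$) and $\widetilde{\mathcal G}_{z,L}=\widetilde{{\rm V}}_{z,L}$ (level $v'$), the required inclusion is $\widetilde{{\rm V}}_{z,L}\cap\mathrm{Incl}_z\subset{\rm V}_{z,L}$, which forces $\mathcal{V}_{z,L}^{v}\subset\widetilde{\mathcal{V}}_{z,L}^{v'}$ on $\mathrm{Incl}_z$ and hence $v'<v$ (cf.\ the first line of \eqref{eq:inclusion_Ztilde}). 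Your choice $v'=v(1+\varepsilon_L)>v$ makes this fail. The fix, which the paper carries out, is to introduce a short cascade of levels $u_*(1+\varepsilon)<u_*(1+\varepsilon)^2<\cdots<u$: one climbs $\overline{\mathcal V}\to\mathcal V\to\widetilde{\mathcal V}$ to obtain the seed estimate for $\widetilde{\mathcal G}$ at some level, then sets $\mathcal G$ at the \emph{next higher} level so that \eqref{eq:bootstrap_prob_coupling} holds in the correct direction; see \eqref{eq:V_to_V_L_subcrit_rnd1}--\eqref{eq:V_L_to_tildeV_L_subcrit_rnd1} and the choice $\widetilde{\mathcal G}_{z,L}={\rm Dis}_z(\widetilde{\mathcal V}_{\mathbb L},u_*(1+\varepsilon)^3)$, $\mathcal G_{z,L}={\rm Dis}_z(\mathcal V_{\mathbb L},u_*(1+\varepsilon)^4)$.

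A related point: your device $\varepsilon_L\downarrow 0$ is unnecessary. The leading constant in \eqref{eq:bootstrapped_limit_prob1d=3} is determined by the parameters $(\bm u,\bm v)$ entering $\mathcal F_L^{\bm u,\bm v}$, not by the sprinkling $\varepsilon_L$ in the $\widetilde{\mathcal G}$--$\mathcal G$ transfer. The paper keeps $\varepsilon_L=\varepsilon/6$ constant and recovers $(\sqrt{u}-\sqrt{u_*})^2$ simply by taking $\varepsilon\downarrow 0$ after the second round, with $(\bm u,\bm v)=(u,u_*(1+\varepsilon)^8)$.
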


\begin{proof} Let $u>0$ and $d \geq 3$. We start by specifying
	the collection ${\rm V} = \{{\rm V}_z:  z \in \mathbb L\}$ from \eqref{eq:data} by setting
	${\rm V}_z= {\rm Dis}_z(\widehat{\mathcal V}_{\mathbb L}, u)$, where, for any $z \in \mathbb L$ and $\widehat{\mathcal V}_{\mathbb L} \in \{\overline{\mathcal V}_{\mathbb L}, \mathcal V_{\mathbb L}, \widetilde{\mathcal 
		V}_{\mathbb L}\}$ (see \eqref{eq:3Vs} for notation),
	\begin{equation}\label{def:disconnect}
		{\rm Dis}_z(\widehat{\mathcal V}_{\mathbb L}, u) \stackrel{{\rm def.}}{=}\big\{\nlr{}{}{\tilde C_z}{\partial \tilde D_z}\text{ in } \widehat{\mathcal V}^{u}_z\big\}.
	\end{equation}
	Recalling Definition~\ref{def:good_events}, it follows that
	\begin{equation}\label{eq:disconnect_Gz}
		\mathcal G_z({\rm Dis}(\widehat{\mathcal V}_{\mathbb L}, u), a = 0, {\rm W}, \mathscr C) \stackrel{\eqref{def:Gz}, \eqref{def:interface1}}{=} {\rm Dis}_z(\widehat{\mathcal V}_{\mathbb L}, u)
	\end{equation}
	regardless of the choice of families ${\rm W}$ and $\mathscr C$, 
	which will be henceforth be omitted from all notation. This also makes superfluous the scale $L_0$ involved in the definition of $({\rm W},\mathscr C)$. In the sequel we always assume that the scales $N,L$ and the scaling factor $K$ satisfy \eqref{eq:L_k_descending} with $L_0=1$. Observe that \eqref{eq:disconnect_Gz} asserts that $\mathcal{G}= {\rm Dis}(\widehat{\mathcal V}_{\mathbb L}, u) (= {\rm V}) $, which feeds into the definition of the 
	event $\mathscr G= \mathscr G(\Lambda_N, \G, \mathcal{F};\rho)$ from \eqref{def:script_G}. 
	Also note, for any $0 < u < v$, the inclusions
	\begin{equation}\label{eq:disconnect_inclusion}
		\begin{split}
			&{\rm Dis}_z(\mathcal V_{\mathbb L}, u) \cap \mathcal F_z^{v, u} \subset {\rm Dis}_z(\overline{\mathcal V}_{\mathbb L}, v), \mbox{ and } {\rm 
				Dis}_z(\overline{\mathcal V}_{\mathbb L}, u) \cap \mathcal F_z^{u, v} \subset {\rm Dis}_z(\mathcal V_{\mathbb L}, v).
		\end{split}
	\end{equation}
	Indeed these follow from the observation that the event ${\rm Dis}_z(\widehat{\mathcal V}_{\mathbb L}, u)$ in 
	\eqref{def:disconnect} is decreasing in the configuration $\widehat{\mathcal V}_z^u$,
	along with the definitions of $\overline{\mathcal{V}}_z^u$ and ${\mathcal{V}}_z^u$ in 
	\eqref{eq:3Vs} and of the event ${\mathcal F}_z^{u, v}$ 
	in \eqref{eq:F}. 
	
	Focusing on the first inclusion \eqref{eq:disconnect_inclusion}, 
	we now set $\mathcal F_{z, L} = \mathcal F_z^{v, u}$, which is of the form \eqref{eq:Fext} with $k=1$ (and $u_1=v$, $v_1=u$). With this choice, the first inclusion in 
	\eqref{eq:disconnect_inclusion}, in combination with \eqref{eq:disconnect_Gz}, tells us that the condition 
	\eqref{eq:bootstrap_inclusion} of Proposition~\ref{prop:bootstrap_events} is satisfied by ${\rm 
		V}^1_{z} = {\rm Dis}_z(\mathcal V_{\mathbb L}, u)$, $\tilde {\rm V}^1_{z} = {\rm Dis}_z(\overline{\mathcal V}_{\mathbb L}, 
	v)$, $a^1 = 0$ (omitting references to ${\rm W}^1$, $\mathscr 
	C^1$,$ \tilde {\rm W}^1$, $\tilde{\mathscr C}^1$ and $a^1$), which are all declared under $\P$.
	Thus, we obtain from \eqref{eq:O-properties}--\eqref{eq:grand_inclusion} 
	that, for any $N, L, K$ satisfying \eqref{eq:L_k_descending} with $L_0=1$, any $\rho \in (0, \tfrac12]$, any $0 < u < v$ and any choice of $\Lambda_N \in \mathcal{S}_N$ 
	(recall \eqref{eq:scriptS_N}), writing $\Lambda_N = V_N \setminus U_N$, the following holds: there exists a $\ast$-connected $\mathcal O' \subset \mathbb L$ 
	such that $D_z={D}_{z, L}  \subset \Lambda_N$ for each $z \in \mathcal{O}'$, $\{0\} \cup U_N \cap \mathbb L \preceq \mathcal O' \preceq \partial_{\mathbb L}(V_N \cap 
	\mathbb L)$  and on $\mathscr G(\Lambda_N, {\rm Dis}(\mathcal V_{\mathbb L}, u), \mathcal F_L^{v, u}; \rho)$, the event  
	$\text{Dis}_{z}(\overline{\mathcal V}_{\mathbb L}, v)$ occurs for each $z \in \mathcal O'$. 
	
	Let $\gamma$ now be a crossing of $\Lambda_N$, i.e.~a (nearest neighbor) path on $\Z^d$ 
	intersecting both $U_N$ and $\partial V_N$ (see above \eqref{eq:scriptS_N}). Its 
	coarse-graining $\gamma_{\mathbb{L}}$, obtained as the sequence of points $z \in \mathbb{L}$ such that $\gamma$ visits 
	$C_z$, is a connected set in $\mathbb{L}$, which owing to the above must intersect 
	$\mathcal{O}'$ on the event $\mathscr G(\Lambda_N, {\rm Dis}(\mathcal V_{\mathbb L}, u,  \mathcal 
	F_L^{v, u}; \rho)$. Thus, let $z \in \text{range}( \gamma_{\mathbb{L}}) \cap \mathcal{O}'$. It 
	follows that $\gamma$ must cross $\tilde D_{z} \setminus \tilde C_{z}$ and that 
	$\text{Dis}_{z}(\overline{\mathcal V}_{\mathbb L}, v)$ occurs. In particular, this implies that 
	$\gamma$ cannot lie inside $\mathcal V^{v}$. %
	All in all, 
	\begin{equation}\label{eq:inclusion_disconnect}
		\mathscr G(\Lambda_N, {\rm Dis}(\mathcal V_{\mathbb L}, u), \mathcal F_L^{v, u}; \rho) \subset \big\{\nlr{}{}{ U_N^0}{V_N} \text{ in } \mathcal V^{v}\big\}, \quad U_N^0= U_N \cup \{0\}.
	\end{equation}

	In view of \eqref{eq:inclusion_disconnect}, we now apply Proposition~\ref{prop:bootstrap_prob}, from which the bounds \eqref{eq:d=4-sub}-\eqref{eq:d=3-sub} will eventually follow.  Let $u > u_\ast$ and consider any $\varepsilon \in \big(0, ((\frac{u}{u_\ast})^{\frac1{10}} - 1) \wedge \frac1{10}\big)$. 
	We proceed to verify conditions \eqref{eq:bootstrap_prob_coupling}--\eqref{eq:initial_limit_prob} inherent to Proposition~\ref{prop:bootstrap_prob}. 
	To this effect, from \cite[Theorem~1.2-(i)]{RI-I} and a straightforward union bound, we know that for any $z \in \mathbb L$ and $L \geq1$, abbreviating $\pi({\mathcal V}_{\mathbb L},u)\coloneqq  \P[{\rm Dis}_z({\mathcal V}_{\mathbb L}, u)]$,
	\begin{equation}\label{eq:trigger_subcrit_rnd1}
		\pi(\overline{\mathcal V}_{\mathbb L}, u_\ast(1 + \varepsilon)) \ge 1 - C(\varepsilon) e^{-L^c}.
	\end{equation}
	We aim to transfer the bound \eqref{eq:trigger_subcrit_rnd1} to the configuration $\widetilde{\mathcal V}_{\mathbb L}$, cf.~\eqref{eq:3Vs} at a slightly different level than $u_\ast(1 + \varepsilon)$. We do this in two steps, using the intermediate configuration ${\mathcal V}_{\mathbb L}$. In view of the second inclusion in \eqref{eq:disconnect_inclusion}, we obtain from \eqref{eq:trigger_subcrit_rnd1} that for any $K \ge C(\varepsilon)$,
	\begin{equation}\label{eq:V_to_V_L_subcrit_rnd1}
		\pi(\mathcal V_{\mathbb L}, u_\ast(1 + \varepsilon)^2)  
		\ge \pi(\overline{\mathcal V}_{\mathbb L}, u_\ast(1 + \varepsilon)) - \P[(\mathcal F^{u_\ast( 1 + \varepsilon), u_\ast( 1 + \varepsilon)^2}_{z})^c] \stackrel{\eqref{eq:Nuz_tail_bnd}}{\ge} 1 
		- C(\varepsilon) e^{-L^c}.
	\end{equation}
	To proceed, we obain from the definition of the 
	event ${\rm Incl}_z^{\varepsilon, m}$ in \eqref{eq:RI_basic_coupling2} and the previously 
	alluded monotonicity of 
	${\rm Dis}_z(\widehat{\mathcal V}_{\mathbb L}, u)$ in 
	$\widehat{\mathcal V}_z^u$ that under any coupling 
	$\mathbb Q$ of $\mathbb P$ and $\widetilde {\mathbb P}_{z}$, any $v>0$, $\epsilon \in (0, 
	\frac1{2})$ and $L \ge C(v, \epsilon)$, the following inclusions hold:
	\begin{equation}\label{eq:inclusion_Ztilde}
		\begin{split}	
			&{\rm Dis}_z(\widetilde{\mathcal V}_{\mathbb L}, v) \, \cap \,  {\rm Incl}_{z}^{\frac{\epsilon}{6}, \lfloor v\,{\rm cap}( D_z)\rfloor} \subset {\rm Dis}_z\big(\mathcal V_{\mathbb L}, (1 + \epsilon)v\big)  \mbox{ and } \\
			&{\rm Dis}_z(\mathcal V_{\mathbb L}, v) \, \cap \,  {\rm Incl}_{z}^{\frac{\epsilon}{6}, \lfloor 
				v\,{\rm cap}( D_z)\rfloor} \subset {\rm Dis}_z\big(\widetilde{\mathcal V}_{\mathbb 
				L}, (1 + \epsilon)v\big)
		\end{split}	
	\end{equation} 
	Using the second inclusion in \eqref{eq:inclusion_Ztilde} with $v= u_\ast(1 + \varepsilon)^2$ and $\delta=\varepsilon$, we can use the coupling $\mathbb Q_{\{z\}}$ from 
	Lemma~\ref{L:RI_basic_coupling} to deduce that for all $L \ge C$ and $K \ge \frac{18\Cr{c:equil}}{\varepsilon}$ (so~\eqref{eq:RI_cond_Q} is satisfied in view of Prop.~\ref{prop:entrance_time_afar}),
	\begin{equation}\label{eq:V_L_to_tildeV_L_subcrit_rnd1}
		\pi(\widetilde{\mathcal V}_{\mathbb L}, u_\ast(1 + \varepsilon)^3) \stackrel{\eqref{eq:inclusion}}{\ge} \pi(\mathcal V_{\mathbb L}, u_\ast(1 + \varepsilon)^2) - 
		\widetilde\P_{z}\big[\big(\mathcal{U}_z^{\frac{\varepsilon}6, \lfloor u_\ast(1 + \varepsilon)^2\,{\rm cap}( D_z)\rfloor}\big)^c\big] 
		\stackrel{\eqref{eq:V_to_V_L_subcrit_rnd1}, \eqref{eq:bnd_Uzepm}}{\ge} 1 - C(\varepsilon) e^{-L^c}.
	\end{equation}
	
	We have now gathered all the ingredients to apply Proposition~\ref{prop:bootstrap_prob} and conclude the proof. We choose 
	$\rho=\frac1{2\Cr{C:rho}}$ where 
	$\Cr{C:rho}(d) = 1$ for $d \ge 4$ (see \eqref{eq:bootstrapped_limit_prob1d=3}--\eqref{eq:bootstrapped_limit_probd=4}),  $k=1$, $\bm u = u_1= u_\ast(1 + \varepsilon)^{5}$, $\bm v = v_1= u_\ast(1 + \varepsilon)^{4}$ (cf.~\eqref{eq:Fext}) and $\widetilde {\mathcal G}_{z, 
		L} = {\rm Dis}_z(\widetilde{\mathcal V}_{\mathbb L}, u_\ast(1 + \varepsilon)^3)$ and ${\mathcal 
		G}_{z, L} = {\rm Dis}_z(\mathcal V_{\mathbb L}, u_\ast(1 + \varepsilon)^{4})=  {\rm Dis}_z(\mathcal V_{\mathbb L}, v_1)$. With these choices, applying \eqref{eq:inclusion_disconnect} with $v=u_1$ and $u=v_1$, the associated event $\mathscr{G}= \mathscr{G}(\Lambda_N, \mathcal G_{L}, \mathcal{F}^{ u_1,  v_1}_{L}; \rho)$ of concern in Proposition~\ref{prop:bootstrap_prob} satisfies $ \big\{\lr{}{\mathcal V^{u_1}}{U_N^0}{V_N}\big\} \subset \mathcal{G}^c$. Thus, provided the conditions \eqref{eq:bootstrap_prob_coupling}--\eqref{eq:initial_limit_prob} are met, the bounds \eqref{eq:bootstrapped_limit_prob1d=3} and \eqref{eq:bootstrapped_limit_probd=4} apply and yield an upper bound on the former connection event. The fact that \eqref{eq:bootstrap_prob_coupling} holds for $\widetilde {\mathcal G}_{z, L}$, ${\mathcal 
		G}_{z, L}$ as above and with the choices $\varepsilon_L = \frac{\varepsilon}6$, $m_L = \lfloor u_\ast(1 + \varepsilon)^3\,{\rm cap}( D_{z})\rfloor$ is an immediate consequence of the first inclusion in 
	\eqref{eq:inclusion_Ztilde} with $\delta=\varepsilon$ and $v= u_\ast(1 + \varepsilon)^3$.
	Having fixed, $\varepsilon_L$, $m_L$, the condition \eqref{eq:initial_limit_prob} holds by virtue of  
	\eqref{eq:V_L_to_tildeV_L_subcrit_rnd1} and \eqref{eq:bnd_Uzepm} for $p_L = C(\varepsilon) 
	e^{-L^c}$,  $K_0 = C(\varepsilon)$ and 
	$L_0  = C(\varepsilon)$ and suitable $\beta'=c > 0$ uniform in $\varepsilon$. 
	
	In dimensions $d \geq 4$, choosing $K=K(\varepsilon)$, $L=L(\varepsilon)$ sufficiently large and $\Lambda_N= B_N^2$ (recall \eqref{eq:scriptS_N}), \eqref{eq:bootstrapped_limit_probd=4} immediately yields \eqref{eq:d=4-sub} for $u= u_1= u_\ast(1 + \varepsilon)^{5}$. Since $\varepsilon \downarrow 0$ as $u \downarrow u_\ast$ and by monotonicity, this concludes the verification of  \eqref{eq:d=4-sub}. For $d=3$, \eqref{eq:bootstrapped_limit_prob1d=3} instead yields for $\Lambda_N= \tilde D_{0, N} \setminus \tilde C_{0, N} $ that
	\begin{equation}\label{eq:disconnection_improved}
		\P[\nlr{}{}{\tilde C_{0, N}}{\tilde D_{0, N}} \text{ in } \mathcal V^{u_1}] \stackrel{\eqref{def:disconnect}}{=} \pi(\overline{\mathcal V}_{N\Z^d}, u_1) \ge 1 - 
		C(\varepsilon)\exp\{- N(\log N)^{-\beta}\}
	\end{equation}
	for all $N \geq 1$ and some $\beta=\beta(\beta') \in (1, \infty)$. 
	To deduce \eqref{eq:d=3-sub}, we apply Proposition~\ref{prop:bootstrap_prob} again,
	now starting with this {\em improved} estimate instead of \eqref{eq:trigger_subcrit_rnd1} and running the 
	same procedure as above. 
	In particular, 
	by \eqref{eq:disconnection_improved}, \eqref{eq:initial_limit_prob} is now satisfied for any choice of $\beta'<1$, say $\beta' = \frac34$.  We then deduce from 
	\eqref{eq:bootstrapped_limit_prob1d=3} and the inclusion in \eqref{eq:inclusion_disconnect} with $\bm u = u$, $\bm v = 
	u_\ast(1 + \varepsilon)^{8}$ (by choice of $\varepsilon$, $u_\ast(1 + \varepsilon)^{8} < u$), any $\rho \in (0, \frac1{2\Cr{C:rho}})$ and 
	$\delta' \in (0, 1)$, $K = \sqrt{\log \log e^2 N}$ and $\Lambda_N = B_N^2$ that 
	\begin{equation*}
		\limsup_{N \to \infty} \frac{\log N}{N}\log \P[\lr{}{\mathcal V^{u}}{0}{\partial B_N^2}] \le -
		(1 - \delta)
		(1 - \Cr{C:rho}\rho)\frac{\pi}{3} (\sqrt{u} - \sqrt{u_\ast(1 + \varepsilon)^{8}})^2.
	\end{equation*}
	Sending $\delta$, $\rho$ and $\varepsilon$ to $0$ yields \eqref{eq:d=3-sub}.
\end{proof}

\begin{remark}\label{R:subcrit-variation} Although not optimal, the following result, which 
	incorporates noise and is obtained by a variation of the above argument, will be useful below. 
	Recall $\mathsf N_\delta(\mathcal V)$ from 
	\eqref{def:noised_set}--\eqref{def:noised_set_inclusion}. There exists $\Cl{C:log-sub} < \infty$ with $\Cr{C:log-sub} = 0$ 
	when $d \ge 4$
	such that, for all $u<u_\ast$, $\delta \leq  \Cl[c]{c:delta-sub}(u) (> 0)$, and $N \ge 1$,
	\begin{equation}\label{eq:connection_delta_d=3}
		\P\big[\lr{}{}{C_{0, N}}{\partial D_{0, N}} \text{ in } \mathsf N_{\delta}(\mathcal V^{u}) \big]
		\ge 1 - 
		C(u)\exp\{-c(u){N}/{(1 \vee \log N)^{\Cr{C:log-sub}}}\}.
	\end{equation}	
	To obtain  \eqref{eq:connection_delta_d=3}, it is enough to show an analogue of the 
	\emph{seed estimate}  \eqref{eq:trigger_subcrit_rnd1} but replacing the event 
	$\widehat{\mathcal{G}}_{z, L} = \mathcal{G}_{z,L}({\rm Dis}(\widehat{\mathcal V}_{\mathbb 
		L}), \ldots)$ in \eqref{eq:disconnect_Gz}  by an analogue of the local 
	uniqueness event \cite[(5.43)]{gosrodsev2021radius}, involving configurations in 
	$\mathsf{N}_{\delta}(\widehat{\mathcal V}_{\mathbb L}^{\cdot})$ at levels close to $u$ instead 
	of $(\{\chi^z \geq \cdot\})_{z\in \mathbb{L}}$ (cf.~also the event ${\rm V}_z$ defined in 
	\eqref{def:Fz} below). Once this seed estimate is 
	shown, \eqref{eq:connection_delta_d=3} follows in the same way as 
	\eqref{eq:disconnection_improved}. We will not give a proof of the required seed estimate, 
	see \cite[Lemma 5.16]{gosrodsev2021radius} for a similar argument. 
\end{remark}

\section{Coarse-graining for SLU} \label{sec:supcrit_upper_bnd}
The main focus of this section is Theorem~\ref{prop:exploration_RI-I}, which transfers the 
probability bounds in \eqref{eq:supcrit0} at scale $L$ to a similar lower bound on the 
probability of an `${\rm SLU}$-type' event at scale $N \gg L$. This result is instrumental for the proof of Theorem~\ref{T:ri-main} in \S\ref{subsec:slu}. Theorem~\ref{prop:exploration_RI-I} relies crucially on certain novel events, see for instance \eqref{def:Z_tr_RI}, involving the {\em packeting} of excursions. We lay this out in \S\ref{subsubsec:supcrit}. The next two subsections are devoted to the proof of 
Theorem~\ref{prop:exploration_RI-I}, which hinges on several new ideas. In 
\S\ref{subsec:reduction}, we deduce Theorem~\ref{prop:exploration_RI-I} 
from the presence of a large number of {\em good encounter points} between any crossing cluster 
of $\tilde D_{z, N} \setminus \tilde C_{z, N}$ in $\mathcal V(Z)$ and a suitable ambient cluster. Here $Z$ is any sequence of excursions included in the aforementioned {\em packets} and good encounter points are defined in terms of carefully 
designed stopping times 
that allow us to connect two clusters using Proposition~\ref{lem:conditional_prob1}.  
 In \S\ref{subsec:exploration}, we describe a delicate exploration scheme designed to ensure a 
large number of such good encounter points $y$, thus supplying the last missing 
ingredient to the proof of Theorem~\ref{prop:exploration_RI-I}. The most pressing issue is that this exploration needs to be compatible with $\mathcal{F}$ from \eqref{def:FEsigma_algebra1}, see \eqref{property:reduction3} below, which is restrictive.

\subsection{Framework}\label{subsubsec:supcrit} 	
We now prepare the ground for the statement of Theorem~\ref{prop:exploration_RI-I}. 
We will work with two specific sets of data in \eqref{eq:data}, namely $({\rm V}, {\rm W}^{\rm I}, \mathscr{C})$ and $({\rm V}, {\rm W}^{\rm II}, \mathscr{C})$, roughly speaking in 
order to deal with small and large numbers of excursions; 
cf.~\eqref{eq:V_z^I}-\eqref{eq:V_z^II} below. They determine the corresponding families of 
events $\G^{\rm I}$ and $\G^{\rm II}$ from Definition~\ref{def:good_events} via 
\eqref{def:Gz}. We will refer to I and II as \emph{types}. 

We start by collecting here all the parameters that will appear in the sequel. These are:
\begin{equation}\label{eq:params_RI}
	\text{\begin{minipage}{0.58\textwidth}$u_0 \in  (0, \infty)$,
			$u < u_1 < u_2  < u_3  \in  (0, u_\ast)$, 
			$a \in \N^*$, 
			$\nu \ge 0$, $\delta \in [0, \tfrac12)$, scales $N> L> L_0$, $K$ and $\rho$ satisfying \eqref{eq:L_k_descending}. 
			\end{minipage}}
\end{equation}
To keep notations reasonable, we will routinely suppress the dependence of quantities (in particular,~events or sets) on parameters which stay fixed in a given context. 
Recall that $\mathbb{L}=L\Z^d$, that $\widehat{\mathcal V}_{\mathbb L} \in 
\{\overline{\mathcal V}_{\mathbb L}, \mathcal V_{\mathbb L}, \widetilde{\mathcal V}_{\mathbb 
L}\}$ (see below \eqref{eq:3Vs} for notation), that 
$({\mathcal V})_{\delta}$ refers to a noised configuration (see below 
\eqref{def:noised_set_inclusion}) and the boxes $C_z,\tilde{C}_z, D_z,\dots$ from 
\eqref{def:CDU}. We also use $\widehat{Z} = \widehat{Z}_{\mathbb L} = \{ 
\widehat{Z}_z^u: z \in \mathbb{L}, u > 0\}$ to refer {\em collectively} to any of the three 
sequences in \eqref{eq:3Vs}. We parametrize events below in terms of (finite) 
sequences $Z=(Z_j)_{1 \leq j \leq n_Z}$ (and sometimes $Z',Z'',\dots$) of excursions rather 
than through their associated vacant sets $\mathcal{V}=\mathcal{V}(Z)$, 
cf.~\eqref{eq:I(Z)}--\eqref{eq:3Vs}. 
This paradigm shift will be important.
\begin{itemize}
\item 
\textbf{The sets $\mathscr C = \{\mathscr C_z: z \in \mathbb L\}$.} For $Z$ a (finite) sequence of excursions, the set $\mathscr 
C_{z}(Z, Z',\delta)$ ($= \mathscr C_{z, L}(Z, Z',\delta)$) is the subset of $\Z^d$ obtained as the union of 
all clusters in $D_z \cap  (\mathcal{V}(Z))_{\delta}$  containing a crossing of $\tilde D_z \setminus \tilde C_z$ in $({\mathcal V}(Z'))_{2\delta}$. We also set  $\mathscr C_z(Z ,\delta)= \mathscr C_z(Z ,Z,\delta)$, and let
\begin{equation}\label{eq:script_C}
\mathscr C_z 
( = \mathscr C_{z}(\widehat{Z}, \delta, u_1, u_3))
= \mathscr C_{z}(\widehat{Z}_z^{u_1} , \widehat{Z}_z^{u_3}, \delta). 
\end{equation}
\item 
\textbf{The events ${\rm V} = \{{\rm V}_z:  z \in \mathbb L\}$.} For $Z,Z',Z''$ any finite sequences of excursions, set 
\begin{equation*}
{\rm 
V}_{z, L}(Z,Z',Z'', \delta)=	\left\{
	\begin{array}{c}
		\text{$C_z$ is connected to $\partial D_z$ in $({\mathcal V}(Z''))_{2\delta}$ and all clusters of $D_z \cap ({\mathcal V}(Z'))_{2\delta}$}\\ 
		\text{
			 crossing $\tilde D_z \setminus \tilde C_z$ are connected inside $D_z \cap ({\mathcal V}(Z))_{\delta}$}
	\end{array}
	\right\}
\end{equation*}
and abbreviate ${\rm V}_{z,L}(Z, \delta) = {\rm V}_{z, L}(Z, Z,Z,\delta)$. We then set 
\begin{equation}\label{def:Fz}
{\rm 
V}_z
(= {\rm 
V}_z(\widehat{Z}, \delta, u_1, u_2, u_3)) = {\rm 
V}_z(\widehat{Z}_z^{u_1},\widehat{Z}_z^{u_2},\widehat{Z}_z^{u_3}, \delta)
\end{equation}
and abbreviate ${\rm 
V}_z(\widehat{Z}, \delta, u)= {\rm 
V}_z(\widehat{Z}, \delta, u, u, u)$. 
We remove $\delta $ from all notation when $\delta=0$.
\end{itemize}

The remaining events ${\rm W}$ 
in \eqref{eq:data} 
will be introduced shortly. Notice that the set 
$\mathscr C_{z}$ is a connected set on the event ${\rm V}_z$. We are 
ultimately interested in ${\rm 
V}_z(\overline{Z}, u)= {\rm 
V}_z(\overline{Z}, \delta=0, u)$, which deals with the actual vacant set of random interlacements (cf.~\eqref{eq:3Vs}) and entails that there is {\em no} sprinkling.

So far we could have expressed the quantities $\mathscr C_{z}$ and ${\rm V}_z$ directly in terms of vacant sets $\widehat{\mathcal{V}}_{\mathbb{L}}$ via the 
identification $\widehat{\mathcal{V}}^u_z= \mathcal{V}(\widehat{Z}_z^u)$, see below \eqref{eq:3Vs}. In the sequel we will deal with a more general class of events 
involving subsets of excursions for which this factorization property no longer holds. 
To this end, we consider two basic collections of (sub-)sequences of excursions. Given a (finite) sequence $Z=(Z_j)_{1 \leq j \leq n_Z}$  of excursions (see above \eqref{eq:I(Z)}) and $\nu \in [0, \infty]$, we introduce 
\begin{align}
\label{eq:Z^+}
&{Z}_+(\nu) \coloneqq
\text{the collection of all sequences~$(Z_j)_{j \in J}$, $J \subset \{1, \ldots, n_Z\}$ s.t.~$\{1, \ldots, \lfloor \nu \rfloor \} \subset J$}\\
&\label{eq:Z^-}
{Z}_-(\nu) \coloneqq
\text{the collection of all sequences 
$(Z_j)_{j \in J}$, $J \subset \{1, \ldots, n_Z\}$ s.t.~$ J \subset \{1, \ldots, \lfloor \nu \rfloor \} $.}
\end{align}
By convention, ${Z}_+(\nu=0)$ comprises all subsequences of $Z$   whereas  
${Z}_-(\nu=0)$ is empty. 

Rather than dealing directly with ${\rm V}_z(\widehat{Z}_z^u)$, we will bound the complement of a stronger (i.e.~smaller) event involving certain subsequences of the excursions forming ${\rm V}_z(\widehat{Z}_z^u)$, as follows. Given a (finite) sequence $Z=(Z_j)_{1 \leq j \leq n_Z}$  of excursions and any $\nu \in [0, \infty]$,  we let 
\begin{equation}\label{def:Z_tr_RI}
{Z} (\nu) \coloneqq \textstyle \bigcup_
{j \geq 0}\big(Z_+(j) \cap Z_-(j+ \lfloor \nu \rfloor) \big).
\end{equation} 
In words, $Z(\nu)$ denotes the collection of all sequences 
$(Z_j)_{j \in J}$ such that $J \subset \{1, \ldots, n_Z\}$ satisfies 
$\{1, \ldots, j\} \subset J \subset \{1, \ldots, j + \lfloor \nu \rfloor\}$ for some integer 
$j \ge 0$. Roughly speaking, $(Z_j)_{j \in J} \in Z(\nu)$ if $J$ is `almost an interval.' Note that 
$Z(\nu)$ is increasing in $\nu$ and that $Z \in Z(\nu)$ (pick $j=n_Z$) for any $\nu \in [0, 
\infty]$. 
Now, for $\nu \in [0,\infty]$, with the notation from above \eqref{def:Fz}, we introduce
\begin{equation}\label{eq:V-boosted} \textstyle
{\rm V}_z(\widehat{Z}_z^u(\nu)) \coloneqq \bigcap_{{Z} \in \widehat{Z}_z^u(\nu)}  {\rm V}_z({Z}) \ (\subset 
 {\rm V}_z( \widehat{Z},u) ).
\end{equation}
In line with the convention below \eqref{def:Fz}, $\delta=0$ is implicit in \eqref{eq:V-boosted}. In what follows, much as in \eqref{eq:V-boosted}, given an event $E(Z)$ and $\zeta$ a collection of subsequences of $Z$, the event $E(\zeta)$ is declared by setting 
\begin{equation}\label{def:boosted} \textstyle
E(\zeta)=\bigcap_{Z'\in \zeta}E(Z')
\end{equation}
(measurability is never an issue since $Z$ is always a finite sequence). To see the expedience of events like ${\rm V}_z(\widehat{Z}_z^u(\nu))$ 
we present a lemma which will later form the starting point of our proof of Theorem~\ref{T:ri-main}.
\begin{lemma}\label{lem:VzSLU_inclusion}
For any $L \ge 1$ and $u > 0$, with ${\rm V}_{z,L} \equiv {\rm V}_{z}(\overline{Z}_{z,L}^u(\nu 
= 0))$ as defined by \eqref{eq:V-boosted}, one has the inclusion (recall \eqref{eq:slu} regarding ${\rm SLU}_L(u)$)
\begin{equation}
	\label{eq:SLU-reduc0}
	\textstyle\bigcap_{z \in B_{2L} } {\rm V}_{z,{L}/{100}} \subset {\rm SLU}_L(u).
\end{equation}
\end{lemma}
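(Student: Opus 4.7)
The argument hinges on two observations. First, the abstract intersection $\overline Z_{z,L/100}^u(\nu = 0)$ defining ${\rm V}_{z, L/100}$ is, with $\nu = 0$, a concrete enumeration of vacant configurations at all levels in $[0,u]$. Indeed, unpacking \eqref{def:Z_tr_RI} with $\nu = 0$ yields $Z_+(j) \cap Z_-(j) = \{(Z_1, \ldots, Z_j)\}$ for each $j \ge 0$, so that $\overline Z_{z, L/100}^u(0)$ is precisely the family of prefixes $(Z_1, \ldots, Z_j)$, $0 \le j \le N_{z, L/100}^u$, of $\overline Z_{z, L/100}^u$. Since excursions in \eqref{eq:RI_Z} are listed in increasing order of the labels of their source trajectories, the prefix of length $N_{z, L/100}^v$ satisfies $\mathcal V((Z_1, \ldots, Z_{N_{z, L/100}^v})) \cap D_{z, L/100} = \mathcal V^v \cap D_{z, L/100}$ in view of \eqref{e:def-I-u-K}--\eqref{eq:3Vs}, and this exhausts every $v \in [0, u]$. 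As the event ${\rm V}_z(\cdot)$ defined in \eqref{def:Fz} (with $\delta = 0$) depends on its argument only through the restriction of the associated vacant set to $D_{z, L/100}$, the event ${\rm V}_{z, L/100}$ is equivalent to the requirement that, for every $v \in [0, u]$,
\begin{enumerate}[(a)]
\item $C_{z, L/100}$ is connected to $\partial D_{z, L/100}$ in $\mathcal V^v$, and
\item every pair of clusters of $\mathcal V^v \cap D_{z, L/100}$ that both cross $\tilde D_{z, L/100} \setminus \tilde C_{z, L/100}$ lies in a common cluster of $\mathcal V^v \cap D_{z, L/100}$.
\end{enumerate}

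Second, on the event $\bigcap_{z \in B_{2L}} {\rm V}_{z, L/100}$, (a) and (b) together form a \emph{scale-$(L/100)$ local uniqueness} property holding throughout $B_{2L}$, simultaneously for all $v \in [0, u]$. This is the classical input to a renormalisation/chaining argument that upgrades it to $\text{Unique}(L, v, v)$ for each $v \in [0, u]$ separately, hence to ${\rm SLU}_L(u)$. Concretely, fix $v \in [0, u]$ and let $\mathcal C_1, \mathcal C_2$ be clusters of $\mathcal V^v \cap B_L$ of diameter $\ge L/10$. For $i \in \{1, 2\}$, pick $z_i \in (L/100)\Z^d \cap B_L$ with $\mathcal C_i \cap C_{z_i, L/100} \neq \emptyset$. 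Since $\tilde D_{z_i, L/100}$ has $\ell^\infty$-diameter strictly less than $L/10$, $\mathcal C_i$ must contain a crossing of $\tilde D_{z_i, L/100} \setminus \tilde C_{z_i, L/100}$; by (b) at $z_i$, this crossing lies in a single ``ambient'' cluster $\mathscr C_{z_i}^v$ of $\mathcal V^v \cap D_{z_i, L/100}$, which moreover reaches $\partial D_{z_i, L/100}$ by (a). Choosing a nearest-neighbor chain $z_1 = z^{(0)}, \ldots, z^{(m)} = z_2$ in $(L/100)\Z^d \cap B_L$, the generous overlap between consecutive boxes $D_{\cdot, L/100}$ and the crossings provided by (a) ensure that at each step $\mathscr C_{z^{(k)}}^v$ contains a crossing of $\tilde D_{z^{(k+1)}, L/100} \setminus \tilde C_{z^{(k+1)}, L/100}$, which (b) at $z^{(k+1)}$ then forces into $\mathscr C_{z^{(k+1)}}^v$ inside $\mathcal V^v \cap (D_{z^{(k)}, L/100} \cup D_{z^{(k+1)}, L/100}) \subset \mathcal V^v \cap B_{2L}$. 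Iteration merges $\mathscr C_{z_1}^v$ and $\mathscr C_{z_2}^v$ into a single cluster of $\mathcal V^v \cap B_{2L}$ containing both $\mathcal C_1$ and $\mathcal C_2$, and the arbitrariness of $v \in [0, u]$ yields \eqref{eq:SLU-reduc0}.

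The essential conceptual step is the identification in the first paragraph, which translates the packet formalism --- here at its most restrictive value $\nu = 0$ --- into a quantified local-uniqueness statement uniform in $v \in [0, u]$. Once this is in hand, the geometric chaining in the second paragraph is routine given the generous scale ratio $100$ between $L$ and $L/100$; the only minor technical care is in arranging the chain so that every box $D_{z^{(k)}, L/100}$ stays inside $B_{2L}$, which is straightforward since $z_1, z_2 \in B_L$.
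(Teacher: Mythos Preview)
Your proof is correct and follows essentially the same approach as the paper's: first identifying $\overline{Z}_{z,L/100}^u(\nu=0)$ with the family of prefixes (hence with the configurations $\mathcal V^v$, $v\in[0,u]$, restricted to $D_{z,L/100}$), and then carrying out a chaining argument via the local-uniqueness events to merge large clusters inside $B_{2L}$. Your writeup is in fact more explicit than the paper's (which leaves the gluing mostly to the reader), and the key geometric fact you use---that a path from $C_{z^{(k)},L/100}$ to $\partial D_{z^{(k)},L/100}$ necessarily crosses $\tilde D_{z^{(k+1)},L/100}\setminus\tilde C_{z^{(k+1)},L/100}$ for $*$-neighboring $z^{(k)},z^{(k+1)}$---is precisely the content of \eqref{eq:z'z''} later in the paper.
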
	
\begin{proof}
Let $\ell \coloneqq L/100$. Any cluster of $\mathcal{V}^v$ for $v \in 
[0,u]$ in $B_L$ having diameter at least $L/10$
crosses $\tilde D_{z,\ell} \setminus \tilde{C}_{z,\ell} $ for some (nearby) $z\in B_{2L}$. The 
occurrence of ${\rm V}_{z, \ell}$, cf.~above \eqref{def:Fz} and recall that $\delta=0$, along 
with that of other $z$'s in $B_{2L} $, thus allows to connect any two such clusters inside 
$B_{2L}$, provided $\mathcal{V}^v$ can be identified as the vacant set $\mathcal{V}(Z)$ for 
some 
$Z \in \zeta \coloneqq \overline{Z}_{z,\ell}^u(\nu=0)$. But by \eqref{eq:Z^+}-\eqref{def:Z_tr_RI}, 
$\zeta$ comprises all collections of excursions  of the form $(Z_1,\dots Z_j)$ for some $j \leq N_{z,\ell}^u$, where $(Z_j)_{j \geq 1}$ is the sequence from \eqref{eq:RI_Z} with $ D= D_{z,\ell}$ and $ U= U_{z,\ell}$, 
exactly one of which (when $j = N_{z,\ell}^v$) corresponds to the excursions underlying 
$\mathcal{V}^v \cap D_{z, \ell}$ between 
$D$ and $\partial^{{\rm out}} U$. The inclusion \eqref{eq:SLU-reduc0} follows.
\end{proof}
 Our goal is now to bound the probability of ${\rm 
 V}_z(\overline{Z}_z^u(\nu))$ (recall  \eqref{eq:3Vs} concerning $\overline{Z}$). We will 
split this task into two parts, which brings into play the \emph{types} I and II alluded to 
above. Roughly speaking, type I, resp.~II, 
corresponds to the cases that $J$ is `sizeable,' resp.~`small.' More precisely, we let (see 
\eqref{eq:V-boosted} and \eqref{eq:Z^+}--\eqref{eq:Z^-} for notation and recall 
$N_z^u=N_{z,L}^u$ from above \eqref{eq:3Vs})
\begin{align}
\label{eq:V_z^I} &{\rm V}_z^{\rm I}={\rm V}_{z,L}^{\rm I}(\nu; u_0, u) \coloneqq 
{\rm V}_z \big(\overline{Z}_{z}^u(\nu) \cap \big((\overline{Z}_{z}^u)_+(N_{z}^{{u_0}/{2}}) \big) \big), \\
\label{eq:V_z^II} &{\rm V}_z^{\rm II}={\rm V}_{z,L}^{\rm II}(\nu; u_0, u) \coloneqq {\rm V}_z 
\big(\overline{Z}_{z}^u(\nu) \cap \big((\overline{Z}_{z}^u)_-(N_{z}^{{3u_0}/{2}}) \big) \big).
\end{align}
In view of \eqref{eq:V_z^I}-\eqref{eq:V_z^II} and \eqref{eq:Z^+}-\eqref{eq:Z^-}, types I 
and II respectively deal with typical and small numbers of excursions. 
As we now explain, abbreviating ${\rm V}_z= {\rm V}_{z,L}(\overline{Z}_{z,L}^u(\nu))$, 
the event of interest, one has
\begin{equation}\label{eq:V-first-inclusion}
{\rm V}_z^c \cap \big\{ N_z^{{3u_0}/{2}}- N_z^{{u_0}/{2}} > \nu \big\} \subset
({\rm V}_z^{\rm I})^c \cup  ( {\rm V}_z^{\rm II})^c, \text{ for all $\nu \in [1,\infty]$.}
\end{equation}
In view of \eqref{eq:V-boosted}, let $Z \in \overline{Z}_{z}^u(\nu) 
=\overline{Z}_{z,L}^u(\nu) $ be such that $(V_z(Z))^c$ occurs. If $Z \in 
(\overline{Z}_z^u)_+(N_z^{{u_0}/{2}})$, then $({\rm V}_z^{\rm I})^c$ occurs. Otherwise, by 
\eqref{eq:Z^-}-\eqref{def:Z_tr_RI}, $Z=(Z_j)_{j \in J}$ is such that $\{1,\dots, j\} \subset 
\{1,\dots, j+\lfloor\nu \rfloor\}$ for some 
$j \ge 0$ and $j < N_z^{{u_0}/{2}}$. In particular, on 
the event appearing on the left of \eqref{eq:V-first-inclusion}, $J$ is contained in an 
interval of length at most $j+ \nu < N_z^{{u_0}/{2}} + \nu \leq N_z^{{3u_0}/{2}}$, i.e.~$Z 
\in (\overline{Z}_{z}^u)_-(N_z^{{3u_0}/{2}}) $,  and $({\rm V}_z^{\rm II})^c$
occurs.

Theorem~\ref{prop:exploration_RI-I} below yields a crucial estimate on $\P[({\rm V}_{z,N}^{\rm I})^c]$. The event ${\rm V}_z^{\rm II}$  is in fact easier to deal with, and for 
improved clarity all proceedings relating to type II are relegated to Appendix~\ref{subsec:smallu}; see in particular
 Theorem~\ref{prop:exploration_RI-II}, which plays a role analogous to Theorem~\ref{prop:exploration_RI-I} but concerns ${\rm V}_z^{\rm II}$.

We now focus on the event ${\rm V}_z^{\rm I}$. Its occurrence will be bounded in terms of a (good) event $\mathscr{G}_z^{\rm I}$ of the form \eqref{def:script_G}, whose constituent family $\mathcal{G}^{\rm I}$ will be of the form given by Definition~\ref{def:good_events} for suitable data $({\rm V}, {\rm W}^{\rm I}, \mathscr{C})$, with 
$ \mathscr{C}$, ${\rm V}$ as in \eqref{eq:script_C}-\eqref{def:Fz}, and 
events ${\rm W}^{\rm I}$ that we now introduce. 
We declare $ \hat{\nu}_z(u) =\hat{\nu}_{z,L}(u)$ for $u>0$, $z \in \mathbb{L}$ (where, as with $\widehat{Z}$, the hat is a placeholder for three possibilities; cf.~\eqref{eq:3Vs}) as
\begin{equation} \label{eq:nu-hat}
\nu_{z,L}(u)=\tilde{\nu}_{z,L}(u)= u \, \text{cap}( D_{z,L}), \quad \bar{\nu}_{z,L}(u)= N_{z,L}^u.
\end{equation}
\begin{itemize}
\item 
\textbf{The events ${\rm W}^{\rm I} = \{{\rm W}_{z, y}^{\rm I} : z \in \mathbb L, y \in \mathbb L_0\}$.} Let (see below \eqref{eq:V-boosted} for notation) 
\begin{equation} \label{eq:WI}
{\rm W}_{z, y}^{\rm I} \equiv {\rm W}_{z, y}^{\rm I} (\widehat{Z}, u_0, u_1) \coloneqq {\rm FE}_y\big( (\widehat{Z}_z^{u_1})_+ ( \hat{\nu}_z( \tfrac{u_0}{8}))\big),
\end{equation}
where, for any  sequence $Z$ of excursions 
we define the event ${\rm FE}_{y}({Z})$ as follows:
\begin{equation}\label{def:FE1}
{\rm FE}_{y}({Z}) =  
{\rm FE}_{y, L_0}({Z}) = 
{\rm LU}_{y}({Z})  \cap
{\rm O}_{y}({Z})
\end{equation}
with ${\rm O}_{y}({Z}) = {\rm O}_{y, L_0}({Z})$ as in \eqref{def:Conn0} and, 
for $x, x'$ ranging in $(\tilde 
	D_{y} \setminus \tilde C_{y}) \cap \mathcal I(Z)$ below, 
\begin{equation}\label{def:Conn}
\begin{split}
&{\rm LU}_{y}({Z}) = {\rm LU}_{y, L_0}({Z})\coloneqq\textstyle\bigcap_{x,x'} \{\lr{}{}{x}{x'} \text{ in } \mathcal I({Z}) \cap \, (D_{y} \setminus (\partial D_y \cup C_{y}))\}.
\end{split}	
\end{equation}
\end{itemize}
The above data set $({\rm V}, {\rm W}^{\rm I}, \mathscr{C})$  leads to the well-defined event (recall \eqref{def:Gz} for the right-hand side) 
\begin{equation}\label{def:good_events_supcrit}
\mathcal{G}_z^{\rm I}(\widehat{Z}, \delta, u_0, u_1, u_2, u_3; a) \coloneqq 
\mathcal G_z({\rm V}, {\rm W}^{\rm I}, \mathscr C ; a),
\end{equation}
with ${\rm V}_z= {\rm 
V}_z(\widehat{Z}, \delta, u_1, u_2, u_3)$ given by \eqref{def:Fz}, 
${\rm W}^{\rm I}= {\rm W}^{\rm I} (\widehat{Z}, u_0, u_1)$ given by \eqref{eq:WI}, and $
\mathscr C_z = \mathscr C_z(\widehat{Z}, \delta, u_1, u_3)$ given by \eqref{eq:script_C}. 
Finally, $\mathscr{G}_{z,N}^{\rm I} (\widehat{Z}, \delta, u_0, u_1, u_2, u_3; a)$,~$z \in N \Z^d$, is defined as
\begin{equation}\label{def:mathscrG_RI-gen}
\mathscr G\big(\tilde D_{z, N} \setminus \tilde C_{z, N},  
\mathcal{G}^{\rm I}= \big\{\mathcal{G}_{z'}^{\rm I}(\widehat{Z},  \delta, u_0, u_1, u_2, u_3; a): z' \in 
\mathbb L\big\}, \mathcal F_L^{\bm u_1, \bm u_2} ; \rho = \tfrac1{2\Cr{C:rho}}\big)
\end{equation}
(recall \eqref{def:script_G} and \eqref{def:good_events_supcrit}), where $\mathcal F_L^{\bm 
u_1, \bm u_2} $ is given by \eqref{eq:Fext} 
and \eqref{eq:bootstrapped_limit_probd=4}) and $\bm u_1$, $\bm u_2$ are as follows:
\begin{equation}\label{def:F_y_RI}
\bm u_1 = (u, u_{2,3} \coloneqq\tfrac{u_2+u_3}{2}, u_{2,3}, \tfrac{u_0}2) \mbox{ and } \bm u_2  = (u_1, u_2, u_3, \tfrac{u_0}8).
\end{equation}
The next proposition is the announced estimate for $\P[({\rm V}_{z,N}^{\rm I})^c]$, 
expressed in terms of $\P\big[(\mathscr G_{z,N}^{\rm I})^c\big]$ (later controlled by means of Proposition~\ref{prop:bootstrap_prob}), where (see \eqref{eq:3Vs} regarding $Z_{\mathbb L}=\{Z_{z',\mathbb L}^u: u > 0 , z' \in \mathbb{L} \}$)
\begin{equation}\label{def:mathscrG_RI}
\mathscr{G}_{z,N}^{\rm I} \coloneqq \mathscr{G}_{z,N}^{\rm I} ({Z}_{\mathbb L}, \delta, u_0, u_1, u_2, u_3; a), \quad z \in N\Z^d
\end{equation}
 
\begin{theorem}[Coarse-graining for ${\rm V}_{z,N}^{\rm I}$]
\label{prop:exploration_RI-I}
Under \eqref{eq:params_RI} and for $\delta > 0$, $\nu \ge 
0$,  $2u_0  < u_\ast$, 
and $L \ge C(u_0)$, there exists $c = c(\delta, L_0) > 0$ 
such that, for $z \in N\Z^d$, with $h(x) = x(1 + (\log x)^21_{d \ge 4})$ as in \S\ref{subsec:admissible},
\begin{equation}\label{eq:truncated_RI-I}
	\P[({\rm V}_{z,N}^{\rm I})^c] \leq  \P\big[(\mathscr G_{z,N}^{\rm I})^c\big] 
	   + \P\big[\nlr{}{}{ C_{z, N}}{\partial  D_{z, N}} \text{ in } (\mathcal V^{u_{2, 3}})_{2\delta}\big] 
	 + e^{-c(a 
	 	\frac N{h(KL)}\wedge N)  +  C(\nu + \log N)}.
\end{equation}
\end{theorem}

\subsection{
Gluing clusters via good encounter points}\label{subsec:reduction}
In this subsection, we derive Theorem~\ref{prop:exploration_RI-I} from the existence of a carefully designed sequence $\tau=(\tau_k)_{k \geq1}$ of {\em good 
encounter times} attached to the dynamics of a cluster exploration algorithm. Their existence is the content of Proposition~\ref{prop:reduction} below. Throughout \S\ref{subsec:reduction}--\ref{subsec:exploration}, we assume that 
the assumptions of Theorem~\ref{prop:exploration_RI-I} hold; in particular, all parameters (such as $N, L,\dots$) below satisfy 
\eqref{eq:params_RI}. Let $J \subset \N^\ast=\{1,2.,\dots\}$ be finite. We abbreviate $Z_J= (Z_j^{ D_{z, N},  U_{z, N}})_{j \in J}$, for a fixed $z \in N\Z^d$. We omit 
$N$ in all the subscripts involving $z$ like $\tilde C_{z, N}$, 
$N_{z, N}^{u}$ etc. For a point $x \in \partial \tilde{C}_z$, we let $\mathscr C_J(x)$ denote the (possibly empty)
cluster of $x$ inside $D_z \cap \mathcal V({Z}_J)$ with $\mathcal V({Z}_J)$ given by \eqref{eq:I(Z)}.
By convention, we set 
$\partial_{D_z}^{{\rm out}}\mathscr C_J(x)=\{x\}$ in the sequel whenever $\mathscr C_J(x) = \emptyset$.

The exploration algorithm consists of a sequence $(w_n)_{n \ge 1}$ of $\Z^d$-valued random variables on the space $(\Omega, 
\mathcal{A}, \P)$, see 
below \eqref{eq:eBeB}, where $w_1 \coloneqq x $ and, if $\textstyle \bigcup_{1 \le i \le n-1}\{w_i\}=\mathscr C_J(x) \cup \partial_{D_z}^{{\rm 
out}}\mathscr C_J(x)$  (an event measurable relative to the random variables $(w_i, 
1_{\{w_{i} \in \mathcal V(Z_J)\}}, 1 \leq i  < n)$) for some $n \geq 2$, then $w_n \coloneqq w_{n-1}$. Else, $w_n$ is the smallest point (in a given deterministic ordering of 
$\Z^d$) in $\textstyle \Z^d \setminus \bigcup_{1 \le i \le n-1}\{w_i\}$ that lies on 
$\partial_{D_z}^{{\rm out}} \mathscr C_{J, n}(x)$, where $\mathscr C_{J, n}(x)$ is the cluster 
of $x$ in $\textstyle \bigcup_{1 \le i \le n-1}\{w_i\} \cap \mathcal V(Z_J)$. It is clear that 
$w_n \notin \bigcup_{1 \le i \le n-1}\{w_i\}$ as long as $\bigcup_{1 \le i \le n-1}\{w_i\}$ is a 
proper subset of $\mathscr C_J(x) \cup \partial_{D_z}^{{\rm out}}\mathscr C_J(x)$. Since 
this set is finite, $\P$-a.s.~$w_{n+1}=w_n$ for large enough $n$, i.e.~the 
exploration is complete in finite time.

The aforementioned variables $(\tau_k)_{k \geq1}$ will be coupled to the exploration algorithm $(w_n)_{n \ge 1}$. They roughly act as 
{\em stopping times} for the underlying exploration process. Recall that $\mathbb L=L\Z^d$ and 
the event ${\rm V}_{z',L}$ given by \eqref{def:Fz}. 
For fixed $z$ (as in the statement of Theorem~\ref{prop:exploration_RI-I}), consider the
(random) set
\begin{equation}\label{def:S}
	\Sigma\coloneqq
	\big\{z' \in \mathbb L 
	:  
	\text{
		$D_{z', L} \subset \tilde D_z \setminus \tilde 
		C_z$ and  
		${\rm V}_{z', L}(\overline Z_{\mathbb L}, \delta, u, u_{2, 3}, u_{2, 3})$ occurs}\big\},
\end{equation}
where, as for the remainder of this section, the parameters $u_0, u, u_{2, 3}$ etc. carry the 
same meaning as in \eqref{eq:params_RI} and \eqref{def:F_y_RI}. 
We now apply Proposition~\ref{lem:surroundingInterfaces} on $\mathbb{L}$ instead of 
$\Z^d$ 
with the choices $U= \{ z' \in \mathbb{L}: C_{z',L}\cap \tilde C_z \neq \emptyset\} $, $V= \{ z' \in \mathbb{L}: C_{z',L}\cap \tilde D_z \neq \emptyset\} $ and $\Sigma$ as in \eqref{def:S}. We refer to 
$k(\Sigma)$ as the maximal value of $k \geq 0$ such that the assumptions of Proposition~\ref{lem:surroundingInterfaces} are met with these choices, and denote by $O_1, 
\ldots, O_\ell \subset \Sigma$ the $\ast$-connected (as subsets of $\mathbb{L}$) sets thus obtained when choosing $k = k(\Sigma)$. It may well be that $k(
\Sigma)=0$ in which case $\ell=0$ and $\bigcup_{1 \le j \le \ell} \, O_j=\emptyset$ by convention in what follows.
Using the sets $O_1, \ldots, O_\ell$, we can define a special property of a point $y \in \mathbb L_0 = L_0\Z^d$ (cf.~\eqref{eq:params_RI}) as follows:
\begin{equation}\label{def:Cy}
	\text{$C_{y, L_0}\cap \mathscr C_{z', L} \neq \emptyset$  for some $z' \in \textstyle \bigcup_{1 \le j \le \ell} \, O_j$ 
			and $\widetilde{{\rm FE}}_{y, L_0}(Z_{J})$ occurs,}
\end{equation}
where $\mathscr C_{z', L} = 
	\mathscr C_{z'}(\overline{Z}_{\mathbb L}, \delta, u, u_{2, 3})$ and $\widetilde{{\rm FE}}_{y, L_0}(Z_{J})\coloneqq \widetilde{\rm 
				LU}_{y, L_0}\left({Z}_J\right) \cap {\rm O}_{y, L_0}(\overline{Z}_z^u)$; recall \eqref{def:tildeLU1} and \eqref{def:Conn0} for $\widetilde{{\rm LU}}_{y, L_0}$ and ${\rm O}_{y, L_0}$ respectively and compare with ${\rm FE}_{y, 
	L_0}(Z)$ in \eqref{def:FE1}. 
	
It will be expedient to partition $\mathbb{L}_0$ as follows. 
For $y \in \mathbb{L}_0 \cap D_{0,L_0}$, let $\mathbb{L}_{0,y}= y +7L_0 \mathbb Z^d$. By \eqref{def:CDU}, 
the sets $\mathbb{L}_{0,y}$ partition $\mathbb{L}_0$ as $y$ ranges over $\mathbb{L}_0 \cap 
D_{0,L_0}$. Furthermore, given $y$, the boxes $D_{y',L_0}$ with $y' \in 
\mathbb{L}_{0,y} $ form a partition of $\Z^d$. Let ${\widehat{\mathbb{L}}}_{0,y}$ be obtained 
from ${\mathbb{L}}_{0,y}$ by removing all points $y'$ such that $D_{y',L_0}$ intersects 
$\widetilde{C}_{z}$. Let $y \in \mathbb 
L_0 \cap D_{0, L_0}$. We call  $\tau =(\tau_k)_{k \ge 1} \equiv(\tau_{k; J, y}(x))_{k \ge 1}$ a sequence of {\em good encounter times (for $\mathscr C_J(x)$)} if 
$\tau$ is a non-decreasing sequence of
$\N^\ast \cup \{\infty\}$-valued random variables on the space $(\Omega, 
\mathcal{A}, \P)$ defined 
in \S\ref{subsec:occupation} satisfying the following three conditions:
\begin{align}
	\text{\begin{minipage}{0.90\textwidth}  If $\tau_k < \infty$, then 
			$w_{\tau_k} \in \partial D_{Y_k, L_0}$ for some (unique) $Y_k \in \widehat{\mathbb{L}}_{0,y}$ s.t.~$Y_k$ 
			satisfies~\eqref{def:Cy}. 
			Conversely, if $y' \in \widehat{\mathbb{L}}_{0,y}$ satisfies~\eqref{def:Cy} and 
			$C_{y', L_0}\cap \mathscr C_J(x) \neq \emptyset$, $ \exists k \ge 1$ s.t.~$\tau_k < \infty$, $ Y_k=y'$. \end{minipage}}\label{property:reduction1}\\[0.5em]	
	\text{\begin{minipage}{0.9\textwidth} If $\tau_k < \infty$ and $C_{Y_k, L_0} \subset \mathcal V(Z_J)$, then $w_{\tau_k}$ is connected to $C_{Y_k, L_0}$ 
			in $D_{Y_k, L_0} \cap \mathcal V(Z_J)$. 
	\end{minipage}}\label{property:reduction2} \\[0.5em]	
	\text{\begin{minipage}{0.9\textwidth} For any $y' \in \widehat{\mathbb L}_{0, y}$ and $k \ge 0$, the event $\textstyle\bigcap_{1 \le j \le k}\{\tau_j < \infty, C_{Y_j, L_0} \not\subset \mathcal V(Z_J)\} \cap \{\tau_{k + 1} < \infty, Y_{k+1} = y'\}$ is measurable rel.~to the $\sigma$-algebra $\mathcal F_{y', L_0}(Z_J, \delta, u, u_{2, 3})$ defined in \eqref{def:FEsigma_algebra1}. \end{minipage}}\label{property:reduction3}
\end{align}

Following is the main result of this section. Recall that the assumptions of Theorem~\ref{prop:exploration_RI-I} hold.
\begin{prop}\label{prop:reduction}
	For all finite $J \subset \N^\ast$, $y \in \mathbb 
	L_0 \cap D_{0, L_0}$ and  $x \in \partial \tilde C_z$, there exists a sequence of good encounter times $(\tau_k)_{k \ge 1}$ for $\mathscr C_J(x)$.
\end{prop}
We will prove this result in \S\ref{subsec:exploration} and proceed with the proof of Theorem~\ref{prop:exploration_RI-I}. The 
{\em good encounter points} correspond to the points $Y_k$ in 
\eqref{property:reduction1} when $\tau_k < \infty$. So far we have not said much on the events $\mathscr 
G_{z}^{{\rm I}}$ and $\{\lr{}{}{ C_{z}}{\partial  
	D_{z}} \text{ in } (\mathcal V^{u_{2, 3}})_{2\delta}\}$ whose complements appear on the 
right-hand side of \eqref{eq:truncated_RI-I}. Our next lemma 
connects these two events to the finiteness of $\tau_k$ in Proposition~\ref{prop:reduction} for 
a {\em large} value of $k$.
\begin{lemma}\label{lem:reduction}
	There exists 
	$\Cl[c]{c:reduce} \in (0, \infty)$ such that, for $J,y,x$ as above, letting  
	\begin{equation}\label{def:abc}
		A_{J, y}(x) \coloneqq \left\{ \tau_{\lceil \Cr{c:reduce} am \rceil; J, y}(x)  < \infty\right\}, 
	\end{equation}
	where $a (\in \mathbb{N}^*)$ enters the definition of $\mathscr{G}_{z,N}^{\rm I}$ (see \eqref{def:mathscrG_RI}) and $m$ is the common cardinality of coarsenings in 
	$\mathcal A_L^K(\tilde D_{z} \setminus \tilde C_{z})$ (cf.~Prop.~\ref{prop:coarse_paths}),
	one has the inclusion, with $y$ ranging over $\mathbb L_0 \cap D_{0, L_0}$ below,
	\begin{equation}\label{eq:inclusion_tauk_fin}
		\mathscr G_z^{{\rm I}} \cap 
		\{\lr{}{}{x}{\partial \tilde D_z} \text{ in } (\mathcal V^{u_{2, 3}})_{2\delta} \} \cap \big\{ [1, N_z^{{u_0}/2}] \subset J \subset [1, N_z^{u}]\big\} \subset \textstyle
		\bigcup_{y} A_{J, y}(x).
	\end{equation}
\end{lemma}
In \eqref{eq:inclusion_tauk_fin} and below we tacitly identify an interval $[a,b] \subset \mathbb{R}$ with $[a,b]\cap \mathbb{Z}$ and $[1,0]=\emptyset$ by convention. 
We will prove Lemma~\ref{lem:reduction} at the end of this subsection. We now proceed with the:
\begin{proof}[Proof of Theorem~\ref{prop:exploration_RI-I}]
	For any finite $J \subset \N^\ast$, we claim that
	\begin{equation}\label{eq:explore_hard}
		\P\big [ ({\rm V}_z(Z_J))^c , \,  \mathscr G_z^{{\rm I}}, \,  \lr{}{}{ C_{z}}{\partial  D_{z}} \text{ in } (\mathcal V^{u_{2, 3}})_{2\delta}, \,  [1, N_z^{{u_0}/2}] 
		\subset J \subset [1, N^u_z] \big] 	 \le  C N^{d-1} e^{- c(\delta, L_0)am}
	\end{equation}	
	with $m$ as below \eqref{def:abc}. Let us quickly conclude the proof assuming this bound. 
	Recalling the definition of the event ${\rm V}_z^{{\rm I}}$ from \eqref{eq:V_z^I} which 
	depends on the collection $\overline{Z}_z^u(\nu) \cap 
	\big((\overline{Z}_z^u)_+(N_z^{{u_0}/{2}}) \big)$ defined in 
	\eqref{eq:Z^+}--\eqref{def:Z_tr_RI} (see also 
	\eqref{def:boosted}), we deduce the inclusion
	\begin{multline}\label{eq:inclusion_large}
		({\rm V}_z^{{\rm I}})^c \subset \big( (\mathscr G_z^{\rm I})^c \, \cup \, \big\{\nlr{}{}{ C_{z}}{\partial  D_{z}} \text{ in }(\mathcal V^{u_{2, 3}})_{2\delta}\big\} \, \cup \, \big(  \mathcal F_z^{u, 2u_\ast} \big)^c  \cup
		\\ \textstyle \bigcup_{J} \big(({\rm V}_z(Z_J))^c \cap \mathscr G_z^{\rm I} \cap \big\{\lr{}{}{ C_{z}}{\partial  D_{z}}\text{ in } (\mathcal V^{u_{2, 3}})_{2\delta}\big\} \cap \big\{ [1, N_z^{u_0}] \subset J \subset [1, N^u_z]\big\} \big)\big),
	\end{multline}
	with $J$ ranging over all sets of the form $\{1, \ldots, j \} \subset J \subset \{1 ,\ldots,\, j + \nu\}$ for some $j \geq 1$ with $j + \nu \le 2u_\ast {\rm cap}( D_z)$; to obtain~\eqref{eq:inclusion_large} it suffices to note that $N_z^u \leq 2u_* \text{cap}(D_z)$ on the event  
	$\mathcal F_z^{u, 2u_\ast}$, see \eqref{eq:F}, whence any package $Z_J=(Z_j)_{j \in J}$ 
	belonging to $\overline{Z}_z^u(\nu)$, which by definition comprises excursions drawn from 
	$\overline{Z}_z^u$ in \eqref{eq:3Vs},  have label at most $2u_* \text{cap}(D_z)$.  Taking 
	expectations and applying a union bound in \eqref{eq:inclusion_large}, the claim 
	\eqref{eq:truncated_RI-I} readily follows upon using that $\P[ (\mathcal F_z^{u, 2u_\ast})^c ] {\le} e^{-c N^{d-2}}$ by \eqref{eq:Nuz_tail_bnd} (and since $u \in (0, u_\ast)$, see 
	\eqref{eq:params_RI}), observing that the number of terms in the resulting summation over 
	$J$ is at most $C \, 2^\nu {\rm cap}( D_z) \leq  C 2^\nu N^{d-2},$
	and combining this with \eqref{eq:explore_hard} to deduce that the probability of 
	the event in the second line of \eqref{eq:inclusion_large} is bounded by (with the sum below 
	running over $J$ as in \eqref{eq:inclusion_large})
	$$
	\textstyle \sum_{J} C N^{d-1}e^{- c(\delta, L_0)am} \leq  \exp\left\{-c(\delta, L_0)(am \wedge N)  +  C(\nu + \log N) \right\}, 
	$$
	which leads to the last term in \eqref{eq:truncated_RI-I} on account of \eqref{def:coarse_admissible1}, by which $m \geq c N/h(KL)$.
	
	It remains to show \eqref{eq:explore_hard}. Towards this, we will show an intermediate statement
	which is formally the same as \eqref{eq:explore_hard} but with the event ${\rm V}_z(Z_J)$ replaced by $\widetilde {\rm V}_{z}(Z_J) \coloneqq 
	\bigcap_{x \in \partial \tilde C_z} \widetilde {\rm V}_{z, x}(Z_J)$, where \begin{equation}\label{def:widetildeVx}
		\begin{split}
			\widetilde {\rm V}_{z, x}({Z}_J) 
			\coloneqq
			\big\{\nlr{}{}{x}{\partial \tilde D_z} \text{ in }\mathcal V({Z}_J)\big\} \cup \big\{\lr{}{}{x}{\textstyle (\bigcup_{z'} \mathscr C_{z', L})}  \text{ in } D_z \cap \mathcal V({Z}_J)\big\};
		\end{split}
	\end{equation}
	here $\mathscr C_{z', L} = \mathscr C_{z'}(\overline{Z}_{\mathbb L}, \delta, u, u_{2, 3})$ and the union is over $z' \in \bigcup_{1 \le j \le \ell} \, O_j$ with $O_j$ as introduced below \eqref{def:S}. The bound \eqref{eq:explore_hard}~follows from its version for $\widetilde {\rm V}_{z}(Z_J)$  and the following inclusion 
	of events:
	\begin{equation}\label{eq:onearm_twoarm}
		\widetilde {\rm V}_{z}(Z_J)  
		\cap \{\lr{}{}{ C_z}{\partial  D_z} \text{ in }(\mathcal V^{u_{2, 3}})_{2\delta}\} \cap \{J \subset [1, N_z^u]\} \subset {\rm 
			V}_z(Z_J).
	\end{equation}
	Let us first derive the inclusion \eqref{eq:onearm_twoarm}. Recall the definition of the event ${\rm V}_{z', L} = {\rm V}_{z', L}(\overline Z_{\mathbb L}, \delta, u, u_{2, 3}, u_{2, 3})$ \sloppy from \eqref{def:Fz} and also 
	the 
	set $\mathscr C_{z', L} = \mathscr C_{z'}(\overline{Z}_{\mathbb L}, \delta, u, u_{2, 3})$ from 
	\eqref{eq:script_C} for $z' \in \mathbb L$. It follows from these two definitions combined with: (a) $\mathcal V(\overline{Z}_{z', L}^v) \cap D_{z', L} = \mathcal V^v \cap 
	D_{z', L}$ for any $v \ge 0$ (see~\eqref{eq:3Vs}), 
	(b) $(\tilde D_{z', L} \setminus \tilde C_{z', L}) \subset (D_{z'', L} \setminus C_{z'', L})$ \sloppy 
	for any $|z' - z''|_\infty \le L$ (recall \eqref{def:CDU}) and (c) the inclusion $(\mathcal V^u)_\delta \subset \mathcal V^u$ (see~\eqref{def:noised_set_inclusion}) that, whenever $|z' - z''|_{\infty} \le L$ 
				and ${\rm V}_{z', L} \cap {\rm V}_{z'', L}$ occurs,
	\begin{equation}\label{eq:z'z''}
		\text{$\mathscr C_{z', L}$ and $\mathscr C_{z'', L}$ are (non-empty and)
				connected in $(D_{z', L} \cup D_{z'', L}) \cap \mathcal V^u$}
	\end{equation}
	In particular, \eqref{eq:z'z''} applies by \eqref{def:S} when $z,z'$ are $*$-neighbors in $\Sigma$.
	Recalling from the paragraph containing \eqref{def:S} that each $O_j$ is a $\ast$-connected 
	subset of $\Sigma$, 
	\eqref{eq:z'z''} thus implies that
			the set $\bigcup_{z' \in O_j}\mathscr C_{z', L}$ is connected in $D_z \cap \mathcal V^u$ for each $1 \le j \le \ell$, 
			where we also used 
			that $D_{z', L} \subset D_z$ for any $z' \in \Sigma$ 
			(see~\eqref{def:S}). The sets $O_1, 
			\ldots, O_\ell$ also satisfy property~(a) in Proposition~\ref{lem:surroundingInterfaces} with 
			$\mathbb L$ as the underlying lattice and hence any crossing of $\tilde D_z \setminus \tilde 
			C_z$ must necessarily cross $\tilde D_{z', 
				L} \setminus \tilde C_{z', L}$ for some $z' \in O_j$ and each $1 \le j \le \ell$. Furthermore, if this 
			crossing lies in $(\mathcal V^{u_{2, 3}})_{2\delta}$, then it must be connected to $\mathscr 
			C_{z'}$ in $(\mathcal V^u)_{\delta}$ (and hence in $\mathcal V^u$) by the definition of 
			$\mathscr C_{z', L}$ (revisit \eqref{eq:script_C}). Combined with the last two 
			displays, this yields that the set $\bigcup_{z'}\mathscr C_{z', L} 
			\subset D_{z}$ with $z'$ ranging in $\bigcup_{1 \le j \le \ell} \, O_j$ is connected in $\mathcal V^u$ { on} the event $\{\lr{}{}{ C_z}{ \partial D_z} \text{ in } (\mathcal V^{u_{2, 
						3}})_{2\delta}\}$. 
			Now together with the definitions of 
			$\widetilde {\rm V}_{z, x}(Z_J)$ and $\widetilde {\rm V}_{z}(Z_J)$ in and above 
			\eqref{def:widetildeVx} and 
			of ${\rm V}_{z}(Z_J)$ above \eqref{def:Fz}, the previous display yields 
			\eqref{eq:onearm_twoarm}.

			It remains to prove \eqref{eq:explore_hard} in its version for $\widetilde{\rm V}_z$. In view of the definition of the event 
			$\widetilde{\rm V}_z(Z_J)$ above \eqref{def:widetildeVx} 
			and the inclusion \eqref{eq:inclusion_tauk_fin} in Lemma~\ref{lem:reduction}, it suffices to 
			show that for all $x \in \partial \tilde C_z$, $y \in \mathbb L_0 \cap D_{0, L_0}$,
			\begin{equation}\label{eq:explore_hard_tilde_x}
				\begin{split}	
					\P\big [ (\widetilde{\rm V}_{z, x}(Z_J))^c \cap A_{J, y}(x) \cap \big\{J \subset [1, N^u_z]\big\}\big] \le  e^{- c(\delta, L_0)am}.
				\end{split}
			\end{equation}
			The desired bound \eqref{eq:explore_hard} with $\widetilde{\rm V}_z$ instead of ${\rm V}_z$ 
			follows from \eqref{eq:explore_hard_tilde_x} via a union bound applied {\em first} 
			over $y \in \mathbb L_0 \cap D_{0, L_0}$ for {\em given} $x \in \partial \tilde C_z$ and 
			then over $x \in \partial \tilde C_z$ (recall \eqref{def:CDU} for the cardinality of these 
			sets).  To show \eqref{eq:explore_hard_tilde_x}, we first claim that for any $x \in \partial \tilde C_z$ and $y \in \mathbb L_0 \cap D_{0, L_0}$,
			\begin{equation}\label{eq:inclusion_bad_verybad}
				(\widetilde {\rm V}_{z, x}({Z}_J))^c \cap \{J \subset [1, N^u_z]\} \subset \left\{C_{Y_k, L_0} \not\subset \mathcal V(Z_J) \mbox{ for any $k \ge1$ s.t.~}\tau_{k} < \infty\right\},
			\end{equation}
			where $Y_k$ is as in \eqref{property:reduction1} and $\tau_k \equiv \tau_{k; J, y}(x)$ is supplied by Prop.~\ref{prop:reduction}. 
			To verify this, note that by \eqref{def:widetildeVx}, 
			\begin{equation}\label{eq:abc1}
				(\widetilde {\rm V}_{z, x}({Z}_J))^c \subset \big\{\nlr{}{}{x}{\mathscr C_{z', L}} \text{ in } D_z \cap \mathcal V({Z}_J), \mbox{ for any }z' \in \textstyle \bigcup_{1 \le j \le \ell} \, O_j\big\}.
			\end{equation}
			Now if $\tau_{k} < \infty$ and $C_{Y_k, L_0} \subset \mathcal V(Z_J)$, then by~\eqref{property:reduction2}, $w_{\tau_k}$ (part of the exploration process $(w_n)_{n \geq 1}$ for $\mathscr{C}_J(x)$, the cluster of $x$ in $D_z \cap \mathcal V({Z}_J)$) is connected to $C_{Y_k, L_0}$ in $D_{Y_k, L_0} \cap \mathcal V(Z_J)$. On the other hand, by~\eqref{property:reduction1}, $Y_k$ satisfies~\eqref{def:Cy} and therefore $C_{Y_k, L_0}$ intersects $\mathscr C_{z', L} \subset D_{z', L} \subset \tilde D_z= \tilde D_{z,N}$ for some $z' \in \bigcup_{1 \le j \le \ell} 
			\, O_j$ (see \eqref{eq:script_C} and \eqref{def:S} for the two inclusions). These two observations imply that 
			\begin{multline}\label{eq:abc2}
				\left\{C_{Y_k, L_0} \subset \mathcal V(Z_J) \mbox{ for some $k \ge1$ s.t.~}\tau_{k} < \infty\right\} \cap \{J \subset [1, N^u_z]\}\\
				\subset \big\{\lr{}{}{x}{\mathscr C_{z', L}} \text{ in }D_z \cap \mathcal V({Z}_J), \text{ for some } z' \in \textstyle \bigcup_{1 \le j \le \ell} \, O_j \big\},
			\end{multline}
			{\em provided} $D_{Y_k, L_0} \subset D_z$ on the event $\{\tau_{k} < \infty, C_{Y_k, L_0} \subset \mathcal 
			V(Z_J)\}$. But the inclusion $D_{Y_k, L_0} \subset D_z$ follows from our earlier observation that $C_{Y_k, L_0}$ intersects $\mathscr C_{z', L} \subset \tilde D_z$ together with the definitions of 
			the boxes $\tilde D_z = \tilde D_{z, N}$, $D_z = D_{z, N}$, $C_{Y_k, L_0}$ and $D_{Y_k, L_0}$ in \eqref{def:CDU} and the fact that $N \ge 10^3 L_0$ which 
			is a consequence of \eqref{eq:L_k_descending} as part of our assumption \eqref{eq:params_RI}. Together,  \eqref{eq:abc1} and \eqref{eq:abc2} imply 
			\eqref{eq:inclusion_bad_verybad}.
			
			With \eqref{eq:inclusion_bad_verybad} at hand, recalling $ A_{J, y}(x) $ from \eqref{def:abc}, we see that the intersection $\widetilde{\rm V}_{z, x}(Z_J))^c \cap A_{J, y}(x)$
			as appearing in \eqref{eq:explore_hard_tilde_x} implies $\mathcal E_{\lceil \Cr{c:reduce} a m\rceil}$, where $\mathcal E_b \coloneqq \left\{ \tau_{k} < \infty \mbox{ and } C_{Y_k, L_0} \not\subset \mathcal V(Z_J) \mbox{ for all $k \le b$}\right\}$, for any integer $b \geq 1$. Hence \eqref{eq:explore_hard_tilde_x} follows immediately from the bound
			\begin{equation}\label{eq:explore_tilde2}
				\P \big[  
				\mathcal E_{\lceil \Cr{c:reduce} a m\rceil} \cap \big\{J \subset [1, N^u_z]\big\} \big]  \le  e^{- 
					c(\delta, L_0)am}.
			\end{equation}
			We will set up a recursive inequality in $b$ for this probability (with $\lceil \Cr{c:reduce} a m\rceil$ replaced by $b$) using Proposition~\ref{lem:conditional_prob1} along the way. 
			Letting $p_{b}\equiv \P [  
			\mathcal E_{b} \cap \{J \subset [1, N^u_z]\} ]$, we have, with $\Sigma$ ranging over $y ' \in \widehat{\mathbb{L}}_{0,y}$,
			%
			\begin{multline*}
				p_{b+1} 
				=\sum
				\P\big[
				\mathcal E_{b}  \cap \big\{C_{y', L_0} \not\subset \mathcal V(Z_J)\big\} \cap \big\{J \subset [1, N_z^u], \tau_{b+1} < \infty,  Y_{b+1} = y'\big\}\big]\\
				\stackrel{\eqref{property:reduction3} + \eqref{def:FEsigma_algebra1}}{=} \sum
				 \E\left[ \P\big[C_{y', L_0} \not\subset \mathcal V(Z_J)  \, \big | \, \mathcal F_{y, L_0}(Z_J, \delta, u, u_{2, 3}) \big] 1_{
					\mathcal E_{b} \cap  \{J \subset [1, N_z^u], \tau_{b+1} < \infty,  Y_{b+1} = y'\}}\right]\\
				\stackrel{\eqref{eq:conditional_prob1}}{\le} (1 -  c) \sum
				 \P\big [
				\mathcal E_{b} \cap \big\{\tau_{b+1} < \infty,  Y_{b+1} = y'\big\} \cap \big\{J \subset [1, N_z^u]\big\} \big] 
				\le \,(1 -c) p_b,
			\end{multline*}
			where $c = c(\delta, L_0) \in (0, 1)$ is from Proposition~\ref{lem:conditional_prob1}. In the third step, we used that 
			$\{\tau_{b+1} < \infty, Y_{b+1} = y'\} \subset \widetilde{\rm LU}_{y'}({Z}_J) \cap {\rm O}_{y'}(\overline{Z}_z^{u})$
			as well as the set inclusion 
			$ D_{y, L_0} \subset  D_{z}$, 
			as needed for \eqref{eq:conditional_prob1} to apply. The inclusion of events is a 
			consequence of~\eqref{property:reduction1} and~\eqref{def:Cy}. The inclusions of the boxes, 
			on the other hand, follow from an argument similar to that used at the end of the paragraph containing \eqref{eq:abc2}. Iterating the previous inequality then yields that the left-hand side of \eqref{eq:explore_tilde2} is bounded by $ (1 - c)^{\Cr{c:reduce} am}$.
		\end{proof}
		
		It remains to prove Lemma~\ref{lem:reduction}. The following result will be useful.
		\begin{lemma}\label{lem:LULUtilde}
			For any sequence $Z = (Z_j)_{1 \le j \le n_Z}$ of excursions and $y \in \mathbb L_0$, one has 
			the inclusion
			\begin{equation}\label{eq:LULUtilde}
				{\rm LU}_{y, L_0}(Z) \subset \widetilde {\rm LU}_{y, L_0}(Z) \quad (\text{see \eqref{def:Conn} and \eqref{def:tildeLU1}}).
			\end{equation}
		\end{lemma}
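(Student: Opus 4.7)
The plan is to establish \eqref{eq:LULUtilde} via a direct unwinding of the definitions: we fix $x, x' \in (\tilde D_{y, L_0} \setminus \tilde C_{y, L_0}) \cap \mathscr{C}_{\partial D_{y, L_0}}(Z)$, assume ${\rm LU}_{y, L_0}(Z)$ occurs, and produce a connection between $x$ and $x'$ inside $\mathscr{C}_{\partial D_{y, L_0}}(Z) \setminus \partial D_{y, L_0}$. Since $\mathscr{C}_{\partial D_{y, L_0}}(Z) \subset \mathcal{I}(Z)$ by \eqref{def:C_partial1}, the points $x, x'$ lie in $(\tilde D_{y, L_0} \setminus \tilde C_{y, L_0}) \cap \mathcal{I}(Z)$, so the occurrence of ${\rm LU}_{y, L_0}(Z)$ supplies a path $\gamma$ from $x$ to $x'$ with range contained in $\mathcal{I}(Z) \cap (D_{y, L_0} \setminus (\partial D_{y, L_0} \cup C_{y, L_0}))$.

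The key step is then to identify this range with (part of) a single component featuring in the definition \eqref{def:C_partial1} of $\mathscr{C}_{\partial D_{y, L_0}}(Z)$. Since the vertices of $\gamma$ form a connected subset of $\mathcal{I}(Z) \cap (D_{y, L_0} \setminus C_{y, L_0})$ (the path itself being a witness of connectivity, and the inclusion $D_{y, L_0} \setminus (\partial D_{y, L_0} \cup C_{y, L_0}) \subset D_{y, L_0} \setminus C_{y, L_0}$ being trivial), they all belong to a single component $\mathcal{C}$ of that set. The component $\mathcal{C}$ contains $x \in \mathscr{C}_{\partial D_{y, L_0}}(Z)$, which by definition means $\mathcal{C}$ intersects $\partial D_{y, L_0}$, whence $\mathcal{C} \subset \mathscr{C}_{\partial D_{y, L_0}}(Z)$.

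Combining with the fact that $\gamma$ avoids $\partial D_{y, L_0}$ by choice (its range is in $D_{y, L_0} \setminus (\partial D_{y, L_0} \cup C_{y, L_0})$), we obtain $\text{range}(\gamma) \subset \mathcal{C} \setminus \partial D_{y, L_0} \subset \mathscr{C}_{\partial D_{y, L_0}}(Z) \setminus \partial D_{y, L_0}$, exhibiting the required connection and thereby proving \eqref{eq:LULUtilde}. There is no genuine obstacle here; the only point requiring a little care is the observation that the path $\gamma$, viewed as a connected set of vertices in $\mathcal{I}(Z) \cap (D_{y, L_0} \setminus C_{y, L_0})$, lies entirely in one component of that set, which then inherits membership in $\mathscr{C}_{\partial D_{y, L_0}}(Z)$ from its intersection with $\partial D_{y, L_0}$ via the endpoint $x$.
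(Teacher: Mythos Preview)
Your proof is correct and follows essentially the same route as the paper's: both use that $\mathscr{C}_{\partial D_{y, L_0}}(Z) \subset \mathcal{I}(Z)$ to invoke ${\rm LU}_{y, L_0}(Z)$ and obtain a connection in $\mathcal{I}(Z) \cap (D_{y, L_0} \setminus (\partial D_{y, L_0} \cup C_{y, L_0}))$, then observe that this connection lies in a single component of $\mathcal{I}(Z) \cap (D_{y, L_0} \setminus C_{y, L_0})$ which, by virtue of containing $x \in \mathscr{C}_{\partial D_{y, L_0}}(Z)$, is itself part of $\mathscr{C}_{\partial D_{y, L_0}}(Z)$. Your version is slightly more explicit in naming the path $\gamma$ and the component $\mathcal{C}$, but the argument is the same.
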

		\begin{proof}
			Suppose we are on the event ${\rm LU}_{y, L_0}(Z)$ and $x, x' \in \mathscr C_{\partial D_{y, L_0}}(Z) \cap (\tilde D_{y, L_0} \setminus \tilde C_{y, L_0})$. Since $\mathscr C_{\partial D_{y, 
					L_0}}(Z) \subset \mathcal I(Z)$ (see \eqref{def:C_partial1}), it then follows from the definition 
			of ${\rm LU}_{y}(Z)$ in \eqref{def:Conn} that $x$ and $x'$ lie in the same component of $\mathcal 
			I(Z) \cap (D_{y, L_0} \setminus (\partial D_{y, L_0} \cup C_{y, L_0}))$. Recalling 
			\eqref{def:C_partial1} and that $x, x' \in \mathscr C_{\partial D_{y, L_0}}(Z)$, we 
			conclude that the aforementioned component must lie in 
			the same cluster of  $\mathscr 
			C_{\partial D_{y, L_0}}(Z)$. Therefore $x, x'$ are in fact connected in $\mathscr C_{\partial 
				D_{y, L_0}}(Z) \cap (D_{y, L_0} \setminus (\partial D_{y, L_0} \cup C_{y, L_0}))$. Since $x, x'$ are two arbitrary points inside $\mathscr C_{\partial D_{y, L_0}}(Z) \cap (\tilde 
			D_{y, L_0} \setminus \tilde C_{y, L_0})$, this yields 
			\eqref{eq:LULUtilde} in view of~\eqref{def:tildeLU1}.
		\end{proof}
		We are now ready to give the
		\begin{proof}[Proof of Lemma~\ref{lem:reduction}]
			We will harness the {\em full} strength of Proposition~\ref{lem:surroundingInterfaces} in this proof, including item~(c) therein. Let 
			us define a slightly reformulated version of the event $A_{J, y}(x)$ in \eqref{def:abc}, namely
				$\widetilde A_{J, y}(x) =\{ \textstyle \sum_{y' \in \widehat{\mathbb{L}}_{0,y}}  1\left\{ \text{$y'$ satisfies \eqref{def:Cy} and $C_{y',L_0}$ intersects $\mathscr C_J(x)$} \right\} \ge 
				\Cr{c:reduce} am 
				\}$. In view of  the second part of~\eqref{property:reduction1},  and since $k \mapsto \tau_{k; 
				J, y}$ is non-decreasing, it follows that
			$\widetilde A_{J, y}(x) \subset A_{J, y}(x)$.
			Also since $u_{2, 3} > u$ (see \eqref{eq:params_RI} and \eqref{def:F_y_RI}) and $(\mathcal 
			V)_{\delta}$ is decreasing in $\delta$ (see \eqref{def:noised_set_inclusion}), we have
			\begin{equation*}
				\big\{ \lr{}{(\mathcal V^{u_{2, 3}})_{2\delta}}{\tilde C_{z}}{\partial \tilde D_{z}},  [1, N_z^{{u_0}/2}] 
				\subset J \subset [1, N^u_z] \big\} 
				\subset \big\{ \lr{}{\mathcal V(Z_J)}{\tilde C_{z}}{\partial \tilde D_{z}},  [1, N_z^{{u_0}/2}] 
				\subset J \subset [1, N^u_z] \big\}.
			\end{equation*}
			Hence it is enough to show (cf.~\eqref{eq:inclusion_tauk_fin}), with $y$ ranging in $\mathbb L_0 
					\cap D_{0, L_0}$ below,
			\begin{equation}\label{eq:inclusion_tauk_fin_J}
				\mathscr G_z^{{\rm I}} \cap \{\lr{}{}{x}{\partial \tilde D_z} \text{ in } \} \cap \big\{ [1, 
				N_z^{{u_0}/2}] \subset J \subset [1, N_z^{u}]\big\} \subset \textstyle\bigcup_{y} \widetilde A_{J, y}(x)
			\end{equation}
			Towards showing \eqref{eq:inclusion_tauk_fin_J}, let us start with an inclusion of events which plays a crucial role. For any $z' \in \mathbb L$ 
			satisfying $ D_{z', L} \subset  D_{z} (=D_{z,N})$ and $ U_{z', L} \subset  U_{z}$, 
			and with $\bm u_1, \bm u_2$ are as in \eqref{def:F_y_RI}, we claim that
			\begin{multline}\label{eq:include_GGtilde}
				\mathcal{G}_{z'}^{{\rm I}}(Z_{\mathbb L},  \delta, u_0, u_1, u_2, u_3 ; a) \cap \mathcal F_{z', L}^{\bm u_1, \bm u_2} \cap \big\{[
				1, N_z^{{u_0}/2}] \subset J \subset [1, N_z^{u}]\big\} \\
				\subset 
				\mathcal{G}_{z'}\big({\overline Z}_{\mathbb L}, {\rm V}({\overline Z}_{\mathbb L}, \delta, u, u_{2, 3}, u_{2, 3}), \widetilde{{\rm W}}^{\rm I}, \mathscr C({\overline Z}_{\mathbb L}, \delta, u, u_{2, 3}) ; a\big) \equiv  \widetilde{\mathcal{G}}_{z'}^{{\rm I}}
			\end{multline}
			(see \eqref{def:good_events_supcrit} and \eqref{def:Gz} for notation), where $\widetilde{\rm W}^{\rm I} = \{\widetilde{\rm W}_{z', y'}^{\rm I} : z' \in \mathbb L, y' \in 
			\mathbb L_0\}$ with 
			$	\widetilde{\rm W}_{z', y'}^{\rm I} = {\widetilde{\rm FE}}_{y'}({Z}_J)$
			(cf.~\eqref{eq:WI}). Let us assume \eqref{eq:include_GGtilde} for the moment and finish the proof of Lemma~\ref{lem:reduction}, i.e.~deduce \eqref{eq:inclusion_tauk_fin_J}. 
			
			In essence, we will find many points satisfying \eqref{def:Cy}, as required for $\widetilde A_{J, 
				y}(x)$ to occur, along a path $\gamma$ realizing the crossing event on the left-hand side of 
			\eqref{eq:inclusion_tauk_fin_J}. More precisely, on the event $\{\lr{}{}{x}{\partial \tilde D_z} 
			\text{ in } \mathcal V(Z_J)\}$ appearing in \eqref{eq:inclusion_tauk_fin_J}, and since $x \in 
			\partial \tilde C_z$ by assumption, the component $\mathscr C_J(x)$ contains a 
			crossing $\gamma$ of $\tilde D_z \setminus \tilde C_z$. Let 
			$\gamma_{\mathbb L}$ denote the sequence of points $(z'_1, z'_2, \ldots)$ in 
			$\mathbb L$ such that $\gamma$ 
			visits the boxes $C_{z'_1, L}$, $C_{z'_2, L}$ etc. in that order. 
			Since $\gamma$ is a crossing of $\tilde D_z \setminus \tilde C_z$, 
			it follows that $\gamma_{\mathbb L}$ is itself a crossing on the lattice $\mathbb L$ of 
			$V \setminus U$, with $U,V$ as below \eqref{def:S}.
			For later reference, note that conversely, given any crossing $\bm \gamma'$ of $V\setminus U$, 
			one easily constructs a crossing $\gamma'$ of $\tilde D_z \setminus \tilde C_{z}$ in $\Z^d$ such that $\bm \gamma' = \gamma'_{\mathbb L}$. 
			
			Now recall from the paragraph below \eqref{def:S} that the sets $O_1, \ldots, O_\ell \subset 
			\Sigma$ satisfy property~(c) in Proposition~\ref{lem:surroundingInterfaces}, with $U,V$ as above, $\Sigma$ as in \eqref{def:S} and $\mathbb 
			L$ as the underlying lattice. We deduce from this property and the observations made in the previous 
			paragraph that there exists a crossing $\gamma'$ of $\tilde D_z \setminus \tilde C_z$ satisfying (with $\gamma$ as above) 
			$\text{range}(\gamma'_{\mathbb L}) \cap \Sigma = \text{range}(\gamma_{\mathbb L}) \cap O$, 
			where $O= \bigcup_{1 \le j \le \ell} \, O_j$. By Proposition~\ref{prop:coarse_paths}, there exists a coarsening  $\mathcal C_{\gamma'} \in \mathcal A_{L}^K(\tilde D_{z} \setminus \tilde C_{z})$ that satisfies~\eqref{def:coarse_admissible2} for 
			$\gamma'$. Since $\mathcal C_{\gamma'}$
			is necessarily a subset of $\gamma'_{\mathbb L}$ (this follows from \eqref{def:coarse_admissible2} and the definition of {\em crossing} above \eqref{eq:scriptS_N}) 
			it follows that $\mathcal C_{\gamma'}$ intersects $\Sigma$ {\em only} in $\text{range}(\gamma_{\mathbb L}) \cap O$. 
			Further, by~\eqref{def:coarse_admissible0}, the fact that $\widetilde{\mathcal{G}}_{z'}^{{\rm I}}
			\subset {\rm V}_{z'}({\overline Z}_{\mathbb L}, \delta, u, u_{2, 3}, u_{2, 3}),$
			which is a consequence of \eqref{eq:include_GGtilde} and \eqref{def:Gz}, and by definition of the set $\Sigma$ in 
			\eqref{def:S}, it follows that 
		any $z' \in \mathcal C_{\gamma'}$ such that $\widetilde{\mathcal{G}}_{z'}^{{\rm I}}$ occurs is contained in $\Sigma$ and hence also in $\text{range}(\gamma_{\mathbb L}) \cap O$.
			
			However, on the event $\mathscr G_z^{{\rm I}}$ which appears on the left of \eqref{eq:inclusion_tauk_fin_J}, 
			the number of points $z' \in 
			\mathcal C_{\gamma'}$ such that the event $\mathcal{G}_{z'}^{{\rm I}}(Z_{\mathbb L},  \delta, 
			u_0, u_1, u_2, u_3 ; a) \cap \mathcal F_{z', L}^{\bm u_1, \bm u_2}$ occurs is at least $\rho 
			m = \tfrac m{2\Cr{C:rho}}$, and these two events together with the control over $J$ (also 
			present on the left of \eqref{eq:inclusion_tauk_fin_J}) imply 
			$\widetilde{\mathcal{G}}_{z'}^{{\rm I}}$ by \eqref{eq:include_GGtilde}. Also since any $z' \in 
			\mathcal C_{\gamma'}$ satisfies $D_{z', L} 
			\subset \tilde D_z$ (see property~\eqref{def:coarse_admissible0}), one has $ D_{z', L} 
			\subset  D_{z}$ and $ U_{z', L} \subset  U_{z}$ by \eqref{def:CDU} and  
			\eqref{eq:L_k_descending} (the latter holds as \eqref{eq:params_RI} is in force). 
			All in all, it thus follows that on the event on the left-hand side of 
			\eqref{eq:inclusion_tauk_fin_J}, with $\gamma$ realizing the crossing in $\{\lr{}{}{x}{\partial \tilde D_z} \text{ in } \mathcal V(Z_J)\}$, 
			the following holds:
			there exists $\Sigma_{\gamma} \subset \text{range}(\gamma_{\mathbb L}) \cap O$ such that $|\Sigma_{\gamma}| \ge \tfrac{m}{2\Cr{C:rho}}$, and for each $z' \in \Sigma_\gamma$, the event $\widetilde{\mathcal{G}}_{z'}^{{\rm I}}$ occurs and $\gamma$ crosses $\tilde D_{z'} \setminus C_{z'}$ 
			(simply pick $\Sigma_{\gamma}= \mathcal{C}_{\gamma'}$ in the above construction). 
			In view of Definition~\ref{def:good_events} of the events $\mathcal G_{z'}(\cdot)$ 
			and the definition of $\widetilde{\rm W}^{{\rm I}}$ below \eqref{eq:include_GGtilde}, it follows in this case that there exists a set $S_{z'} \subset \mathbb L_0$ with $|S_{z'}| \ge a$ for any $z' 
			\in \Sigma_\gamma$ such that both $ \text{range} (\gamma) \, (\subset \mathscr C_J(x))$ and $\mathscr 
			C_{z', L} = \mathscr C_{z'}(\overline{Z}_{\mathbb L}, \delta, u, u_{2, 3})$
			intersect $C_{y', L_0}$ as well as $\widetilde{{\rm FE}}_{y'}(Z_J)$ occurs for each $y' \in S_{z'}$. 
			In other words, for each such $y'$, \eqref{def:Cy} holds and $C_{y', L_0}$ intersects 
			$\mathscr C_{J}(x)$. Now observe that any $C_{y', L_0}$ 
			can intersect at most 
			$10^d$-many boxes $\tilde D_{z', L}$ with $z' \in \mathbb L$ (see \eqref{def:CDU}  and 
			\eqref{eq:L_k_descending}). Also note that any $C_{y', L_0}$ with $y' \in \mathbb L_0$ and 
			intersecting $\tilde D_{z', L}$ for some $z' \in \Sigma_\gamma \subset \Sigma$ must 
			necessarily satisfy $D_{y', L_0}  \subset D_{z', L} \subset (\tilde C_z)^c$ (see \eqref{def:S} 
			and \eqref{eq:L_k_descending}), i.e. $y' \in \widehat{\mathbb L}_{0,y}$ for some $y \in 
			\mathbb L_0 \cap D_{0, L_0}$. 
			All in all we thus obtain that on the event 
			on the left-hand side of \eqref{eq:inclusion_tauk_fin_J},
			\begin{equation*}
				\textstyle \sum_{y'}1\left\{\text{
					\eqref{def:Cy} holds with $y'$ in place of $y$ and $C_{y',L_0}$ intersects $\mathscr C_J(x)$
				} \right\}  \ge \tfrac{1}{2\cdot10^d\Cr{C:rho}} a m
			\end{equation*}
			where the sum ranges over $ y' \in \widehat{\mathbb L}_0 = \bigcup_{y} 
			\widehat{\mathbb L}_{0, y}$ with $y$ ranging in  $\mathbb L_0 \cap D_{0, L_0}$. 
			From this \eqref{eq:inclusion_tauk_fin_J} (and hence Lemma~\ref{lem:reduction}) follows immediately with $\Cr{c:reduce} =  ({2\cdot10^d\Cr{C:rho}|\mathbb 
				L_0 \cap D_{0, L_0}|})^{-1} = (2\cdot70^d)^{-1}$.

			\smallskip
			
			%
			%
			%
			%
			%
			
			We still need to verify \eqref{eq:include_GGtilde}. 
			Using 
			our argument for \eqref{eq:supcrit_inclusion0} 
			for the second inclusion below,
			\begin{multline*}
				\mathcal{G}_{z'}^{{\rm I}}(Z_{\mathbb L},  \delta, u_0, u_1, u_2, u_3 ; a)  \cap \mathcal F_{z', L}^{\bm u_1, \bm u_2} 
				\stackrel{\eqref{eq:Fext}, \eqref{def:F_y_RI}}{\subset}
				\mathcal{G}_{z'}^{{\rm I}}(Z_{\mathbb L},  \delta, u_0, u_1, u_2, u_3 ; a)  \cap \mathcal F_{z', L}^{u, u_1} \cap 
				\mathcal F_{z', L}^{u_{2,3}, u_2} \cap \mathcal F_{z', L}^{u_{2, 3}, u_3}\\
				\stackrel{\eqref{def:good_events_supcrit}}{\subset}   
				\mathcal{G}_{z'}\big({\overline Z}_{\mathbb L}, {\rm V}_{z'}({\overline Z}_{\mathbb L}, \delta, u, u_{2, 3}, u_{2, 3}), {{\rm W}}^{\rm I}, \mathscr C_{z'}({\overline Z}_{\mathbb L}, \delta, u, u_{2, 3}) ; a\big).
			\end{multline*}
			Since $\mathcal G_{z'}(\cdot)$ from Definition~\ref{def:good_events} is increasing 
			w.r.t.~the events $\{{\rm W}_{z', y'}: z' \in \mathbb L, y' \in \mathbb L_0\}$, and because 
			\begin{multline}\label{eq:include_GGtilde2}
				\, {{\rm W}}_{z', y'}^{{\rm I}} \cap \mathcal F_{z', L}^{\bm u_1, \bm u_2} \stackrel{\eqref{eq:WI}}{=} {{\rm FE}}_{y', L_0}((Z_{z', L}^{u_1})_+(\tfrac{u_0}{8}\,{\rm cap}( D_{z', L}))) \cap \mathcal F_{z', L}^{\bm u_1, \bm u_2}\\
				\stackrel{\eqref{eq:Fext}, \eqref{def:F_y_RI}}{\subset}    	{{\rm FE}}_{y', L_0}((Z_{z', L}^{u_1})_+(\tfrac{u_0}{8}\,{\rm cap}( D_{z', L}))) \cap  
				\mathcal F_{z', L}^{\frac{u_0}2, \frac{u_0}8} 
			\end{multline}
			all that is left to show towards proving \eqref{eq:include_GGtilde} is to argue that the intersection of $\{[1, N_z^{{u_0}/2}] \subset J \subset [1, N_z^{u}]\}$ with the event in the second line of \eqref{eq:include_GGtilde2} implies $\widetilde{\rm FE}_{y', L}\left({Z}_J\right)$.
			But by Lemma~\ref{lem:LULUtilde}, the definitions of  ${\rm FE}$ in  \eqref{def:FE1} and of $\widetilde{\rm FE}$ in \eqref{def:Cy} and the monotonicity of ${\rm O}_{y'}(Z)$ 
			in $Z$ (with respect to inclusion 
			of the underlying sets, see \eqref{def:Conn0}), we have that
			$	{\rm FE}_{y', L_0}(Z_J) \cap \big\{J \subset [1, N_z^{u}]\big\}$ implies 
			$\widetilde{\rm FE}_{y', L_0}({Z}_J)$ for any $y' \in \mathbb L_0$. 
			Therefore it suffices to show that the intersection of $\{[1, N_z^{{u_0}/2}] \subset J \subset [1, N_z^{u}]\}$ with the event in the second line of \eqref{eq:include_GGtilde2} implies ${\rm FE}_{y', L}\left({Z}_J\right)$ rather than $\widetilde{\rm FE}_{y', L}\left({Z}_J\right)$.
			
			Since $ D_{z', L} \subset  D_z$ and $ U_{z', L} \subset  U_z$ by our assumption above \eqref{eq:include_GGtilde}, it follows from 
			\eqref{eq:nested_excursion} in 
			\S\ref{subsec:excursion} 
			that the sequence of excursions $Z_J$ between $ 
			D_z$ and $ U_z$ {induces} a sequence of excursions $Z_{J', L} = 
			\big(Z_{j}^{ D_{z', L},  U_{z', L}}\big)_{j \in J'}$ between $ D_{z', L}$ and 
			$\partial_{{\rm out}} U_{z', L}$ such that $\mathcal I(Z_J) \cap 
			D_{z', L} = \mathcal I(Z_{J', L})$ and $\ell_{x'}(Z_J) = \ell_x(Z_{J', L})$ for all $x' \in 
			D_{z', L}$. In particular, if $J=[1,N_z^v]= [1,N_{z,N}^v] $ for some $v>0$ then $J'= [1,N_{z',L}^v]$ on account of \eqref{def:N_AU}. Furthermore, recalling $\mathcal F_{z', L}^{u, v}$ from 
			\eqref{eq:F}, on the event $$\mathcal F_{z', L}^{\frac{u_0}2, \frac{u_0}8}  \cap \mathcal F_{z', L}^{u, u_1} \cap \big\{[1, N_z^{{u_0}/2}] \subset J \subset [1, 
			N_z^{u}]\big\},$$ 
			we have $\{1, \ldots, \tfrac{u_0}8\,{\rm cap}( D_{z', L})\} \subset J' \subset \{1, \ldots, 
			u_1\,{\rm cap}( D_{z', L})\}$. However, this means $Z_{J', L}$ lies in the family $(Z_{z', 
				L}^{u_1})_+(\tfrac{u_0}{8}\,{\rm cap}( D_{z', L}))$ by \eqref{eq:Z^+}, thus yielding 
			the desired inclusion.
		\end{proof}

		\subsection{Discovery of good encounter points}\label{subsec:exploration}
		This subsection is devoted to the proof of Proposition~\ref{prop:reduction}. 
		In the sequel, we drop $J, y$ and $x$ from the notations 
		$\tau_{k; J, y}(x)$ etc. and abbreviate $\mathcal I = 
		\mathcal I({Z}_J)$,  $\mathcal V = \mathcal V({Z}_J)$. We proceed to construct a sequence of 
		random times $(\tau_k)_{k \ge 1}$ 
		satisfying~\eqref{property:reduction1}--\eqref{property:reduction3}. Recall the exploration 
		sequence $(w_n)_{n \ge 1}$ of the cluster $\mathscr{C}_J(x)$ of $x$ in $\mathcal{V}\cap 
		D_z$  from the beginning of 
		\S\ref{subsec:reduction}.

		
		%
		
		We start with the sequence of successive times $(\tilde \tau_k)_{k \ge 1}$ 
		at which the exploration of $\mathscr C_J(x)$ visits $$\partial \coloneqq
		\textstyle \bigcup_{y' \in \widehat{\mathbb{L}}_{0,y} } \partial D_{y',L_0},$$
		and certain additional (good) properties are satisfied. Formally, with $\tilde \tau_0=1$, for $k 
		\geq 1$ we let 
		\begin{equation} \label{eq:x-tilde-k}
			\tilde \tau_k=\inf\{n > \tilde \tau_{k-1}: w_n \in \partial\cap \mathcal V \text{ and } (\ast) \text{ holds}  \}, \quad \tilde{X}_k = w_{\tilde \tau_k} \mbox{ if $\tilde \tau_k < \infty$}
		\end{equation}
		(with the convention $\inf \emptyset = \infty$), where $(\ast)$ refers to the property that if 
		$\tilde Y_k \in \widehat{\mathbb{L}}_{0,y}$ denotes the unique point such that $\tilde{X}_k 
		\in D_{\tilde Y_k,L_0}$ (when $\tilde \tau_k < \infty$), then $\tilde Y_k$ satisfies 
		property~\eqref{def:Cy}. By~\eqref{eq:x-tilde-k} and~\eqref{def:Cy},   
		\begin{equation}\label{eq:tildetaumeasur}
			\{\tilde \tau_k = n\} \in \mathcal F_n, \text{ for all $n \geq 1$,}
		\end{equation}
		where, letting $ \partial_n$ denote the set of all points $y' \in \widehat{\mathbb{L}}_{0,y}$  satisfying $\{ w_m\}_{1\leq m \leq n} \cap \partial D_{y', L_0} \cap \mathcal V \neq \emptyset$,  
		\begin{multline}\label{eq:F_n}
			\mathcal{F}_n \coloneqq \sigma \big( (w_1, 1_{\{w_1 \in \mathcal V\}}), \ldots, (w_n, 1_{\{w_n \in \mathcal 
				V\}}), ((\mathcal V^u)_{\delta} \cap D_z, (\mathcal V^{u_{2,3}})_{2\delta} \cap D_z),\\ (\ell_x^u : x \in \partial D_{y', L_0}, y' \in  \partial_n), \{\mathscr C_{\partial D_{y', L_0}}(Z_J) : y' \in  \partial_n\} \big).
		\end{multline}
		
		We will maintain, at each time $n$, 
		three sets $\mathsf{B}_n, \mathsf{W}_n$ and $\mathsf{G}_n$ of so 
		called \emph{black}, \emph{white} and \emph{grey} vertices, 
		whose key features are summarized in Lemma~\ref{L:BWG} below. When speaking of 
		\textit{revealing} a vertex $v \in \Z^d$ in the sequel, we mean disclosing the value of $1\{ v \in  
		\mathcal I \}$. 
		It will  always be the case that $\mathsf W_n$ and $\mathsf B_n$ are precisely the set of 
		vertices in $\mathcal V$ and $\mathcal I$ respectively that have been revealed up until time 
		$n$. In parellel to $ \mathscr{A}_n= (\mathsf{B}_n, \mathsf{W}_n, \mathsf{G}_n)$, we also 
		maintain another triplet $\tilde{\mathscr{A}}_n=(\widetilde{\mathsf{B}}_n, \widetilde{\mathsf{W}}_n, \widetilde{\mathsf{G}}_n)$ for each $n$ that will remain fixed unless $n=\tilde\tau_k$ for some $k\geq 1$ (assuming $\tilde \tau_k< \infty$), where they will serve to keep track of certain occurrences (see 
		\eqref{eq:B_n}--\eqref{eq:W_n} below). We start by defining ${\mathscr{A}}_0$ and $\tilde{\mathscr{A}}_0$ as
		\begin{equation}
			\label{eq:BWG0}
			\mathsf{B}_0 = \mathsf{W}_0 = \widetilde{\mathsf{B}}_0 = \widetilde{\mathsf{W}}_0 = \emptyset, \mbox{ and } \mathsf{G}_0 = \widetilde{\mathsf{G}}_0 = \Z^d.
		\end{equation}
		For each $n$ such that $1=\tilde \tau_0 \leq n < \tilde \tau_1$, we keep $\tilde{\mathscr{A}}_n= \tilde{\mathscr{A}}_0$ and update the triplet ${\mathscr{A}}_n$ inductively from ${\mathscr{A}}_{n-1}$ as follows (note that the following simplifies for $n < \tilde \tau_1$ but the formulation will generalize immediately to $n \in (\tilde \tau_k,\tilde \tau_{k+1})$ 
		for $k \geq 1$). If $w_n \notin \mathsf{G}_{n-1} $ (which cannot happen when $n< \tilde \tau_1$, as can be easily seen inductively) we 
		set ${\mathscr{A}}_n= {\mathscr{A}}_{n-1}$. Otherwise, we reveal $w_n$, remove $w_n$ from 
		$\mathsf{G}_{n-1}$ and add it to $\mathsf{B}_{n-1}$ if $ w_n \in  \mathcal I$ and to 
		$\mathsf{W}_{n-1}$ if $ w_n \in  \mathcal V$, thus yielding ${\mathscr{A}}_n$. We call such a step \emph{generic}. 
		
		Assume now that $n = \tilde \tau_1 < \infty$, recall $\tilde{X}_1$ from \eqref{eq:x-tilde-k} 
		and denote by $\tilde{Y}_1 \in  \widehat{\mathbb{L}}_{0,y} $ the point such that $\tilde{X}_1\in 
		\partial D_{\tilde{Y}_1,L_0}$. The triplets ${\mathscr{A}}_n$ 
		and $\tilde{\mathscr{A}}_n$ are 
		now obtained from ${\mathscr{A}}_{n-1}$ as follows (this will be the first time we update the sets $\tilde{\mathscr{A}}_n$). First, let 
		\begin{equation*}
			\begin{split}
				\mathsf S_n \cap  D_{\tilde{Y}_1,L_0}^c = \mathsf S_{n-1} \cap  D_{\tilde{Y}_1,L_0}^c \mbox{ and } \widetilde{\mathsf S}_n \cap  D_{\tilde{Y}_1,L_0}^c = \widetilde{\mathsf 
					S}_{n-1} \cap  D_{\tilde{Y}_1,L_0}^c
			\end{split}
		\end{equation*}
		for $\mathsf S \in \{\mathsf B, \mathsf W, \mathsf G\}$.
		It thus remains to specify changes to the sets $\mathsf{B}_{n-1}, \mathsf{W}_{n-1},\mathsf{G}_{n-1}$ and $\widetilde{\mathsf{B}}_{n-1}, 
		\widetilde{\mathsf{W}}_{n-1},\widetilde{\mathsf{G}}_{n-1}$ inside $D_{\tilde{Y}_1, L_0}$. To this effect, we first reveal $\mathscr C_{\partial D_{\tilde Y_1, L_0}}(Z_J)$ 
		(recall \eqref{def:C_partial1}). That is, we reveal all the 
		points in $\partial D_{\tilde{Y}_1,L_0} \cap \mathsf{G}_{n-1}$,  and then we explore the clusters of points in $\partial D_{\tilde{Y}_1,L_0} $ inside $\mathcal I \cap (D_{\tilde{Y}_1,L_0} \setminus C_{\tilde{Y}_1,L_0} )$, thereby revealing the points 
		in these clusters and their outer boundary in $D_{\tilde{Y}_1,L_0}\setminus 
		C_{\tilde{Y}_1,L_0} $. All the points in $D_{\tilde{Y}_1,L_0}\setminus C_{\tilde{Y}_1,L_0} $ thereby revealed 
		are removed from $\mathsf{G}_{n-1}$ and constitute $\mathsf{B}_{n}'$, resp.~$\mathsf{W}_{n}'$, depending on whether they are in $\mathcal{I}$, 
		resp.~$\mathcal{V}$. The remaining points in $\mathsf{G}_{n-1} \cap D_{\tilde{Y}_1,L_0}$ define the set $\mathsf{G}_{n}'$. Notice that $C_{\tilde{Y}_1, L_0} \subset 
		\mathsf{G}_{n}'$. Now set (with $n =\tilde \tau_1$)
		\begin{align}
			&\label{eq:B_n} \widetilde{\mathsf{B}}_{n} \cap D_{\tilde{Y}_1, L_0} =  \mathsf{B}_{n}',\\
			&\label{eq:G_n} \widetilde{\mathsf{G}}_{n} \cap D_{\tilde{Y}_1, L_0} = \text{the points in the component of $C_{\tilde{Y}_1, L_0} $ in $\mathsf{G}_{n}'$},\\
			&\label{eq:W_n} \widetilde{\mathsf{W}}_{n} \cap D_{\tilde{Y}_1, L_0} =  \mathsf{W}_{n}'  \cup (\mathsf{G}_{n}' \setminus \widetilde{\mathsf{G}}_{n}).
		\end{align}
		We have thus completely specified $\tilde{\mathscr{A}}_n$; the case of
		$\mathscr{A}_n$ takes a bit more work. 
		Notice that $\tilde{X}_1 \in \mathsf{W}_n' (\subset 
		\widetilde{\mathsf W}_n \cap D_{\tilde{Y}_1, L_0})$ since $\tilde{X}_1\in \mathcal{V}$ on account of \eqref{eq:x-tilde-k}. 
		As shown in Lemma~\ref{L:BWG} below, see \eqref{eq:BW}, we have $\widetilde{\mathsf W}_n \subset 
		\mathcal V$. We now inspect whether $\tilde{X}_1$ is connected to $C_{\tilde{Y}_1, L_0} $ 
		by a path in $(\widetilde{\mathsf{G}}_{n} \cup \widetilde{\mathsf{W}}_{n}) \cap D_{\tilde{Y}_1, L_0}$:
		\begin{itemize}
			\item If the answer is no (Case I), we set $\mathsf{G}_{n} \cap D_{\tilde{Y}_1,L_0} = \widetilde{\mathsf{G}}_{n} \cap D_{\tilde{Y}_1,L_0}$,  $\mathsf{B}_{n} \cap 
			D_{\tilde{Y}_1,L_0}= \widetilde{\mathsf{B}}_{n} \cap D_{\tilde{Y}_1,L_0}$ and $\mathsf{W}_{n} \cap 
			D_{\tilde{Y}_1,L_0}= \widetilde{\mathsf{W}}_{n} \cap D_{\tilde{Y}_1,L_0}$, completing the specification of $(\mathsf{B}_{n}, 
			\mathsf{W}_{n},\mathsf{G}_{n})$ in that case. 
			\item If the answer is yes (Case II), we reveal all the 
			vertices in $\widetilde{\mathsf{G}}_n$
			, add them to $\widetilde{\mathsf{B}}_{n}\cap D_{\tilde{Y}_1,L_0}$ and $\widetilde{\mathsf{W}}_{n}\cap D_{\tilde{Y}_1,L_0}$ 
			dep.~on their state to obtain $\mathsf{B}_{n}\cap D_{\tilde{Y}_1,L_0}$, resp.~$\mathsf{W}_{n}\cap D_{\tilde{Y}_1,L_0}$, and set $\mathsf{G}_n \cap D_{\tilde{Y}_1,L_0}=\emptyset$. 
		\end{itemize}
		Notice that, since $\widetilde{\mathsf W}_n \subset \mathcal V$ as already observed, we 
		have $\mathsf W_n \subset \mathcal V$ and $\mathsf B_n \subset \mathcal I$ in all cases. 
		As a consequence of the definitions \eqref{eq:B_n}--\eqref{eq:W_n}, it follows that the sets $\widetilde{\mathsf S}_n \cap D_{\tilde Y_1, L_0}; \mathsf S \in \{\mathsf B, \mathsf W, \mathsf G\}$ are measurable relative to $\mathscr C_{\partial 
			D_{\tilde Y_1, L_0}}(Z_J)$. Further, we note that the statement \begin{equation}\label{eq:wninclude2}
			\text{\begin{minipage}{0.85\textwidth}$(\mathsf G_m)_{0 \le m \leq \tilde \tau_k}$ form decreasing sets, and for all $x \in \Z^d$, $\mathsf S \in 
					\{\mathsf B, \mathsf W, \mathsf G\}$, integers $1 \le n_1 < \ldots < n_{k-1} < n$, $y' \in  \partial_n$ and $m \le n$, the events $\{x  \in \mathsf S_m, \mathsf G_m \cap D_{y', L_0} \ne \emptyset, \tilde \tau_1 = n_1, \ldots, \tilde \tau_k = n\}$ and $\{x  \in \widetilde{\mathsf S}_m, \tilde \tau_1 = n_1, \ldots, \tilde \tau_k = n\}$ are $\mathcal F_n$-measurable (cf.~\eqref{eq:F_n}), \end{minipage}}
		\end{equation}
		defined for integer $k \geq 1$, holds for $k=1$.

		By induction, suppose that for some $k \geq 1$, both $(\mathscr{A}_n)_{ 0 \leq n \leq \tilde 
			\tau_k}$ and $(\tilde{\mathscr{A}}_n)_{ 0 \leq n \leq \tilde \tau_k}$ have been 
		specified on the event $\{\tilde \tau_k< \infty\}$  and \eqref{eq:wninclude2} holds. Now we continue for times $\tilde 
		\tau_k +1  \leq n < \tilde \tau_{k+1}$ by performing generic steps of the exploration, as 
		defined above for $1 \leq n < \tilde \tau_1$. 
		When $n = \tilde \tau_{k+1}< \infty$, there are three possible scenarios based on which we determine the next course of 
		action. If $\tilde Y_{k+1} \ne \tilde Y_{l}$ for {\em all} $l \le k$, then we follow exactly the same procedure as described for $n=\tilde \tau_1$. Otherwise, and 
		if in addition $\mathsf{G}_{n - 1} \cap D_{\tilde Y_{k+1}, L_0} \ne \emptyset$, we set  
	\begin{equation}\label{eq:GBW2}
		\begin{split}	
			{\widetilde{\mathsf S}_{n} } \cap D_{\tilde Y_{k+1}, L_0}^c = {\widetilde{\mathsf 
					S}}_{n - 1} \cap D_{\tilde Y_{k+1}, L_0}^c \mbox{, } \widetilde{\mathsf S}_{n} \cap 
			D_{\tilde Y_{k+1}, L_0} = {{\mathsf S}}_{n - 1} \cap D_{\tilde Y_{k+1}, L_0}
		\end{split}
	\end{equation}
	for $\mathsf S \in \{\mathsf B, \mathsf W, \mathsf G\}$ and skip to the remaining steps for $n=\tilde \tau_1$ (starting with inspecting whether $\tilde{X}_{k+1}$ is connected to $C_{\tilde Y_{k+1}, L_0} $ by a path 
	in $(\tilde{\mathsf{\mathsf{G}}}_{n} \cup \tilde{\mathsf{\mathsf{W}}}_{n}) \cap D_{\tilde Y_{k+1}, L_0}$). 
	Finally if $\mathsf{G}_{n - 1} \cap D_{\tilde Y_{k+1}, L_0} = \emptyset$, we simply carry out a generic step, which in this case boils down to setting $\mathscr{A}_n=\mathscr{A}_{n-1}$ and $\tilde{\mathscr{A}}_n=\tilde{\mathscr{A}}_{n-1}$.  

Overall, this defines 
 $(\mathscr{A}_n)_{ n \geq 0}= (\mathsf B_n, \mathsf W_n, \mathsf G_n)_{n \ge 0}$ and 
$(\tilde{\mathscr{A}}_n)_{ n \geq 0}= (\widetilde{\mathsf B}_n, \widetilde{\mathsf W}_n, \widetilde{\mathsf 
	G}_n)_{n \ge 0}$, and the 
exploration effectively stops when discovering $\mathscr C_J(x)$, 
from which time on $\mathscr{A}_n$ and $\tilde{\mathscr{A}}_n $ remain fixed. Also by our induction hypothesis and the rules of updating $\mathscr{A}_n$ and $\tilde{\mathscr{A}}_n$, \eqref{eq:wninclude2} holds with $k+ 1$ in place of $k$, and thus \eqref{eq:wninclude2} holds for all $k \geq 1$. The next lemma collects key features 
of $(\mathscr{A}_n)_{ n \geq 0}$ 
and $(\tilde{\mathscr{A}}_n)_{ n \geq 
	0}$. 
\begin{lemma}\label{L:BWG}
	For all $n, k \geq 1$, 
	the following hold.
	\begin{align}
		&\label{eq:BW0} \text{\begin{minipage}{0.89 \textwidth} The sets $(\mathsf B_n, \mathsf W_n, \mathsf G_n)$ and $(\widetilde{\mathsf B}_n, \widetilde{\mathsf W}_n, \widetilde{\mathsf G}_n)$
				both form 
				partitions of $\Z^d$. 
		\end{minipage}}\\
		&\label{eq:BW} \text{\begin{minipage}{0.89 \textwidth}
				$\mathsf{B}_n$ and $\widetilde{\mathsf{B}}_n$ are subsets of $\mathcal I$ whereas $\mathsf{W}_n$ and $\widetilde{\mathsf{W}}_n$ are subsets of $\mathcal V$.
		\end{minipage}}\\
		&\label{eq:w2} \text{\begin{minipage}{0.78 \textwidth} 
				If $\mathsf{G}_{n-1} \cap D_{\tilde Y_k, L_0} \ne \emptyset$ whereas $\mathsf{G}_{n} \cap 
				D_{\tilde Y_k, L_0} = \emptyset$, then $n = \tilde \tau_l$ for some $l \ge 1$ such that $\tilde 
				Y_l = \tilde Y_k$ and $\tilde X_l$ is connected to $C_{\tilde Y_l, L_0}$ by a path in 
				$(\widetilde{\mathsf{G}}_{\tilde \tau_l} \cup \widetilde{\mathsf{W}}_{\tilde \tau_l}) \cap D_{\tilde Y_l, L_0}$.
		\end{minipage}}\\
		&\label{eq:CsubsetV}\text{\begin{minipage}{0.80 \textwidth} If $n = \tilde \tau_k$ such 
				that $\mathsf{G}_{n-1} \cap D_{\tilde Y_k, L_0} \ne \emptyset$ and $C_{\tilde Y_k, L_0} 
				\subset \mathcal V$, then $\widetilde{\mathsf{G}}_n \cap D_{\tilde Y_k, L_0}  \subset \mathcal V$.
		\end{minipage}}
	\end{align}
\end{lemma}
For improved readability, the proof of Lemma~\ref{L:BWG} is postponed to Appendix~\ref{sec:aux-lemmas}. Using it, we now conclude the proof of Proposition~\ref{prop:reduction}. Towards this, we define the sequence $(\tau_k)_{k \geq 0}$ inductively as 
\begin{equation}\label{def:tauk}
	\tau_0 = 0, \quad \tau_k = 	\inf \big\{\tilde \tau_l  > \tau_{k-1} :  \text{ \ref{def:**} holds
	} \big\}
\end{equation}
for any $k \ge 1$, 
where 
\begin{equation}\label{def:**}\tag{$(\ast\ast)_l$}
	\text{$\mathsf{G}_{\tilde \tau_l-1}  \cap D_{\tilde 
			Y_l,L_0}  \neq \emptyset$ and $\lr{}{}{\tilde{X}_l (= w_{\tilde \tau_l})}{C_{\tilde Y_l, L_0} }$ in $\widetilde{\mathsf{G}}_{\tilde 
			\tau_l} \cup \widetilde{\mathsf{W}}_{\tilde \tau_l}$}.
\end{equation}
We set $Y_k = \tilde Y_l$ if $\tau_k = \tilde \tau_l < \infty$. 
We now proceed to verify that $(\tau_k)_{k \geq 1}$ defines a sequence of good encounter times, i.e.,~that properties \eqref{property:reduction1}-\eqref{property:reduction3} hold. Lemma~\ref{L:BWG} readily yields the first two of these.

\begin{proof}[Proof of Proposition~\ref{prop:reduction}, items \eqref{property:reduction1} and \eqref{property:reduction2}]

	The first part of property~\eqref{property:reduction1} follows from the definition of $\tilde 
	\tau_l$ in \eqref{eq:x-tilde-k}. 
	For the converse part, consider a point $y' \in \widehat{\mathbb 
		L}_{0, y}$ satisfying property~\eqref{def:Cy} and with $\mathscr C_J(x) \cap C_{y', L_0} \neq 
	\emptyset$. Since $D_{y', L_0} \subset (\tilde C_z)^c$ (see the definition of 
	$\widehat{\mathbb L}_{0, y}$ below~\eqref{def:Cy}), it follows from the 
	properties of $y'$ together with definition~\eqref{eq:x-tilde-k} that one can find $l \ge 
	1$ such that $y' = \tilde Y_l$ and 
	$\tilde X_l$ is connected to $C_{y', L_0}$ in $\mathcal V \cap D_{y', L_0}$ 
	If $\mathsf G_{\tilde \tau_l - 1} \cap D_{y', L_0} \ne \emptyset$, then $\tilde \tau_l$ satisfies 
	\ref{def:**} as $\mathcal V \cap D_{y', L_0} \subset (\widetilde{\mathsf G}_{\tilde \tau_l} \cup 
	\widetilde{\mathsf W}_{\tilde \tau_l}) \cap 
	D_{y', L_0}$ by \eqref{eq:BW} and the second part of \eqref{eq:BW0} and hence $y' = Y_k$ 
	(defined via $w_{\tau_k} \in \partial D_{Y_k, L_0}$) for some $k \ge 1$. Otherwise if $\mathsf 
	G_{\tilde \tau_l - 1} \cap D_{y', L_0} = \emptyset$, then from \eqref{eq:w2} and 
	\eqref{eq:BWG0} we can deduce the existence of $l' \le l$ such that $y' = \tilde Y_{l'}$, 
	$\mathsf G_{\tilde \tau_{l'} - 1} \cap D_{y', L_0} \ne \emptyset$ and $\tilde X_{l'}$ is 
	connected to $C_{y', L_0}$ in $\widetilde{\mathsf G}_{\tilde \tau_{l'}} \cup \widetilde{\mathsf 
		W}_{\tilde \tau_{l'}}$. In other words $(\ast\ast)_{l'}$ holds and we get the same conclusion 
	as before. 
	
	Property~\eqref{property:reduction2} is a consequence of \eqref{eq:CsubsetV} and \eqref{eq:BW} on account of \eqref{def:tauk} and \ref{def:**}. \end{proof}

Verifying that  $(\tau_k)_{k \geq 1}$ defined by \eqref{def:tauk} 
satisfies~\eqref{property:reduction3} requires more work. The key is:

\begin{lemma}\label{L:BWG'}
	For all $n, k \geq 1$ and $\mathsf S \in \{\mathsf{W}, \mathsf{B},\mathsf{G}\}$, 
	\begin{align}
		&\label{eq:w} \text{\begin{minipage}{0.89 \textwidth} 
				$\mathsf S_n \cap D_{\tilde Y_k,L_0}= \mathsf S_{\tilde \tau_k} \cap D_{\tilde Y_k,L_0}$ provided $n \geq \tilde \tau_k$ 
				and 
				$\mathsf{G}_{n} \cap D_{\tilde Y_k, L_0} \ne \emptyset$.
		\end{minipage}}
	\end{align}
\end{lemma}

A proof of Lemma~\ref{L:BWG'} is included in Appendix~\ref{sec:aux-lemmas}. We now complete the proof of Proposition~\ref{prop:reduction}.

\begin{proof}[Proof of Proposition~\ref{prop:reduction}, item~\eqref{property:reduction3}] 
	Let $\mathcal F_{y}= \mathcal F_{y, L_0}(Z_J, \delta, u, u_{2, 3})$ denote the 
	$\sigma$-algebra in \eqref{def:FEsigma_algebra1}. We will argue that for all integers 
	$l \geq 1$, $1 \leq n_1< \dots < n_l$, $\sigma_m \in \{0,1\}$ and $y_m \in \widehat{\mathbb 
		L}_{0, y}$, $1 \leq m \leq l$,
	\begin{equation}\label{eq:7.5meas-intermediate}
		\big( \textstyle\bigcap_{1 \leq m < l }\big\{ \tilde\tau_m = n_m , \tilde Y_m= y_m, 1_{\{ (\ast\ast)_m\}}=\sigma_m\big\} \cap \{ \tilde\tau_l = n_l, \tilde Y_l= y_l, (\ast\ast)_l  \} \big)  \in \mathcal F_{y_l}.
	\end{equation}
	Let us first conclude \eqref{property:reduction3} from this. In view of \eqref{def:tauk}, it is 
	clear that for all $y', y_1,\dots, y_k \in \widehat{\mathbb L}_{0, y}$ and $k \ge 0$, the event 
	$\textstyle\bigcap_{1 \le j \le k}\{\tau_j < \infty, Y_j = y_j\} \cap \{\tau_{k + 1} < \infty, Y_{k+1} 
	= y'\}$ can be decomposed into a countable union of events of the type appearing in 
	\eqref{eq:7.5meas-intermediate} with $l$ such that  $\tilde{\tau}_l= \tau_{k+1}$ and 
	$y_l=y'$, whence this event is in $\mathcal F_{y'}$. The event appearing in 
	\eqref{property:reduction3} differs from this through the presence of the condition $C_{y_j, 
		L_0} \not\subset \mathcal V(Z_J)$ for each $1 \leq j \leq k$. 
	As we now explain, none of the $y_j$'s can be equal to $y'$ if the corresponding event is non-empty. For, otherwise this would mean in view 
	of \eqref{def:tauk} and the definition of the processes $(\mathscr{A}_n)_{n \ge 0}$ and 
	$(\tilde{\mathscr{A}}_n)_{n \ge 0}$ (see below \eqref{eq:W_n} and below \eqref{eq:GBW2}) 
	that Case II would have to have occurred at time $n= \tau_j (< \tau_{k+1})$, thus implying that 
	$\mathsf G_n \cap D_{y_j, L_0} = \mathsf G_n \cap D_{y', L_0}= \emptyset$. With $\mathsf 
	G_n$ being decreasing in $n$ by \eqref{eq:wninclude2}, 
	this prevails at all times $>n$, thus precluding the possibility that $\tau_{k+1}=\tilde{\tau}_l$ in 
	view of the first condition inherent to \ref{def:**} in \eqref{def:tauk}. But with $y_j \neq 
	y'$, the event $\{C_{y_j, L_0} \not\subset \mathcal V(Z_J)\}$ is measurable with respect to 
	$\mathcal I \cap (D_{y', L_0})^c$ (part of $\mathcal F_{y_l}$), and 
	\eqref{property:reduction3} follows.

	We now turn to \eqref{eq:7.5meas-intermediate} and abbreviate the event appearing there as $\mathcal E$. Let $\Xi$ denote the random field comprising the configurations $((\mathcal V^u)_{\delta} \cap D_z, (\mathcal V^{u_{2,3}})_{2\delta} \cap D_z)$, the 
	occupation times $(\ell_x^u : x \in \partial D_{y', L_0}, y' \in  \partial_{n_l})$ (recall $ \partial_{n}$ from below \eqref{eq:tildetaumeasur}) and the 
	clusters $\{\mathscr C_{\partial D_{y', L_0}}(Z_J) : y' \in \partial_{n_l}\}$. In view of definitions \eqref{eq:x-tilde-k} and \ref{def:**}, 
	we see that $\mathcal E$ is measurable relative to the variables $(w_1, 1_{\{w_1 \in \mathcal 
		V\}}), \ldots, (w_{n_l}, 1_{\{w_{n_l} \in \mathcal V\}})$, the field $\Xi$ and the occupancy 
	variables of the events $\{x  \in \mathsf S_m, \mathsf G_m 
	\cap D_{y', L_0} \ne \emptyset, \tilde \tau_1 = n_1, \ldots, \tilde \tau_l = n_l\}$ and $\{x  \in \widetilde{\mathsf S}_m, \tilde \tau_1 = n_1, \ldots, \tilde \tau_l = n_l\}$, where $x \in \Z^d, \mathsf S \in \{\mathsf B, \mathsf W, \mathsf G\}, y' \in  \partial_{n_l},$ and $1 \leq m \le n_l$. With this, \eqref{eq:wninclude2} and \eqref{eq:F_n} give  that 
	$\mathcal E \in \mathcal F_{n_l}$. 
	Now consider $x_1, \ldots, x_{n_l} \in \Z^d$, $s_1, \ldots, s_{n_l} \in \{0, 1\}$ and a realization $\xi$ of $\Xi$ satisfying
	\begin{equation}\label{eq:Eintersectnonemp}
		\mathcal E \cap \{w_1 = x_1, 1_{\{x_1 \in \mathcal V\}} = s_1, \ldots, w_{n_l} = x_{n_l}, 1_{\{x_{n_l} \in \mathcal V\}} = s_{n_l}, \Xi = \xi\} \ne \emptyset.
	\end{equation} 
	Note that $\partial_{n_l}$ introduced above \eqref{eq:F_n} (and part of $\Xi$) is deterministic 
	on the event in \eqref{eq:Eintersectnonemp}. 
	Since $\mathcal E \in \mathcal F_{n_l}$, the definition of the $\sigma$-algebra $\mathcal 
	F_{n_l}$ from \eqref{eq:F_n} and of $\mathcal{F}_{y_l}$ from 
	\eqref{def:FEsigma_algebra1}, it therefore suffices to show for proving 
	\eqref{eq:7.5meas-intermediate} that for any choice as above \eqref{eq:Eintersectnonemp}, 
	the event $\{w_1 = x_1, 1_{\{x_1 \in \mathcal V\}} = s_1, \ldots, w_{n_l} = x_{n_l}, 1_{\{x_{n_l} 
		\in \mathcal V\}} = s_{n_l}, \Xi = \xi\}$ is $\mathcal F_{y_l}$-measurable. But the definition of 
	the exploration sequence $(w_n)_{n \ge 1}$ from \S\ref{subsec:reduction} implies that 
		the variables $1_{\{w_1 = x_1, x_1 \in \mathcal V\}}, \ldots, 1_{\{w_{n_l} = x_{n_l}, x_{n_l} 
			\in \mathcal V\}}$ are measurable relative to $\{x_1, \ldots, x_{n_l}\} \cap \mathcal V$, 
		so we only need to prove that
		\begin{equation}\label{eq:requirement}
			\{1_{\{x_1 \in \mathcal V\}} = s_1, \ldots, 1_{\{x_{n_l} \in \mathcal V\}} = s_{n_l}, \Xi = \xi\} \in \mathcal F_{y_l}.
		\end{equation}
		To this end note that the value of $\mathscr C= \mathscr C_{\partial D_{y_l, L_0}}(Z_J)$ is 
		fixed by the event $\{\Xi = \xi\}$ (note that $y_l \in \partial_{n_l}$ due to 
		\eqref{eq:Eintersectnonemp}), and let $\widetilde{\mathscr C} = \overline{\mathscr C}  \cup 
		\{\text{the points outside the component of $C_{y_l, L_0}$ in $D_{y_l, L_0} \setminus 
			\overline{\mathscr C}$}\},$
		where $\overline{\mathscr C} = \mathscr C \cup (\partial^{{\rm out}}_{D_{y_l, L_0}}{{\mathscr 
				C}} )\cup \partial D_{y_l, L_0}$. Note that both $\widetilde {\mathscr C}$ and 
		$\widetilde{\mathscr C} \cap \mathcal V$ are determined on the event in 
		\eqref{eq:Eintersectnonemp}. 
		Therefore, in view of the definition of the $\sigma$-algebra $\mathcal F_{y_l} = \mathcal F_{y_l, L_0}(Z_J, \delta, u, u_{2, 3})$ in \eqref{def:FEsigma_algebra1} whence $\Xi$ and 
		$\mathcal I \cap ({\mathring D_{y_l, L_0}})^c$ are both measurable with respect 
		to $\mathcal F_{y_l}$, \eqref{eq:requirement} follows at once if
		\begin{equation}\label{eq:requirement1}
			\{x_1, \ldots, x_{n_l}\} \cap (\mathring D_{y_l, L_0} \setminus \widetilde{\mathscr C}) = \emptyset \mbox{ on the event in \eqref{eq:Eintersectnonemp}}.
		\end{equation}
		
		The remainder of the proof is devoted to showing \eqref{eq:requirement1}. 
		Recalling that $l_1 = \min \{1 \le m \le l: y_m = y_l\}$, we see that 
		$w_k \notin D_{y_l, L_0}$ for any $k < \tilde \tau_{l_1} = n_{l_1}$ and $w_{n_{l_1}} 
		\in \partial D_{y_{l}, L_0}$ on the event $\mathcal E$ (recall \eqref{eq:7.5meas-intermediate}). 
		Hence in view of \eqref{eq:Eintersectnonemp}, we immediately conclude that
		\begin{equation}\label{eq:requirement1_1}
			\{x_1, \ldots, x_{n_{l_1}}\} \cap (\mathring D_{y_l, L_0} \setminus \mathscr C) = \emptyset.
		\end{equation}
		As to $n_{l_1} < n < n_l$ (if $l_1 < l $), note that since \eqref{eq:Eintersectnonemp} holds and $\mathcal E  \subset \{\tilde \tau_l = n_l, \tilde Y_l = y_l, (\ast\ast)_l\}$, which itself is contained in $\{\tilde \tau_l = n_l, \tilde Y_l = y_l, \mathsf G_{n_l -1} \cap D_{y_l, L_0} \ne \emptyset, \lr{}{}{w_{n_l}}{C_{y_l, L_0} } \text{ in } \widetilde{\mathsf{G}}_{n_l} \cup \widetilde{\mathsf{W}}_{n_l}\}$ by \ref{def:**}, we have from \eqref{eq:w} and our rules for updating $\mathscr{A}_n$ and 
		$\tilde{\mathscr{A}}_n$ as described around \eqref{eq:GBW2} that 
		$\mathsf S_n \cap D_{y_l, L_0} = \mathsf S_{n_{l_1}} \cap D_{y_l, L_0}$ for all $n_{l_1} \le n < 
		n_l$ and $\mathsf S \in \{\mathsf B, \mathsf W, \mathsf G\}$  on the event $\mathcal E$. 
		Consequently, 
		\begin{equation}\label{eq:wGD}
			\{w_n\}_{n_{l_1} < n < n_{l}} \cap (\mathsf G_{n_{l_1}} \cap D_{y_l, L_0}) = \emptyset \text{ on the event } \mathcal E
		\end{equation}
		for otherwise the exploration would reveal some vertex in $\mathsf G_{n} \cap  D_{y_l, L_0} = 
		\mathsf G_{n_{l_1}} \cap D_{y_l, L_0}$ for some $n_{l_1} < n < n_{m}$ causing $\mathsf G_{n} 
		\cap  D_{y_l, L_0}$ to be different from $\mathsf G_{n_{l_1}} \cap  D_{y_l, L_0}$. Since $\mathcal E$ also implies the event $\{\tilde \tau_l = n_l, \tilde Y_l = y_l, \mathsf G_{n_l 
			-1} \cap D_{y_l, L_0} \ne \emptyset\},$
		it follows that we were in the scenario covered by Case~I at time $\tilde \tau_{l_1} = n_{l_1}$ 
		(see below \eqref{eq:W_n}) on the event $\mathcal E$ and consequently $\mathsf S_{n_{l_1}} \cap D_{y_l, L_0} = \widetilde{\mathsf S}_{n_{l_1}} \cap D_{y_l, L_0}$ for $\mathsf S \in \{\mathsf B, \mathsf W, \mathsf G\}$. But by \eqref{eq:B_n}--\eqref{eq:W_n}, we have $(D_{y_l, L_0} \setminus \widetilde{\mathsf G}_{n_{l_1}}) \subset  \widetilde{\mathscr C}$.  Together with \eqref{eq:wGD}, this implies that
		$\{w_n\}_{n_{l_1} < n < n_{l}}  \cap (\mathring D_{y_l, L_0} \setminus \widetilde{\mathscr C}) = \emptyset$ on the event in \eqref{eq:Eintersectnonemp}. In other words, on the event in \eqref{eq:Eintersectnonemp},
		\begin{equation}\label{eq:requirement1_2}
			\{x_{n_{l_1} +1}, \ldots, x_{n_{l} - 1}\} \cap (\mathring D_{y_l, L_0} \setminus \widetilde{\mathscr C}) = \emptyset.
		\end{equation}
		Finally, we have $w_{n_l} \in \partial D_{y_l, L_0}$ on the event $\mathcal E \subset \{\tilde 
		\tau_l = n_l\}$ and hence, again in view of \eqref{eq:Eintersectnonemp}, we have that $x_{n_l} 
		\notin \mathring D_{y_l, L_0}$. Combined with \eqref{eq:requirement1_1} and 
		\eqref{eq:requirement1_2}, this implies \eqref{eq:requirement1}, thus completing the proof. 
		\qedhere
	\end{proof}

\section{Denouement}\label{sec:denouement}
We now give the proofs of Theorems~\ref{T:ri-main} and \ref{T:ri-2point}, see \S\ref{subsec:slu} and \S\ref{subsec:supcrit_upper} 
respectively, drawing upon results developed in the previous three sections. Moreover, we derive in \S\ref{subsec:supcrit_upper} sharp upper 
bounds on the truncated one-arm probability for $\mathcal V^u$ in the supercritical regime $u < u_\ast$, see Theorem~\ref{thm:d=3} below, 
from which Theorem~\ref{T:ri-2point} follows immediately. In the intervening \S\ref{sec:apriori}, we provide 
an a priori estimate essential for the proofs of Theorems~\ref{T:ri-main} and \ref{thm:d=3}, which is interesting on its own.
\subsection{Proof of Theorem~\ref{T:ri-main}}\label{subsec:slu}
Theorem~\ref{T:ri-main} will be ultimately deduced from Theorems~\ref{prop:exploration_RI-I} and \ref{prop:exploration_RI-II} in view of 
\eqref{eq:V-first-inclusion}. Their usefulness hinges on suitable bounds for $\P[(\mathscr{G}_{z,N}^{\rm i})^c]$, ${\rm i} = {\rm I, II}$ (see~\eqref{def:mathscrG_RI} and 
\eqref{def:mathscrG_RI2}), for which we will rely on Proposition~\ref{prop:bootstrap_prob}.  This in turn requires verifying the condition \eqref{eq:initial_limit_prob} and 
notably to exercise control on a localized version of the events $\mathcal{G}_{z}^{\rm i}$, see 
\eqref{eq:bootstrap_prob_coupling}. The required estimate for ${\rm i} = {\rm I}$ is supplied by the following lemma. 
All parameters are assumed to satisfy \eqref{eq:params_RI}. 
\begin{lemma}[Seed estimates]\label{lem:mathcalG} 
	There exists a scale $L_0 = L_0(\bm u)$ where $\bm u = (u_0, u_1, u_2, u_3)$ and $\Cr{c:delta}(L_0) \in (0, \frac12)$ such that for all $\delta \in [0, \Cl[c]{c:delta}]$ and $L \ge 1$, with $\mathcal{G}_{0,L}^{\rm I}$ as in \eqref{def:good_events_supcrit},
	\begin{equation}\label{eq:mathcalGtrigger}
		\begin{split}
			&
			\P\left[\mathcal{G}_{0,L}^{\rm I}\big(\overline{Z}_{\mathbb{L}}, \delta, \bm u; a = 1 \big) \right] \ge 1 - C e^{-L^c}
		\end{split}
	\end{equation}
	for some $C = C(\bm u) \in (0, \infty)$ 
	with $\overline{Z}_{\mathbb{L}}=\{ \overline{Z}^u_z: z \in \mathbb{L}, u> 0\}$ as in \eqref{eq:3Vs}. 
\end{lemma}

Lemma~\ref{lem:mathcalG} follows by adapting the renormalization argument used to prove 
\cite[Lemma~5.16]{gosrodsev2021radius}, similarly as with \eqref{eq:connection_delta_d=3}. This hinges on a-priori 
estimates for $\P[({\rm FE}_{0, L_0})^c]$, which we prove in Lemma~\ref{lem:FE_likely}.

Next, we collect important localization properties of the events 
$\mathcal{G}_{0,L}^{\rm I}$ as one moves through the sequences of excursions in \eqref{eq:3Vs}. 
Recall that \eqref{eq:params_RI} is in force, and in particular that the scales $N,L, L_0$ and 
$K$ satisfy \eqref{eq:L_k_descending}. The following conclusions all hold uniformly in  $z 
\in \mathbb{L}$ 
without further mention. 
\begin{lemma}[Localization of 
	$\mathcal{G}_{z}^{\rm I}$]\label{L:loca-supercrit}
	For all 
	$u_0, v_0  \in [0, \infty), u_1 < u_2 < u_3 \in (0, \infty)$ and $v_1 < v_2 < v_3 \in (0, \infty)$ such 
	that  $u_0  < v_0$, $u_2 < v_2$, and $u_1 > v_1$, $u_3 > v_3$,  
	abbreviating 
	$\bm u = (u_0, \ldots, u_3)$, 
	$\bm u' = (u_0/8, u_1, \ldots, u_3)$ and 
	$\bm v = (v_0, \ldots, v_3)$, 
	$\bm v' = (v_0/8, v_1, \ldots, v_3)$, one has
	\begin{equation}\label{eq:supcrit_inclusion0}
		\begin{split}
			\mathcal{G}_{z}
			^{\rm I}(\overline{Z}_{\mathbb L}, \delta, \bm u; a)  \cap 
			\mathcal  F_{z}^{\bm u', \bm v'} \subset  \mathcal{G}_{z}
			^{\rm I}({Z}_{\mathbb L}, \delta, \bm  v;  a)
		\end{split}
	\end{equation}
	(see  \eqref{eq:F} and \eqref{eq:Fext} regarding the definition of $\mathcal F_{z}^{\bm u, \bm v}$). Moreover,  under any coupling $\mathbb Q$ of 
	$\mathbb P$ and 
	$\widetilde{\mathbb P}_{z}$,
	\begin{equation}\label{eq:supcrit_inclusion_Ztilde}
		\begin{split}	
			& \, \mathcal{G}_{z}
			^{{\rm I}}(\widetilde{Z}_{\mathbb L}, 
			\delta, \bm u; a) \, \cap \,  {\rm Incl}_{z}^{\frac{\varepsilon}
				{10}, \lfloor v\,{\rm cap}( 
				D_z)\rfloor} \subset \mathcal{G}_{z}
			^{{\rm I}}({Z}_{\mathbb L}, \delta, 
			\bm u(1, \varepsilon); a)  \mbox{ and } \\
			&\, \mathcal{G}_{z}
			^{{\rm I}}({Z}_{\mathbb L}, 
			\delta, \bm u; a) \, \cap \,  {\rm Incl}_{z}^{\frac{\varepsilon}
				{10}, \lfloor v\,{\rm cap}( 
				D_{z})\rfloor} \subset \mathcal{G}_{z}
			^{{\rm I}}(\widetilde{Z}_{\mathbb L}, \delta, 
			\bm u(1, \varepsilon); a)
		\end{split}	
	\end{equation} 
	for 
	$v  = \frac1{20} \, \min(u_0, u_1)$, 
	$\varepsilon \in (0, 1)$ and $L \ge C(v, \varepsilon)$,
	where 
	$
	\bm u(1, \varepsilon)$ stands for the tuple 
	$(u_{0, \varepsilon}, u_{1, -\varepsilon}, u_{2, \varepsilon}, u_{3, -\varepsilon})$ \sloppy with $u_{k, \epsilon} = u_k(1 + \epsilon)$ for any $\epsilon \in (-1, 1)$. 
\end{lemma}

\begin{proof}
	Let us recall from \eqref{def:good_events_supcrit} that $\mathcal{G}_{z}^{\rm I}$ is 
	increasing in all of ${\rm V}_z$, the events comprising ${\rm W}^{\rm I} $ and 
	$\mathscr{C}$ by \eqref{def:Gz}-\eqref{def:interface1}. We now inspect how each of these 
	events moves across the two sides of \eqref{eq:supcrit_inclusion0} starting with the events 
	${\rm W}^{\rm I}_{z, y}$. 
	Letting $u_0'=u_0/8$ and $v_0'=v_0/8$, the occurrence of the event 
	$\mathcal F_{z }^{u_0', v_0'} \cap \mathcal F_{z}^{u_1, v_1} (\supset \mathcal F_{z}^{\bm u', \bm v'})$, see \eqref{eq:F}, guarantees that any set of indices $J$ with $\{1,\dots, v_0' 
	\text{cap}( D_{z})\} \subset J \subset \{1,\dots, {v_1} \text{cap}( D_{z})\}$ 
	satisfies 
	$\{1,\dots, N_{z}^{u_0'}\} \subset J \subset \{1,\dots, N_{z}^{u_1}\}$. In other words, in view 
	of the definition of $Z_+$ in \eqref{eq:Z^+} and of \eqref{eq:3Vs},  on the event $\mathcal 
	F_{z}^{\bm u', \bm v'}$ we have the inclusion
	$	({Z}_{z}^{v_1})_+({\nu}_{z}( v_0')) \subset (\overline{Z}_{z}^{u_1})_+(\bar{\nu}_{z}( u_0'))$	(see also \eqref{eq:nu-hat} regarding $\nu_z$ and $\bar \nu_z$). By 
	\eqref{def:boosted} and the definition of ${\rm W}^{\rm I}$ in \eqref{eq:WI}, it then follows that
	\begin{equation}\label{eq:Winclusion}
		{\rm W}^{\rm I}_{z, y}(\overline{Z}_{\mathbb L}, u_0, u_1) \cap \mathcal F_{z}^{\bm u', \bm 
			v'} \subset {\rm W}^{\rm I}_{z, y}({Z}_{\mathbb L}, v_0, v_1)
	\end{equation}
	for any $z \in \mathbb L$ and $y \in \mathbb L_0$. Next, we deal with the event ${\rm V}_{z}={\rm V}_{z, 
		L}$ defined above \eqref{def:Fz}. It is clear from this definition that 
	${\rm V}_{z}(Z, Z', Z'', \delta)$ is decreasing in $Z$ and $Z''$ 
	and increasing in $Z'$ (w.r.t. inclusion of the underlying sets). Now on the event 
	$\mathcal F_{z}^{u_1, v_1} \cap \mathcal F_{z}^{u_2, v_2} \cap \mathcal F_{z}^{u_3, v_3} 
	(\supset \mathcal F_{z}^{\bm u', \bm v'})$, by \eqref{eq:3Vs} and \eqref{eq:F},
	\begin{equation}\label{eq:Z_include1}
		\begin{split}
			\{{Z}_{z}^{v_1}\} \subset \{\overline{Z}_{z}^{u_1}\}, \,  \{{Z}_{z}^{v_3}\} \subset 
			\{\overline{Z}_{z}^{u_3}\} \, \mbox{ and }\, \{\overline{Z}_{z}^{u_2}\} \subset \{{Z}_{z}^{v_2}\},
		\end{split}
	\end{equation}
	where $\{Z\} = \{Z_1, \ldots, Z_{n_Z}\}$ for any sequence $Z = (Z_j)_{1 \le j \le n_Z}$.  Although we don't need this for the purpose of dealing with ${\rm V}_{z}$ (but we'll use it shortly), the inclusions in \eqref{eq:Z_include1} do in fact hold as multisets (as per our convention below \eqref{eq:RI_basic_coupling2}) on the event $\mathcal F_{z}^{\bm u', \bm v'}$. 
	With \eqref{eq:Z_include1} at hand, in view of \eqref{def:Fz}, we have
	\begin{equation}\label{eq:Vinclusion}
		{\rm V}_{z}(\overline{Z}_{\mathbb L}, \delta, u_1, u_2, u_3) \cap 
		\mathcal F_{z}^{\bm u', \bm v'} \subset {\rm V}_{z}(Z_{\mathbb L}, \delta, v_1, v_2, v_3).
	\end{equation}
	By similar arguments (see \eqref{eq:script_C} regarding $\mathscr C_{z}$), we also obtain that 
	\begin{equation}\label{eq:Cinclusion}
		\mathscr C_{z}(\overline{Z}_{\mathbb L}, \delta, u_1, u_3) \cap \mathcal F_{z}^{\bm u', \bm v'} \subset \mathscr C_z({Z}_{\mathbb 
			L}, \delta, v_1, v_3).
	\end{equation}
	Together with the observation made in the line below \eqref{eq:supcrit_inclusion0} and the 
	definition of $\mathcal{G}_{z}^{\rm I}$ in~\eqref{def:good_events_supcrit}, 
	the displays \eqref{eq:Winclusion}, \eqref{eq:Vinclusion} and \eqref{eq:Cinclusion} yield \eqref{eq:supcrit_inclusion0}.

The arguments leading to 
	\eqref{eq:supcrit_inclusion0} also straightforwardly yield \eqref{eq:supcrit_inclusion_Ztilde}. Indeed, to pass to the $\widetilde{Z}_{\mathbb 
		L}$-version of the events $\mathcal G_z^{\rm i}$, the event ${\rm Incl}_z^{\varepsilon/10, m_0}$ 
	in \eqref{eq:RI_basic_coupling2} with $m_0 = \lfloor v\,{\rm cap}( 
	D_z)\rfloor$ and $v$ as below \eqref{eq:supcrit_inclusion_Ztilde} precisely ensures the desired inclusions between the relevant multisets of excursions belonging to ${Z}_{\mathbb 
		L}$ and $\widetilde{Z}_{\mathbb L}$. \end{proof}
We can now already conclude Theorem~\ref{T:ri-main}. 
\begin{proof}[Proof of Theorem~\ref{T:ri-main}]
	We prove \eqref{eq:ri-unique} and explain at the end of the proof how \eqref{eq:ri-equal} is 
	deduced. With Lemma~\ref{lem:VzSLU_inclusion} at hand, applying \eqref{eq:V-first-inclusion} with the 
	choices $u_0 = \bar u_0(u) > 0$ (to be specified) and $\nu=0$ at scale $L/100$ together with a union bound 
	over $z$ and using that the probability that $\big\{ N_{z, L/100}^{{3\bar u_0}/{2}}- N_{z, L/100}^{{\bar u_0}/{2}} =0 \big\} 
	$ (recall that the difference on the left 
	dominates a Poisson variable with mean $\bar u_0 \,\text{cap}(B_{L/100})$) is bounded from 
	above by $\exp\{-c\bar u_0L^{d-2}\} \le \exp\{-c L^{d-2}\}$,  the task of proving \eqref{eq:ri-unique} reduces to showing that 
	for suitable $c=c(d) >0$, all $u \in (0, u_*)$, $N \geq C(u)$ and ${\rm i} = {\rm I,II}$,
	\begin{equation}
		\label{eq:SLU-reduc1}
		\P[({\rm V}_{0,N}^{\rm i}(\nu = 0; \bar u_0, u))^c] \leq e^{-N^{c}},
	\end{equation}
	for some value $\bar u_0 = \bar u_0(u) \in (0, \frac{u_\ast}2)$. We will prove \eqref{eq:SLU-reduc1} for ${\rm i} = {\rm I}$ by application of 
	Theorem~\ref{prop:exploration_RI-I} aided by Lemmas~\ref{lem:mathcalG} and \ref{L:loca-supercrit}. In view of \eqref{eq:truncated_RI-I}, the first and the 
	main step is to derive a similar estimate on $\P[(\mathscr G_{0,N}^{\rm I})^c]$ which we will obtain by means of Proposition~\ref{prop:bootstrap_prob} 
	starting with the bound on $\P[(\mathcal G_{0, L}^{{\rm I}})^c]$ given by Lemma~\ref{lem:mathcalG} corresponding to the choice of parameters
	\begin{equation}\label{eq:params_RIep0}
		\text{\begin{minipage}{0.9\textwidth}
				$\bm u = (u_0 = \tfrac{\Cl[c]{c:u_4}}{10}, u_1, u_2, u_3)$, $L_0 = L_0(\bm u)$ and $\delta$ satisfying $\delta \in (0, \Cr{c:delta}(L_0)]$ 
				s.t.~\eqref{eq:params_RI} holds.
		\end{minipage}}
	\end{equation}
	Here $\Cr{c:u_4} \in (0, u_\ast)$ is a constant (see below). The case ${\rm i} = {\rm II}$ follows from the exact same 
	arguments, using Theorem~\ref{prop:exploration_RI-II} in lieu of 
	Theorem~\ref{prop:exploration_RI-I} which involves the `type II versions' of the events 
	$\mathcal G_z^{{\rm I}}$ and $\mathscr G_z^{{\rm I}}$, namely $\mathcal G_z^{{\rm II}}$ 
	and $\mathscr G_z^{{\rm II}}$, defined in \eqref{def:good_events_supcrit2} and 
	\eqref{def:mathscrG_RI2} respectively and analogous statements to \eqref{eq:mathcalGtrigger}--\eqref{eq:supcrit_inclusion_Ztilde} for the events $\mathcal 
	G_{z, L}^{{\rm II}}$ (the required a-priori estimate to prove the analogue of \eqref{eq:mathcalGtrigger} for type ${\rm II}$ is supplied by \cite[Corollary~3.7]{MR3269990}, which replaces Lemma~\ref{lem:FE_likely} below). The choice of the parameter $u_0 = \frac{\Cr{c:u_4}}{10}$ in \eqref{eq:params_RIep0} above is solely informed by the fact that the 
	analogue of \eqref{eq:mathcalGtrigger} holds for $\mathcal G_{0, L}^{{\rm II}}(\overline{Z}_{\mathbb L}, u_4; a = 1)$ (see 
	\eqref{def:good_events_supcrit2}) when $u_4 \in [0, \Cr{c:u_4}]$ thus reflecting the crossover between the type I and type II regimes (see below 
	\eqref{eq:V_z^II}).

	We now proceed with the proof of \eqref{eq:SLU-reduc1} for ${\rm i} = {\rm I}$. Combining \eqref{eq:mathcalGtrigger}
	with the inclusion \eqref{eq:supcrit_inclusion0}, we obtain that for any $z \in \mathbb 
	L$, 
	$L_0$ and $\delta$ as in \eqref{eq:params_RIep0}, and for $\varepsilon \in (0, 1)$,  $K \geq 
	C(\varepsilon)$, 
	\begin{multline}\label{eq:V_to_V_L_supcrit_rnd1}
		\P[\mathcal{G}_{z}^{\rm I}({Z}_{\mathbb L}, \delta, \bm u(1, \varepsilon); a = 1) ] \\
		\stackrel{\eqref{eq:supcrit_inclusion0}}{ \ge}  
		\P[\mathcal{G}_{z}^{\rm I}(\overline{Z}_{\mathbb L}, \delta, \bm u; a = 1) ] - 
		\P[(\mathcal F^{\bm u', {\bm u(1, \varepsilon)}'}_{z})^c]
		\stackrel{\eqref{eq:mathcalGtrigger},  \eqref{eq:Nuz_tail_bnd}}{\ge}  1 - 
		C(\varepsilon, \bm u) e^{-L^c},
	\end{multline}
	where 
	$\bm u(k, \varepsilon) = (u_0(1 + \varepsilon)^k, u_1(1 - \varepsilon)^k, u_2(1 + \varepsilon)^k, 
	u_3(1 - \varepsilon)^k)$ \sloppy for any $k \in \N$ (cf.~$\bm u(1, \varepsilon)$ below 
	\eqref{eq:supcrit_inclusion_Ztilde}) and $\bm u'$ (or ${\bm u(1, \varepsilon)}'$) is as above 
	\eqref{eq:supcrit_inclusion0}. 
	Now using the second inclusion in \eqref{eq:supcrit_inclusion_Ztilde}, we can use the coupling 
	$\mathbb Q_{\{z\}}$ from Lemma~\ref{L:RI_basic_coupling} 
	to deduce that 
	\begin{multline}\label{eq:V_L_to_tildeV_L_supcrit_rnd1}
		\P[\mathcal{G}_{z}^{\rm I}(\widetilde{Z}_{\mathbb L}, \delta, \bm u(2, \varepsilon); a = 1) ] 
		\stackrel{\eqref{eq:inclusion}}{\ge} 
		\P[\mathcal{G}_{z}^{\rm I}({Z}_{\mathbb L}, \delta, \bm u(1, \varepsilon); a = 1) ] 
		\widetilde \P_z[(\mathcal U_z^{\frac{\varepsilon}{ 10}, \lfloor (1-\varepsilon)v\,{\rm cap}(D_{z, L})\rfloor})^c] \\
		\stackrel{\eqref{eq:V_to_V_L_supcrit_rnd1}, \eqref{eq:bnd_Uzepm}}{\ge}  \, 1 - C(\varepsilon, \bm u) e^{-L^c} - C \varepsilon^2 e^{-c(\varepsilon, \bm u)L^{d-2}} \ge 1 - C(\varepsilon, \bm u) e^{-L^c},
	\end{multline}
	for all $L \ge C(\varepsilon)$ and $K \ge \frac{ 30\Cr{c:equil}}{\varepsilon} \vee C(\varepsilon)$ (ensures that condition 
	\eqref{eq:RI_cond_Q} holds on account of Proposition~\ref{prop:entrance_time_afar} and that \eqref{eq:V_to_V_L_supcrit_rnd1} applies) where 
	$v = \frac1{20}\, \min(u_0, u_1)$ as below \eqref{eq:supcrit_inclusion_Ztilde}. 

	Next, in view of the first inclusion in \eqref{eq:supcrit_inclusion_Ztilde} together with 
	Lemma~\ref{L:RI_basic_coupling} and Proposition~\ref{prop:entrance_time_afar}, and the 
	probability bounds in \eqref{eq:V_L_to_tildeV_L_supcrit_rnd1} above and 
	\eqref{eq:bnd_Uzepm}, we see that the conditions 
	\eqref{eq:bootstrap_prob_coupling}--\eqref{eq:initial_limit_prob} of 
	Proposition~\ref{prop:bootstrap_prob} are satisfied by the events 
	\begin{equation}\label{eq:supcrit_GtildeG}
		\begin{split}
			\widetilde {\mathcal G}_{z, L} = \mathcal{G}_{z}^{\rm I}(\widetilde{Z}_{\mathbb L}, \delta, \bm u(2, \varepsilon); a = 1)
			\text{ and } 
			{\mathcal G}_{z, L} = 
			\mathcal{G}_{z}^{\rm I}({Z}_{\mathbb L}, \delta, \bm u(3, \varepsilon); a = 1)
		\end{split}
	\end{equation}
	and for 
	$\varepsilon_L = \frac{\varepsilon}{10}$, 
	$m_L = \lfloor v(1 - \varepsilon)^2\,{\rm cap}( D_{z, L})\rfloor$, $\beta' = c \in (0, \infty)$, $K_0 = C(\varepsilon)$ and $L_0 = C(\varepsilon, \bm u)$ (as for \eqref{eq:initial_limit_prob} to hold). 
	Let us suppose for the remainder of this proof that, on top of the conditions specified 
	in 
	\eqref{eq:params_RIep0}, $\bm u$ and $\varepsilon$ also satisfy
	\begin{equation}\label{eq:params_RIep}
		\text{\begin{minipage}{0.7\textwidth}
				$u < u_1(1 - \varepsilon)^4$, $u_2(1 + \varepsilon)^3 < u_3(1 - \varepsilon)^4$ and 
				$2u_0(1 + \varepsilon)^3 < u_\ast$
		\end{minipage}}
	\end{equation}
	(cf.~\eqref{eq:params_RI} and also the assumptions underlying 
	Theorem~\ref{prop:exploration_RI-I}). 
	Then in dimension $d \ge 4$, choosing 
	$(\bm u, \bm v)  = ({\bm u(3, \varepsilon)}_1, {\bm u(3, \varepsilon)}_2)$ 
	for ${\rm i} = {\rm I}$, 
	where ${\bm u(3, \varepsilon)}_1$ and ${\bm u(3, \varepsilon)}_2$ are 
	defined 
	exactly as $\bm u_1$ and $\bm u_2$ with $\bm u(3, \varepsilon)$ replacing $\bm u$ 
	in \eqref{def:F_y_RI}, 
	$\widetilde{\mathcal G}_{z, L}, {\mathcal G}_{z, L}$ as in \eqref{eq:supcrit_GtildeG}, $K = K(\varepsilon, \bm u)$, $L = L(\varepsilon, \bm u)$ large enough and $\Lambda_N = \tilde D_{0, N} \setminus \tilde C_{0, N}$, we obtain from \eqref{eq:bootstrapped_limit_probd=4} that for 
	all $N \ge 1$, 
	\begin{equation}\label{eq:supcrit_GN_lower_bnd0d=4}
		\begin{split}
			\P[\mathscr G_{0, N}^{\rm I}(Z_{\mathbb L}, \delta, \bm u(3, \varepsilon); a = 1)]	\ge
			1 - 
			C (\varepsilon, \bm u) e^{-c(\varepsilon, \bm u) N}.
		\end{split}
	\end{equation}	
	On the other hand for $d = 3$, \eqref{eq:bootstrapped_limit_prob1d=3} yields with the choice 
	of $(\bm u, \bm v)$, $\widetilde{\mathcal G}_{z, L}$ and $\mathcal G_{z, L}$ as above, $K = 
	K(\varepsilon,\bm  u)$, $\delta = \frac12$ for the parameter appearing in \eqref{eq:bootstrapped_limit_prob1d=3}), $\rho = \frac1{2\Cr{C:rho}}$, $L = L(N) = \lfloor (\log 
	N)^\alpha \rfloor$ for some absolute constant $\alpha \in (0, \infty)$ and $\Lambda_N = \tilde D_{0, 
		N} \setminus \tilde C_{0, N}$ that for all $N \ge 1$, with $\beta \in (0, \infty)$ an absolute constant (determined by the choice of $\beta'=c$ from below \eqref{eq:supcrit_GtildeG}),
	\begin{equation}\label{eq:supcrit_GN_lower_bnd0d=3}
		\begin{split}
			\P[\mathscr G_{0, N}^{\rm I}(Z_{\mathbb L}, \delta, \bm u(3, \varepsilon) ; a = 1)]	\ge 1 - C(\varepsilon, \bm u)e^{- N{( 1 \vee \log N)^{-\beta}}}.
		\end{split}
	\end{equation}	
	
	Now 
	plugging the bounds \eqref{eq:supcrit_GN_lower_bnd0d=4} and \eqref{eq:supcrit_GN_lower_bnd0d=3} into the right-hand side of \eqref{eq:truncated_RI-I} 
	in Theorem~\ref{prop:exploration_RI-I} 
	with 
	$\nu = 0$ (the required conditions are ensured by \eqref{eq:params_RIep0} and \eqref{eq:params_RIep}), we get
	that for all $N \ge 2$ and $d \geq 3$, with ${\rm V}_{0,N}^{{\rm I}} = {\rm V}_{0,N}^{\rm 
		I}(\nu = 0; \bar u_{0} = u_0(1 + \varepsilon)^3, u)$, 
	\begin{equation}\label{eq:V_prob_lower_bnd_dge4}
		\P[({\rm V}_{0,N}^{\rm I})^c] 
		\le 
		\P\big[\text{Disc}_{0,N}^{\rm I}\big] + 
		C(\delta, \varepsilon, \bm u) \times \eta(N),
	\end{equation}
	where  $\text{Disc}_{0,N}^{\rm I} \coloneqq \big\{ \lr{}{}{ C_{0, N}}{\partial  D_{0, N}} \text{ in $(\mathcal V^{u_{2, 3}(\varepsilon)})_{2\delta}$} \big\}^c,$ $u_{2, 3}(\varepsilon) = \frac12(u_2(1 + \varepsilon)^3 + u_3(1 - \varepsilon)^3)$ (recall \eqref{def:F_y_RI}) and $\eta(N)= e^{- {N}{( \log N )^{-\beta}}} $, if $d=3$ and $\eta(N)= e^{-c(\delta, \varepsilon, \bm u) N}$ if $ d\geq 4$. 
		
		We already have from \eqref{eq:connection_delta_d=3} in Remark~\ref{R:subcrit-variation} a 
		bound on the disconnection probability in \eqref{eq:V_prob_lower_bnd_dge4} when $\delta \in 
		(0, \Cr{c:delta-sub}(u_{2, 3}(\varepsilon))]$. 
		To conclude the proof of \eqref{eq:SLU-reduc1} we set, for any given $u \in (0, u_*)$,
		\begin{multline}\label{eq:uchoice}
			u_0 = \tfrac{\Cr{c:u_4}}{10}, u_1 = u_\ast(1 - \varepsilon)^{10}, u_2 = u_\ast(1 - \varepsilon)^9, 
			u_3 = u_\ast(1 - \varepsilon), 
			\\ \text{and }\varepsilon = ((1 - (\tfrac{u}{u_\ast})^{\frac1{20}}) \wedge \tfrac1{10}, \, \text{$\delta = \tfrac{\Cr{c:delta-sub}(u_{2, 3}(\varepsilon))}2 \wedge  \Cr{c:delta}(L_0(\bm u)) \, (> 0)$}
		\end{multline}
		with $L_0(\cdot)$ provided by Lemma~\ref{lem:mathcalG}. We see that the conditions 
		\eqref{eq:params_RIep0} and \eqref{eq:params_RIep} are satisfied and the bound 
		\eqref{eq:connection_delta_d=3} 
		holds for the value $u = u_{2, 3}(\varepsilon)$. 
		Therefore we can plug 
		\eqref{eq:connection_delta_d=3} 
		into the right-hand side of \eqref{eq:V_prob_lower_bnd_dge4} to deduce 
		\eqref{eq:SLU-reduc1} for ${\rm i} = {\rm I}$ which, along with its type II analogue, concludes the proof of \eqref{eq:ri-unique}.

		It remains to argue that \eqref{eq:ri-equal} holds. The following inclusion of events follows from 
		the definition of ${\rm SLU}_L(u)$ in \eqref{eq:slu}. For any $v \in [0, u]$ and $x \in \Z^d$ such 
		that $|x|_{\infty} \ge 2$, one has
		\begin{equation}\label{eq:incl_tr_slu}
			\big\{0 \stackrel{}{\longleftrightarrow}x \nlr{}{}{}{\infty} \text{ in } \mathcal{V}^v\big\} 
			\subset \big({\rm SLU}_{|x|_{\infty}}(u) \, \cap \, \big\{B_{|x|_{\infty} / 4} 
			\stackrel{}{\longleftrightarrow}\infty \text{ in } \mathcal{V}^v\big\}\big)^c.
		\end{equation}
		Also following the derivation of (5.73) in \cite{gosrodsev2021radius}, by combining 
		\eqref{eq:ri-unique}, the disconnection estimate from \cite[Thm.~7.3]{MR3602841}, which holds for all $u < \bar u$ and the equality of $u_\ast$ and $\bar u$ of \cite[Thm.~1.2]{RI-I}, we obtain that the connection to infinity on the right of \eqref{eq:incl_tr_slu} has probability at least
		$1 - C(v)e^{-|x|^c}$, for all $v \in [0, u_\ast)$ and $x \in \Z^d$. Since the connection event in question is decreasing w.r.t.~$v$, 
		feeding the previous bound together with \eqref{eq:ri-unique} into \eqref{eq:incl_tr_slu} yields that 
		$\tau_v^{{\rm tr}}(x, y) = \tau_v^{{\rm tr}}(0,y - x) \le C(u) e^{-|x - y|^c}$, {\em uniformly} over all $v \in [0, u]$ and $x, y \in \Z^d$ when $u < u_\ast$ (see~\eqref{eq:tau-RI}). But this is precisely the asserted equality of $\widehat{u}$ 
		and $u_\ast$ in \eqref{eq:ri-equal} in view of the definition of $\widehat{u}$ in \eqref{eq:u-hat}.
	\end{proof}
\subsection{A-priori estimates}\label{sec:apriori}
In this section we prove the promised a-priori estimate required for the proof of Lemma~\ref{lem:mathcalG} (cf.~below the statement of Lemma~\ref{lem:mathcalG}).
\begin{lemma}[${\rm FE}_{y, L_0}$ is likely]
\label{lem:FE_likely}
For any $u_0, u  \in (0, \infty)$,  
$K \ge 100$ (from \eqref{eq:L_k_descending}) and $L_0\geq 1$,  
we have
\begin{equation}\label{eq:FE_likely}
\P \big[ {\rm FE}_{0, L_0}((\overline{Z}_{0, L_0}^{u})_+( N_{0, L_0}^{u_0}))
\big]
\ge 1 - 
C(u, u_0)\,e^{-c(u) L_0^c}.
\end{equation}
\end{lemma}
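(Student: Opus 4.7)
The plan is to split the event $\text{FE}_{0,L_0}(Z) = \text{LU}_{0,L_0}(Z) \cap \text{O}_{0,L_0}(Z)$ from \eqref{def:FE1} into its two defining pieces and treat each uniformly over $Z \in (\overline{Z}_{0,L_0}^u)_+(N_{0,L_0}^{u_0})$. The occupation-time constraint $\text{O}_{0,L_0}(Z)$ in \eqref{def:Conn0} is monotone in $Z$ by the obvious inequality $\ell_x(Z) \leq \ell_x(\overline{Z}_{0,L_0}^u)$; hence the intersection over the entire family collapses to $\text{O}_{0,L_0}(\overline{Z}_{0,L_0}^u)$. Since each $\ell_x(\overline{Z}_{0,L_0}^u) \leq \ell_x^u$ is a compound-Poisson random variable with mean of order $u$ (independent of $L_0$ for $x \in \partial D_{0,L_0}$), a standard exponential tail bound combined with a union bound over $|\partial D_{0,L_0}| \leq C L_0^{d-1}$ vertices produces a bound of the right form.

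The substantive part is the local-uniqueness piece $\text{LU}_{0,L_0}(Z)$ from \eqref{def:Conn}, which is genuinely non-monotone in $Z$. The idea is to introduce a monotone ``backbone'' event built only from $\mathcal{I}^{u_0}$ (which is contained in $\mathcal{I}(Z)$ for every $Z$ in the family, since all such $Z$ include the first $N_{0,L_0}^{u_0}$ excursions). Define $\mathcal{E}_1$ as the event that there is a unique connected component $\mathcal{B} \subset \mathcal{I}^{u_0} \cap (D_{0,L_0} \setminus (\partial D_{0,L_0} \cup C_{0,L_0}))$ containing every point of $\mathcal{I}^{u_0} \cap (\tilde{D}_{0,L_0} \setminus \tilde{C}_{0,L_0})$, and $\mathcal{E}_2$ as the event that for each excursion $Z_j$ of $\overline{Z}_{0,L_0}^u$ with $j \leq N_{0,L_0}^u$, any connected component of $\mathrm{range}(Z_j) \cap (D_{0,L_0} \setminus (\partial D_{0,L_0} \cup C_{0,L_0}))$ meeting $\tilde{D}_{0,L_0} \setminus \tilde{C}_{0,L_0}$ also meets $\mathcal{B}$. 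On $\mathcal{E}_1 \cap \mathcal{E}_2$, for any $Z = (Z_j)_{j\in J}$ in the family and any $x \in \mathcal{I}(Z) \cap (\tilde{D}_{0,L_0} \setminus \tilde{C}_{0,L_0})$, the point $x$ lies on some $Z_j$ with $j \in J \subseteq \{1,\ldots,N_{0,L_0}^u\}$; by $\mathcal{E}_2$, following $Z_j$ inside the punctured box connects $x$ to $\mathcal{B}$, and via $\mathcal{E}_1$ the component $\mathcal{B}$ glues all such $x$'s together inside $\mathcal{I}(Z) \cap (D_{0,L_0} \setminus (\partial D_{0,L_0} \cup C_{0,L_0}))$, yielding $\text{LU}_{0,L_0}(Z)$.

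It remains to bound $\P[\mathcal{E}_1^c]$ and $\P[\mathcal{E}_2^c]$. For $\mathcal{E}_1$, this is precisely a stretched-exponential local-uniqueness estimate for the interlacement set $\mathcal{I}^{u_0}$ itself at scale $L_0$, which follows for every $u_0 > 0$ from the standard renormalization scheme in \cite[Section 3]{MR3269990} using that $\mathcal{I}^{u_0}$ is always supercritical and connected. For $\mathcal{E}_2$, one first uses the Poisson tail \eqref{eq:Nuz_tail_bnd} to control $N_{0,L_0}^u \leq 2u\,\text{cap}(D_{0,L_0})$ with high probability; then, conditionally on this, a union bound over excursions together with a random-walk hitting estimate for $\mathcal{I}^{u_0}$ (exploiting that $\mathcal{I}^{u_0}$ is independent of the conditioning trajectory up to resampling and has positive density) shows that any excursion sub-path of length $\gtrsim L_0$ inside $D_{0,L_0} \setminus (\partial D_{0,L_0} \cup C_{0,L_0})$ meets $\mathcal{I}^{u_0}$ except on an event of probability $e^{-c(u,u_0)L_0^c}$. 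The main obstacle to watch out for is the topological constraint that the connecting sub-paths must stay inside the punctured box $D_{0,L_0} \setminus (\partial D_{0,L_0} \cup C_{0,L_0})$; this is handled by exploiting the ``moats'' of width $L$ separating $\tilde{D}_{0,L_0}$ from $\partial D_{0,L_0}$ and $\tilde{C}_{0,L_0}$ from $C_{0,L_0}$ built into \eqref{def:CDU}, which guarantee enough room for excursion sub-pieces to propagate at least $\sim L_0$ steps before being forced to exit the valid region. A final union bound combines the three estimates into the claimed \eqref{eq:FE_likely}.
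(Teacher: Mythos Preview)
Your handling of ${\rm O}_{0,L_0}$ via monotonicity is correct and matches the paper. For ${\rm LU}_{0,L_0}$ the backbone strategy is a genuinely different route, and the deterministic gluing on $\mathcal{E}_1 \cap \mathcal{E}_2$ is sound. The problem is the bound on $\P[\mathcal{E}_2^c]$. You define $\mathcal{E}_2$ as requiring that each relevant excursion component meet the specific cluster $\mathcal{B}$, but your argument only shows that such a component meets $\mathcal{I}^{u_0}$. These are not the same: the random-walk sub-path of diameter $\gtrsim L_0$ may hit $\mathcal{I}^{u_0}$ only at a point lying in a small island of $\mathcal{I}^{u_0} \cap (D_{0,L_0} \setminus (\partial D_{0,L_0} \cup C_{0,L_0}))$ that never reaches $\tilde D_{0,L_0} \setminus \tilde C_{0,L_0}$ and hence is not $\mathcal{B}$. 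Your $\mathcal{E}_1$ does not rule out such islands (it only connects points \emph{inside} the middle annulus), and nothing forces the hit to occur there, since the sub-path can exit $\tilde D_{0,L_0} \setminus \tilde C_{0,L_0}$ in $O(1)$ steps. Closing this would require either strengthening $\mathcal{E}_1$ (e.g.\ every component of $\mathcal{I}^{u_0}$ in the punctured box of diameter $\geq L_0^{1/2}$ coincides with $\mathcal{B}$) together with an argument that the walk cannot land only on tiny pieces, or a direct hitting estimate for $\mathcal{B}$ itself; neither is supplied, and neither is immediate.

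The paper sidesteps this with a combinatorial reduction instead of a backbone. After passing to i.i.d.\ excursions via Lemma~\ref{L:RI_basic_coupling} and the events $\mathcal{F}^{\cdot,\cdot}$, the key observation \eqref{eq:FE_likely_LU_inclusion} is that for a fixed base index set $J$ and any $J'$,
\[
\bigcap_{J'' \subset J',\,|J''| \leq 2} {\rm LU}\big(\widetilde{Z}^{D,U}_{J\cup J''}\big) \;\subset\; {\rm LU}\big(\widetilde{Z}^{D,U}_{J \cup J'}\big),
\]
because any two points in $\mathcal{I}(\widetilde{Z}^{D,U}_{J\cup J'}) \cap (\tilde D_{0,L_0} \setminus \tilde C_{0,L_0})$ lie on at most two extra excursions $j',j'' \in J'$, and ${\rm LU}$ for $J \cup \{j',j''\}$ already connects them within the smaller interlacement set. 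Taking $J$ to be the first $\sim u_0\,{\rm cap}(D_{0,L_0})$ indices, this collapses the intersection over exponentially many packets to a union bound over $|\mathscr{J}_{u_0,u}| \leq C(u)L_0^{C}$ ordinary ${\rm LU}$ events for fixed packets (see \eqref{def:Ju0u}), each of which is handled by known local-uniqueness estimates for the interlacement set.
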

\begin{proof}[Proof of Lemma~\ref{lem:FE_likely}]
By \eqref{def:boosted} and \eqref{eq:Z^+}, and with a similar argument as above \eqref{eq:Winclusion}, we can write
\begin{equation} \label{eq:FE_likely_global_local}
{\rm FE}_{0, L_0}\big(({Z}_{0, L_0}^{2u})_+( \tfrac{u_0}2{\rm cap}(D_{0, L_0}))\big) \cap \mathcal F_{0, 
L_0}^{u, 2u} \cap \mathcal F_{0, L_0}^{u_0, {u_0}/2} \subset {\rm FE}_{0, L_0}((\overline{Z}_{0, L_0}^{u})_+( N_{0, L_0}^{u_0})).
\end{equation}
Throughout the remainder of this proof we omit the subscripts ``$0,L_0$'' from all notations, so 
$Z^u= {Z}_{0, L_0}^{u}$, ${\rm FE}(\cdot)= {\rm FE}_{0, L_0}(\cdot)$ etc. 
Recall from \eqref{def:FE1} that ${\rm FE}(\cdot)= {\rm LU}(\cdot) \cap {\rm O}(\cdot)$. To deal with the presence of $({Z}^{2u})_+( \tfrac{u_0}2{\rm cap}(D))$, we first observe that, since the event ${\rm O}(Z)$ is decreasing in $Z$ w.r.t. inclusion of the multiset  $\{Z\}$ 
(revisit definition~\eqref{def:Conn0} and our convention below \eqref{eq:RI_basic_coupling2}), it follows from \eqref{def:boosted} that
\begin{equation}\label{eq:FE_likely_O_inclusion}
{\rm O}(Z^{2u}) \subset {\rm O}\big(({Z}^{2u})_+( \tfrac{u_0}2{\rm cap}(D))\big). 
\end{equation}
As to ${\rm LU}(Z)$, by 
similar arguments as for 
\eqref{eq:supcrit_inclusion_Ztilde}, 
we obtain that under any coupling $\mathbb Q$ of $\mathbb P$ and 
$\widetilde{\P}_0$,
\begin{equation}\label{eq:FE_likely_LU_tildeinclusion}
\begin{split}
{\rm LU} \big((\widetilde{Z}^{4u})_+( \tfrac{u_0}4{\rm cap}(D)) \big)
\cap {\rm Incl}_0^{\frac1{10}, \frac{u_0}{8}\,{\rm cap}(D)} \subset {\rm 
LU} \big(({Z}^{2u})_+( \tfrac{u_0}2{\rm cap}(D))\big).
\end{split}
\end{equation}
for all 
$L_0 \ge C(u_0)$. 
Now we 
argue that for any two 
finite 
subsets $J$ and $ J'$ of $\N^\ast$, 
\begin{equation}\label{eq:FE_likely_LU_inclusion}
\begin{split}
\textstyle \bigcap_{J'' \subset J': |J''| \le 2}{\rm LU}\big(\widetilde{Z}^{D, U}_{J \cup J''}\big) \subset  {\rm LU}\big(\widetilde{Z}^{D, U}_{J \cup J'}\big)
\end{split}
\end{equation}
where $\widehat{Z}_J^{D, U} = (\widehat Z_j^{D, U})_{j \in J}$ (with $U=U_{0,L_0}$). 
To see this suppose that we are {on the event} at the left-hand side of \eqref{eq:FE_likely_LU_inclusion} 
and consider 
$x', x'' \in \mathcal I\big(\widetilde{Z}^{D, U}_{J \cup J'}\big) \cap 
(\tilde D \setminus \tilde C)$.  If neither $x'$ nor $x''$ lie in 
$\mathcal I(\widetilde{Z}^{D, U}_J)$, then there exist $j', j'' \in J'$ such that 
$x' \in \mathcal I(\widetilde{Z}^{D, U}_{j'})$ and $x'' \in \mathcal I(\widetilde{Z}^{D, 
U}_{j''})$. Recalling the definition of ${\rm LU}(\cdot)$ from \eqref{def:Conn}, and since 
${\rm LU}\big(\widetilde{Z}^{D, U}_{J \cup \{j', j''\}}\big)$ occurs, we 
have in this case that 
$x'$ and $x''$ are connected in 
$\mathcal I\big(\widetilde{Z}^{D, U}_{J \cup \{j', j''\}}\big) \cap 
(\mathring D \setminus C)$, which is contained in $\mathcal I\big(\widetilde Z^{D, U}_{J \cup J'}\big) \cap (\mathring D \setminus C)$ (recall that 
$\mathring D = D \setminus \partial D$). Similarly we can verify the cases $x', 
x'' \in \mathcal I(\widetilde Z^{D, U}_J)$, $x'\in 
\mathcal I(\widetilde Z^{D, U}_J)$ and $x'' \in 
\mathcal I(\widetilde Z^{D, U}_{J'})$. 
All in all, the inclusion in \eqref{eq:FE_likely_LU_inclusion} 
follows.

Now recall from 
\eqref{eq:Z^+} that any $Z \in (\widetilde{Z}^{4u})_+( \tfrac{u_0}4{\rm cap}(D))$ must 
necessarily contain 
$\widetilde{Z}^{{u_0}/4}$ as a subsequence (see \eqref{eq:3Vs} for 
notation). Therefore, we obtain from \eqref{eq:FE_likely_LU_inclusion} that 
\begin{equation}\label{eq:LU_simplify}\textstyle
\bigcap_{J \in \mathscr{J}_{u_0, u} } {\rm LU}(\widetilde{Z}_J^{D, U}) \subset {\rm LU}\big(
(\widetilde{Z}^{4u})_+( \tfrac{u_0}4{\rm cap}(D))\big) 
\end{equation}
where 
\begin{equation}\label{def:Ju0u}
	\begin{split}
		\mathscr{J}_{u_0, u} \coloneqq \ &\mbox{the collection of all subsets of $\{1, \ldots, \lfloor 
			4u \, {\rm cap}(D)\rfloor\}$ of the}\\ &\mbox{form $\{1 , \ldots, \lfloor \frac{u_0}{4} \,{\rm cap}(D)\rfloor\} \cup 
			J''$ for some 
			$J'' \subset \N^\ast$ such that
		$|J''| \le 2$.}	\end{split}
\end{equation}
The major gain from the inclusion in \eqref{eq:LU_simplify} is that the (complement of the) event on the 
left-hand side is now amenable to a union bound argument as $|
\mathscr J_{u_0, u}|$ is at most a power of $L_0$ (see below). With \eqref{eq:FE_likely_O_inclusion} and \eqref{eq:FE_likely_LU_tildeinclusion}, 
\eqref{eq:LU_simplify} implies in view of~\eqref{def:FE1} that under any coupling $
\mathbb Q$ of $\mathbb P$ and 
$\widetilde{\P}_0$, the inclusion 
\begin{equation*}
\begin{split}
{\rm O}(Z^{2u}) \cap \textstyle \big( \bigcap_{J \in \mathscr 
	J_{u_0, u} } {\rm LU}(\widetilde{Z}_J^{D, U})\big) \cap \, {\rm Incl}_0^{\frac1{10}, \frac{u_0}{8}\,{\rm cap}(D)} 
\subset 
{\rm FE}\big(
({Z}^{2u})_+( \tfrac{u_0}2{\rm cap}(D))\big)
\end{split}.	
\end{equation*}
holds. Finally, plugging this into the left-hand side of \eqref{eq:FE_likely_global_local} yields that 
\begin{equation}\label{eq:FE_likely_simplify}
{\rm O}
(Z^{2u}) \cap \textstyle  \big(\bigcap_{J \in \mathscr{J}_{u_0, u} } {\rm LU}(Z_J^{D, U})\big) \cap \, {\rm Incl}_0^{\frac1{10}, \frac{u_0}{8}\,{\rm cap}(D)} \cap \, \mathcal F^{u, 2u} \cap \mathcal F^{u_0, \frac{u_0}2}
 \subset \,  
{\rm FE}\big(
(\overline{Z}^{u})_+( N^{u_0} )\big)
\end{equation}
under any coupling $\mathbb Q$ of $\mathbb P$ and 
$\widetilde{\mathbb P}_0$ and for all $L_0 \ge C(u)$.
%
%
%
%
%
%
%
%
%
We will now bound from below the probabilities of each of the events on the left-hand side of \eqref{eq:FE_likely_simplify} starting with ${\rm O}(Z^{2u})$. From its definition 
 in \eqref{def:Conn0}, property \eqref{eq:law_muKu}, 
standard tail bound for a Poisson variables, 
and the exponential decay of the tail of occupation time for transient random walks, we 
readily obtain that
\begin{equation}\label{eq:Obound-apriori}
\P\big[{\rm O}(Z^{2u})\big] \ge 1 - e^{-c(u) L_0^c}.
\end{equation}
Next we want to prove that for any $K \ge 100$ 
and $J \in \mathscr J_{u_0, u}$, one has
\begin{equation}\label{eq:LU_Ztilde_bnd}
\P\big[{\rm LU}(\widetilde Z_J^{D, U})\big] \ge 1 - C(u_0)e^{-L_0^c}.
\end{equation}
To this end, for any $C > 0$, $R' > R \ge 1$ and $x \in \Z^d$, let us consider the event
$\overline{\rm LU}_{x, R, R'}(\widetilde Z_J^{D, U})$ defined as
$\textstyle \bigcap_{x, x'} \{\lr{}{ }{x}{x'} \text{ in } \mathcal I(\widetilde Z_J^{D, U}) \, \cap  \, B_{R'}(x)\}$, with $x, x' \in B_R(x) \, \cap \, \mathcal I(\widetilde Z_J^{D, U})$
(cf.~\eqref{def:Conn}), where $B_{R}(x) \subset \Z^d$ denotes the closed $\ell^\infty$-ball of radius $R$ centered at $x$ for any $R \ge 0$ and $x \in \Z^d$. It is enough to show that there exists $C > 0$ such that for any $R 
\ge 1$ and $x \in \Z^d$ satisfying $B_{CR}(x) \subset  D$, and for any $K \ge 100$ 
and $J \in \mathscr J_{u_0, u}$, the bound
\begin{equation}\label{eq:lower_bnd_tildeLU}
\P\big[ \overline{\rm LU}_{x, R, CR}(\widetilde Z_J^{D, U}) \big] \ge 1 - 
C(u_0)e^{-L_0^c}
\end{equation}
 holds; we can deduce \eqref{eq:LU_Ztilde_bnd} from this by a standard 
covering argument, 
see, e.g.,~the proof of Proposition~1 at the end of page~390 in \cite{RathSapoztransience11}. In view of the definition of $\mathscr J_{u_0, u}$ from \eqref{def:Ju0u}, 
we need to verify \eqref{eq:lower_bnd_tildeLU} when $J = \{1, 
\ldots, \lfloor\tfrac{u_0}4 \,{\rm cap}(D)\rfloor + k\}$, for $k\in \{0,1,2\}$. For any such $k$,  \eqref{eq:lower_bnd_tildeLU}  follows by adapting the arguments in the proof of \cite[(5.4) and (5.20)]{DPR22}. We omit further details.

Next, we note that Lemma~\ref{L:RI_basic_coupling} together  
with \eqref{eq:RI_basic_coupling2}, Proposition~\ref{prop:entrance_time_afar} and the 
bound in \eqref{eq:bnd_Uzepm} gives us that there is a coupling $\mathbb Q$ of $\mathbb P$ 
and 
$\widetilde{\mathbb P}_0$ such that
\begin{equation}\label{eq:bnd_incl}
\mathbb Q\big[ {\rm Incl}_0^{\frac1{10}, \frac{u_0}{4}\,{\rm cap}(\check D)}\big] \ge 1 - Ce^{-c(u_0)L_0^{d-2}}.
\end{equation}
Finally, from \eqref{eq:Nuz_tail_bnd} we have 
$\P\big[(\mathcal F_{}^{u_0, \frac{u_0}{2}} \cap 
\mathcal F_{}^{u, 2u})^c\big] \le 2 e^{-c\, (u_0 \wedge u) L_0^{d-2}}$.
Now plugging this along with the estimates \eqref{eq:Obound-apriori}, \eqref{eq:LU_Ztilde_bnd} and \eqref{eq:bnd_incl} 
 into \eqref{eq:FE_likely_simplify} 
after applying a union bound and using that $|{\mathscr J_{u_0, u}}| \le C (u\vee1)^3 L_0^C$ on account of \eqref{def:Ju0u} yields \eqref{eq:FE_likely}.\end{proof}

\subsection{Bounds for the supercritical phase}\label{subsec:supcrit_upper}
Let us introduce the \emph{local uniqueness} event
\begin{equation}\label{def:locuniq}
	{\rm LocUniq}(N, u) \coloneqq
	\left\{
	\text{$\mathcal{V}^u$ has a unique cluster crossing $B^2_{2N} \setminus B^2_N$}
	\right\} 
\end{equation}
as well as $2$-${\rm arms}(N, u)$, the two arms event in $B^2_{2N} \setminus B^2_N $, 
which refers to the existence of (at least) two 
crossings 
of $(B^2_{2N} \setminus B^2_N)$ in $\mathcal{V}^u$ that are not connected in $\mathcal 
V^u \cap (B^2_{2N} \setminus B^2_N)$. The two-arms event is clearly a subset of ${\rm LocUniq}(N, u)^c$. 
We can use any $\Lambda_N$ from \eqref{eq:scriptS_N} in place of $B^2_{2N} \setminus 
B^2_N$ and denote the resulting events as ${\rm LocUniq}(\Lambda_N, u)$ and $2$-${\rm 
arms}(\Lambda_N, u)$ respectively.
\begin{thm}[Supercritical regime] \label{thm:d=3}
	For all $u\in (0,u_*)$, 
	\begin{align}	
		&\sup_{N \ge 1}N^{-1}\log \P\big[ \lr{}{}{0}{\partial B_N^2}, \nlr{}{}{0}{\infty} 
	\text{ in } \mathcal V^u	\big] \le -c(u),  \text{ if $d \ge 4$;} \label{eq:d=4-sup}\\
		& \limsup_{N \to \infty} \frac{\log N}{N}\log \P\big[ \lr{}{}{0}{\partial B_N^2}, \nlr{}{}{0}{\infty} \text{ in } \mathcal V^u
		\big] \leq -\frac{\pi}{3}(\sqrt{u} - \sqrt{u_*})^2,  \text{ if $d = 3$.} \label{eq:d=3-sup}
	\end{align}
	Moreover, the bounds  \eqref{eq:d=4-sup} and \eqref{eq:d=3-sup} also hold for the events ${\rm LocUniq}(N, u)^c$ 
	and $2$-${\rm arms}(N, u)$. 
\end{thm}
We start with the case $d \ge 4$ which is simpler.
\begin{proof}[Proof of \eqref{eq:d=4-sup}]
	It follows from the proof of Theorem~\ref{T:ri-main} (revisit \eqref{eq:V_prob_lower_bnd_dge4}) that
	\begin{equation}\label{eq:V0Ndge4}
		\P\big[ ( {\rm V}_{0, N}(\overline{Z}_{0, N}^u(\nu = 0)) )^c \big] \le C(u) e^{-c(u) N}
	\end{equation}
	for all $u\in (0,u_*)$ and $N \ge 1$. Now it follows from \eqref{eq:incl_tr_slu} and 
	Lemma~\ref{lem:VzSLU_inclusion} applied with $L=N/\sqrt{d}$  (using the inclusion 
	$B_{N/\sqrt{d}} \subset B_N^2$)  that for all $N \ge 10\sqrt{d}$,
	\begin{equation}\label{eq:includeVzdge4}
		\big\{0 \stackrel{}{\longleftrightarrow}\partial B_N^2, 
		\nlr{}{}{0}{\infty} \text{ in } \mathcal V^u\big\} \subset \textstyle  \bigcup_{z \in B_{2N}^2} \big({\rm 
			V}_{z, {{N}}/{100\sqrt{d}}} \, \cap \, \big\{B_{{N}/{4 \sqrt{d}}} 
		\stackrel{}{\longleftrightarrow}\infty \text{ in } \mathcal V^u\} \big)^c 
	\end{equation}
	where ${\rm V}_{z, L} = {\rm V}_{z, L}(\overline{Z}_{z, L}^u(\nu = 0))$. 
	Following the steps leading to the bound (5.73) in \cite{gosrodsev2021radius}, we deduce 
	from \eqref{eq:V0Ndge4} and \cite[Theorem~7.3]{MR3602841} that $\P[\nlr{}{}{B_{{N}/{4 \sqrt{d}}}}{\infty} \text{ in } \mathcal V^u]$ decays super-exponentially in $N$. Together with \eqref{eq:V0Ndge4}, 
	this implies \eqref{eq:d=4-sup} via \eqref{eq:includeVzdge4} and a union bound. The inclusion 
	\eqref{eq:includeVzdge4} continues to hold with the event ${\rm LocUniq}(N, u)^c$ (and 
	hence also $2$-${\rm arms}(N, u)$, see 
	below \eqref{def:locuniq}) on the left-hand side, as follows from  \eqref{def:locuniq}, hence  the same bound for ${\rm LocUniq}(N, u)^c$ and $2$-${\rm arms}(N, u)$.
\end{proof}

The case $d = 3$ of Theorem~\ref{thm:d=3}, i.e.~\eqref{eq:d=3-sup}, requires several 
rounds of bootstrapping owing to the refined nature of the bounds involved. Content of the 
first round is summarized in our next lemma.
\begin{lemma}[Bootstrapping $\mathcal G_{z}^{{\rm I}}$; $d = 3$]\label{lem:script_G}
	Suppose that (cf.~\eqref{eq:mathcalGtrigger})
	\begin{equation}\label{eq:mathcalGtrigger2}
		\begin{split}
			&\P\big[\mathcal{G}_{0, L}^{\rm I}\big(\overline{Z}_{\mathbb{L}(L)}, \delta, \bm u; a^{(1)}\big) 
			\big] \ge 1 - \theta' e^{- L^{\theta}},
			\quad L \ge 1 
		\end{split}
	\end{equation}
	for some 
	$\theta \in (0, 1)$, $\theta' <\infty$, 
	$a^{(1)} \geq 1$, $L_0$ 
	and 
	$\bm u = (u_0, u_1, u_2, u_3)$ satisfying 
	\eqref{eq:params_RI} and all $\delta \in 
	[0, \delta']$ for some $\delta'  \in (0, \frac12)$. There exist 
	$\delta'' = \delta''(\bm u, \delta') \in (0, \tfrac12)$ 
	such that, with $\bm u(k, \varepsilon)$ as below \eqref{eq:V_to_V_L_supcrit_rnd1},
	\begin{equation}
		\label{eq:mscrG_z}
		\begin{split}
			&\P\big[\mathcal{G}_{0, N}^{\rm I}\big(\overline{Z}_{\mathbb{L}(N)}, \delta, \bm u(4, \varepsilon); a^{(2)}( N) =
			{c'N}{(\log N)^{-C(\theta)} } \cdot a^{(1)}\big) 
			\big] \ge 1 - 
			C' e^{- {N}{(1 \vee \log  N)^{-C(\theta)}}},
		\end{split}
	\end{equation}
	for all $N \ge 1$, 
	$\delta \in [0, \delta'']$, $\varepsilon \in (0, 1)$ satisfying the last two of the three conditions 
	in \eqref{eq:params_RIep} 
	and some $C' = C'(\bm u, 
	L_0, \delta', \theta, \theta', \varepsilon) < \infty$ and $c' = c'(\bm u, 
	\theta, \theta', \varepsilon) > 0$.
\end{lemma}
The proof of Lemma~\ref{lem:script_G} is postponed for a few lines. From now on until the 
end of this section we assume that $d=3$. We start by explaining how 
Lemma~\ref{lem:script_G} leads to a bound for the event $({\rm V}_{0,N}^{\rm I}(\nu))^c$ 
similar to \eqref{eq:SLU-reduc1} but with a 
{\em larger} value of $\nu$ and a better error. The need for a larger value of 
$\nu$ arises from the change in the form of 
${\rm V}_z= {\rm V}_{z,L}(\overline{Z}_{z,L}^u(\nu))$  (see above \eqref{eq:V-first-inclusion})
across any 
inclusion of the type \eqref{eq:RI_basic_coupling2} (see \eqref{eq:supcrit_inclusion_Vztilde} 
below) which is essential for 
further improving the error bound in view of \eqref{eq:bootstrap_prob_coupling}. 

To the effect of improving over {\eqref{eq:SLU-reduc1}}, for any given $u \in (0, u_\ast)$, let 
\begin{equation}\label{def:ep}
	\varepsilon = ((1 - (\tfrac{u}{u_\ast})^{\frac1{30}}) 
	\wedge \tfrac1{20} \text{ and } \delta' = \Cr{c:delta}(L_0(\bm u))
\end{equation}
where $\bm u = (u_0, u_1, u_2, u_3)$ is given by
\begin{equation}\label{eq:uchoice1}
	u_0 = \tfrac{\Cr{c:u_4}}{10}, u_1 = u_\ast(1 - \varepsilon)^{21}, u_2 = u_\ast(1 - \varepsilon)^{20}  \mbox{ and } u_3 = u_\ast(1 - \varepsilon) 
\end{equation}
(cf.~\eqref{eq:uchoice}). In view of Lemma~\ref{lem:mathcalG}, we see that the conditions of 
Lemma~\ref{lem:script_G} are satisfied with $\bm u$, $\varepsilon$ and $\delta'$ as above, 
$a^{(1)} = 1$ and $\theta = c$, $\theta' = C(u)$ and $L_0 = L_0(\bm u)$ 
from Lemma~\ref{lem:mathcalG}. Thus \eqref{eq:mscrG_z} holds with $\varepsilon$ as in 
\eqref{def:ep}  and $c',C$ depending effectively only on $u$ with the above choices.

Now we notice from \eqref{def:ep} and \eqref{eq:uchoice1} that the conditions 
\eqref{eq:params_RIep0} and \eqref{eq:params_RIep} are satisfied by $\bm u(4, \varepsilon)$ instead of $\bm u$
as well and consequently we can follow the steps leading to 
\eqref{eq:V_prob_lower_bnd_dge4} in the proof of Theorem~\ref{T:ri-main} starting from  
\eqref{eq:mscrG_z} in place of \eqref{eq:mathcalGtrigger}, which feeds into \eqref{eq:V_to_V_L_supcrit_rnd1} and the subsequent estimates. 
In particular, when reaching the point in the argument leading to 
\eqref{eq:V_prob_lower_bnd_dge4} at which Theorem~\ref{prop:exploration_RI-I} is applied, we can now afford to choose $a=a^{(2)}(L)$ owing 
to \eqref{eq:mscrG_z} when applying \eqref{eq:truncated_RI-I}. 
Moreover, we are free to choose any value of $\nu$ for which these bounds remain 
meaningful; that is, with $K(u)= K(\varepsilon,\bm  u)$ as above 
\eqref{eq:supcrit_GN_lower_bnd0d=3} for the choices of $\varepsilon,\bm  u$ from 
\eqref{def:ep}-\eqref{eq:uchoice1} and $L= L(N)$ as above 
\eqref{eq:supcrit_GN_lower_bnd0d=3}, we pick
\begin{equation*}\nu \coloneqq\big( c(u)a^{(2)}(L) \, 
	\tfrac{N}{h(K(u) L)} \big) \big\vert_{L=L(N)} 
	\stackrel{(d=3)}{=} c(u) \, a^{(2)}(L(N)) \,\tfrac N{K(u)L(N)} 
	\stackrel{\eqref{eq:mscrG_z}}{\geq} c(u) \tfrac N{( 1\vee \log \log N)^C}
\end{equation*}
when applying Theorems~\ref{prop:exploration_RI-I} (recall from \S\ref{subsec:admissible} 
that $h(x) = x$ when $d = 3$). The ${\rm i} = {\rm II}$ case follows similarly from the 
corresponding version of Lemma~\ref{lem:script_G} and Theorem~\ref{prop:exploration_RI-II} 
in lieu of Theorem~\ref{prop:exploration_RI-I}. All in all, we thus 
obtain, similarly as \eqref{eq:SLU-reduc1}, that for all $u \in (0, u_\ast)$, ${\rm i} = {\rm 
	I}, {\rm II}$ and $N \geq 2$,
\begin{equation}\label{eq:mathrmVtrigger0}
	\P[({\rm V}_{0,N}^{\rm i}(\nu;  \bar{u}_0 , u))^c] \le C(u) e^{- {N}/{( \log N)^{C'}}},
\end{equation}
for some absolute constant $C' \in (0, \infty)$, with $\nu$ as above and $\bar u_{0} = u_0(1 + \varepsilon)^3$ with $\varepsilon$ and $u_0$ defined in \eqref{def:ep}-\eqref{eq:uchoice1}.

The bound \eqref{eq:mathrmVtrigger0} brings us to 
the final round of bootstrapping where we derive the optimal upper bound on the probability 
of the $2$-${\rm arms}$ event. 

\begin{lemma}[Bootstrapping ${\rm V}_z$ to $2$-${\rm arms}$; $d = 3$]\label{lem:Vto2arms}
	Suppose that for all $u \in (0, u_\ast)$, we have 
	\begin{equation}\label{eq:mathrmVtrigger}
		\sup_{L \ge C(u)}L^{-\frac34}\log \P\big[({\rm V}_{0, L}(\overline{Z}_{0, L}^u(\nu_L)))^c \big] \le -1
	\end{equation}
	(see \eqref{eq:V-boosted} for the event in question), for some $C(u) < \infty$ and $\nu_L  \ge  L(1 \vee \log 
	L)^{-1/{4}}$. Then for any 
	$\Lambda_N \in \mathcal S_N$ (recall \eqref{eq:scriptS_N}) and $u \in (0, u_\ast)$, we have (see below \eqref{def:locuniq} for notation)
	\begin{equation}\label{eq:2armsd=3}
		\limsup_{N \to \infty} \frac{\log N}{N}\log \P[ \text{$2$-}{\rm arms}(\Lambda_N, u)] \leq -\frac{\pi}{3}(1 
		- \sigma)(\sqrt{u} - \sqrt{u_*})^2.
	\end{equation}
\end{lemma}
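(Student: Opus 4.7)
The proof follows the bootstrap paradigm one last time, applying Proposition~\ref{prop:bootstrap_prob}-i) with the a-priori estimate \eqref{eq:mathrmVtrigger} as the seed. What makes this round sharp is that the stretched-exponential exponent $\beta' = 3/4$ in the hypothesis strictly exceeds $1/2$, which by the dichotomy in \eqref{eq:bootstrapped_limit_prob1d=3} triggers the optimal value $\beta = 1$ in the conclusion, and hence the rate $N/\log N$ without spurious logarithmic factors; this matches the leading order $\mathrm{cap}(T_{(1-\sigma) N}) \sim \pi(1-\sigma) N/\log N$ extracted from admissible coarsenings in dimension three via Proposition~\ref{prop:coarse_paths}.

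The first step is the reduction of $\text{$2$-}\mathrm{arms}(\Lambda_N, u)$ to $\mathscr G^c$. If ${\rm V}_z(\overline Z_{z,L}^u)$ holds at some $z \in \mathbb L$ with $D_{z,L} \subset \Lambda_N$, then any two vacant crossings of $\Lambda_N$ in $\mathcal V^u$ (each of which, by~\eqref{def:coarse_admissible2}, must cross $\tilde D_{z,L} \setminus \tilde C_{z,L}$ for every $z$ in any admissible coarsening $\mathcal C \in \mathcal A_L^K(\Lambda_N)$) must be joined inside $D_{z,L} \cap \mathcal V^u \subset \Lambda_N \cap \mathcal V^u$ at any such $z$, contradicting the two-arms event. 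Hence, provided $(\mathcal G, \mathcal F)$ can be chosen so that $\mathcal G_{z,L} \cap \mathcal F_{z,L} \subset {\rm V}_z(\overline Z_{z,L}^u(\nu_L))$ for each $z \in \mathbb L$, one has $\text{$2$-}\mathrm{arms}(\Lambda_N, u) \subset \mathscr G(\Lambda_N, \mathcal G_L, \mathcal F_L; \rho)^c$ for any $\rho \in (0,1)$.

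For the setup, fix $\varepsilon > 0$ small and set $u_0 = u_\ast(1-\varepsilon)$, $u_0' = u_0(1+\varepsilon)$, $u_0'' = u_0(1+\varepsilon)^2$, so $u < u_0 < u_0' < u_0'' < u_\ast$. I would take
\[
\widetilde{\mathcal G}_{z,L} = {\rm V}_z\big(\widetilde Z_{z,L}^{u_0'}(\nu_L')\big), \qquad \mathcal G_{z,L} = {\rm V}_z\big(Z_{z,L}^{u_0''}(\nu_L'')\big)
\]
with $\nu_L' = (1-\varepsilon)\nu_L$ and $\nu_L'' = (1-2\varepsilon)\nu_L$, and take $\mathcal F_L = \mathcal F_L^{\bm u_1, \bm u_2}$ with a single pair ($k=1$) $\bm u_1 = (u)$, $\bm u_2 = (u_0'')$. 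The inclusion~\eqref{eq:bootstrap_prob_coupling} follows by a direct analog of Lemma~\ref{L:loca-supercrit}, using monotonicity of ${\rm V}_z(\cdot)$ in its underlying packet (cf.~\eqref{eq:V-boosted}) together with the multiset inclusions delivered by ${\rm Incl}_z^{\varepsilon/10, m_L}$ with $m_L = \lfloor u_0' \, \mathrm{cap}(D_{z,L})\rfloor$. To verify \eqref{eq:initial_limit_prob}, transfer the hypothesis bound from $\overline Z^{u_0}$ to $\widetilde Z^{u_0'}$ via Lemma~\ref{L:RI_basic_coupling} through a localized intermediate $Z^{u_0}$: the transfer costs are controlled by $\P[(\mathcal F_z^{u_0,\, u_0'(1-\varepsilon)})^c] \le e^{-cL}$ (from \eqref{eq:Nuz_tail_bnd}) and $\widetilde \P[(\mathcal U_z^{\varepsilon/10, m_L})^c] \le e^{-cL}$ (from~\eqref{eq:bnd_Uzepm} together with $\mathrm{cap}(D_{z,L}) \ge cL$ in $d=3$), both dominated by $e^{-L^{3/4}}$ from the hypothesis; hence $p_L \le C e^{-L^{3/4}}$ and $\beta' = 3/4$ is admissible. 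Moreover, on $\mathcal F_{z,L}^{u, u_0''} = \{N_z^u \le u_0'' \mathrm{cap}(D_{z,L})\}$, the sequence $\overline Z_z^u$ is a prefix of $Z_z^{u_0''}$, so by~\eqref{def:Z_tr_RI} (taking $j = N_z^u$) one has $\overline Z_z^u \in Z_{z,L}^{u_0''}(\nu_L'')$, whence $\mathcal G_{z,L} \cap \mathcal F_{z,L}^{u, u_0''} \subset {\rm V}_z(\overline Z_{z,L}^u)$ by~\eqref{eq:V-boosted}, as required for Step~1.

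Applying Proposition~\ref{prop:bootstrap_prob}-i) with $\beta = 1$ (since $\beta' = 3/4 > 1/2$), $k=1$, $\Lambda_N \in \mathcal S_N$ and the pair $(u, u_0'')$ therefore yields, for any $\delta' \in (0,1)$,
\[
\limsup_{N \to \infty} \tfrac{\log N}{N}\log \P\big[\text{$2$-}\mathrm{arms}(\Lambda_N, u)\big] \le -(1-\delta')(1-\sigma)(1-\Cr{C:rho}\rho)\tfrac{\pi}{3}\big(\sqrt{u_0''} - \sqrt u\big)^2.
\]
Sending $\rho, \delta' \downarrow 0$ and then $\varepsilon \downarrow 0$ (whence $u_0'' \uparrow u_\ast$) gives~\eqref{eq:2armsd=3}. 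The main technical hurdle will be a careful verification of~\eqref{eq:bootstrap_prob_coupling} that also preserves the packet parameter $\nu_L$: the switch across ${\rm Incl}_z^{\varepsilon_L, m_L}$ causes an $O(\varepsilon_L m_L) = O(\varepsilon L)$ shift in the index ranges, and one must choose $\varepsilon_L = \varepsilon/10$ small enough relative to $\nu_L \gtrsim L/(\log L)^{1/4}$ so that the packet $Z^{u_0''}(\nu_L'')$ remains large enough (and in particular nonempty) to accommodate all the subsequences arising from $\widetilde Z^{u_0'}(\nu_L')$.
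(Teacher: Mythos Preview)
Your overall strategy is correct and matches the paper's: feed the hypothesis \eqref{eq:mathrmVtrigger} as a seed into Proposition~\ref{prop:bootstrap_prob}-i), after localizing the events ${\rm V}_z(\overline Z_{z,L}^{v}(\nu_L))$ to their $\widetilde Z$-counterparts, and then use the resulting bound on $\mathscr G^c$ together with the inclusion $\text{$2$-}\mathrm{arms}(\Lambda_N,u)\subset \mathscr G^c$. The reduction step and the final limit $\rho,\delta',\varepsilon \downarrow 0$ are exactly as in the paper.

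There is, however, a genuine gap in your choice of the coupling parameter. You take $\varepsilon_L=\varepsilon/10$ \emph{constant} in $L$, and you correctly observe that the switch across ${\rm Incl}_z^{\varepsilon_L,m_L}$ shifts the packet index range by $O(\varepsilon_L\,\mathrm{cap}(D_{z,L}))=O(\varepsilon L)$. But $\nu_L\asymp L(\log L)^{-1/4}$, so for any fixed $\varepsilon>0$ one has $\varepsilon L\gg \nu_L$ for large $L$: the inclusion \eqref{eq:supcrit_inclusion_Vztilde} produces packets with parameter $\nu_L-2u_\ast\,\mathrm{cap}(D_z)\,\varepsilon_L<0$, and the chain of inclusions required for \eqref{eq:bootstrap_prob_coupling} collapses. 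No fixed choice of $\varepsilon$ can rescue this.

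The paper resolves this by letting the coupling parameter depend on $L$: one takes $\varepsilon_L=c(1\vee\log L)^{-1/4}$ and $m_L=cL(1\vee\log L)^{-1/4}$, so that the packet shift $2u_\ast\,\mathrm{cap}(D_z)\,\varepsilon_L=O(L(\log L)^{-1/4})$ is of the same order as $\nu_L$ and can be absorbed. The price is that $\widetilde\P[(\mathcal U_z^{\varepsilon_L,m_L})^c]\le C\varepsilon_L^{-2}e^{-cm_L\varepsilon_L^2}\le Ce^{-cL/(\log L)^{3/4}}$, which together with the hypothesis bound $e^{-L^{3/4}}$ gives \eqref{eq:initial_limit_prob} only for $\beta'=\tfrac34-\tfrac18=\tfrac58$ rather than $\tfrac34$. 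Since $\tfrac58>\tfrac12$, the dichotomy in \eqref{eq:bootstrapped_limit_prob1d=3} still yields $\beta=1$, and your final display (with $v(1-\varepsilon)^3$ in place of $u_0''$) follows. Note also that with $\varepsilon_L$ shrinking, $K_-=3\Cr{c:equil}/\varepsilon_{L(N)}\asymp(\log\log N)^{1/4}$, which is still $\le K_+=\sqrt{\log\log e^2N}$, so the range $[K_-,K_+]$ remains nonempty.
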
	 
Assuming Lemma~\ref{lem:Vto2arms} for a moment, we are now ready to conclude the proof of \eqref{eq:d=3-sup}, thereby completing 
the proof of Theorem~\ref{thm:d=3}, contingent on Lemmas~\ref{lem:script_G} and~\ref{lem:Vto2arms} which are proved below.
\begin{proof}[Proof of \eqref{eq:d=3-sup}] 	
	Combining the two
	estimates \eqref{eq:mathrmVtrigger0} for ${\rm i} = {\rm 
		I}, {\rm II}$ with \eqref{eq:V-first-inclusion} and using \eqref{eq:Nuz_tail_bnd} to bound the Poisson deviation appearing in \eqref{eq:V-first-inclusion}, we  readily deduce that \eqref{eq:mathrmVtrigger} is satisfied with $\nu_L$ as defined above \eqref{eq:mathrmVtrigger0} with $L$
	in place of $N$. This choice satisfies $\nu_L  \ge  L(1 \vee \log 
	L)^{-1/{4}}$ for $L \geq C(u)$ hence Lemma~\ref{lem:Vto2arms} is in force and thus \eqref{eq:2armsd=3} 
	holds for all $u \in (0, u_\ast)$ and $\Lambda_N \in \mathcal S_N$. Now observe that,
	\begin{equation}\label{eq:2armsinclude}
		\big\{\lr{}{}{0}{\partial B_N^2}, \nlr{}{}{0}{\infty} \text{ in } \mathcal V^u\big\} \cap \big\{\lr{}{}{B_{\sigma N}^2}{\infty} \text{ in } \mathcal V^u\big\} \subset 
		\text{$2$-}{\rm arms}\left(B_N^2 \setminus B_{\sigma N}^2, u\right)	
	\end{equation}
	for any $\sigma \in (0, 1)$. Mimicking the proof of (5.73) in \cite{gosrodsev2021radius}, we obtain from 
	\eqref{eq:2armsd=3} applied to $\Lambda_N = \tilde D_{0, N} \setminus \tilde C_{0, N}$, the 
	disconnection estimate in \cite[Theorem~7.3]{MR3602841} which holds for all $u < \bar u$ and 
	the equality of $u_\ast$ and $\bar u$ in Theorem~1.2 of \cite{RI-I} that $({\log N})N^{-1}	\log \P\big[ \{ \lr{}{\mathcal V^u}{B_{\sigma N}^2}{\infty} \}^c\big] \to -\infty$ as $N \to \infty$ for all $u \in [0, u_\ast)$ and $d=3$. Jointly with \eqref{eq:2armsinclude} this implies via a union bound that the left-hand side of  \eqref{eq:d=3-sup} is bounded by $-\frac{\pi}{3}(1 - \sigma)(\sqrt{u} - \sqrt{u_*})^2$for {\em any} $\sigma \in (0, 1)$ and  $u \in (0, u_\ast)$, and  \eqref{eq:d=3-sup} follows upon letting $\sigma \downarrow 0$. 
	
	The corresponding bound for the event $\text{$2$-}{\rm arms}(N, u)$ follows directly from 
	\eqref{eq:2armsd=3}. As to the event ${\rm LocUniq}(N, u)^c$, recall from \eqref{def:locuniq} 
	that
	${\rm LocUniq}(N, u)^c$ is contained in the union of $\text{$2$-}{\rm arms}(N, u)$ and $\{\nlr{}{\mathcal V^u}{B_{N}^2}{B_{2N}^2}\}$. The probability $\P[\text{$2$-}{\rm arms}(N, u)]$ yields the desired contribution to the upper bound and similarly as before, one obtains by combining the results from \cite{MR3602841} and \cite{RI-I} that for all $u \in (0,u_*)$, the above disconnection probability decays exponentially in $N^{d-2}=N$ as $N \to \infty$ when $d=3$.
\end{proof}

We now give the pending proofs of Lemma~\ref{lem:script_G} and \ref{lem:Vto2arms} 
starting with the: 
\begin{proof}[Proof of Lemma~\ref{lem:script_G}]
	Let $\mathbb{L} = L\Z^d$. We will introduce slightly modified versions of the event $\mathscr{G}_{z,N}^{\rm I}$. To this end we let, for any $\varepsilon \in (0, 1)$, $\delta \in [0, \frac12)$, and $\bm u(k, \varepsilon)$ as below 
	\eqref{eq:V_to_V_L_supcrit_rnd1},
	\begin{equation}\label{def:mathscrG_RI-genbar}
		{\overline{\mathscr{G}}}_{z, N}^{\rm I} (Z_{\mathbb L}, \delta,
		\bm u, \varepsilon; a^{(1)}) 
		\coloneqq\mathscr G\big(\tilde D_{z, N} \setminus \tilde C_{z, N},  
		\mathcal{G}^{\rm I}, \mathcal F_L^{{\bm u(1, \varepsilon)}', {\bm u}'} ; \rho = 
		\tfrac1{2\Cr{C:rho}}\big)
	\end{equation}
	(cf.~\eqref{def:mathscrG_RI-gen} and \eqref{def:mathscrG_RI-gen2}), where $\mathcal{G}^{\rm I}= \{\mathcal{G}_{z'}^{\rm I}(Z_{\mathbb L},  \delta, 
		\bm u; a^{(1)}): z' \in \mathbb L\}$ and $\bm u'$ is 
	defined as above \eqref{eq:supcrit_inclusion0} for any $\bm u$. 
	%
	%
	%
	%
	%
	%
	%
	%
	Now mimicking the derivation of \eqref{eq:supcrit_GN_lower_bnd0d=3} in the proof of 
	Lemma~\ref{lem:mathcalG} with \eqref{eq:mathcalGtrigger2} instead of 
	\eqref{eq:mathcalGtrigger} as the corresponding seed estimate, we obtain from an 
	application of \eqref{eq:bootstrapped_limit_prob1d=3} at the final stage that with any $\varepsilon > 0$ small enough depending solely on $\bm u$ and $K = 
	K(\bm u, 
	\theta, \theta', \varepsilon)$ and $L(N) = \lfloor (\log N)^{C(\theta)} \rfloor$,
	\begin{equation}\label{eq:supcrit_GN_lower_bnd1d=3}
		\P[\overline{\mathscr G}_{0, N}^{\rm I}(Z_{\mathbb L}, 
		\delta, \bm u(3, \varepsilon), \varepsilon; a^{(1)})]	\ge 1 - C'\exp\{- N{(1 \vee \log N)^{-C(\theta)}}\} 
	\end{equation}
	for any $\delta \in [0, \delta']$ and $N \ge 1$, and some
	$C'$ with a dependence on parameters as specified below \eqref{eq:mscrG_z}. 
	
	We also need to 
	introduce 
	versions of 
	the events $\mathcal G_z^{{\rm I}}$ 
	from \eqref{def:good_events_supcrit} 
	with excursions at scale $N$ instead of $L$. These will carry a superscript ``$0$.''
	Thus, for $z \in \mathbb L$, recalling Definition~\ref{def:good_events}, let
	\begin{equation}\label{def:good_events_supcritZy}
		\mathcal{G}_{z, L}^{\rm I, 0}(\widehat{Z}_{0, N}, \delta, u_0, u_1, u_2, u_3; a) \coloneqq\mathcal G_z({\rm V}^0, {\rm W}^{{\rm I}, 0}, \mathscr C^0 ; a),
	\end{equation}
	where ${\rm V}_z^0 = {\rm 
		V}_{z}(\widehat{Z}_{0, N}^{u_1}, 
	\widehat{Z}_{0, N}^{u_2}, \widehat{Z}_{0, N}^{u_3}, \delta)$ (see above \eqref{def:Fz}), ${\rm W}^{\rm I, 0}_{z, y} = {\rm 
		FE}_y\big( (\widehat{Z}_{0, N}^{u_1})_+ ( \hat{\nu}_{0, N}( \tfrac{u_0}{8}))\big)$ 
	(cf.~\eqref{eq:WI}), and $\mathscr C_z = \mathscr C_z(\widehat{Z}_{0, N}^{u_1}, 
	\widehat{Z}_{0, N}^{u_3}, \delta)$ (see around \eqref{eq:script_C}). 
	We now claim that 
	\begin{equation}\label{eq:supcrit_inclusion}
		\mathcal G_{z, L}^{\rm I}({Z}_{\mathbb L}, \delta, \bm u(3, \varepsilon); a^{(1)}) \cap \mathcal 
		F_{z, L}^{{\bm u(4, \varepsilon)}', {\bm u(3, \varepsilon)}'} \subset \mathcal G_{z, 
			L}^{\rm I, 0}
		(%
		\overline{Z}_{0, N}, \delta, \bm u(4, \varepsilon); a^{(1)}), 
	\end{equation}
	for 
	any $z \in \mathbb L$ such that $ D_{z, L} \subset  D_{0, N}$ and $ U_{z, L} 
	\subset  U_{0, N}$; for later orientation, the event $\mathcal G_{z, L}^{\rm I}$ with 
	arguments as on the left-hand side of \eqref{eq:supcrit_inclusion} belongs precisely to the 
	family used to declare the event $\overline{\mathscr G}_{0, N}^{\rm I}$ appearing in 
	\eqref{eq:supcrit_GN_lower_bnd1d=3} in view of \eqref{def:mathscrG_RI-genbar}.
	
	The inclusion \eqref{eq:supcrit_inclusion} 
	follows from similar arguments as those leading to 
	\eqref{eq:supcrit_inclusion0} in the proof of Lemma~\ref{L:loca-supercrit}, except that 
	some caution is needed as the event on the right-hand side now involves excursions between $ D_{0, N}$ and 
	$ U_{0, N}$ instead of $ D_{z, L}$ and $ U_{z, L}$. We now highlight these changes. Since $ D_{z, L} 
	\subset  D_{0, N}$ and $ U_{z, L} \subset  U_{0, N}$, we get from 
	\eqref{eq:nested_excursion} in 
	\S\ref{subsec:excursion} that {on the event} $\mathcal F_{z, L}^{\bm u(4, \varepsilon)', \bm u(3, \varepsilon)'}$, 
	$\mathcal I\big(\overline{Z}_{0, N}^{u_k(1 - \varepsilon)^4}\big) \cap  D_{z, L}$ is contained in
			 $\mathcal I\big({Z}_{z, L}^{u_k(1 - \varepsilon)^3}\big) \cap  D_{z, L}$ for $k = 1, 3$, and
			$\mathcal I\big({Z}_{z, L}^{u_2(1 + \varepsilon)^3}\big) \subset \mathcal 
			I\big(\overline{Z}_{0, N}^{u_2(1 + \varepsilon)^4}\big)$. Moreover, for any $Z \in ({Z}_{0, N}^{u_1(1 - \varepsilon)^4})_+\big(\bar{\nu}_{0, N}( \tfrac{u_0(1 + \varepsilon)^4}{8})\big)$, there exists $Z' \in 
	({Z}_{z, L}^{u_1(1 - \varepsilon)^3})_+({\nu}_{z, L}\big( \tfrac{u_0(1 + \varepsilon)^3}{8})\big)$ 
	satisfying $\mathcal I(Z) \cap  D_{z, L} = \mathcal I(Z') \cap  D_{z, L}$ and 
	$\ell_x(Z) = \ell_x(Z')$ for all $x \in  D_{z, L}$. But these are 
	enough to deduce \eqref{eq:supcrit_inclusion} following the arguments in the proof of 
	\eqref{eq:supcrit_inclusion0} owing to 
	the defintions of $\mathscr C_{z, L}$ and the events ${\rm V}_{z, L}$ and ${\rm 
		W}^{{\rm I}}_{z, y}$ (revisit \eqref{eq:script_C}, \eqref{def:Fz} and \eqref{eq:WI}). 

	Now in view of \eqref{eq:supcrit_inclusion}, whereby condition \eqref{eq:bootstrap_inclusion} 
	of Proposition~\ref{prop:bootstrap_events} is satisfied for the 
	pair of events $(\mathcal G_{z, L}^{\rm I}({Z}_{\mathbb L}, \delta, \bm u(3, \varepsilon); 
	a^{(1)}), \mathcal G_{z, L}^{\rm I, 0}(\overline{Z}_{0, N}, \delta, \bm u(4, 
	\varepsilon); a^{(1)}))$ and $\mathcal F_{z, L} = \mathcal F_{z, L}^{{\bm u(4, \varepsilon)}', 
		{\bm u(3, \varepsilon)}'}$, we obtain by application of \eqref{eq:grand_inclusion2} 
	that there exista a (random) non-empty set $\mathcal O^{{\rm I}}$ 
	satisfying \eqref{eq:O-properties} such that
	\begin{equation}\label{eq:grand_inclusionA}
		\overline{\mathscr G}_{0, N}^{\rm I}(Z_{\mathbb L}, \delta, \bm u(3, \varepsilon), \varepsilon; a^{(1)}) 
		\subset \mathcal G_{0, N}({\rm V}^{2, {\rm I}}, {\rm W}^{2, {\rm I}}, \mathscr{C}^{2, {\rm I}}; a^{(2)}), 
	\end{equation}
	(recall \eqref{def:mathscrG_RI-genbar} and that $\bm u(4, \varepsilon) = (\bm u(3, 
	\varepsilon))(1, \varepsilon)$ in the notation from below \eqref{eq:V_to_V_L_supcrit_rnd1}), where $a^{(2)} 
	{=}  \lfloor\tfrac{c N}
	{KL(N)}\rfloor \cdot a^{(1)}$ with $K$ as above \eqref{eq:supcrit_GN_lower_bnd1d=3} (recall 
	from 
	\S\ref{subsec:admissible} that $h(x) = x$ when $d = 3$), and the triplets $({\rm V}^{2, {\rm I}}, {\rm W}^{2, {\rm I}}, \mathscr{C}^{2, {\rm I}})$ are specified as follows:
	$$\textstyle{{\rm V}}^{2, {\rm I}}_{0} {=} \bigcap_{z \in \mathcal O^{\rm I}} {\rm V}_{z, 
		L}\big(\overline{Z}_{0, N}^{u_1(1 - \varepsilon)^4}, \overline{Z}_{0, N}^{u_2(1 + 
		\varepsilon)^4}, \overline{Z}_{0, N}^{u_3(1 - \varepsilon)^4}, \delta\big) \stackrel{ \eqref{def:Fz}}{=} \textstyle\bigcap_{z \in 
		\mathcal O^{\rm I}} {\rm V}_{z, L}(\overline{Z}_{\mathbb L(N)}, \delta, \bm u(4, \varepsilon)),$$
	${{\rm W}}^{2, {\rm I}}_{0,y}= {\rm W}^{{\rm I}, 
		0}_{0,y}$ 
	for all $y \in \mathbb{L}_0$ (see below \eqref{def:good_events_supcritZy}) 
	and ${\mathscr C}^{2, {\rm I}}_{0} 
	= \bigcup_{z \in \mathcal O} {\mathscr C}_{z, L}(\overline{Z}_{0, N}^{u_1(1 - 
		\varepsilon)^4}, \overline{Z}_{0, N}^{u_3(1 - \varepsilon)^4}, \delta)$. 
	In particular, these choices entail that \eqref{eq:grand_inclusion2} indeed applies in 
	deducing \eqref{eq:grand_inclusionA}.

	Abbreviating $\text{Conn}= \big\{\lr{}{}{C_{0, N}}{\partial D_{0, N}} \text{ in } \mathsf 
	N_{2\delta}({\mathcal V}^{u_3(1-\varepsilon)^4})\big\} 
	$ we have
	\begin{equation}\label{eq:mathscrGoNIinclude}
		\begin{split}	
			& \, 
			\mathcal G_{0, N}\big({\rm V}^{2, {\rm I}}, {\rm W}^{2, {\rm I}}, \mathscr{C}^{2, {\rm I}}; 
			a^{(2)} \big) \cap \text{Conn} \subset  \, \mathcal{G}_{0, N}^{\rm I}\big(\overline{Z}_{\mathbb{L}(N)}, \delta, \bm u(4, 
			\varepsilon); a^{(2)} \big),
		\end{split}
	\end{equation}
	which also follows readily from the definition of $\mathcal G_{0, N}^{\rm 
		I}(\overline{Z}_{\mathbb L(N)}, \delta, \bm u; a^{(2)})$ in \eqref{def:good_events_supcrit} 
	{provided} one has ${\rm V}_{0}^{2, {\rm I}} \subset 
	{\rm V}_{0, N}(\overline{Z}_{\mathbb L(N)}, \delta, \bm u(4, \varepsilon))$ and $\mathscr 
	C_{0}^{2, {\rm I}} \subset \mathscr C_{0, N}(\overline{Z}_{\mathbb L(N)}, \delta, \bm u(4, 
	\varepsilon))$ {\em on} the event $\text{Conn}$ (
	recall that the event $\mathcal G_z({\rm V}, {\rm W}, \mathscr C; a)$ is increasing in ${\rm 
		V}_z$ and $\mathscr C_z$). 
	Both of these inclusions follow from standard gluing arguments inherent in the definition of 
	the events ${\rm V}_{z, L}$ and already used in the proof of 
	Lemma~\ref{lem:VzSLU_inclusion}. 
	For a precise verification, we refer the reader to 
	the arguments used 
	in 
	\S\ref{subsec:exploration} 
	to derive \eqref{eq:onearm_twoarm} in the 
	course of proving Theorem~\ref{prop:exploration_RI-I}. 
	Finally, \eqref{eq:mathscrGoNIinclude}, 
	\eqref{eq:supcrit_GN_lower_bnd1d=3} and the 
	upper bound on the 
	disconnection probability from 
	\eqref{eq:connection_delta_d=3} 
		for $\delta \le \tfrac{\Cr{c:delta}(u_3)}2$ (recall \eqref{def:noised_set_inclusion}) together imply \eqref{eq:mscrG_z} 
		via a simple union bound. 
	\end{proof}
	Next we give the:
	\begin{proof}[Proof of Lemma~\ref{lem:Vto2arms}]
		Let us start with an inclusion of events. For any $\Lambda_N \in \mathcal S_N$ as in \eqref{eq:scriptS_N}, all $\rho \in (0, 1)$, 
		$0 < u < v < u_\ast$ and $\nu \ge 0$, we have
		\begin{equation}\label{eq:include_Gtwoarms}
			\mathscr G(\Lambda_N, {\rm V}_L, \mathcal F_L^{u, v}; \rho) \subset (\text{$2$-}{\rm arms}(\Lambda_N, 
			u))^c
		\end{equation}
		where $ {\rm V}_L= \{ {\rm V}_{z}: z \in 
		\mathbb{L}\}$ and $  {\rm V}_z = {\rm V}_z(Z_z^v(\nu))$ for $z \in \mathbb L$. To see this, first note that the sequence $\overline{Z}_z^u$ lies in the family $Z_z^v(\nu)$ 
		on the event $\mathcal F_z^{u, v}$ (revisit \eqref{def:Z_tr_RI} and \eqref{eq:F} for relevant 
		definitions) and therefore by \eqref{eq:V-boosted}, ${\rm V}_z \cap \mathcal F_z^{u, v} \subset {\rm V}_z(\overline{Z}, u)$. Thus condition \eqref{eq:bootstrap_inclusion} of Proposition~\ref{prop:bootstrap_events} is satisfied for the pair of events $({\rm V}_z, {\rm 
			V}_z(\overline{Z}, u))$ and $\mathcal F_{z, L} = \mathcal F_z^{u, v}$, and hence by \eqref{eq:grand_inclusion} there exists a 
		(random) non-empty, $\ast$-connected set $\mathcal O'$ satisfying \eqref{eq:O-properties} such that 
			$\mathscr G(\Lambda_N, {\rm V}_L, \mathcal F_L^{u, v}; \rho) \subset \bigcap_{z \in \mathcal O'} {\rm V}_z(\overline{Z}, u)$.
		From this and the definition of the event ${\rm V}_z(\overline Z, u)$ given below \eqref{def:Fz}
		it follows by elementary gluing considerations (see also below \eqref{eq:z'z''} in 
		\S\ref{subsec:reduction}) that on the event $\mathscr G(\Lambda_N, {\rm V}, \mathcal 
		F_L^{u, v}; \rho)$,
		\begin{equation}\label{eq:connected_interface1}
			\begin{split}
				&\text{there exists a component $\mathscr C_{\mathcal O'}$ of $\Lambda_N  \cap 
					\mathcal V^u$ which contains}\\ 
				&\text{all crossing clusters of 
					$\tilde D_z \setminus \tilde C_z$ in $D_z \cap \mathcal V^{u}$ for each $z \in \mathcal O'$.}
			\end{split}
		\end{equation}
		Moreover, writing $\Lambda_N =  V_N \setminus U_N $, since $\{0\} \cup U_N \cap \mathbb L \preceq \mathcal O' \preceq \partial_{\mathbb 
			L}(V_N \cap \mathbb L)$ by \eqref{eq:O-properties} (see \eqref{eq:prec} for definition), any 
		crossing of $\Lambda_N$ in $\mathcal V^u$ must lie in the {\em same} component of $\Lambda_N \cap 
		\mathcal V^u$ as $\mathscr C_{\mathcal O'}$ on the event \eqref{eq:connected_interface1}, 
		thus yielding \eqref{eq:include_Gtwoarms} (the definition of the $2$-$\text{arms}$ event 
		appears below \eqref{def:locuniq}).

		In view of \eqref{eq:include_Gtwoarms}, it suffices to obtain \eqref{eq:2armsd=3} with 
		$\text{$2$-}{\rm arms}(\Lambda_N, u)$ replaced by the event $(\mathscr G(\Lambda_N, {\rm V}_L, \mathcal 
		F_L^{u, v} ; \rho))^c$ for `suitable' values of the parameters $v$, $\rho$ and $K$ (recall 
		\eqref{def:script_G}). Obviously, we will use Proposition~\ref{prop:bootstrap_prob} to this end. 
		We start just like in the proof of Theorem~\ref{T:ri-main} with the events ${\rm 
			V}_z(\widehat{Z}_z^v(\nu))$ in place of $\mathcal G_z^{{\rm i}}(\widehat Z_{\mathbb L}, \delta, \bm u; a)$. By similar arguments as those yielding 
		\eqref{eq:supcrit_inclusion0}, we have that
		\begin{equation}\label{eq:supcrit_inclusion1}
			{\rm V}_z(\overline{Z}_z^v(\nu)) \cap \mathcal F_z^{v, v(1 - \varepsilon)} \subset {\rm 
				V}_z({Z}_z^{v(1 - \varepsilon)}(\nu))
		\end{equation}
		where $\varepsilon \in (0, 1)$. In place of \eqref{eq:supcrit_inclusion_Ztilde}, on the other 
		hand, we have that under any coupling $\mathbb Q$ of $\mathbb P$ and 
		$\widetilde {\mathbb P}_{z}$, 
		\begin{equation}\label{eq:supcrit_inclusion_Vztilde}
				{\rm V}_z(\widetilde{Z}_z^{v}(\nu)) \cap {\rm Incl}_{z}^{\frac{\varepsilon}{10}, 
					\frac1{20}(\nu \wedge v\,{\rm cap}(D_z))} \subset {\rm V}_z({Z}_z^{v(1 - \varepsilon)}(\nu - 2u_\ast\,{\rm cap}(D_z)\,\varepsilon))
				\text{,}\\
		\end{equation} 
		and the same with $Z$ and $\widetilde{Z}$ exchanged
		whenever $\nu \ge 4u_\ast\,{\rm cap}(D_z) \, \varepsilon$, for $\varepsilon \in (0, \frac12)$ 
		and $L \ge \tfrac{C}{v\varepsilon}$. In view of \eqref{def:boosted}, 
		\eqref{eq:supcrit_inclusion_Vztilde} follows readily from the inclusions 
		$\{{Z}_z^{v(1 - \varepsilon)}(\nu - 2u_\ast\,{\rm cap}(D_z)\,\varepsilon)\} \subset 
			\{\widetilde{Z}_z^{v}(\nu)\}$ and $\{\widetilde{Z}_z^{v(1 - \varepsilon)}(\nu - 2u_\ast\,{\rm 
				cap}(D_z)\,\varepsilon)\} \subset \{{Z}_z^{v}(\nu)\}$,
		which hold on the event ${\rm 
			Incl}_{z}^{\frac{\varepsilon}{10}, \frac1{20}(\nu \wedge v\,{\rm cap}(D_z))}$ owing to the 
		definition of the latter in \eqref{eq:RI_basic_coupling2} and the family $Z(\nu)$ in 
		\eqref{def:Z_tr_RI}. 

		Equipped with \eqref{eq:supcrit_inclusion1} and \eqref{eq:supcrit_inclusion_Vztilde}, we can 
		now follow in the steps of the proof of Theorem~\ref{T:ri-main} starting from 
		\eqref{eq:mathrmVtrigger} instead of \eqref{eq:mathcalGtrigger} as the corresponding seed 
		estimate and with $\nu =  \nu_L$. In particular, we obtain that the conditions 
		\eqref{eq:bootstrap_prob_coupling}--\eqref{eq:initial_limit_prob} of 
		Proposition~\ref{prop:bootstrap_prob} are satisfied by the events 
		\begin{equation}\label{eq:supcrit_GtildeG2arms}
			\begin{split}
				&\widetilde {\mathcal G}_{z, L} =  {\rm V}_z({Z}_z^{v(1 - \varepsilon)^2}(\nu_L - 2u_\ast \, {\rm cap}(D_z) \, 
				\varepsilon_L)), \quad
				{\mathcal G}_{z, L} = {\rm V}_z({Z}_z^{v(1 - \varepsilon)^3}(\nu_L - 4u_\ast \, {\rm cap}(D_z) \, 
				\varepsilon_L))
			\end{split}
		\end{equation}
		for $\varepsilon_L = c  (1 \vee \log 
		L)^{-1/{4}}$, $m_L = cL (1 \vee \log 
		L)^{-1/{4}}$, whence 
			$\widetilde \P_z[(\mathcal U_z^{\varepsilon_L, m_L})^c] $ is bounded using \eqref{eq:bnd_Uzepm} by $C \varepsilon_L^{-2} e^{-c m_L\varepsilon_L^2}  \le C e^{-\frac L{1 \vee \log L}}$
		(cf.~\eqref{eq:V_L_to_tildeV_L_supcrit_rnd1} and 
		\eqref{eq:bootstrap_prob_coupling}--\eqref{eq:initial_limit_prob}); $\beta' = \frac34 - \frac18$ ($ > \frac12$), $K_0 = \frac C{\varepsilon_L} = C (1 \vee \log 
		L)^{\frac14}$ and any $\varepsilon \in (0, 1)$; $L_0$ (from \eqref{eq:initial_limit_prob}) and $L$ 
		sufficiently large depending only on $v$ and $\varepsilon$. 
		The estimate
		\eqref{eq:bootstrapped_limit_prob1d=3} now yields with the choice $(\bm u, \bm v) = (u, v(1 - 
		\varepsilon)^3)$, 
		$\widetilde{\mathcal G}_{z, L}$ and $\mathcal G_{z, L}$ as in 
		\eqref{eq:supcrit_GtildeG2arms}, $\delta \in (0, 1)$, $\rho \in (0, 1)$, $L(N) = \lfloor (\log 
		N)^\alpha \rfloor$ for some $\alpha \in (0, \infty)$ and $K = \sqrt{ 1 \vee \log \log N}$ that for all $v \in (0, u_\ast)$,
		\begin{equation*}\label{eq:two_arms_lower_bnd0d=3}
			\begin{split}
				\limsup_{N \to \infty} \frac{\log N}{N}&\log \P[\mathscr G^c(\Lambda_N, {\rm V}_{L(N)}, \mathcal F_{L(N)}^{u, v(1- 
					\varepsilon)^3}; \rho)]	\\
				&\le -(1 - \delta)(1 - \sigma)(1 - \Cr{C:rho}\rho) \frac{\pi}3(\sqrt{u} - \sqrt{v(1 - \varepsilon)^3} )^2.
			\end{split}
		\end{equation*}	
		Sending $\delta$, $\rho$ and $\varepsilon$ to $0$ and subsequently 
		$v$ to $u_\ast$, we obtain \eqref{eq:2armsd=3} in view of \eqref{eq:include_Gtwoarms}.
	\end{proof}
	\begin{proof}[Proof of Theorem~\ref{T:ri-2point}]
		Theorems~\ref{thm:d=4} and \ref{subsubsec:supcrit} immediately imply 
		Theorem~\ref{T:ri-2point} on account of the inclusion $\{\lr{}{}{0}{x} \text{ in }\mathcal V^u\} \subset 
		\big\{\lr{}{}{0}{\partial B_{ |x| }^2}  \text{ in }\mathcal V^u\}$	(see \eqref{eq:tau-RI} and below to recall 
		the definition of $\tau_u^{\rm tr}(x)$). 
	\end{proof}

\noindent \textbf{Acknowledgement.}  SG's research was partially supported by the SERB 
grant SRG/2021/000032, a grant from the Department of Atomic Energy, Government of India, 
under project 12R\&DTFR5.010500 \sloppy and in part by a grant from the Infosys Foundation as a 
member of the Infosys-Chandrasekharan virtual center for Random Geometry. The research of PFR
is supported by the European Research Council (ERC) under the European Union’s Horizon Europe research and innovation program (grant agreement No 101171046). This work was supported in part
by EPSRC grant EP/Z000580/1, and PFR thanks the Newton Institute for its hospitality.
 YS's work was 
supported by EPSRC Centre for Doctoral Training in Mathematics of Random Systems: Analysis, Modelling and Simulation (EP/S023925/1).

\appendix

\stoptoc

\section{Small excursion packets}\label{subsec:smallu}

In this Appendix we present an analogue of Theorem~\ref{prop:exploration_RI-I} for 
$\P[({\rm V}_{z,N}^{\rm II})^c]$, Theorem~\ref{prop:exploration_RI-II} below, as needed 
in view of \eqref{eq:V-first-inclusion}. It is proved by  adapting {\em parts} of the proof of 
Theorem~\ref{prop:exploration_RI-I}. 
The proof is comparatively simpler in this case 
owing to the stronger conditioning permitted by Lemma~\ref{lem:conditional_prob2} which is 
related to the fact that the sequences of excursions within the purview of this lemma are meant 
to be `small'. 

Our starting point is the following simplification of Proposition~\ref{lem:conditional_prob1} in the regime of `small' excursion packets $Z$. Essentially, this result allows the stronger conditionning on the configuration immediately outside a box, cf.~\eqref{def:FEsigma_algebra_small_u} and~\eqref{def:FEsigma_algebra1}. But this comes at the 
(serious) cost of holding on a good event which is only likely when the underlying interlacement 
set is very small.
\subsection{Insertion tolerance at low intensity}\label{appendix:typeII_fe}
Let $\square(0, L)$ denote the set of all 
points in $C_{0, L}$ (see \eqref{def:CDU}) such that at least two of their coordinates lie in the set $\{0, 1, 2, L-3, L-2, 
L-1\}$ and $\square(z, L) = z + \square(0, L)$ for any $z \in \Z^d$. For $y \in 
\mathbb L_0 = L_0 \Z^d$ and a sequence of excursions $Z = (Z_j)_{1 \le j \le n_Z}$, let 
\begin{equation}\label{def:Gx-1}
{\rm W}_{y, L_0}^-(Z)\stackrel{{\rm def.}}{=} \left\{
		\begin{array}{c}
			\mbox{$\square( z, L_{0}) \subset \mathcal V({Z})$ and $\sum_{x \in \partial C_{z, L_{0}}} \ell_x({Z}) \le (L_{0})^{d-1}$} \\
			\mbox{for all $ z \in \mathbb L_{0}$ satisfying $| z - y |_{\infty} \le L_{0}$}
		\end{array}
		\right\}.
\end{equation}
Using this we can define the (random) set
\begin{equation}\label{def:OyZ}
	\mathcal O_0^-({Z})  = \{ y \in \mathbb L_{0} :  {\rm W}_{y, L_0}^-(Z) \mbox{ occurs}\}.
\end{equation}
Clearly the event ${\rm W}_{y, L_0}^-(Z_J)$ is measurable 
relative to the $\sigma$-algebra
\begin{equation}\label{def:FEsigma_algebra_small_u}
	{\mathcal F}^-_{y, L_0}(Z_J) = \sigma\big(\mathcal O_0^-(Z_J), \mathcal I(Z_J)_{|C_{y, L_{0}}^c}\big).
\end{equation}
\begin{lemma}\label{lem:conditional_prob2}
Let $L_0 \geq 100$. There exists $c' = c'(L_{0}) \in (0, 1)$ such that for all $K \geq 100$, $z \in N\Z^d$ for some $N \ge 10^3 L_0$,
	 $y \in L_0 \Z^d$ such that $ D_{y, L_0} \subset  D_{z, N}$, $x \in \partial^{{\rm out}} C_{y, 
	 L_{0}}$ and any $J \subset \N^\ast$ deterministic and finite,  abbreviating $Z_J = 
	 Z_J^{ D_{z, N},  U_{z, N}}$  we have
	\begin{equation}\label{eq:conditional_prob2}
		\P\big[ \lr{}{\mathcal V(Z_J)}{x}{\square(y, L_{0})}  
		\, \big | \,  {\mathcal F}^-_{y, L_0}(Z_J) \big] \ge c' \, 1_{G'},
	\end{equation}
	with the `good' event $G'=\{x \in \mathcal V(Z_J)\} \, \cap \, {\rm W}_{y, L_0}^-({Z}_J).$
\end{lemma}
 To appreciate the utility of \eqref{eq:conditional_prob2}, one should imagine the sets 
$\square(y, L)$ being contained in $\mathcal V(Z_J)$ for many neighboring points $y \in 
\mathbb{L}_0$, thus forming an ambient cluster, and \eqref{eq:conditional_prob2} gives a 
conditional probability on a point $x$ at the doorstep of the box $C_{y,L_0}$ to connect locally 
to this ambient cluster. 
\begin{proof}
	\eqref{eq:conditional_prob2} follows from a straighforward adaptation of the argument underlying the proof of Lemma~5.10 in \cite{MR3269990}, using a similar 
	computation as in the proof of Lemma~5.13 therein. 
\end{proof}	

\subsection{Type II estimates} \label{appendix:typeII} As with type ${\rm I}$, the 
estimate for $\P[({\rm 
V}_{z,N}^{\rm II})^c]$ will bring into play an event  $\mathscr{G}_z^{\rm II}$ built using different events 
${\rm W}^{\rm II}$ involving ${\rm  W}_{y^-}^-(Z) = {\rm  W}_{y^-, L_0^-}^-(Z)$ from \eqref{def:Gx-1}. 
We introduce a scale $L_0^- \geq 1$ with 
			$L_0 > 100L_{0}^{-} $ (cf.~\eqref{eq:params_RI}). As with 
$L_0$, to keep notations reasonable and because $L_0^-$ does not change throughout our 
bootstrap argument, we keep its dependence implicit. Recall  $\hat{\nu}_z$ from 
\eqref{eq:nu-hat}. 

\begin{itemize}
\item 
\textbf{The events ${\rm W}^{\rm II} = \{{\rm W}_{z, y}^{\rm II} : z \in \mathbb L, y \in \mathbb L_0\}$.} Let
\begin{equation} \label{eq:WII}
{\rm W}_{z, y}^{\rm II} \equiv {\rm W}_{z, y}^{\rm II}(\widehat{Z}, u_4 )  
\coloneqq \mathcal G_{y}^-(\widehat{Z}_z^{u_4}),
\end{equation}
where (see Definition~\ref{def:good_events} for notation) $\mathcal G_{y}^-(Z) \coloneqq\mathcal{G}_{y, 
L_{0}, L_{0}^{-}}({\rm V} = \{\Omega : y \in \mathbb L_0\}, {\rm W}^-(Z), \mathscr 
C = \{\Z^d: y \in \mathbb L_0\}; a = 1)$ and for $y \in \mathbb L_0, y^- \in \mathbb L_{0}^{-} = L_0^-\Z^d$, ${\rm W}_{y,y^-}^-(Z) \equiv {\rm W}_{y^-}^-(Z)$. 
\end{itemize}

Somewhat in the same way as \eqref{def:good_events_supcrit} and 
\eqref{def:mathscrG_RI-gen}, 
this leads to events 
\begin{equation}\label{def:good_events_supcrit2}
\mathcal{G}_z^{\rm II}(\widehat{Z}, u_4; a) = \mathcal{G}_z({\rm V}, {\rm W}^{{\rm II}}, 
\mathscr C; a)
\end{equation}
with ${\rm V}_z = \Omega$, ${\rm W}^{{\rm II}} = {\rm W}^{{\rm II}}(\widehat Z, u_4)$ given by 
\eqref{eq:WII} and $\mathscr C_z = \Z^d$, and 
subsequently
\begin{equation}\label{def:mathscrG_RI-gen2}
	\mathscr{G}_{z,N}^{\rm II} (\widehat{Z}, u_4; a) \coloneqq\mathscr G\big(\tilde D_{z, N} \setminus \tilde C_{z, N},  
	\mathcal{G}^{\rm II}= \big\{\mathcal{G}_{z'}^{\rm II}(\widehat{Z}, u_4; a): z' \in 
	\mathbb L\big\}, 
\mathcal F_L^{\frac{3u_0}{2}, u_4}; \rho = \tfrac1{2\Cr{C:rho}}\big).
\end{equation}
Finally we let
\begin{equation}\label{def:mathscrG_RI2}
\mathscr{G}_{z,N}^{\rm II} \coloneqq 
\mathscr{G}_{z,N}^{\rm II} ({Z}_{\mathbb L}, u_0, u_4; a), \quad z \in N\Z^d
\end{equation}
(cf.~\eqref{def:mathscrG_RI}). 
The analogue of Theorem~\ref{prop:exploration_RI-I} reads as follows.

\begin{theorem}[Coarse-graining for ${\rm V}_{z,N}^{\rm II}$]
\label{prop:exploration_RI-II}
With the choice of parameters as in \eqref{eq:params_RI} 
as well as 
$\nu \ge 0$, $2u_0 <  u_4 < u_\ast$ 
and $L \ge C(u_0)$, there exists
$c = c(L_{0}^{-}) > 0$ such that, abbreviating $\bar{u}_0=3u_0/2$, 
 \begin{multline}\label{eq:truncated_RI-II}
	\P[({\rm V}_{z,N}^{\rm II})^c] 
	\le \P\big[(\mathscr G_{z,N}^{\rm II})^c\big]  	+ 
	\P\big[\nlr{}{}{( C_{z,N} \cap \mathbb L_{0}^{-})}{\partial_{\mathbb 
			L_{0}^{-}} ( D_{z,N} \cap \mathbb L_{0}^{-})} \text{ in }\mathcal O_0^-(\overline Z^{\bar{u}_0}_{z,N})\big] 
		\\ + e^{-c(a
		 	\frac N{h(KL)}\wedge N)  +  C(\nu + \log N)},
\end{multline}
where the set $\mathcal O_0^-(Z)$ was defined in \eqref{def:OyZ} in 
Section~\ref{sec:insertion} and the connectivity in $\mathcal O_0^-(\overline Z_z^v)$ 
is w.r.t. the graph structure on 
$\mathbb L_{0}^{-}$, i.e.~$z_1, z_2 \in \mathbb L_{0}^{-}$ are neighbors if and only if 
$|z_1 - z_2| = L_{0}^{-}$.
\end{theorem}


\begin{proof}[Proof of Theorem~\ref{prop:exploration_RI-II}]
For $A \subset \Z^d$, let $A^-= A \cap \mathbb L_{0}^{-}$. Following the proof of Theorem~\ref{prop:exploration_RI-I}, the following inequality is analogous to \eqref{eq:explore_hard}. For any finite deterministic $J \subset \N^\ast$, we have
	\begin{equation}\label{eq:explore_easy}
			\P\big [({\rm V}_z(Z_J))^c \cap \mathscr G_z^{{\rm II}} \cap \big\{\lr{}{}{C_{z}^{-}}{(\partial_{\mathbb L_{0}^{-}} D_{z}^{-})} \text{ in } \mathcal 
				O_0^{-}(\overline{Z}_z^{\bar{u}_0}), J \subset [1, N_z^{\bar{u}_0}]\big\} \big] 
			\le C N^{d-1}e^{- c(L_{0}^{-}) am}
	\end{equation}
(recall the set $\mathcal O_0^-(Z)$ from \eqref{def:OyZ}). 
Theorem~\ref{prop:exploration_RI-II} follows from this using a similar line of argument as 
used to deduce Theorem~\ref{prop:exploration_RI-I} from \eqref{eq:explore_hard}. We will deduce \eqref{eq:explore_easy} from 
a related statement: namely, 
\begin{equation}\label{eq:explore_easyII}
\P\big [({\rm V}_z(Z_J))^c \cap \big\{\lr{}{}{C_{z}^{-}}{(\partial_{\mathbb L_{0}^{-}} D_{z}^-)} \text{ in } \mathcal 
O_0^{-}(Z_J), k( \mathcal O_0^-(Z_J)) \ge k\big\} \big]  \le 
C N^{d-1}e^{- c(L_{0}^{-}) k},
\end{equation}
for any $k \ge 0$, where the functional $k(\Sigma)$ is defined similarly as below \eqref{def:S} 
in 
\S\ref{subsec:reduction} 
with $L$ and $\mathbb L$ replaced by $L_0^-$ and $\mathbb L_0^-$ respectively. The bound
\eqref{eq:explore_easy} follows from this and the inclusion of events
\begin{equation}\label{eq:inclusion_GII}
\mathscr G_z^{{\rm II}} \subset \big\{k( \mathcal 
O_0^-(Z_z^{\bar{u}_0})) \ge c am\big\}
\end{equation}
for some $c \in (0, \infty)$ in view of monotonicity of the set $\mathcal O_0^-(Z)$ in $Z$ (
see \eqref{def:OyZ} and \eqref{def:Gx-1}) and also of 
the functional $k(\cdot)$ (revisit its definition 
below \eqref{def:S}). We will show \eqref{eq:inclusion_GII} at the end after giving a proof of 
\eqref{eq:explore_easyII}. To this end, let $O'_1, \ldots, O'_\ell \subset \mathcal 
O_0^-(Z_z^{\bar{u}_0})$ denote the $\ast$-connected sets satisfying properties~(a)--(b) in Proposition~\ref{lem:surroundingInterfaces} (as subsets of the 
coarse-grained lattice $\mathbb L_0^-$) with $V = \mathbb L_0^-(\tilde D_z)$, $U = \mathbb 
L_0^-(\tilde C_z)$ and $\Sigma = \mathcal O_0^-(Z_z^{\bar{u}_0}) \cap (V \setminus U)$.  
Note that $\ell = 0$ when $k( \mathcal O_0^-(Z_z^{\bar{u}_0})) = 0$. 
As we now explain, we can reduce \eqref{eq:explore_easyII} to a similar statement (cf.~ 
\eqref{eq:explore_hard_tilde_x} in the proof of Theorem~\ref{prop:exploration_RI-I}), whereby $V_z(Z_J)$ is replaced by the event $\overline {\rm V}_{z}(Z_J) \coloneqq 
\bigcap_{x \in \partial \tilde C_z} \overline {\rm V}_{z, x}(Z_J)$ with (see above \eqref{def:Gx-1} for notation)
\begin{equation}\label{def:overineVx}
\overline {\rm V}_{z, x}({Z}_J) \coloneqq 
\big\{\nlr{}{}{x}{\partial \tilde D_z} \text{ in } \mathcal V({Z}_J)\big\} \cup \big\{\lr{}{}{x}{ \textstyle \big( \bigcup_{y \in \bigcup_{1 \le j \le \ell} \, O'_j} \square( y, L_{0}^{-})\big)} \text{ in } D_z \cap \mathcal 
V({Z}_J) \big\}.
\end{equation}
Indeed \eqref{eq:explore_easyII} follows from its version for $\overline {\rm V}_z(\cdot)$ and the inclusion \begin{equation}\label{eq:onearm_twoarm_easy}
\overline {\rm V}_{z}(Z_J) \cap \big\{\lr{}{}{(C_{z}^-}{(\partial_{\mathbb L_{0}^{-}} D_{z}^{-})} \text{ in }\mathcal O_0^{-}(Z_J)\big\}
\subset {\rm V}_z(Z_J).
\end{equation}
However, \eqref{eq:onearm_twoarm_easy} can be obtained in a similar way as 
\eqref{eq:onearm_twoarm} in 
\S\ref{subsec:reduction} in view of following analogue of 
\eqref{eq:z'z''} which is a direct consequence of the definition of the set $\square(y, L_0^-)$ above \eqref{def:Gx-1} and the event ${\rm W}_{y}^-(Z) =  {\rm W}_{y, L_0^-}^-(Z)$ in \eqref{def:Gx-1}: $\square(y, L_0)$ and $\square(y', L_0)$ are 
			connected in $(\tilde C_{y, L_0^-} \cup \tilde C_{y', L_0^-}) \cap \mathcal V(Z_J)$ whenever $|y - y'|_{\infty} \le L_0^-$ and ${\rm W}_{y}^-(Z_J) \cap {\rm W}_{y'}^-(Z_J)$ occurs.

Let us get back to \eqref{eq:explore_easyII} in its version for $\overline {\rm V}_z(\cdot)$, which remains to be shown. By definition of the event 
$\overline {\rm V}_z(Z_J)$ above \eqref{def:overineVx}, we can deduce this from the estimate
\begin{equation}\label{eq:explore_easy_tilde_x}
		 \P\big [(\overline{\rm V}_{z, x}(Z_J))^c \cap \big\{\lr{}{}{C_{z}^{-}}{\partial_{\mathbb L_{0}^{-}} (D_{z}^{-})} \text{ in }\mathcal O_0^{-}(Z_J), k( \mathcal O_0^-(Z_J)) \ge k\big\} \big]
		\le  \, e^{- c(L_{0}^{-}) k}
\end{equation}
for each $x \in \partial \tilde C_z$ via a union bound over $x$. To show 
\eqref{eq:explore_easy_tilde_x}, 
we will 
construct a sequence of `good' random times $(\tau_l)_{l \ge 1}$ coupled to the exploration sequence $(w_n)_{n \ge 1}$ 
revealing the cluster $\mathscr C_J(x)$ (see Section~\ref{prop:reduction}) 
and satisfying the following two properties (cf.~properties~\eqref{property:reduction1}--\eqref{property:reduction3} in 
Proposition~\ref{prop:reduction}).
\begin{align}
	\text{\begin{minipage}{0.9\textwidth}  If $\tau_l < \infty$, then 
			$w_{\tau_l} \in \partial^{{\rm out}} C_{Y_l, L_0} \cap \mathcal V(Z_J)$ for some $ \textstyle Y_l \in \bigcup_{1 \le j \le \ell}\,O_j'$. Conversely, if $y \in \textstyle \bigcup_{1 \le j \le \ell}\,O_j'$ and $\mathscr C_J(x) \cap \partial^{{\rm out}} C_{y, L_0} \ne \emptyset$, there exists $l 
			\ge 1$ such that $\tau_l < \infty$ with $y = Y_l$. \end{minipage}}\label{property:reduction_easy1}\\[0.5em]	
	\text{\begin{minipage}{0.9\textwidth} For any $z \in {\mathbb L}_{0}^-$ and $l \ge 0$, 
	the event $\textstyle \bigcap_{1 \le j \le l}\big\{\tau_j < \infty, \nlr{}{S_j}{w_{\tau_j}}{\square(Y_j, L_0^-)}\big\} \cap \{\tau_{l + 1} < \infty, Y_{l+1} = y\}$ is measurable relative to the $\sigma$-algebra 
	$\overline{\mathcal F}_{y, L_0^-}(Z_J)$ defined in \eqref{def:FEsigma_algebra_small_u};
	\end{minipage}}\label{property:reduction_easy3}
\end{align}
 here and in the sequel $S_j$ is the set $(\{w_{\tau_j}\} \cup C_{Y_j, L_0}) \cap \mathcal V(Z_J)$.
Let us suppose for the moment that such a sequence of random times exists. Then it follows from the 
definition of $k( \mathcal O_0^-(Z_J))$ below \eqref{def:S} and 
property~\eqref{property:reduction_easy1} above that
\begin{multline}\label{eq:VEk}
(\overline{\rm V}_{z, x}(Z_J))^c \cap \big\{\lr{}{}{C_{z}^-}{(\partial_{\mathbb L_{0}^{-}} D_{z}^{-})} \text{ in }\mathcal O_0^{-}(Z_J), k(\tilde D_z 
\setminus \tilde C_z, \mathcal O_0^-(Z_J)) \ge k\big\} \\
\subset \big\{ \tau_j < \infty, \nlr{}{}{w_{\tau_j}}{\square(Y_j, L_0^-)}\text{ in } S_j,
\mbox{ for all } j \le k\big\} \stackrel{{\rm def.}}{=} \overline{\mathcal E}_k. 
\end{multline}
Now using similar arguments as used to derive \eqref{eq:explore_tilde2} in the proof of 
Theorem~\ref{prop:exploration_RI-I} 
with property~\eqref{property:reduction_easy3} and Lemma~\ref{lem:conditional_prob2} in 
lieu of \eqref{property:reduction3} and Proposition~\ref{lem:conditional_prob1}, respectively, 
we obtain
$\P[\overline{\mathcal E}_k] \le e^{-c(L_0^-)k}$ for any $k \ge 0$ thus yielding \eqref{eq:explore_easy_tilde_x} in view of \eqref{eq:VEk}. 
Coming back to the 
sequence $(\tau_l)_{l \ge 1}$, recalling $(w_n)_{n \ge 1}$ 
from the beginning of 
\S\ref{subsec:reduction}, 
define for each $l \ge 1$, with $\tau_0 = 0$,
\begin{equation*}\label{def:tauk_easy}
	\tau_l = 	\inf \left\{n > \tau_l :  \text{ \begin{minipage}{0.50\textwidth}
			$w_n \in \mathcal V(Z_J)\cap \partial^{{\rm out}} C_{y, L_0^-}$ for some $y \textstyle \in \bigcup_{1 \le j \le \ell}\, O_j'$ such that $\textstyle\bigcup_{1 \le i < n} \{w_i\} \cap \mathcal V(Z_J) \cap \partial^{{\rm out}} C_{y, L_0^-} = \emptyset$
	\end{minipage}} \right\}.
\end{equation*}
Properties~\eqref{property:reduction_easy1} and \eqref{property:reduction_easy3} follow from 
this definition and that of $(w_n)_{n \ge 1}$ in a straightforward manner.

It remains to verify \eqref{eq:inclusion_GII}. 
Since the event ${\rm W}_{y^-}^-(Z)$ in decreasing in $Z$ (see \eqref{def:Gx-1}) and $2u_0 < 
u_4$ (see the statement of Theorem~\ref{prop:exploration_RI-II}), we have 
by \eqref{eq:F} that ${\rm W}_{y}^-(Z_{z', L}^{u_4}) \cap \mathcal F_{L}^{\bar{u}_0, u_4} 
{\subset}{\rm W}_{y}^-(\overline{Z}_{z}^{\bar{u}_0})$ for any $z' \in \mathbb L$ satisfying $ D_{z', L_0} \subset  D_z$ and 
$ U_{z', L_0} \subset  U_z$ (cf.~\eqref{eq:WII}). Now recalling the definition of 
the event $\mathcal G_{z'}^{{\rm 
II}}(\widehat Z, u_4; a)$ from \eqref{def:good_events_supcrit2} and, as part of that, the event 
${\rm W}_{z', y}^{{\rm II}}$ from \eqref{eq:WII} (see also Definition~\ref{def:good_events} for 
the generic events $\mathcal G_{z'}(\cdot)$), we get that \begin{equation}\label{eq:GII_inclusion}
\big(\mathcal G_{z'}^{{\rm II}}(Z_{\mathbb L}, u_4; a) \cap \mathcal F_L^{\bar{u}_0, u_4}\big) 
\subset \mathcal G_{z'}({\rm V} = \{\Omega : z'' \in \mathbb L\}, \widetilde{\rm W}^{{\rm II}}, \mathscr C = \{\Z^d: z'' \in \mathbb L\}; a)
\end{equation}
for any $z' \in \mathbb L$ such that $ D_{z', L} \subset  D_z$ and $ 
U_{z', L} \subset  U_z$ where $\widetilde{\rm W}^{\rm II} = \{\widetilde{\rm W}_{z', 
y'}^{\rm II} : z' \in \mathbb L, y' \in \mathbb L_0\}$ with $\widetilde{\rm W}_{z', y'}^{\rm II} = 
\mathcal G_{y'}^{-}(\overline{Z}_{z}^{\bar{u}_0})$. 
Hence from \eqref{eq:grand_inclusion2} in Proposition~\ref{prop:bootstrap_events} and 
\eqref{def:mathscrG_RI2} we obtain that, on the event $\mathscr G_z^{{\rm II}}$, any crossing of $\tilde D_z \setminus \tilde C_z$ intersects at least $c a m$-many boxes 
$C_{y, L_0}$ such that $y \in \mathbb L_0^-$ and ${\rm W}_{y}^-(\overline{Z}_{z}^{\bar{u}_0})$ occurs 
(condition \eqref{eq:bootstrap_inclusion} is satisfied owing to \eqref{eq:GII_inclusion} and 
the observation that $ D_{z', L} \subset  D_z$ and $ 
U_{z', L} \subset  U_z$ as soon as $z' \in (\tilde D_z \setminus \tilde C_z) \cap \mathbb 
L$, a consequence of \eqref{eq:params_RI} and \eqref{eq:L_k_descending}). But the above 
statement directly implies \eqref{eq:inclusion_GII} in view of the definition of $\mathcal 
O^-_0(\overline Z_{z}^{\bar{u}_0})$ in \eqref{def:OyZ} and the functional $k(\Sigma)$ below \eqref{def:S}. 
\end{proof}

\section{Crossings and blocking interfaces}\label{sec:dual_surface}
In this appendix we state a basic result, Proposition~\ref{lem:surroundingInterfaces} below, 
which is topological and of independent interest. It connects the minimum number of times 
any path crossing an annular region $V \setminus U$ (where $U \subset V \subset \Z^d$) 
intersects a set $\Sigma$ to the density of certain dual `blocking' interfaces in $\Sigma$. 
This result is a refinement of \cite[Lemma~2.1]{gosrodsev2021radius}, in that it also 
establishes a certain maximality property of the interfaces (see item~(c) below) which is 
crucial for our proof of Theorem~\ref{prop:exploration_RI-I}.

We first introduce the necessary notation. 
For any $U \subset \subset \Z^d$, we let ${{\rm Fill}}(U)$ denote the union of 
$U$ and all its holes, where a hole is~any finite component of $U^c$. The set ${{\rm Fill}}(U)$ is  
($\ast$-)connected whenever the set $U$ is ($\ast$-)connected. Since $U$ is finite, there exists a unique infinite 
connected component $U_\infty^c$ of $U^c$ and we define the {\em exterior 
	boundary} of $U$ as $\partial^{{\rm ext}} U \coloneqq \partial U_\infty^c$, which equals $\partial^{{\rm out}}{{\rm Fill}}(U).$ For any two sets $U, \Sigma \subset \subset \Z^d$, we say $U$ is surrounded by $\Sigma$, denoted as
\begin{equation}\label{eq:prec}
	\text{$U \preceq \Sigma$, if any infinite 
		connected set $\gamma$ intersecting $U$ also intersects  $\Sigma$.}
\end{equation}

Following is the main result of this section. Property~(c) is the most delicate of the three 
stated below and, as mentioned already, it is also the main feature of this result compared 
to \cite[Lemma~2.1]{gosrodsev2021radius}. 
\begin{prop}[Blocking interfaces] \label{lem:surroundingInterfaces}
	Let $V \subset \Z^d$ be a box, $U\subset V$ a $\ast$-connected set and $k \geq 1$. Suppose that $\Sigma \subset (V\setminus U)$ is such that any path $\gamma$ connecting $U$ and $\partial V$ 
	intersects $\Sigma$ in at least 
	$k (\geq 1) $ points. Then there exists an integer $\ell \geq 1$ and $\ast$-connected sets 
	$O_1, \ldots, {O}_\ell \subset \Sigma$ 
	such that:
	\begin{enumerate}
		\item[(a)] $U \preceq {O}_1 \preceq \ldots \preceq {O}_\ell \preceq \partial V$.
		\item[(b)] Any path connecting $U$ and $\partial V$ intersects $O \coloneqq \bigcup_{1 \le i \le \ell} \, O_i$ in at least $k$ points.
		\item[(c)] The sets $O_1, \ldots, {O}_\ell$ are maximal in the following sense. If for some $j \in \{1,\dots,\ell\}$ and $j' \in \{j, ( j + 1) \wedge \ell \}$  two points $x_j \in \overline{O_j}$ and 
		$x_{j'} \in \overline{O_{j'}}$ are connected in $V \setminus O$, then they are connected 
		in $V \setminus \Sigma$. Similarly if $ x_{\ell} \in \overline{O_{\ell}}$ is connected to $\partial V$ 
		in $V \setminus O$, then $x_{\ell}$ is connected 
		to $\partial V$ 
		in $V \setminus \Sigma$.
	\end{enumerate}
\end{prop}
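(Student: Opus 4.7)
The plan is to construct the sets $O_1, \ldots, O_\ell$ by an iterative peeling of $\Sigma$, starting from $U$ and moving outward toward $\partial V$, at each step producing a $*$-connected blocking layer contained in $\Sigma$. The key topological input is a Timar-type result: for any simply connected finite $A \subset \Z^d$ (both $A$ and its complement connected), the exterior vertex boundary $\partial^{\mathrm{ext}} A$ is $*$-connected.

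Concretely, take $A_0$ to be a connected set containing $U$ and disjoint from $\Sigma$ (suitably using $\mathrm{Fill}(U)$, while taking care that $\Sigma$-points sitting in holes of $U$ may need to be absorbed into $A_0$). Recursively, given a connected $A_{i-1}$ with $A_{i-1} \cap \Sigma = \emptyset$, let $\mathcal{C}_i$ be the connected component of $A_{i-1}$ in the subgraph of $V$ obtained by deleting $\Sigma \setminus A_{i-1}$, and set $A_i = \mathrm{Fill}(\mathcal{C}_i)$, $O_i = \partial^{\mathrm{ext}} A_i$. The construction terminates at the first index $\ell$ at which $A_\ell$ or $O_\ell$ meets $\partial V$. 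A standard holes analysis shows that every $y \in \partial^{\mathrm{ext}} A_i$ is adjacent to some $x \in \mathcal{C}_i$ (not to a point of $A_i \setminus \mathcal{C}_i$), so $y \in \Sigma$ and therefore $O_i \subset \Sigma$; moreover $O_i$ is $*$-connected by the Timar-type theorem since $A_i$ is simply connected. Property~(a), $U \preceq O_1 \preceq \cdots \preceq O_\ell \preceq \partial V$, then follows immediately from $A_{i-1} \subset A_i$ and the fact that each $O_i$ separates $A_{i-1}$ from $\partial V$ in $V$.

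For property~(b), any path $\gamma$ from $U$ to $\partial V$ must traverse each annular region $A_i \setminus A_{i-1}$, and the first vertex of $\gamma$ outside $A_{i-1}$ must lie in $O_i$ (since $\mathcal{C}_i$ is the maximal $\Sigma$-avoiding extension of $A_{i-1}$, so any exit from $A_{i-1}$ to $V \setminus A_i$ goes through $O_i = \partial^{\mathrm{ext}} A_i \subset \Sigma$). This yields $|\gamma \cap O| \geq \ell$. To upgrade to the sharp bound $|\gamma \cap O| \geq k$, one argues by contradiction: if a path $\gamma$ realizing the minimum crossed $O$ in fewer than $k$ points, one could reroute $\gamma$ using the $*$-connectedness of each $O_i$ to produce a new crossing of $V\setminus U$ meeting $\Sigma$ in fewer than $k$ points, contradicting the hypothesis.

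Property~(c) is the delicate new feature and where the main obstacle lies. The worry is that some $\zeta \in \Sigma \setminus O$ sits in a hole of some $\mathcal{C}_i$, i.e.\ in $A_i \setminus \mathcal{C}_i$, thereby obstructing a $V \setminus \Sigma$ connectivity that survives in $V \setminus O$. However, any such $\zeta$ is topologically enclosed by $\mathcal{C}_i \subset V \setminus \Sigma$: every path leaving the hole must pass through $\mathcal{C}_i$, so $\zeta$ cannot disconnect two points that lie outside its enclosing hole. Granted this observation, the cases $j' = j$ and $j' = j+1$ of~(c) reduce to showing that the corridor $\bigl((A_j \setminus A_{j-1}) \cup \overline{O_j} \cup \overline{O_{j+1}}\bigr) \setminus O$ is path-connected in $V \setminus \Sigma$ whenever $V \setminus O$ provides such a connection, which follows by combining the maximality of each $\mathcal{C}_i$ with the $*$-connectedness of the $O_i$'s. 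The connection of $\overline{O_\ell}$ to $\partial V$ is handled analogously, using the termination criterion to exclude further peeling steps. The careful hole-bookkeeping and the verification that $\Sigma \setminus O$ is topologically invisible in the stated sense will be the main technical difficulty.
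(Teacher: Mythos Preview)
Your iterative peeling has the right shape, but the recursion as written does not progress. You define $\mathcal C_i$ as the component of $A_{i-1}$ in $V\setminus(\Sigma\setminus A_{i-1})$, yet $O_{i-1}=\partial^{\mathrm{ext}}A_{i-1}$ lies \emph{outside} $A_{i-1}$ and inside $\Sigma$, so $O_{i-1}\subset\Sigma\setminus A_{i-1}$ still blocks every exit from $A_{i-1}$; hence $\mathcal C_i=A_{i-1}$ and the construction stalls after one step. The paper fixes this by explicitly deleting the newly found layer from $\Sigma$ at each stage: it sets $U_i=\mathscr C_{U_{i-1}}\cup O_i$ and $\Sigma_i=\Sigma_{i-1}\setminus O_i$, and---crucially---it defines $O_i$ not as the bare exterior boundary but as the \emph{full $*$-component in $\Sigma_{i-1}$} containing that boundary. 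Taking only $\partial^{\mathrm{ext}}A_i$ leaves pieces of $\Sigma$ that are $*$-adjacent to $O_i$ but not in $O$ and not trapped in any hole, and your ``holes are enclosed by $\mathcal C_i$'' heuristic does not cover them; this is exactly where the maximality in~(c) is at risk. Relatedly, your inductive hypothesis $A_{i-1}\cap\Sigma=\emptyset$ already fails at the second step once you fill holes.

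For~(b), your rerouting ``using the $*$-connectedness of each $O_i$'' cannot work as stated, since $*$-connected sets do not carry nearest-neighbour paths. The paper instead derives~(b) \emph{from}~(c): given any crossing $\gamma$, replace each maximal sub-segment with interior in $V\setminus O$ by a segment with interior in $V\setminus\Sigma$ (which exists by~(c)), obtaining a crossing $\gamma'$ with $\gamma'\cap\Sigma=\gamma\cap O$, and then invoke the hypothesis on $\gamma'$. For~(c) itself, the paper isolates a separate topological lemma (if $U$ is a $*$-component of $\Sigma$, $W$ is connected or a $*$-component of $\Sigma$ with $U\preceq W$, and $\overline U$ is connected to $\overline W$ in $\Sigma^c$, then any point of $U$ reachable from $W$ by a path with interior avoiding $U$ is also reachable by one with interior avoiding all of $\Sigma$); this is where the choice of $O_i$ as a full $*$-component of $\Sigma_{i-1}$ is used, and your sketch does not supply a substitute for it.
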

The following lemma captures an essential feature that will be used to prove (c) above. For a 
path $\gamma=(\gamma(n))_{0\leq n \leq k}$, $k \geq 0$, we denote by ${\gamma}^\circ 
=\bigcup_{0< n < k} \{\gamma(n)\}$ its range with endpoints omitted, also referred to as the 
{\em interior} of $\gamma$.
\begin{lemma}\label{lem:annular}
	Let $\Sigma \subset \subset \Z^d$ and $U \subset \Sigma$ be a $\ast$-component of $\Sigma$. 
	Let $W \subset \subset \Z^d$ be 
	either i) a connected set, or ii) a $\ast$-component of $\Sigma$, and assume $W$ is such that 
	\begin{align}
		\label{eq:W-prop1}
		&U \preceq W, \text{ and }\\
		&\label{eq:W-prop2} \text{$\overline{U}$ is connected to $\overline{W}$ 
			in $\Sigma^c$.}
	\end{align} Then any point in ${U}$  
	that is connected to $W$ by a path $\pi$ with ${\pi}^\circ \cap U=\emptyset$, is also connected to $W$ by a path $\tilde\pi$ with $\tilde\pi^\circ \cap \Sigma=\emptyset$.
\end{lemma}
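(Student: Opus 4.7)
The plan is to exhibit a connected component $C$ of $\Sigma^c$ adjacent to $x$ that also reaches $\overline W \setminus \Sigma$, so that $\tilde\pi$ can be built by concatenating the step $(x,\pi(1))$, a path inside $C$ to some $w' \in \overline W \setminus \Sigma$, and (in case (ii)) a single final step into $W$. The first ingredient is the observation that since $U$ is a $*$-component of $\Sigma$, any vertex outside $U$ that is $*$-adjacent to $U$ lies in $\Sigma^c$; hence $\partial^{\mathrm{out}} U \subset \Sigma^c$, and analogously $\partial^{\mathrm{out}} W \subset \Sigma^c$ in case (ii). Since $\pi^\circ \cap U = \emptyset$ and $\pi(1)$ neighbours $x \in U$, we get $\pi(1) \in \partial^{\mathrm{out}} U \subset \Sigma^c$; let $C$ be the nearest-neighbour connected component of $\Sigma^c$ containing $\pi(1)$. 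If $C$ meets $\overline W \setminus \Sigma$, we are done.

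To show $C \cap (\overline W \setminus \Sigma) \neq \emptyset$ I distinguish the cases $C$ infinite and $C$ finite. If $C$ is infinite, then $C \cup \{x\}$ is an infinite connected set meeting $U$; by $U \preceq W$ it meets $W$. In case (i) this yields $C \cap W \subset \overline W$, and the degenerate situation where the intersection is $\{x\}$ forces $U = W$ and makes the lemma trivial. In case (ii) the intersection must occur at $x$, giving $U = W$ and again the trivial case. So the genuine difficulty is $C$ finite.

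When $C$ is finite, I invoke the classical discrete Jordan-type result: the outer exterior boundary $\partial^{\mathrm{ext}} C$ is $*$-connected for a finite connected $C \subset \Z^d$, $d \geq 2$. Since $\partial^{\mathrm{ext}} C \subset \partial^{\mathrm{out}} C \subset \Sigma$ (as $C$ is a $\Sigma^c$-component), the $*$-connectedness forces $\partial^{\mathrm{ext}} C$ into a single $*$-component of $\Sigma$. The aim is to show this $*$-component is $U$. A preliminary check is that $W \not\subset \mathrm{Fill}(C)$: were $W$ surrounded by $\partial^{\mathrm{ext}} C \subset \Sigma$, then in the subsequent identification $\partial^{\mathrm{ext}} C \subset U$ we would obtain $W \preceq U$, incompatible with $U \preceq W$ for disjoint finite sets. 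With $W$ outside $\mathrm{Fill}(C)$, the given path $\pi$ must leave $\mathrm{Fill}(C)$, and because $\pi^\circ \cap U = \emptyset$ while any exit vertex of $\pi$ from $\mathrm{Fill}(C)$ lies in $\partial^{\mathrm{ext}} C \subset \Sigma$, a careful case analysis on where this exit happens—combined with \eqref{eq:W-prop2}, which produces a $\Sigma^c$-path from $\partial^{\mathrm{out}} U$ to $\overline W$—will force a vertex of $U$ into $\partial^{\mathrm{ext}} C$, yielding $\partial^{\mathrm{ext}} C \subset U$.

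Once $\partial^{\mathrm{ext}} C \subset U$ has been established, the $\Sigma^c$-path $\rho$ of \eqref{eq:W-prop2} cannot cross $\partial^{\mathrm{ext}} C$ (which lies in $\Sigma$) and must reach $\overline W \not\subset \mathrm{Fill}(C)$, so it lies entirely in $\mathrm{Fill}(C)^c$; its starting vertex $u' \in \partial^{\mathrm{out}} U$ is then outside $\mathrm{Fill}(C)$, and the neighbour of $u'$ in $U$ lies in $\partial^{\mathrm{ext}} C$. From this and the $*$-connectedness of $U$ one can bridge from $x$ to $u'$ while using the existence of $\pi$ to control the region near $x$, ultimately producing the required $\tilde\pi$ by going along an alternative $\Sigma^c$-component adjacent to $x$ that reaches $\overline W$. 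The main obstacle is precisely this last topological bookkeeping: tracking how the $*$-component $U$ sits relative to $\mathrm{Fill}(C)$, and combining the hypotheses $U \preceq W$ and \eqref{eq:W-prop2} together with the mere existence of $\pi$ to force either the conclusion or a contradiction. The discrete Jordan lemma and the duality between $*$-connectedness and nearest-neighbour connectedness are the principal tools throughout.
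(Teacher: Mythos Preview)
Your sketch has the right topological ingredients (the $*$-connectedness of $\partial^{\mathrm{ext}} C$, the discrete Jordan lemma, and the path supplied by \eqref{eq:W-prop2}), but as written it does not close, and the last paragraph essentially concedes this. There are two concrete failures.

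First, the argument that $\partial^{\mathrm{ext}} C \subset U$ is circular: you use $W \not\subset \mathrm{Fill}(C)$ to locate an exit point of $\pi$ on $\partial^{\mathrm{ext}} C$, yet you justify $W \not\subset \mathrm{Fill}(C)$ by invoking ``the subsequent identification $\partial^{\mathrm{ext}} C \subset U$.'' More seriously, even granting $W \not\subset \mathrm{Fill}(C)$, the exit step goes the wrong way: if $x$ lies in a hole of $C$ (so $x\in\mathrm{Fill}(C)$), the exit vertex of $\pi$ from $\mathrm{Fill}(C)$ sits in $\pi^\circ$, hence in $\Sigma\setminus U$ by hypothesis, and you have produced a point of $\partial^{\mathrm{ext}} C$ \emph{outside} $U$, forcing $\partial^{\mathrm{ext}} C$ into a \emph{different} $*$-component of $\Sigma$. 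Your subsequent claim that ``the neighbour of $u'$ in $U$ lies in $\partial^{\mathrm{ext}} C$'' is likewise unjustified.

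Second, the final step---passing to ``an alternative $\Sigma^c$-component adjacent to $x$ that reaches $\overline W$''---is precisely the statement to be proved, and nothing in your analysis of the particular component $C$ containing $\pi(1)$ produces it.

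The paper avoids both issues by working not with a single $C$ but with $\mathscr C_x$, the connected component of $\Sigma^c\cup\{x\}$ containing $x$ (i.e.\ $x$ together with \emph{all} $\Sigma^c$-components adjacent to $x$), and arguing by contradiction. Assuming $x$ admits no path to $W$ with interior in $\Sigma^c$, one has $\overline{\mathscr C_x}\cap W=\emptyset$; a separate claim---whose proof is exactly where the dichotomy (i)/(ii) on $W$ is used---then shows that every point of $\mathrm{Fill}(\mathscr C_x)$ inherits this property. Hypothesis \eqref{eq:W-prop2} provides a point $y\in U$ that \emph{does} have such a path, hence $y\notin\mathrm{Fill}(\mathscr C_x)$; a $*$-path in $U$ from $x$ to $y$ then forces $\partial^{\mathrm{ext}}\mathscr C_x\subset U$. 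But now the given $\pi$ must exit $\mathrm{Fill}(\mathscr C_x)$ through $\partial^{\mathrm{ext}}\mathscr C_x\subset U$ at an interior vertex, contradicting $\pi^\circ\cap U=\emptyset$. The passage from $C$ to $\mathscr C_x$ is not cosmetic: with a single component $C$, $\pi$ may exit $\mathrm{Fill}(C)$ through another $*$-component of $\Sigma$, and the argument stalls---which is exactly where your sketch gets stuck.
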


\begin{proof}
	Throughout the proof, we refer to a point $z \in \mathbb{Z}^d$ as having \textit{property $\text{NC}$} if $z$
	\begin{equation}\label{eq:property-P}
		\text{is \textit{not} connected to $W$ by a path $\pi$ with $\pi^{\circ} \cap \Sigma =\emptyset$}.
	\end{equation}
	Let $x \in U$ be connected to $W$ by a path $\pi$ with ${\pi}^\circ \cap U=\emptyset$. We assume that ${\pi}^\circ  \neq \emptyset$ else choosing $\tilde\pi=\pi$ works. Our aim is to show 
	that $x$ does {\em not} have property $\text{NC}$. Suppose for the sake of contradiction 
	that it did. Consider the component $\mathscr C_x$ of $\Sigma^c \cup \{x\}$ containing the point 
	$x$. By definition, $\mathscr C_x$ is a connected set. Since $x$ is assumed to satisfy 
	\eqref{eq:property-P}, it necessarily holds that
	\begin{equation}\label{eq:annular_empty_intersect}
		\big((\mathscr C_x \cup \partial^{{\rm ext}}\mathscr C_x) \cap W \big)   \, {  \subset \big( \overline{\mathscr C_x} \cap W \big)} = \emptyset;
	\end{equation}  
	indeed the inclusion is obvious since $\partial^{{\rm ext}} A \subset \partial^{{\rm out}}A$ for any set $A$, and a 
	non-empty intersection of $\overline{\mathscr C_x} \cap W$ would imply the existence of a path 
	$\pi$ with the above (precluded) properties since $\mathscr{C}_x$ is connected. Next, since 
	$\mathscr C_x  \cup \partial^{{\rm ext}}\mathscr C_x$ is a connected set containing $x $ and 
	$\{x\} \preceq W$ by \eqref{eq:W-prop1} and the transitivity of $\preceq$ (using that $\{x\} \subset U$ which implies that $\{x\} \preceq U$), it follows from 
	\eqref{eq:annular_empty_intersect} in view of \eqref{eq:prec} that
	\begin{equation}\label{eq:WsurrC}
		(\mathscr C_x \cup \partial^{{\rm ext}}\mathscr C_x) \preceq W.
	\end{equation}
	Moreover, by definition of $\mathscr C_x$ and \eqref{eq:annular_empty_intersect},
	\begin{equation*}
		\begin{split}
			\mbox{every point in $\mathscr C_x$ has property $\text{NC}$.}
		\end{split}
	\end{equation*}
	Now recall the definition of ${{\rm Fill}}(\mathscr C_x)$ from the beginning of this section. We claim 
	that, in fact,
	\begin{equation}\label{eq:P-claim}
		\text{every point in ${{\rm Fill}}(\mathscr C_x)$ has property~$\text{NC}$}
	\end{equation} 
	and first finish the proof of the lemma assuming this claim by deriving the desired contradiction. By hypothesis in \eqref{eq:W-prop2}, $U$ contains a point 
	$y$ (say) that does not have property~$\text{NC}$: indeed with $\gamma' \subset \Sigma^c (\subset U^c)$ a path starting in $\overline{U}$ and ending in $\overline{W}$, any neighbor $y \in U$ of $\gamma'(0) (\in \partial^{\text{out}} U)$ will do (note that $\gamma'$ can always be extended by addition of at most one point so as to intersect $W$). Since $U$ is $\ast$-connected, there is a $\ast$-path 
	$\gamma \subset U$ connecting $x$ and $y$. Also since $y \notin {{\rm Fill}}(\mathscr C_x)$ on 
	account of \eqref{eq:P-claim}, it follows from the definition of $\ast$-connectivity that $\gamma$ 
	contains a point that either lies in $\partial^{{\rm out}}  {{\rm Fill}}(\mathscr C_x) = \partial^{{\rm ext}} \mathscr C_x \subset \Sigma$ or is a $\ast$-neighbor of $\partial^{{\rm ext}} \mathscr C_x$. {The inclusion in $\Sigma$ is immediate since $\mathscr C_x$ is a component in $\Sigma^c \cup \{x\}$}. 

	We will now argue that the stronger inclusion $\partial^{{\rm ext}} 
	\mathscr C_x \subset U (\subset \Sigma)$ holds true.
	Indeed, 
	$\mathscr C_x$ is connected and therefore 
	$\partial^{{\rm ext}} \mathscr C_x$ is $\ast$-connected by 
	\cite[Lemma~2.1-(i)]{DeuschelPisztora96}.  Consequently, the set $ \gamma \cup 
	\partial^{{\rm ext}} \mathscr C_x$ is itself a $\ast$-connected subset of $\Sigma$. Since 
	$U$ is a $\ast$-component of $\Sigma$ and $\gamma \subset U$, we thus obtain $\partial^{{\rm ext}} \mathscr C_x \subset U$. On the other hand, since $x$ can be connected to $W$ by a path $\pi$ with $\pi^{\circ} \cap U=\emptyset$ and $\overline{\mathscr C_x} \cap W = \emptyset$, the latter on account of \eqref{eq:annular_empty_intersect}, it must be the case that $\pi \cap \partial^{{\rm ext}} \mathscr C_x \ne \emptyset$ and hence $\partial^{{\rm ext}} \mathscr C_x \cap 
	U^c \ne \emptyset$ {
		(recall that $\pi^\circ \neq \emptyset$) }which leads to a contradiction.

	

	It remains to prove \eqref{eq:P-claim}. 
	{ To this end let $w \in {{\rm Fill}}(\mathscr C_x)$.} We first argue that if $w$ does not have property~$\text{NC}$, then it is necessarily the case that 
	\begin{equation}\label{eq:weird-W}
		\text{$W$ intersects some finite component of $\mathscr C_x^c$.}
	\end{equation}
	Indeed since 
	{ $w \in {{\rm Fill}}(\mathscr C_x)$, any path $\gamma$ connecting $w$ to a point in $({{\rm Fill}}(\mathscr C_x) \cup \partial^{{\rm ext}}\mathscr C_x)^c$ must satisfy $\gamma^\circ \cap \partial^{{\rm ext}} \mathscr C_x \ne \emptyset$.} 
	But if \eqref{eq:weird-W} does not occur { then, since $\overline{\mathscr C_x} \cap W = \emptyset$ in view of \eqref{eq:annular_empty_intersect}, it further holds that $({{\rm Fill}}(\mathscr C_x) \cup \partial^{{\rm ext}}\mathscr C_x) \cap W = \emptyset$. The last two observations together imply that} 
	$\pi^{\circ}$ must intersect {$\partial^{{\rm ext}}\mathscr C_x$ and hence $\Sigma$ for any path $\pi$ connecting $w$ to $W$}, i.e.~$w$ 
	has Property $\text{NC}$. All in all, \eqref{eq:weird-W} thus follows.
	{ Now notice that \eqref{eq:WsurrC} and \eqref{eq:annular_empty_intersect} together imply
		\begin{equation}\label{eq:Wintersects}
			({{\rm Fill}}(\mathscr C_x) \cup \partial^{{\rm ext}}\mathscr C_x)^c \cap W \ne \emptyset.	
		\end{equation}	
		Hence if $W$ is connected and \eqref{eq:weird-W} holds, then $W$ must intersect $\partial^{{\rm ext}}\mathscr C_x$ as any path between a point in some finite component of $\mathscr C_x^c$ and $({{\rm Fill}}(\mathscr C_x) \cup \partial^{{\rm ext}}\mathscr C_x)^c$ has to intersect $\partial^{{\rm ext}}\mathscr C_x$. But this contradicts \eqref{eq:annular_empty_intersect} and thus 
		\eqref{eq:weird-W} is not possible in this case}. On the other hand, if $W$ is a $\ast$-{ 
		connected set and \eqref{eq:weird-W} holds, then it follows from \eqref{eq:Wintersects} and 
		the definition of $\ast$-connectivity that $W$ contains a point which is either an element of 
		$\partial^{{\rm ext}}\mathscr C_x$ or a $\ast$-neighbor of $\partial^{{\rm ext}}\mathscr C_x$. 
		Therefore if $W$ is a $\ast$-component of $\Sigma$, we get $\partial^{{\rm ext}}\mathscr C_x \subset W$ since $\partial^{{\rm ext}}\mathscr C_x \subset \Sigma$ as we already noted above.} 
	{ But this also }
	contradicts \eqref{eq:annular_empty_intersect}. Thus the conclusion holds for both types of $W$ considered. 
\end{proof}

We now turn to the:

\begin{proof}[Proof of Proposition~\ref{lem:surroundingInterfaces}]
	Since $k \ge 1$, $U$ is not connected to $\partial V$ in $\Z^d \setminus \Sigma \, (\supset U)$. Then by \cite[Lemma~2.1-(i)]{DeuschelPisztora96}, the exterior boundary $\partial^{{\rm ext}}\mathscr C_{U}$ of the $\ast$-component $\mathscr C_{U}$ of $U$ in 
	$\Z^d \setminus \Sigma$ is a non-empty $*$-connected subset of $\Sigma$. Let $O_1$ be defined as 
	the $\ast$-component of $\partial^{{\rm ext}} \mathscr C_{U}$ in $\Sigma$. Thus by definition, $U 
	\preceq O_1$ and $O_1 \subset \Sigma \subset V$. Also observe that $\mathscr C_U \cup O_1 
	\subset V$ is $\ast$-connected.

	Now the hypothesis of the proposition is clearly satisfied with $U_1 = \mathscr C_U \cup O_1$, $\Sigma_1 
	= \Sigma \setminus O_1$ and $k_{U_1, \Sigma_1}$ {
		(in case the latter is $\geq 1$)} substituting for $U$, $\Sigma$ and $k$ 
	respectively, where for any two disjoint subsets $U'$ and $\Sigma'$ of $V$, we denote
	\begin{equation*}
		k_{U', \Sigma'} = \min\{|\gamma \cap \Sigma'| :  \mbox{$\gamma$ is any path between $U'$ and $\partial V$}\} \ge 0.
	\end{equation*}
	Iterating the construction in the previous paragraph over successive rounds until the first $\ell$ such 
	that $k_{U_\ell, \Sigma_\ell} = 0$ by letting $O_k$ be the $\ast$-component of $\partial^{{\rm ext}}U_{k-1}$ in $\Sigma_{k-1}$, $U_k = \mathscr C_{U_{k-1}} \cup O_k$ and $\Sigma_k = \Sigma_{k-1} \setminus O_k$ in each round $2 \le k \le \ell$, we obtain a sequence of $\ast$-components $(U \preceq) \, O_1 \preceq \ldots \preceq O_\ell \preceq \partial V$ in $\Sigma$. This is readily verified 
	inductively. In particular, the collection $\{O_1, \ldots, O_\ell\}$ satisfies~(a).

	Next, consider any path $\gamma$ connecting $U$ and $\partial V$ with exactly one (end-)point in $\partial V$. Let $\gamma_1, \ldots, \gamma_m$ denote the maximal non-trivial segments of 
	$\gamma$ whose interiors lie outside $O$. By 
	our construction of the sets $O_1, \ldots, O_\ell$, any such segment must 
	necessarily have either both its endpoints in $\{O_{j}, O_{j+1}\}$ for some $j \in \{1, \ldots, \ell - 1\}$ or one 
	endpoint in $O_\ell$ and the other in $\partial V$. Now assuming the sets $O_1, \ldots, O_\ell$ also satisfy~(c),  we can replace each $\gamma_i$ with a suitable segment  
	whose interior is disjoint from $\Sigma$ such that the resulting sequence $\gamma'$ is also a 
	path between $U$ and $\partial V$ 
	satisfying $\gamma' \cap \Sigma = \gamma' \cap O = \gamma \cap O$. Hence 
	$|\gamma \cap O| \ge k$ by the hypothesis of our proposition applied to the path $\gamma'$, yielding item~(b).

	It only remains to verify~(c). 
	Let us suppose that some $x \in O_j$ is connected to some $y \in O_j \cup O_{j+1}$, or $O_j \cup \partial V$ in case $j = \ell$, by a path 
	whose interior lies in $V \setminus O$. 

	We will first consider the case $y \in O_j$. 
	Let $\mathscr C_x$ denote the component of $x$ in $(V 
	\setminus \Sigma) \cup \{x\}$. If $x$ and $y$ can {\em not} be connected 
	by a path whose interior lies in $V 
	\setminus \Sigma$, then $y$ necessarily lies in a component, say $\mathscr C_y^x$, of 
	$V \setminus \mathscr C_x$ such that $y \in \mathscr C_y^x \setminus \partial_V \mathscr C_y^x$ and 
	\begin{equation}\label{eq:C_ySigma}
		\partial_V \mathscr C_y^x \subset \Sigma
	\end{equation}	
	Also since $x, y$ are connected by a path whose interior lies in $V \setminus O$ and $y \in \mathscr C_y^x \setminus \partial_V \mathscr C_y^x$, it follows that 
	\begin{equation}\label{eq:C_yO}
		\partial_V \mathscr C_y^x \cap O^c  \ne \emptyset.
	\end{equation}
	On the other hand, $x$ and $y$ are connected by a $\ast$-path $\gamma$ in $\Sigma$ as $O_j 
	\subset \Sigma$ is $\ast$-connected by definition. Hence by the definition of $\ast$-connectivity, 
	$\gamma$ either intersects $\partial_V \mathscr  C_y^x$ or contains a point that is a $\ast$-neighbor of $\partial_V \mathscr C_y^x$. Therefore the set $\gamma \cup \partial_V \mathscr C_y^x$ is $\ast$-connected {\em provided} $\partial_V \mathscr C_y^x$ is also $\ast$-connected which turns out to be a consequence 
	of \cite[Lemma~2.1-(ii)]{DeuschelPisztora96} as the set $\mathscr C_x$ is a connected subset of $V$. But $\gamma \cup \partial_V \mathscr C_y^x \subset \Sigma$ (recall \eqref{eq:C_ySigma} as well as that $\gamma \subset \Sigma$) and intersects $\{x, y\} 
	\subset O_j$. Since $O_j$ is a $\ast$-component by construction, the previous two observations 
	imply that $\partial_V \mathscr C_y^x \subset \gamma \cup \partial_V \mathscr C_y^x \subset O_j$. However, this contradicts \eqref{eq:C_yO} and thus property~(b) is satisfied in this case.
	
	Next let us consider the case $y \in O_{j+1}$ where $j < \ell$ and let $\mathscr C_x$ denote the component of $x$ in $(V 
	\setminus \Sigma) \cup \{x\}$, as before. Since $O_{j} \subset \Sigma$ and $(\{x\} \subset)\, O_j \preceq O_{j+1}$	 by~(a), it follows from Lemma~\ref{lem:annular} applied with $(U, \Sigma, W) = (O_j, \Sigma, O_{j+1})$ that there is a point $z \in \partial_{V, 
		{\rm out}} \mathscr C_x \cap O_{j + 1}$. Since $z \in \partial_{V}^{ {\rm out}} \mathscr C_x$, it must be the case that $z \in \partial_V \mathscr C_y^x$ if $z \in \partial_{V}\mathscr C_y^x$. Thus any $\ast$-path contained in $O_{j+1}$ connecting $y$ and $z$, which necessarily exists as $O_{j + 1}$ is $\ast$-connected, must either intersect or be a $\ast$-neighbor of $\partial_{V}\mathscr C_y^x$. Now we repeat the same argument as in the previous case with such a $\ast$-path $\gamma$.
	
	Finally the case where $x \in O_\ell$ and $y \in \partial V$ follows almost immediately from 
	Lemma~\ref{lem:annular} with $(U, \Sigma, W) = (O_\ell, \Sigma, \partial V')$ where $V'$ is any box 
	containing $V$ in its interior.
\end{proof}

\section{Proofs of Lemmas~\ref{L:BWG} and~\ref{L:BWG'}} \label{sec:aux-lemmas}

Lemmas~\ref{L:BWG} and~\ref{L:BWG'} essentially follow from the defining properties of the ``algorithm'' $\mathscr{A}=(\mathscr{A}_n)_{ n \geq 0}$ introduced in \S\ref{subsec:exploration}. We include full proofs here since the definition of $\mathscr{A}$ is somewhat involved.

\begin{proof} [Proof of Lemma~\ref{L:BWG}]
We start with~\eqref{eq:BW0}. For $n = 0$, $\mathscr{A}_n=(\mathsf{B}_n, \mathsf{W}_n, \mathsf{G}_n)$ is a partition of $\Z^d$ by \eqref{eq:BWG0}. For 
	general $n$, this is deduced inductively by following the update rule for the sets $\mathscr{A}_n$. The second part is an immediate consequence of 
	 definitions \eqref{eq:B_n}--\eqref{eq:W_n} and \eqref{eq:GBW2}, together with the definitions of the sets $\mathsf{B}_n', \mathsf{W}_n'$ and $\mathsf{G}_n'$ in the paragraph preceding the display \eqref{eq:G_n}.
	
	\smallskip
	
	We now show \eqref{eq:BW}. This is obvious except for the restrictions of the sets $\mathsf S_n$ and $\widetilde{\mathsf S}_n$ to $D_{\tilde Y_k, L_0}$ for 
	any $\mathsf S \in \{\mathsf B, \mathsf W\}$ and $k \ge 1$ such that $\tilde \tau_k < \infty$. It follows from the update rule for the 
	triplets $\mathscr{A}_n$ and $\tilde{\mathscr{A}}_n$ and \eqref{eq:GBW2} that 
	 if $\tilde Y_k = \tilde Y_l$ for some $l < k$ and \eqref{eq:BW} holds for $n=\tau_l$, 
	then it also holds for $n=\tau_k$.  If $\tilde Y_k \ne \tilde Y_l$ for any $l < k$, we are in the 
	same situation as $k = 1$ (see above \eqref{eq:GBW2}), hence it suffices to verify the 
	property for $n = \tilde \tau_1$. Moreover, in view of our treatment of Cases~I and II below 
	\eqref{eq:W_n}, it is enough to verify the inclusions for $\widetilde{\mathsf W}_n \cap D$ 
	and $\widetilde{\mathsf B}_n \cap D$ with $D=D_{\tilde Y_1, L_0}$. In the sequel, we will 
	implicitly mean their intersection with $D$ when referring to the sets $\mathsf B_n$ and 
	$\widetilde{\mathsf W}_n$ .
	
	It is clear from \eqref{eq:B_n} that
	$\widetilde{\mathsf{B}}_n \subset \mathcal I$. On the other hand, since $\mathsf W_n' 
	\subset \mathcal V$, we have $\widetilde{\mathsf{W}}_n 
	\subset \mathcal V$ in view of \eqref{eq:W_n} {\em provided} we also have $\mathsf 
	{G}_n' \setminus \widetilde{\mathsf {G}}_n \subset \mathcal V$. To this end, let $x' \in 
	\mathcal I \cap \mathsf{G}_n'$ and we will show $x' \in \widetilde{\mathsf{G}}_n$. 
	
	Let us first observe that the cluster $\mathscr C(x')$ of $x'$ in $\mathcal I \cap (D \setminus 
	C)$, $C=C_{\tilde{Y}_1, L_0}$, is necessarily a subset of $\mathsf{G}_n'$ and is disjoint from $\partial 
	D$. This is because, by definition, $(\mathsf{B}_n', \mathsf{W}_n', \mathsf{G}_n')$ forms a partition of $D$ with $\mathsf{W}_n' \subset \mathcal V$ and 
	$\mathsf{B}_n'$ comprising the clusters of $\partial D$ in $\mathcal I \cap (D \setminus C)$. 
	But any (non-empty) component of $\mathcal I \cap D = \mathcal I(Z_J) \cap D$ must 
	intersect $\partial D$ as the sequence $Z_J$ consists of excursions $Z_j^{ D_z,  U_z}$'s 
	between $ 
	D_z$ and $\partial^{{\rm out}} U_z$ with $D \subset  D_z$. 
	Since $\mathscr C(x')$ is a component of $\mathcal I \cap (D \setminus C)$ disjoint from $\partial D$, the previous 
	observation implies that $\mathscr C(x') \cap \partial^{{\rm out}} C \ne \emptyset$. Together with the fact that both $\mathscr C(x')$ and $C$ are subsets of 
	$\mathsf{G}_n'$ (see above \eqref{eq:G_n} for the latter), this implies $x'$ lies in the 
	component of $C$ in $\mathsf{G}_n'$, i.e.~$\widetilde{\mathsf{G}}_n$ by \eqref{eq:G_n}, and the 
	proof is complete. For use in the proof of \eqref{eq:CsubsetV}, let us also note important conclusion that we can draw from the arguments in this paragraph: the cluster of $x'$ 
	in $\mathcal I \cap D$ 
	must intersect $C$.

\smallskip

The proofs of~\eqref{eq:w2} and~\eqref{eq:CsubsetV} both rely on Lemma~\ref{L:BWG'}. The 
argument is not circular, as the latter only requires knowledge of \eqref{eq:BW0} and 
\eqref{eq:BW}, which have already been shown. The proof of \eqref{eq:w2} is in fact contained 
in the proof of \eqref{eq:w}, see the paragraph following \eqref{eq:taul_tauk} below. 
As to~\eqref{eq:CsubsetV}, assuming \eqref{eq:w} to hold, we only need to verify this when 
$\tilde Y_k \ne \tilde Y_l$ for any $l < k$, which is similar to the case $k = 1$. But then it is 
precisely the statement at the end of the proof of \eqref{eq:BW} in the previous paragraph. \end{proof}
	
\begin{proof}[Proof of Lemma~\ref{L:BWG'}]
It clearly suffices to prove \eqref{eq:w} with $k$ replaced by $K_1$ where $K_1 \coloneqq \inf\{l \ge 1: \tilde Y_l = \tilde Y_k\}$ and therefore we can 
	assume, without any loss of generality, that $\tilde Y_k \ne \tilde Y_l$ for any $l < k$. Now suppose that 
	the statement, i.e. \eqref{eq:w} holds for some $n \ge \tilde \tau_k$ such 
	that $\mathsf{G}_{n + 1} \cap D_{\tilde Y_k, L_0} \ne \emptyset$. We will verify that the statement 
	also holds at time $n + 1$. The claim then follows by induction. To this end, first 
	note that if $w_{n+1} \in \Z^d \setminus D_{\tilde Y_k, L_0}$, then the statement clearly holds at time 
	$n + 1$ as no vertex in $D_{\tilde Y_k, L_0}$ gets inspected in this case. So suppose that $w_{n + 
		1} \in D_{\tilde Y_k, L_0}$. We now consider two possibilities. Firstly, $w_{n + 1} \in D_{\tilde Y_k, 
		L_0} \setminus \partial D_{\tilde Y_k, L_0}$ in which case one performs a generic step of the 
	exploration (see the start of the paragraph containing \eqref{eq:GBW2}). Since $\mathsf{W}_{n} 
	\subset \mathcal V$ and $\mathsf{B}_{n} \subset \mathcal I$ by \eqref{eq:BW}, the only way 
	the statement may fail to hold in this case is if $w_{n + 1} \in \mathsf{G}_{n}$ on account of \eqref{eq:BW0}. Since $w_{n+1}$ 
	is selected from the outer boundary of a connected set in $\mathsf{W}_n$, which is the 
	explored part of $\mathscr C_J(x)$ at time $n$, and this set intersects $\partial D_{\tilde Y_k , L_0}$ 
	as it contains the point $w_{\tilde \tau_k} \in \partial D_{\tilde Y_k , L_0}$ (see~\eqref{eq:x-tilde-k}), it 
	follows that $w_{n+1}$ is adjacent to a boundary component of $\mathsf{W}_n \cap D_{\tilde Y_k, 
		L_0}$. The points in this component, in particular the ones that also lie in $\partial D_{\tilde Y_k, 
		L_0}$, were revealed at some time $m \le n$. In view of \eqref{eq:x-tilde-k}, it then follows that 
	that $\tilde \tau_l \le n$ for some $l \ge 1$ such that $\tilde Y_l = \tilde Y_k$ and $w_{n + 1}$ has a neighbor 
	connected to $\tilde X_l \in \partial D_{\tilde Y_l, L_0}$ inside $\mathsf{W}_n \cap D_{\tilde Y_k, L_0}$. 
	Since $\tilde Y_k \ne \tilde Y_{l'}$ for any $l' < k$ by our assumption at the beginning of the proof, we in 
	fact have $l \ge k$. All in all we get that {\em either} \eqref{eq:w} holds in this case at time 
	$n+1$ {\em or}
	\begin{equation}\label{eq:taul_tauk}
		\text{\begin{minipage}{0.9\textwidth}
				$w_{n+1} \in \mathsf{G}_n$ is conn.~to $\tilde X_{l}$ in $(\mathsf{G}_n \cup \mathsf{W}_n) \cap D_{\tilde Y_k, L_0}$ for some $l \ge 1$ s.t.~$\tilde \tau_l \in [\tilde \tau_k, n]$ and 
				$\tilde Y_{l} = \tilde Y_k$.
		\end{minipage}}
	\end{equation}
	We will now show that \eqref{eq:taul_tauk} cannot occur. Recall that $\mathsf{G}_{n + 1} \cap D_{\tilde Y_k, L_0} \ne \emptyset$ by assumption, which implies $\mathsf{G}_{n'} \cap D_{\tilde Y_k, L_0} \ne \emptyset$ for any $n' \le n$ by monotonicity (recall \eqref{eq:wninclude2}). Therefore, $\mathsf{G}_{\tilde \tau_l} \cap D_{\tilde Y_k, L_0} = 
	\mathsf{G}_{\tilde \tau_l} \cap D_{\tilde Y_l, L_0} \ne \emptyset$ as $\tilde \tau_l \le n$ and $\tilde Y_l = \tilde Y_k$ by 
	\eqref{eq:taul_tauk}. Consequently, the triplet $(\widetilde{\mathsf{B}}_{\tilde \tau_l} \cap D_{\tilde Y_l, L_0}, \widetilde{\mathsf{W}}_{\tilde \tau_l} \cap D_{\tilde Y_l, L_0}, \widetilde{\mathsf{G}}_{\tilde \tau_l} \cap D_{\tilde Y_l, L_0})$ satisfies the condition 
	leading to Case~I below \eqref{eq:W_n}, for otherwise we would have 
	$\mathsf{G}_{\tilde \tau_l} \cap D_{\tilde Y_l, L_0} = \emptyset$. 
	But in Case~I, there exists {\em no} path in $(\widetilde{\mathsf{G}}_{\tilde \tau_l} \cup 
	\widetilde{\mathsf{W}}_{\tilde \tau_l}) \cap D_{\tilde Y_l, L_0} = (\mathsf{G}_{\tilde \tau_l} 
	\cup \mathsf{W}_{\tilde \tau_l}) \cap D_{\tilde Y_l, L_0}$ connecting $\tilde X_l$ to 
	$C_{\tilde Y_l, L_0}$. As $\mathsf{G}_{\tilde \tau_l} \cap D_{\tilde Y_l, L_0}$ is a connected 
	set containing $C_{\tilde Y_l, L_0}$ 
	(see~\eqref{eq:G_n} and the line above \eqref{eq:B_n}), the previous fact 
	also implies that there is {\em no} path in $(\mathsf{G}_{\tilde \tau_l} \cup \mathsf{W}_{\tilde \tau_l}) \cap 
	D_{\tilde Y_l, L_0}$ connecting $\tilde X_l$ to any point in $\mathsf{G}_{\tilde \tau_l} \cap D_{\tilde Y_l, L_0}$. 
	However, this contradicts \eqref{eq:taul_tauk} as $\mathsf{G}_n \cap D_{\tilde Y_k, L_0} = 
	\mathsf{G}_{\tilde \tau_l} \cap D_{\tilde Y_k, L_0}$ and $\mathsf{W}_n \cap D_{\tilde Y_k, L_0} = 
	\mathsf{W}_{\tilde \tau_l} \cap D_{\tilde Y_k, L_0}$ for any $l \ge 1$ satisfying $\tilde \tau_l \in [\tilde \tau_k, n]$ 
	according to our induction hypothesis. Thus property~\eqref{eq:w} holds in this case at time 
	$n+1$.
	
	The remaining possibility is that $w_{n+1} \in 
	\partial D_{\tilde Y_k, L_0}$, i.e.~$n + 1 = \tilde \tau_l$ and $w_{n+1} = \tilde X_{l}$ for some $l > k$ with 
	$\tilde Y_l = \tilde Y_k$. So we are in the situation considered in \eqref{eq:GBW2}. 
	As $\mathsf{G}_{n+1} 
	\cap D_{\tilde Y_k, L_0} = \mathsf{G}_{\tilde \tau_l} \cap D_{\tilde Y_k, L_0} \ne \emptyset$ by induction 
	hypothesis, it is clear as before that $(\mathsf{B}_{\tilde \tau_l} \cap D_{\tilde Y_l, L_0}, \mathsf{W}_{\tilde \tau_l}\cap D_{\tilde Y_l, L_0}, \mathsf{G}_{\tilde \tau_l}\cap D_{\tilde Y_l, L_0})$ satisfies the condition of Case~I at time $\tilde \tau_l = n+1$, 
	hence $\mathsf{S}_{n+1} \cap D_{\tilde Y_l, L_0} = \widetilde{\mathsf{S}}_{n+1} \cap D_{\tilde Y_l, L_0}$ for all 
	$\mathsf{S} \in \{\mathsf{W}, \mathsf{B}, \mathsf{G}\}$. 
	But $\widetilde{\mathsf{S}}_{n+1} \cap D_{\tilde Y_l, L_0} =  
	\mathsf{S}_{n} \cap D_{\tilde Y_l, L_0}$ 
	due to \eqref{eq:GBW2}, hence \eqref{eq:w} holds in this case as well at 
	time $n+1$.
\end{proof}

\resumetoc

\bibliography{rodriguez,biblicomplete}
\bibliographystyle{abbrv}

\end{document}